\newcommand{\wick}[1]{\mathopen{:}#1\mathclose{:}}
\newcommand{\wwick}[1]{\mathopen{::}#1\mathclose{::}}
\newcommand{\commentrb}[1]{\begin{quote}\bf Comment RB: #1\end{quote}}
\newcommand{\Eg}{\mathsf{E}}
\newcommand{\Pg}{\mathsf{P}}
\newcommand{\Qg}{\mathsf{Q}}
\newcommand{\dsemnorm}[1]{[\![#1]\!]}
\newcommand{\dnorm}[1]{|\!|\!|#1|\!|\!|}
\def\mint{\,\ThisStyle{\ensurestackMath{%
    \stackinset{c}{0\LMpt}{c}{0\LMpt}{\SavedStyle-}{\SavedStyle\phantom{\int}}}%
    \setbox0=\hbox{$\SavedStyle\int\,$}\kern-\wd0}\int} 
\title{Holley--Stroock uniqueness method for the $\varphi^4_2$ dynamics}
\author{Roland Bauerschmidt\footnote{Courant Institute of Mathematical Sciences, NYU. E-mail: {\tt bauerschmidt@cims.nyu.edu}.}
\and Benoit Dagallier\footnote{CEREMADE, Université Paris-Dauphine, PSL University. E-mail: {\tt dagallier@ceremade.dauphine.fr}.}
\and Hendrik Weber\footnote{Institut für Analysis und Numerik, Universit\"at M\"unster. E-mail: {\tt hendrik.weber@uni-muenster.de}.}}
\date{April 11, 2025} 
\begin{document}
\maketitle

\begin{abstract}
  The approach initiated by Holley--Stroock establishes the uniqueness of invariant measures of Glauber dynamics of lattice spin systems
  from a uniform log-Sobolev inequality.
  We use this approach to prove uniqueness of the invariant measure of the $\varphi^4_2$ SPDE up to the critical temperature
  (characterised by finite susceptibility).
  The approach requires three ingredients: a uniform log-Sobolev inequality (which is already known),
  a propagation speed estimate, and a crude estimate on the relative entropy of the law of the finite volume dynamics at time $1$
  with respect to the finite volume invariant measure.
  The last two ingredients are understood very generally on the lattice,
  but the proofs do not extend to SPDEs,
  and are here established in the instance of the $\varphi^4_2$ dynamics.
\end{abstract}

\setcounter{tocdepth}{1}
\tableofcontents

\section{Introduction and main results}

Given $\lambda>0$ and $\mu\in\R$, we consider the $\varphi^4$ SPDE on $\R^2$, obtained as the limit $\epsilon\to 0$ of
\begin{equation} \label{e:SPDE}
  \dot \varphi = - A \varphi - \lambda \varphi^3 - (\mu-a(\lambda))\varphi + \sqrt{2} \dot W, 
\end{equation}
where $A = -\Delta +1$ is the Laplacian (the mass term $+1$ included in $A$ is notationally convenient later and equivalent to replacing $\mu$ by $\mu-1$),
$a(\lambda)=a_\epsilon(\lambda)$ is the divergent counterterm (which is chosen independently of $\mu$ and can, in our
two-dimensional situation, be taken to be $\frac{3}{2\pi} \log (\frac{1}{\epsilon}) \lambda$),
and $\dot W = \dot W^\epsilon$ is space-time white noise regularised suitably at scale $\epsilon>0$.
The SPDE is well posed starting from initial data in the weighted Besov--H\"older space
$C^{-\alpha}(\rho)$ with $\alpha>0=\frac12 (d-2)$ small and any decaying polynomial weight $\rho$ on $\R^2$,
see Section~\ref{sec:preliminaries} for this and other preliminaries. 
It is also known that its space-periodic version on the torus $\T_L^2$
has a unique invariant measure given by the torus $\varphi^4$ measure which will be denoted $\nu_L$.
On the other hand, in the infinite volume limit $L \to \infty$, invariant measures need not be unique.

Our goal is to prove uniqueness of the invariant measure above the static critical temperature.
This critical temperature is characterised in terms of the susceptibility, defined by
\begin{equation} \label{e:chi-def}
  \chi(\lambda,\mu)
  = \sup_{L}\frac{\E_{\nu_L}\big[(\varphi,1)^2\big]}{|\T_L^d|}
  = \sup_L\int_{\T_L^d} \E_{\nu_L}\big[\varphi(0)\varphi(x)\big]   \, dx,
\end{equation}
where the last equality is formal, and 
\begin{equation}
  \mu_c(\lambda)=\inf\{\mu\in \R: \chi(\lambda,\mu)<\infty\}.
\end{equation}
It is known that $\mu_c(\lambda) \in (-\infty,\infty)$ for every $\lambda>0$, see \cite{MR0363256,MR4426324}.

\begin{theorem} \label{thm:main-uniqueness}
  For any $\lambda>0$ and $\mu>\mu_c(\lambda)$,
  the infinite volume $\varphi^4$ SPDE \eqref{e:SPDE} on $\R^2$
  has a unique invariant measure $\nu$ supported on $C^{-\alpha}(\rho)$, where $\alpha,\sigma>0$ and $\rho(x)=(1+|x|^2)^{-\sigma/2}$.
  
  Moreover, for any deterministic initial condition $\varphi_0 \in C^{-\alpha}(\rho)$ and any test function $f\in C_c^\infty(\R^2)$,
  for sufficiently large $t>0$ (depending on $f$):
  \begin{equation} \label{e:unique-rate-main}
    \Big|\E\big[e^{i(f,\varphi_t)}\big] -\E_{\nu}\big[e^{i(f,\varphi)}\big]\Big|
    \lesssim
    (1+\|\varphi_0\|_{-\alpha,\rho}^{4})e^{-\gamma t(1+o(1))}
    ,
  \end{equation}
  where $\gamma>0$ is a uniform bound on the log-Sobolev constant of $\nu_L$ and $o(1)$ is independent of the initial condition.
  The $C^{-\alpha}(\rho)$-norm $\|\cdot\|_{-\alpha,\rho}$ is defined in Section~\ref{sec:norms-def}.
\end{theorem}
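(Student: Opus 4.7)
The plan is to compare the $\R^2$ dynamics with the torus dynamics on $\T_L^2$ for a carefully chosen $L=L(t)\to\infty$, and to exploit the uniform log-Sobolev inequality for $\nu_L$ to force exponential decay of relative entropy in finite volume. Writing $\mu_t^L$ for the law at time $t$ of the torus dynamics started from a suitable localisation of $\varphi_0$, I decompose
\[
  \bigl|\E[e^{i(f,\varphi_t)}] - \E_\nu[e^{i(f,\varphi)}]\bigr|
  \le \mathrm{(I)} + \mathrm{(II)} + \mathrm{(III)},
\]
where $\mathrm{(I)}=|\E[e^{i(f,\varphi_t)}]-\E_{\mu_t^L}[e^{i(f,\varphi)}]|$ is the propagation error, $\mathrm{(II)}=|\E_{\mu_t^L}[e^{i(f,\varphi)}]-\E_{\nu_L}[e^{i(f,\varphi)}]|$ is the finite-volume mixing error, and $\mathrm{(III)}=|\E_{\nu_L}[e^{i(f,\varphi)}]-\E_\nu[e^{i(f,\varphi)}]|$ is the thermodynamic-limit error. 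The candidate $\nu$ is taken as $\lim_L\nu_L$, whose existence along subsequences follows from tightness; that every subsequential limit is the \emph{same} and is the unique invariant measure will come out of the argument itself, by applying the bound with $\varphi_0$ distributed according to any given invariant measure.

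\textbf{Controlling (II).} The uniform log-Sobolev inequality with constant $\gamma$ yields exponential entropy decay under the torus dynamics, $H(\mu_t^L\mid\nu_L)\le e^{-2\gamma(t-1)}H(\mu_1^L\mid\nu_L)$ for $t\ge 1$. The crude ingredient on the time-$1$ entropy bounds $H(\mu_1^L\mid\nu_L)$ by $C(L)\,P(\|\varphi_0\|_{-\alpha,\rho})$ for some polynomial $P$ and some $C(L)$ growing at most polynomially in $L$. Pinsker then delivers
\[
  \mathrm{(II)}\le\sqrt{\tfrac12 H(\mu_t^L\mid\nu_L)}\lesssim \sqrt{C(L)}\,\sqrt{P(\|\varphi_0\|_{-\alpha,\rho})}\;e^{-\gamma(t-1)}.
\]

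\textbf{Controlling (I), (III), and balancing.} The propagation-speed estimate controls $\mathrm{(I)}$ by an error that is superpolynomially (or at worst stretched-exponentially) small in $L$, provided $L$ exceeds some $L_*(t)$ growing at most polynomially in $t$ and in the diameter of $\mathrm{supp}\,f$. Similarly, $\mathrm{(III)}$ vanishes as $L\to\infty$ for every fixed $f$ by tightness of $\{\nu_L\}$ combined with uniform moment bounds. Choosing $L=L(t)$ to grow slowly enough that $\sqrt{C(L(t))}$ is absorbed into $e^{o(1)\gamma t}$, yet fast enough that $\mathrm{(I)}$ and $\mathrm{(III)}$ are exponentially small in $t$, produces the claimed rate $e^{-\gamma t(1+o(1))}$ and the asserted $(1+\|\varphi_0\|_{-\alpha,\rho}^{4})$ prefactor after absorbing $\sqrt{P}$ into a polynomial of degree at most four.

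\textbf{Main obstacle.} The overall Holley--Stroock skeleton, the Pinsker/log-Sobolev step, and the choice of $L(t)$ are essentially formal; the genuine difficulty lies entirely in the SPDE-specific inputs. The propagation-speed estimate requires a careful mild-solution analysis involving the renormalised Wick powers $\wick{\varphi^2}$ and $\wick{\varphi^3}$, using heat-kernel tails and weighted Besov norms to localise the influence of initial data and of the white noise outside $\T_L^2$; the lattice proof based on comparing monotone couplings of graphical representations has no clear analogue here. The time-$1$ entropy bound is the more delicate ingredient: it must establish that after one unit of parabolic smoothing the singular initial datum $\varphi_0\in C^{-\alpha}(\rho)$ produces a law with only polynomial-in-$L$ relative entropy with respect to $\nu_L$. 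I expect this to be proved by a Girsanov change of drift from the stationary $\varphi^4_2$ process, combined with moment estimates on the resulting Radon--Nikodym derivative and with sharp short-time regularity estimates on the SPDE starting from rough initial data.
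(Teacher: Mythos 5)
Your proposal follows the same Holley--Stroock skeleton as the paper: a three-way decomposition into propagation error, finite-volume mixing (via log-Sobolev $\Rightarrow$ entropy decay $\Rightarrow$ Pinsker), and a thermodynamic-limit error, with $L=L(t)$ growing polynomially. The choice $L\sim t^{C_0}$, the $L^{\beta}$ prefactor from the crude entropy bound being swallowed into $e^{o(1)\gamma t}$, and the $(1+\|\varphi_0\|^4)$ prefactor from Pinsker all match.

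The one genuine gap is in your treatment of term (III). You assert that $|\E_{\nu_L}[F]-\E_\nu[F]|$ ``vanishes as $L\to\infty$ by tightness combined with uniform moment bounds'' and then ask for it to be ``exponentially small in $t$'' once $L=L(t)$. Tightness gives weak convergence only along subsequences and supplies no rate, so this step does not produce the quantitative bound you need; moreover, at this stage you do not yet know that all subsequential limits agree -- that is precisely what is being proved -- so $\nu_{L(t)}\to\nu$ for a specified $\nu$ requires care. This is enough to conclude \emph{uniqueness} (for uniqueness one takes $t\to\infty$ along the subsequence compatible with a given limit point and shows every invariant measure is pinned to it), but it is not enough for the explicit rate \eqref{e:unique-rate-main}. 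The paper's fix is an averaging trick you do not mention: having established $\nu$ is invariant, one integrates the propagation-plus-entropy bound over $\varphi_0\sim\nu$, which (by invariance of $\nu$ and Corollary~\ref{cor:nuL-tight}) yields the quantitative estimate
\begin{equation*}
\bigl|\E_\nu[e^{i(f,\varphi)}]-\E_{\nu_{L(R_f,t)}}[e^{i(f,\varphi)}]\bigr|
\lesssim L(R_f,t)^\beta e^{-\gamma t} + \|f\|_\alpha e^{-\delta L(R_f,t)^2/t} + e^{-t^2\log L(R_f,t)},
\end{equation*}
which one then feeds back into the main decomposition. Without this step your argument gives uniqueness but not the rate.

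A secondary remark: for the time-$1$ entropy bound you suggest a Girsanov change of drift ``from the stationary $\varphi^4_2$ process.'' The path-space laws of the $\varphi^4$ dynamics and of the Ornstein--Uhlenbeck process are mutually singular, so this Girsanov comparison fails directly. The paper circumvents this by a three-term entropy decomposition through the \emph{Gaussian} time-marginals $m_t^0$, together with a time-shift of the drift (Lemma~\ref{lemm_def_tildephi}) so that the resulting path-space Girsanov density is finite. You flag this ingredient as the hard part, which is fair, but the specific mechanism you propose would need to be replaced by the time-shifted comparison with the Gaussian dynamics.
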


The method of proof for Theorem~\ref{thm:main-uniqueness} is a continuum version of the Holley--Stroock strategy for
uniqueness of invariant measures for spin systems on $\Z^d$, see \cite{MR893137,MR428984,MR410985,MR1153990,MR1292280,MR1370101}
for the development of this strategy in various settings, and also \cite{MR2352327,MR1971582} for textbook treatments.
This strategy requires three main ingredients:
\begin{enumerate} \parskip=0pt
  \item[(1)] A propagation speed estimate that compares the dynamics of a finite volume
    approximation with the dynamics on the full space (or a large volume);
  \item[(2)] A relatively crude quantitative estimate on the density of the law of the dynamics on a finite volume at a finite time;
  \item[(3)] A log-Sobolev inequality for the infinite volume dynamics (or a uniform bound on the finite volume log-Sobolev constant).
\end{enumerate}
On the lattice, (1) and (2) can be established very generally, and only the log-Sobolev inequality is really model specific.
For the $\varphi^4_d$ measures (in the continuum), the uniform log-Sobolev inequality was proved in \cite{MR4720217},
also in dimension 3.
However, for SPDEs in the continuum, (1) and (2) have not been proved, and the typical lattice arguments
do not extend.
For example, typical existing proofs of (1) on the lattice involve comparisons of generators
in which it is exploited that the lattice Laplace operator is a bounded operator (leading to wrong scaling in the
continuum limit where it becomes unbounded), and (2) can be proved on the lattice by exploiting ultracontractivity
of the finite dimensional finite volume dynamics on the lattice.
In this work, we address (1) and (2) for the $\varphi^4_2$ measure, making use of a combination of pathwise SPDE
techniques (such as the a priori bounds \cite{MR4164267}) and of stochastic calculus (such as the ``It\^o trick'' from
\cite{MR2593276,KoenigPerkowskivanZuiljen_PAM}).
We expect that the argument leading to (1) can be extended
to higher dimensions with a more careful and systematic treatment.
The extension of (2) to higher dimensions remains an interesting  question.
Only relatively crude but quantitative bounds are required, which we certainly expect to be true very generally, but
our proof uses the simplifying feature that the finite volume
$\varphi^4_2$ measure is absolutely continuous with respect to the free field (and in fact that this can be shown for the
fixed time distributions in finite volume as well, similarly to \cite{MR2122959,MR4476105}).
Finally, we mention that the typical lattice arguments do not even apply to the continuum limit in one dimension,
which is however very well understood nonetheless due to the connection with diffusion processes and the absense of divergent counterterms;
see, for example, \cite{MR2127731} and references.

Our result answers half of the sharpness question of the phase transition for invariant measures of the $\varphi^4_2$ SPDE:
For $\mu>\mu_c$ we prove there is a unique invariant measure.
For $\mu<\mu_c$ we expect there are two ergodic invariant measures with opposite non-zero magnetisation. 
Such results are well established for Ising and percolation lattice models, including the lattice $\varphi^4$ model,
see \cite{MR894398} and many more recent developments, but so far not in the continuum limit.
We also refer to \cite{2211.00319} for recent finer results concerning the structure of Gibbs measure of lattice $\varphi^4$ models.

It is well known that invariant measures for Glauber dynamics and Gibbs measures coincide on the lattice,
see for example \cite{MR512335} (and also \cite{MR451455}).
This correspondence is less developed in the continuum, partly because the notion of Gibbs state requires discussion of boundary conditions
and is therefore  less convenient in the continuum, in particular when the dimension is greater than two (where different boundary conditions
lead to singular measures and, for example, the ``half-Dirichlet'' states considered in \cite{MR0378670,MR0489552} cannot be used),
but we expect that appropriate notions of Gibbs states and invariant measures would nonetheless coincide, certainly in two dimensions.
An advantage of (our implementation of) the Holley--Stroock uniqueness method is that boundary conditions need not be considered.
For weak coupling corresponding to $\mu \gg \mu_c(\lambda)$,
the uniqueness of Gibbs states of the $\varphi^4_2$ model is proven in \cite{MR1003426,MR990999}.

It is a direct consequence of the uniqueness
(together with the Euclidean invariance of the SPDE \eqref{e:SPDE} which can be defined using a Euclidean invariant regularisation on the full plane \cite{MR3693966},
see also Corollary~\ref{cor:infvol-existence} below)
that the unique infinite volume invariant measure for $\mu>\mu_c(\lambda)$ is Euclidean invariant.
The Euclidean invariance of an invariant measure obtained from a finite volume approximation defined on the sphere
was also recently established in \cite{2311.04137}, for all $\mu \in \R$,
and see also \cite{2102.08040} for rotational invariance of such a measure in three dimension. These results do not establish the mixing property,
which must fail when the invariant measure is not unique as expected for $\mu<\mu_c(\lambda)$.
While we do not explicitly derive the mixing property, we expect it would be a relatively simple extension using
the same methods and under the same assumption $\mu>\mu_c(\lambda)$,
  see for example \cite[Section~8]{MR1971582}.
For sufficiently large $\mu$ our uniqueness result would in any case allow to apply the existing correlation decay estimates from \cite{MR723546}.
We also mention the study of uniqueness and correlation decay for invariant measures of SPDEs with exponential interaction using convexity of the potential \cite{MR4872106}.

\medskip

In the final stages of the preparation of this manuscript we learned that related results are in preparation \cite{DuchHairer}.
An advantage of the results of \cite{DuchHairer} seems to be that they apply in $d=3$ and do not rely on correlation inequalities in any way.
On the other hand, with such methods, it seems impossible to reach the optimal condition $\mu>\mu_c(\lambda)$ that we obtain,
and we expect that our proof will eventually be extended to $d=3$ as well.

\section{Preliminaries on the SPDE}
\label{sec:preliminaries}

In this section we collect preliminaries on the $\varphi^4_2$ SPDE,
and also set the precise conventions (for example for time and volume independent counterterms).
The preliminaries are essentially all known, 
but often not stated in the literature in precisely the version that we need (for example not with the correct sharp weights). To make our
treatment complete and accessible we have included essentially self-contained proofs in Appendices~\ref{app:norms}--\ref{app:apriori}.

\subsection{Local Besov--H\"older norms}
\label{sec:norms-def}

We use local and weighted versions of the Besov--H\"older spaces $C^{\alpha}$,
defined in this section similarly to \cite[Section~2.1]{MR4164267}.
Since the local versions are not completely standard (but nice), we include proofs of the properties we need in Appendix~\ref{app:norms}.

Throughout the bulk of the paper, we only use the standard ``elliptic'' versions of these norms
(defined in terms of the Euclidean distance)
appropriate for distributions in the spatial variable $x\in \R^d$.
In Appendix~\ref{app:apriori}, we also make use of analogous parabolic versions of these norms for distributions in the space-time variable $(x,t) \in \R^d \times \R$;
the precise definitions are recalled there.
We sometimes write  $\R^d_x$ instead of $\R^d$ to emphasise space and $\R_t$ for time.

\paragraph{Scaling and test function}
For $x\in \R_x^d$ we denote the Euclidean norm by $|x|$ and write $B_R(x)$ for the corresponding ball of radius $R$:
\begin{equation}
B_R(x) = \{\bar x \in \R_x^d \colon: |\bar x-x|<R\}.
\end{equation}
For a function $f \colon \R_x^d \to \R$ and a scaling parameter $R>0$, we denote the rescaled function by
\begin{equation}
f_R(x) = R^{-d} f\Big(\frac{x}{R}\Big).
\end{equation}
The negative regularity H\"older norms
are defined in terms of a smooth test function $\Psi$, supported in the unit ball of $\R^d$ and normalised such that $\int \Psi \, dx =1$,
that is fixed from now on.

\paragraph{Norms}

Given a set $C\subset \R^d$ respectively a bounded weight $\rho : \R^d \to (0,\infty)$,
we denote the uniform norm respectively weighted uniform norm by
\begin{equation}
  \|f\|_C = \sup_{x\in C}|f(x)|, \qquad \|f\|_\rho = \sup_{x} \rho(x) |f(x)|.
\end{equation}
To avoid risks of confusion the uniform norm will also be written $\|f\|_{\bbL^\infty(C)}$ respectively $\|f\|_{\bbL^\infty(\rho)}$. 
For $\alpha<0$, local versions of the $C^\alpha$ norm are defined by
\begin{align}
    \label{e:norm-minus-C}
  \|f\|_{\alpha,C} &= \sup_{\substack{R\leq 1, x\in C:\\B_R(x) \subset C}} |\Psi_R*f(x)| R^{-\alpha},
  \\
  \label{e:norm-minus-rho}
  \|f\|_{\alpha,\rho} &= \sup_{R\leq 1} \|\Psi_R*f\|_{\rho} R^{-\alpha}.
\end{align}
For $\alpha\in (0,1)$, the H\"older seminorms are defined by
\begin{align}
  \label{e:norm-plus-C}
  [f]_{\alpha,C} &= \sup_{\substack{x\neq y \in C\\|x-y|\leq 1}} \frac{|f(x)-f(y)|}{|x-y|^\alpha},
  \\
    \label{e:norm-plus-rho}
  [f]_{\alpha,\rho} &= \sup_{x\in\R^d} \rho(x) \sup_{0<|z|\leq 1} \frac{|f(x)-f(x+z)|}{|z|^\alpha},
\end{align}
and the corresponding weighted H\"older norm, for $\alpha \in (0,1)$, is
\begin{equation}
  \|f\|_{\alpha,\rho} = \|f\|_{\rho} + [f]_{\alpha,\rho}.
\end{equation}

For a sufficiently regular weight $\rho$, a version of the weighted Besov--H\"older space $C^{-\alpha}(\rho)$
can be defined as the completion of $C_c^\infty(\R^d)$ with respect to $\|\cdot\|_{\alpha,\rho}$,
see Remark~\ref{rk:completion} and also \cite[Remark~7]{MR3693966} for a comparison of this definition with the restriction
of the space of Schwartz distributions to the subspace of those with finite norm.
Different choices of bump function $\Psi$ lead to equivalent norms on the full space,
see Proposition~\ref{prop:norm-tilde} in Appendix~\ref{app:norms}.

The above definitions give the inhomogeneous versions of the Besov--H\"older norms
(which we will use throughout the bulk of the paper),
in which the distances $|x-y|$ and $R$ are restricted to $(0,1]$. The homogeneous versions of these norms would not have this restriction;
their space-time versions will be more convenient in Appendix~\ref{app:apriori} and are recalled there.

\paragraph{Periodisation and weights}
Norms on the torus $\T_L^d$ are defined by viewing $f: \T_L^d \to \R$ as a $2L$-periodic function on $\R^d$.
The space $C^\alpha(\T_L^d)$ without explicit specification of the weight refers to the weight $\rho=1$
but it is equivalent to the weighted versions we will use since all weights will be bounded above and below
on compact sets. Indeed, we will only use the polynomial weights
\begin{equation} \label{e:weight}
  \rho(x)=\rho^\sigma(x)=(1+|x|^2)^{-\frac{\sigma}{2}}, \qquad x\in\R^2,
\end{equation}
with $\sigma \geq 0$.

\subsection{Space-periodic dynamics}\label{subsec_periodised_dynamics}

Let $W = (W_t)_{t\geq 0}$ denote a cylindrical Wiener process on $L^2(\R^d)$
defined on some probability space that is fixed from now on, see, e.g., \cite{MR3236753}.
Thus its distributional time derivative $\dot W$ is a space-time white noise on $\R_+ \times \R^d$.
Even though we always choose $d=2$, we sometimes write $d$ for emphasis.
We will consider the SPDE \eqref{e:SPDE} on the full space $\R^d$ and its space-periodic version on the torus $\T^d_L=[-L,L)^d/\sim$ which is formally given by
\begin{equation}   \label{e:SPDE-eps}
  \dot \varphi^{L} 
  = -A\varphi^L - \lambda (\varphi^L)^3 - (\mu-a(\lambda))\varphi^L + \sqrt{2}\dot W^L, 
\end{equation}
where the divergent counterterm $a(\lambda)=a_\epsilon(\lambda)$ is chosen to be the same as in \eqref{e:SPDE} and
we also assume that the full space space equation and the torus equation are coupled through the same driving noise, i.e.,
$W^L$ is the space-periodised version of $W$:
\begin{equation}\label{e:xiL}
  (W^L_t,f) = (W_t,f^L), 
  \qquad 
  f^L(x) = 1_{(- L,L]^d}(x)\sum_{n\in \Z^d} f(x+2Ln).
\end{equation}
Given an initial condition $\varphi_0 \in \cS'(\R^d)$, we define a periodised initial condition $\varphi_0^L \in \cS'(\T_L^d)$
by
\begin{equation} \label{e:phi0L}
  \varphi_0^L = \sum_{n\in \Z^d} T_{2Ln}(\chi_L\varphi_0)
\end{equation}
where $\chi \in C_c^\infty(B(0,1))$ is a bump function with $\chi(x)=1$ for $|x|\leq \frac{99}{100}$, $\chi_L(x) =\chi(x/L)$,
and $T_xf$ denotes translation of the distribution $f$ by $x$.
Clearly,
\begin{equation}
(\varphi^L_0,f)
=
(\varphi_0,f)
,
\label{eq_def_varphi_L0}
\end{equation}
for any smooth $f$ supported on $[-\frac{99}{100}L,\frac{99}{100}L]^2$ 
and if $\|\varphi_0\|_{-\alpha,\rho}<\infty$,
\begin{equation}
\lim_{L\to\infty}\|\varphi^L_0-\varphi_0\|_{-\alpha,\rho}
=
0
,
\label{eq_cv_varphi_L0}
\end{equation}
where  $\|\cdot\|_{-\alpha,\rho}$  is the weighted $C^{-\alpha}$ 
norm defined in \eqref{e:norm-minus-rho}. 
We omit the proof of the last statement and refer to
\cite[Lemma 13]{MR3693966} for a very similar statement.

\subsection{Gaussian estimates}
\label{sec_gaussian_estimates}

We next discuss the stochastic heat equation~\eqref{eq_heat}, also referred to as Ornstein--Uhlenbeck process, 
its periodised version~\eqref{eq_heat_periodised}, and their Wick powers \eqref{e:Wick-convention} with volume and mass independent renormalisation
as appropriate for the infinite volume limit.
Because we could not find a reference that contains exactly what we need, especially not the quantitative continuity theorem in time and the quantitative
convergence of the finite volume approximation,
the proofs of these estimates are included in Appendix~\ref{app_gaussian_estimates}.
The dimension is always $d=2$.

Let $\dot W$ be the space-time white noise defined in Section~\ref{subsec_periodised_dynamics}.
The Ornstein--Uhlenbeck process $Z$ is defined by
\begin{equation} \label{e:OU}
  (\partial_t+A)Z =  \sqrt{2}\dot  W, \qquad Z_0=\varphi_0,
\end{equation}
where $A=-\Delta+1$, and the solution to~\eqref{e:OU} is interpreted as the stochastic integral
\begin{equation}
  Z_t = e^{-tA}\varphi_0 + \int_0^t e^{-(t-s)A} \, \sqrt{2}dW_s
  ,\qquad
  t\geq 0
  .
  \label{eq_heat}
\end{equation}
Its torus version $Z^L$ is defined using the periodisation of the same driving noise, see \eqref{e:xiL}, by:
\begin{equation}
Z^L_t 
=
e^{-tA}\varphi^L_0 + \int_0^t e^{-(t-s)A}\, \sqrt{2}dW^L_s
,\qquad
t\geq 0
.
\label{eq_heat_periodised} 
\end{equation}
The Ornstein--Uhlenbeck process is well posed in $C^{-\alpha}(\rho)$ from any initial condition $Z_0\in C^{-\alpha}(\rho)$,
for any $\alpha>0$ (in dimension two) and where $\rho$ denotes the polynomial weight \eqref{e:weight} with arbritary $\sigma>0$, see  Appendix~\ref{app_gaussian_estimates}.
The covariance of $Z_t$ and $Z_s$ is given by
  \begin{equation} \label{e:OU-cov}
    2\int_0^{s\wedge t} e^{-(t+s-2u)A}\, du
    = \int_{|t-s|} ^{t+s} e^{-uA}\, du
    = \int_{|t-s|} ^{t+s} e^{-u}e^{u\Delta}\, du
  \end{equation}
  and the mean of $Z_t$ by
  \begin{equation} \label{e:OU-mean}
    e^{-tA}\varphi_0 = e^{-t}e^{t\Delta} \varphi_0
    ,
  \end{equation}
  where $(e^{t\Delta})(x,y)=p_t(x-y)$ is the heat kernel on $\R^d$. 
  Similar identities hold for $Z^L$, in terms of the periodised heat kernel $p_t^L$ on $\T_L^d$. 
  These kernels are given by:
    \begin{align}
p_t(x) &=
\frac{1}{(4\pi t)^{d/2}}e^{-|x|^2/4t} 
,\qquad 
x \in\R^d
,
\label{eq_OU_kernel}
\\
 p_t^L(x) &= \sum_{a\in\Z^2}p_t(x-2aL)
 , \qquad x \in \T_L^d.
 \label{eq_OU_kernel_periodic}
\end{align}

For any initial condition $\varphi_0 \in C^{-\alpha}(\rho)$ on $\R^2$, 
we recall that $\varphi_0^L$ denotes its periodised version \eqref{e:phi0L}.
From now on we write $Z$, $Z^L$ for the solutions of~\eqref{eq_heat}, \eqref{eq_heat_periodised} respectively starting from $\varphi_0$, $\varphi^L_0$,
and also denote by $\tilde Z$, $\tilde Z^L$ the solutions with $0$ initial condition:
\begin{equation}\label{e:def-Gaussian-without-Initial-datum}
\tilde Z_t
:=
Z_t - e^{-tA}\varphi_0
,\qquad
\tilde Z^L_t
:=
Z^L_t - e^{-tA}\varphi^L_0
,\qquad
t\geq 0
.
\end{equation}

We define Wick powers $\wick{\tilde Z^n}$ of $\tilde Z$ with counterterm $a_\epsilon = \frac{1}{2\pi} \log \frac{1}{\epsilon}$ 
independent of $t$ and $L$.
Thus they are given as space-time distributions by, e.g., for $n=2,3$:
\begin{equation} \label{e:Wick-convention}
  \wick{\tilde Z_t^2} = \lim_{\epsilon\to 0} \pB{ (\eta_\epsilon*\tilde Z_t)^2-a_\epsilon},
  \qquad
  \wick{\tilde Z_t^3} = \lim_{\epsilon\to 0} \pB{ (\eta_\epsilon*\tilde Z_t)^3-3a_\epsilon(\eta_\epsilon* \tilde Z_t)},
\end{equation}
and correspondingly for higher Wick powers. 
The function $\eta: \R^d \to [0,\infty)$ is a smooth compactly supported mollifier with unit integral and $\eta_\epsilon(x) = \epsilon^{-d}\eta(x/\epsilon)$. 
As indicated in the notation, these space-time distributions can be interpreted as distributions
in the spatial variable for fixed $t>0$.

The Wick powers with non-vanishing initial conditions $\varphi_0\in C^{-\alpha}(\rho)$ are then defined by the formula:
\begin{equation} 
\wick{Z_t^n}
=
\sum_{\ell=0}^n\binom{n}{\ell}\wick{\tilde{Z}_t^\ell}(e^{-tA}\varphi_0)^{n-\ell}
,
\label{eq_link_wick_powers_w_w/o_IC}
\end{equation}
where the products are well-defined for $t>0$ by the multiplicative property of Besov spaces and since $e^{-tA}\varphi_0$ is smooth
(see Propositions~\ref{prop:besov-mult} and~\ref{prop:Besov-heat}).
Analogous definitions (with the same counterterm $a_\epsilon$ independent of $L$) apply to the Wick powers of $Z^L$.
For ease of notation we write:
\begin{equation}
\wick{(\tilde Z^\infty_t)^n}
\overset{\text{def}}{=}
\wick{(\tilde Z_t)^n},
\qquad
\wick{(Z^\infty_t)^n}
\overset{\text{def}}{=}
\wick{(Z_t)^n}
\ .
\end{equation}

The estimates we need are summarised in the following propositions, proved in Appendix~\ref{app_gaussian_estimates}.
Recall that $\rho(x)=(1+|x|^2)^{-\sigma/2}$ and $Z^\infty:=Z$.

\begin{proposition}\label{prop_gaussian_bounds}
 Let $n\geq 1$ be an integer, 
  let $\alpha,\sigma>0$ and recall that $\rho(x)=(1+|x|^2)^{-\sigma/2}$ and $Z^\infty:=Z$. 
There is $\beta>0$ such that, for each $r\in[0,2]$ and some $\epsilon_r,\epsilon'_r>0$:
\begin{align}
\sup_{L\in[3,\infty]} \E\bigg[\exp\Big[\epsilon_r\, \Big(\sup_{s\in[0,t]}  (s^{n\alpha}\wedge 1)\| \wick{(Z^L_s)^n}\|_{-n\alpha,\rho^n}\Big)^{r/n}\, \Big]\bigg]
&\lesssim
(1+ t)^{\beta}\exp\Big[\epsilon'_r\big\|\varphi_0\|_{-\alpha,\rho}^{r}\Big]
,
\label{eq_gaussian_bound_non0IC_rho_norm}
\end{align}
where the proportionality constant is independent of $t,\varphi_0,L$. 
\end{proposition}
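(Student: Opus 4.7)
The plan is to reduce the estimate to two essentially independent ingredients by using the decomposition \eqref{eq_link_wick_powers_w_w/o_IC}:
\begin{equation*}
\wick{(Z^L_s)^n} = \sum_{\ell=0}^n \binom{n}{\ell}\wick{(\tilde Z^L_s)^\ell}(e^{-sA}\varphi_0^L)^{n-\ell}.
\end{equation*}
The two ingredients are (i) Gaussian chaos bounds on the zero-initial-data Wick powers $\wick{(\tilde Z^L_s)^\ell}$, and (ii) heat-kernel bounds on the deterministic evolution $e^{-sA}\varphi_0^L$ in $C^{-\alpha}(\rho)$. The restriction $r\le 2$ together with the exponent $r/n$ on the outside is exactly what is forced by the order of the highest Wiener chaos ($n$) that appears.

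For (i), one first fixes $s>0$ and $L$. Because $\wick{(\tilde Z^L_s)^\ell}$ belongs to the $\ell$-th Wiener chaos of the cylindrical noise, Nelson's hypercontractivity gives $L^p$-equivalence with constants growing like $p^{\ell/2}$. Testing against the bump $\Psi_R$ from \eqref{e:norm-minus-rho} and applying a Kolmogorov-type argument in the variables $(x,s,R)$, one obtains
\begin{equation*}
\E\Big[\sup_{s\in[0,t]} \| \wick{(\tilde Z^L_s)^\ell}\|_{-\ell\alpha,\rho^\ell}^p\Big] \;\lesssim\; (1+t)^{\beta} (Cp)^{\ell p/2},
\end{equation*}
where the constants only depend on explicit bounds on second moments of $\Psi_R \ast \wick{(\tilde Z^L_s)^\ell}$ against the weight $\rho^\ell$. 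Uniformity in $L\in[3,\infty]$ is inherited from the fact that the periodised heat kernel $p^L_t$ from \eqref{eq_OU_kernel_periodic} has the same short-distance singularity as $p_t$, and that the renormalisation constant $a_\epsilon$ is chosen independently of $L$; on the full space the polynomial weight $\rho^\ell$ tames the lack of compactness.

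For (ii), the standard heat-semigroup estimate on the negative Hölder scale yields
\begin{equation*}
\|e^{-sA}\varphi_0^L\|_{\bbL^\infty(\rho)} \lesssim (s^{-\alpha/2}\wedge 1)\,\|\varphi_0\|_{-\alpha,\rho},
\end{equation*}
uniformly in $L$, by virtue of \eqref{eq_def_varphi_L0}--\eqref{eq_cv_varphi_L0}. The Besov multiplication property (the paper's Proposition~\ref{prop:besov-mult}) then gives
\begin{equation*}
\|\wick{(\tilde Z^L_s)^\ell}\,(e^{-sA}\varphi_0^L)^{n-\ell}\|_{-n\alpha,\rho^n}\lesssim \|\wick{(\tilde Z^L_s)^\ell}\|_{-\ell\alpha,\rho^\ell}\,s^{-(n-\ell)\alpha/2}\,\|\varphi_0\|_{-\alpha,\rho}^{n-\ell}.
\end{equation*}
Multiplying by $s^{n\alpha}\wedge 1$ absorbs the singularity since $n\alpha - (n-\ell)\alpha/2 = (n+\ell)\alpha/2 \ge 0$. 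Summing over $\ell$, taking the supremum in $s\in[0,t]$ and combining with step (i) yields a bound of the form
\begin{equation*}
\E\Big[\Big(\sup_{s\in[0,t]}(s^{n\alpha}\wedge 1)\|\wick{(Z^L_s)^n}\|_{-n\alpha,\rho^n}\Big)^{p}\Big] \lesssim (1+t)^{\beta}(Cp)^{np/2}\,(1+\|\varphi_0\|_{-\alpha,\rho})^{np}.
\end{equation*}
Applying this with $p = rq/n$, $q\in\N$, and summing the series in $q$ produces exponential moments of $X^{r/n}$ as long as $rp/2\le p$, i.e. $r\in[0,2]$, which gives \eqref{eq_gaussian_bound_non0IC_rho_norm} with a possibly smaller $\epsilon_r,\epsilon'_r$.

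The main obstacle is the uniform-in-$L$ control of the weighted Besov norms of the Wick powers: one must check that the heat-kernel and covariance estimates on $\T_L^d$ degrade to the $\R^d$ ones in a way that is quantitatively uniform in $L\ge 3$, and that the polynomial weight $\rho^\ell$ is handled consistently through the Kolmogorov argument on the non-compact space. Once this is in place, the combination with Nelson's hypercontractivity and the stated heat-kernel bounds is essentially algebraic.
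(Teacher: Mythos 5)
Your proposal follows essentially the same path as the paper's proof (Appendix~\ref{app_gaussian_estimates}, Proposition~\ref{prop_gaussian_bounds_app}): decompose via \eqref{eq_link_wick_powers_w_w/o_IC}, use hypercontractivity plus a Kolmogorov-type argument for the centred Wick powers, heat-kernel smoothing plus Besov multiplication for the contribution of the initial datum, and convert moment bounds into exponential moments using the exponent $r/n\leq 2/n$. One point worth correcting: the intermediate bound
\begin{equation*}
\E\Big[\sup_{s\in[0,t]} \|\wick{(\tilde Z^L_s)^\ell}\|_{-\ell\alpha,\rho^\ell}^p\Big]\lesssim (1+t)^{\beta}(Cp)^{\ell p/2}
\end{equation*}
is false as written for $\ell\ge 2$ if $\wick{\cdot}$ denotes the $t$- and $L$-independent counterterm convention \eqref{e:Wick-convention}, since e.g. $\wick{\tilde Z_s^2}=\wwick{\tilde Z_s^2}+f(s,L)$ with the deterministic shift $f(s,L)\sim\frac{1}{4\pi}\log s$ as $s\downarrow 0$ (Lemma~\ref{lemma_size_ftL}), making the supremum over $s\in[0,t]$ divergent. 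The moment bound you quote holds for the standard homogeneous Wick power $\wwick{(\tilde Z^L_s)^\ell}$, and the counterterm mismatch is a \emph{second} source of small-$s$ singularity, in addition to the heat-kernel factor $s^{-(n-\ell)\alpha/2}$ that your algebra $n\alpha - (n-\ell)\alpha/2 \geq 0$ accounts for. Since the mismatch is only logarithmic, it is again absorbed by the $(s^{n\alpha}\wedge 1)$ prefactor; once this is spelled out the argument closes exactly as in the paper.
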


We remark that the bound~\eqref{eq_gaussian_bound_non0IC_rho_norm} holds in particular with the optimal exponent $r=2$.
It is convenient to have the statement with $\|\varphi_0\|_{-\alpha,\rho}^r$ on the right-hand side with $r<2$, however,
as in Section~\ref{sec_main_argument} we will need to integrate the right-hand side under the $\varphi^4$ measure for which only stretched integrability
in Besov--H\"older norms is known (see Corollary~\ref{cor:nuL-tight}).

\begin{proposition}\label{prop_gaussian_difference}
Let $\alpha>0$ and $n\in\N\setminus\{0\}$. 
There is $c>0$ such that, for each $L\geq 12$, $t>0$ and each test function $f$ supported on $[-\frac23 L,\frac23 L]^2$:
\begin{align}
\E\Big[\, \big(\wick{Z_t^n}-\wick{(Z^L_t)^n},f\big)^2\, \Big]
&\lesssim 
\, (t^{-n\alpha}\vee 1) \|f\|_\alpha^2 (1+\|\varphi_0\|^{2n}_{-\alpha,\rho})\, e^{-cL^2/t}
,
\nnb
\E\Big[\, \big\|\wick{Z_t^n}-\wick{(Z^L_t)^n}\big\|_{-n\alpha,\, [-\frac23 L,\frac23 L]^2 }^2\, \Big]
&\lesssim 
\, (t^{-n\alpha}\vee 1) (1+\|\varphi_0\|^{2n}_{-\alpha,\rho})\, e^{-cL^2/t}
.
\label{eq_bound_Z-ZL_sec2}
\end{align}
 One also has, for each $\sigma'>\sigma$, 
 writing $\rho'=(1+|\cdot|^2)^{-\sigma'/2}$:
\begin{equation}
\E\Big[\, \big\|\wick{Z_t^n}-\wick{(Z^L_t)^n}\big\|_{-n\alpha,(\rho')^n}^2\, \Big]
\lesssim 
\frac{1}{L^{2n(\sigma-\sigma')}}\, (t^{-n\alpha}\vee 1)\,  (1+\|\varphi_0\|^{2n}_{-\alpha,\rho})
.
\label{eq_bound_Z-ZL_weights}
\end{equation}
\end{proposition}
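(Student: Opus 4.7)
The strategy is to reduce all three estimates to a pointwise second-moment bound on the random field
$F:=\wick{Z_t^n}-\wick{(Z_t^L)^n}$
on the central region $[-\tfrac23 L,\tfrac23 L]^2$, which is then lifted to the stated norms. By the binomial expansion~\eqref{eq_link_wick_powers_w_w/o_IC}, I would decompose
\begin{align*}
F &= \sum_{\ell=0}^{n}\binom{n}{\ell}\bigl(\wick{\tilde Z_t^\ell}-\wick{(\tilde Z_t^L)^\ell}\bigr)(e^{-tA}\varphi_0)^{n-\ell}\\
&\quad+\sum_{\ell=0}^{n}\binom{n}{\ell}\wick{(\tilde Z_t^L)^\ell}\bigl((e^{-tA}\varphi_0)^{n-\ell}-(e^{-tA}\varphi_0^L)^{n-\ell}\bigr),
\end{align*}
which separates a purely stochastic discrepancy $F_\ell:=\wick{\tilde Z_t^\ell}-\wick{(\tilde Z_t^L)^\ell}$ from the deterministic one.

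The deterministic term is controlled by the support property of $\chi_L$. Since $\varphi_0-\varphi_0^L$ vanishes on $[-\tfrac{99}{100}L,\tfrac{99}{100}L]^2$, and the surviving pieces---the cut-off remainder $(1-\chi_L)\varphi_0$ and the periodic translates $T_{2Ln}(\chi_L\varphi_0)$ for $n\neq 0$---live at distance $\geq\tfrac{L}{4}$ from the central region, the Gaussian decay of the heat kernel $p_t$ against the $C^{-\alpha}(\rho)$ norm of $\varphi_0$ (via $C^{-\alpha}$--$C^\alpha$ duality, with a factor $t^{-\alpha/2}$ from the kernel regularity and a polynomial-in-$L$ factor from the weight $\rho$ that is beaten by $e^{-cL^2/t}$) gives
\[
\|e^{-tA}(\varphi_0-\varphi_0^L)\|_{\bbL^\infty([-\frac{2}{3}L,\frac{2}{3}L]^2)}\lesssim(1\vee t^{-\alpha/2})e^{-cL^2/t}\|\varphi_0\|_{-\alpha,\rho},
\]
whose relevant powers feed into the decomposition above. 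For the stochastic term, the noise coupling~\eqref{e:xiL} makes $(\tilde Z_t,\tilde Z_t^L)$ jointly Gaussian, so $F_\ell$ lies in a finite direct sum of Wiener chaoses of degree $\leq\ell$; by Wick's pairing rule $\E[\wick{X^\ell}\wick{Y^\ell}]=\ell!\,\E[XY]^\ell$ (plus bounded lower-chaos corrections from the mismatch between the fixed counterterm $a_\epsilon$ and the exact variances $\E[(\eta_\epsilon\ast\tilde Z_t)^2],\E[(\eta_\epsilon\ast\tilde Z_t^L)^2]$), the pointwise second moment $\E[F_\ell(x)F_\ell(y)]$ becomes a polynomial in the covariance kernels
\[
K_t(x,y)=\int_0^{2t}e^{-u}p_u(x-y)\,du,\qquad K_t^L(x,y)=\int_0^{2t}e^{-u}p_u^L(x-y)\,du,
\]
and the cross-kernel $K_t^{\rm cross}(x,y)=\E[\tilde Z_t(x)\tilde Z_t^L(y)]$. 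Every term contains at least one factor among $K_t-K_t^L$ and $K_t-K_t^{\rm cross}$, each of which for $x,y\in[-\tfrac23 L,\tfrac23 L]^2$ is bounded by $Ce^{-cL^2/t}$ via the identity $p_u^L(z)=\sum_m p_u(z+2Lm)$, the lower bound $|x-y+2Lm|\geq\tfrac{2L}{3}|m|$ for $m\neq 0$, and the Gaussian decay of $p_u$. The remaining (undifferenced) kernels carry integrable logarithmic singularities at the diagonal which, when paired with a $C^\alpha$ test function, contribute at most the short-time factor $t^{-n\alpha}\vee 1$.

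The first inequality of~\eqref{eq_bound_Z-ZL_sec2} follows by direct integration against $f\otimes f$, with the logarithmic diagonal singularity absorbed into $\|f\|_\alpha^2$ alongside the $t^{-n\alpha}\vee 1$ factor. The second (Besov-norm) inequality is obtained by applying the pointwise bound to the regularisations $\Psi_R\ast F$ uniformly in $R\leq 1$ and $x$ in the central region, and then using hypercontractivity within the finitely many Wiener chaoses in which each $F_\ell$ lives to upgrade the $L^2$ estimate to Gaussian integrability before taking suprema. The weighted bound~\eqref{eq_bound_Z-ZL_weights} follows by splitting the weighted norm over the central region (where the exponential decay just established dominates) and its complement, on which $\wick{Z_t^n}$ and $\wick{(Z_t^L)^n}$ are controlled individually by Proposition~\ref{prop_gaussian_bounds} and the weight ratio $(\rho'/\rho)^n\lesssim L^{-n(\sigma-\sigma')}$ provides the polynomial-in-$L$ saving. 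The main obstacle is the covariance analysis: extracting cleanly a single factor $e^{-cL^2/t}$ from the polynomial expansion of $\E[F_\ell(x)F_\ell(y)]$ while keeping the remaining kernel factors and their logarithmic diagonal singularities under control, tracking the finite corrections from the counterterm mismatch, and transferring the pointwise bound to a Besov-norm bound without spoiling either the exponential decay in $L$ or the short-time $t^{-n\alpha}$ behaviour.
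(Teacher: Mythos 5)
Your proposal follows essentially the same route as the paper's proof (Proposition~\ref{prop_gaussian_difference_app} in Appendix~\ref{app_gaussian_estimates}): split via the binomial formula~\eqref{eq_link_wick_powers_w_w/o_IC} into a deterministic heat-flow discrepancy and a stochastic Wiener-chaos discrepancy, bound the former by the support separation of $\varphi_0-\varphi_0^L$ against Gaussian heat-kernel decay, bound the latter by expanding second moments in the covariance kernels (your $K_t, K_t^L, K_t^{\mathrm{cross}}$ are the paper's $\mathcal K^\infty_t, \mathcal K^L_t, \mathcal K^{L,\infty}_t$) and extracting $e^{-cL^2/t}$ from their differences, and treat the weighted bound by splitting into central and exterior regions. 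Two small points worth flagging: the weight ratio $(\rho'/\rho)^n=(1+|x|^2)^{n(\sigma-\sigma')/2}$ is $\lesssim L^{n(\sigma-\sigma')}$ for $|x|\gtrsim L$, i.e.\ your exponent has a flipped sign (as, incidentally, does the main-text display~\eqref{eq_bound_Z-ZL_weights}, corrected in the appendix version~\eqref{eq_bound_Z-ZL_app_weighted}); and the step ``upgrade via hypercontractivity, then take suprema'' glosses over the real mechanism, which in the paper is to first apply the deterministic Besov embedding of Proposition~\ref{prop:Besov-embedding} to rewrite the $C^{-n\alpha}$ norm as an integral over scales of $\bbL^p$ norms, so that one never takes a supremum of random variables and hypercontractivity is used only to reduce the resulting $p$-th moments to $p=2$.
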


\subsection{Da Prato--Debussche decomposition}
\label{sec_background}

We solve the SPDE  \eqref{e:SPDE} using the well-known method of Da Prato--Debussche \cite{MR2016604},
by decomposing the corresponding process as $\varphi=Z+v$ with $Z$ a solution to the Ornstein--Uhlenbeck process \eqref{e:OU}.
Thus $\varphi=Z+v$ should solve \eqref{e:SPDE} if $v$ solves the remainder equation
\begin{equation}
  \dot v
  = -Av -\mu v- \lambda(v+Z)^3 + a(\lambda)(v+Z)
  ,
\end{equation}
where the remainder equation is then interpreted in terms of the Wick powers of $Z$.
Ideed, using that $a_\epsilon(\lambda) = 3 a_\epsilon \lambda$ in $d=2$ and that the Wick powers \eqref{e:Wick-convention}
are defined in terms of $a_\epsilon$, the last equation becomes (in the limit $\epsilon \to 0$)
\begin{equation}
  \dot v
  = 
  -Av - \mu v-\lambda \qB{v^3+3v^2 Z + 3v\wick{Z^2}+ \wick{Z^3}}, 
\end{equation}
interpreted in the integral form:
\begin{equation} \label{e:Y-Duhamel}
  v_t = 
  \int_0^t e^{-(t-s)A} \Big[-\mu v - \lambda [v_s^3+3v_s^2 Z_s + 3v_s\wick{Z_s^2}+ \wick{Z_s^3}]\Big] \, ds
  .
\end{equation}
The initial condition $\varphi_0$ can be distributed into $(Z_0,v_0)$ such that $\varphi_0=Z_0+v_0$,
but for most purposes it will be convenient for us to make the choice
\begin{equation}\label{eq:initial-data-choice}
  Z_0 = \varphi_0, \qquad v_0=0.
\end{equation}
The analogous decomposition defines the solution $\varphi^L = Z^L+v^L$ of~\eqref{e:SPDE} on the torus $\T^d_L$,
where $Z^L$ is the space-periodic Ornstein--Uhlenbeck process \eqref{eq_heat_periodised}
and $\varphi^L_0$ is the periodised initial condition \eqref{e:phi0L}:
\begin{equation} \label{e:YL-Duhamel}
  v^L_t = 
  \int_0^t e^{-(t-s)A} \Big[-\mu v_s^L - \lambda [(v^L_s)^3+3(v^L_s)^2 Z^L_s + 3v^L_s\wick{(Z^L_s)^2}+ \wick{(Z^L_s)^3}\big]\Big] \, ds
  .
\end{equation}
In particular, all equations are coupled through the same periodised noise $W^L$ as in~\eqref{e:xiL}. 

The local wellposedness of \eqref{e:Y-Duhamel} on the torus is the main result of  \cite{MR2016604},
where it was also observed that the torus $\varphi^4$ measure $\nu_L$ is invariant for this dynamics.
This measure is defined by
\begin{equation} \label{e:phi42measure}
  \nu_L(d\varphi) \propto
  \exp\qa{-\int_{\T_L^d} \pa{\frac{\lambda}{4}\wick{\varphi^4} + \frac{\mu}{2} \wick{\varphi^2}} \, dx} \, d\nu^{\rm GFF}_L,
\end{equation}
where the Wick powers are defined with the same conventions as in Section~\ref{sec_gaussian_estimates} (corresponding to the $t\to\infty$ limit), i.e., counterterms
independent of $L$, and $\nu^{\rm GFF}$ is the stationary distribution of the Ornstein--Uhlenbeck process, namely the Gaussian measure with covariance $A^{-1}$.

For more background on the definition of  the finite volume $\varphi^4_2$ measure, see \cite{MR0489552} and \cite{HairerASA}.
We record the invariance as the following theorem
(also see \cite{MR3626040} for a more detailed discussion).

\begin{theorem} \label{thm:nuL-invariant}
  The measure $\nu_L$ is invariant for the $\varphi^4$ dynamics on $\T_L^d$.
\end{theorem}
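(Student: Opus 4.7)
The plan is to reduce to finite dimensions via a spectral Galerkin approximation, where invariance reduces to a classical Fokker--Planck computation, and then to pass to the limit using the Da Prato--Debussche construction to transfer invariance to the limit SPDE. Let $P_N$ denote the projection onto Fourier modes of frequency at most $N$ on $\T_L^d$ and set $a_N = \E[(P_N Z^L_\infty)(0)^2]$, which diverges as $\frac{1}{2\pi}\log N$. Consider the finite-dimensional SDE
\begin{equation}
d\varphi^N_t = \Big[-A\varphi^N_t - \lambda P_N\big((\varphi^N_t)^3\big) - (\mu - 3\lambda a_N)\varphi^N_t\Big]\, dt + \sqrt{2}\, P_N\, dW^L_t
\end{equation}
on the finite-dimensional space $P_N L^2(\T_L^d)$, driven by the same periodised noise as \eqref{e:SPDE-eps}.

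First I would identify the invariant measure of this Galerkin SDE. Since its drift is the $L^2$-gradient, restricted to $P_N L^2$, of a smooth potential, an integration-by-parts computation against the generator yields that the unique invariant measure is
\begin{equation}
\nu_L^N(d\varphi) \propto \exp\Big[-\int_{\T_L^d}\Big(\frac{\lambda}{4}\wick{\varphi^4} + \frac{\mu}{2}\wick{\varphi^2}\Big)\, dx\Big]\, \nu^{\rm GFF}_{L,N}(d\varphi),
\end{equation}
where $\nu^{\rm GFF}_{L,N}$ is the Gaussian measure on $P_N L^2$ with covariance $(P_N A P_N)^{-1}$ and the Wick products are computed with the truncated variance $a_N$. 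Writing $P^N_t$ for the associated Markov semigroup, this gives $\int F\, d\nu_L^N = \int P^N_t F\, d\nu_L^N$ for every bounded measurable $F$.

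The core of the argument is then the limit $N\to\infty$, which must be performed simultaneously on the measure and on the semigroup. On one hand, $\nu_L^N$ embedded in $C^{-\alpha}(\T_L^d)$ converges weakly to $\nu_L$ by the standard construction of the $\varphi^4_2$ measure: the densities converge in $L^p(\nu^{\rm GFF}_L)$ as a consequence of the convergence of the truncated Wick products in $C^{-\alpha}(\T_L^d)$, which is a Gaussian computation in the spirit of Section~\ref{sec_gaussian_estimates}. On the other hand, decomposing $\varphi^N_t = Z^{L,N}_t + v^{L,N}_t$, the convergence of $Z^{L,N}$ and its Wick powers to those of $Z^L$ (torus analogue of Proposition~\ref{prop_gaussian_bounds}), combined with a Grönwall bound on the remainder equation analogous to \eqref{e:YL-Duhamel}, shows that $\varphi^N_t \to \varphi^L_t$ in probability for any fixed initial datum coupled pathwise through $W^L$; this is the content of \cite{MR2016604}. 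Combining both convergences for $F \in C_b(C^{-\alpha}(\T_L^d))$ gives
\begin{equation}
\int F\, d\nu_L = \lim_N \int F\, d\nu_L^N = \lim_N \int P^N_t F\, d\nu_L^N = \int P^L_t F\, d\nu_L,
\end{equation}
which is the claimed invariance. The main subtlety is justifying the second equality in the limit, which requires $\nu_L^N$-uniform integrability of $P^N_t F$; this is delivered by stationarity of each $\nu_L^N$ under the Galerkin flow together with uniform-in-$N$ moment bounds on $\nu_L^N$ in $C^{-\alpha}(\T_L^d)$, which themselves follow from the uniform control of the truncated Wick products.
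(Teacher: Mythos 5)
Your Galerkin approach is essentially the argument of Da Prato--Debussche \cite{MR2016604}, which is exactly the reference the paper cites for this theorem; the paper itself offers no proof, simply recording the statement and pointing to \cite{MR2016604} and \cite{MR3626040}. So your reconstruction matches the literature proof in outline. The other reference, \cite{MR3626040}, takes a genuinely different route via the Dirichlet form $D(F,G)=\E_{\nu_L}[(\nabla F,\nabla G)]$, for which $\nu_L$ is a symmetrising measure by construction, and then identifies the Dirichlet-form Markov process with the SPDE solution (this identification is the content of the paper's Theorem~\ref{thm:Dirichlet}). The Galerkin route you follow is more elementary; the Dirichlet-form route gives the stronger structural statement that is actually needed elsewhere in the paper (e.g.\ for the log-Sobolev argument).

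One imprecision worth flagging in your last paragraph: the delicate step is the \emph{third} equality in your display, $\lim_N \int P^N_t F\, d\nu_L^N = \int P^L_t F\, d\nu_L$, not the second (which is exact invariance of $\nu_L^N$, requiring no limit). Calling the issue ``$\nu_L^N$-uniform integrability of $P^N_t F$'' is not quite right, since $|P^N_tF|\leq \|F\|_\infty$ is trivially uniformly bounded; integrability is never in doubt. What is actually needed is convergence of the integral when \emph{both} the measure $\nu_L^N\to\nu_L$ and the integrand $P^N_t F \to P^L_t F$ change simultaneously. The standard way to close this is to exhibit a coupling: either realise $\varphi_0^N\to\varphi_0$ almost surely with $\varphi_0\sim\nu_L$ (Skorokhod representation plus the uniform moment bounds you mention to get tightness), feed these into the pathwise Da Prato--Debussche construction with the common noise $W^L$, and deduce $F(\varphi_t^N)\to F(\varphi_t^L)$ a.s.\ and hence in expectation; or, equivalently, show convergence of the two-time laws $\mathrm{Law}(\varphi_0^N,\varphi_t^N)\to\mathrm{Law}(\varphi_0^L,\varphi_t^L)$ under stationarity. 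Your moment bounds are the right ingredient, but they enter through tightness of the joint law rather than through uniform integrability of a bounded function. Finally, a small bookkeeping point: the paper's Wick powers in \eqref{e:phi42measure} use $L$- and $t$-independent counterterms, while the Galerkin Wick ordering uses the truncated variance $a_N$; these differ by a finite shift as $N\to\infty$, which only renormalises $\mu$ and the normalising constant, so no harm results, but the mismatch should be acknowledged for the weak convergence $\nu_L^N\to\nu_L$ to land on the $\nu_L$ of \eqref{e:phi42measure} and not on a $\mu$-shifted version.
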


\noindent {\bf Notation.}
From now on, $\E_{\nu_L}$ denotes the expectation with respect to the dynamics started from $\nu_L$. 
Abusing notation somewhat,
we also use $\E_{\nu_L}$ to denote expectation against the measure $\nu_L$ when there is no risk of confusion: $\E_{\nu_L}[F] := \E_{\nu_L}[F(\varphi^L_0)]$. 

\subsection{Local a priori bounds and consequences}

Using the method of \cite{MR4164267}, a priori bounds on the remainder equation are proved in Appendix~\ref{app:apriori}.
The following is a streamlined version for solutions of the remainder equation  \eqref{e:Y-Duhamel} that will be used throughout the paper.
We recall the weight
\begin{equation} \label{e:weight-bis}
  \rho(x)=(1+|x|^2)^{-\frac{\sigma}{2}}
  .
\end{equation}

\begin{theorem}\label{thm:apriori_bounds} 
  Let $\alpha' \in [0,1)$ (note that $0$ is included), let  $\alpha >0$ be small enough, and set $\eta= \frac{1+\alpha'}{1-3\alpha}$.
  For $L\in \N\cup\{\infty\}$, let $v^L$ be the solution of the remainder equation  \eqref{e:YL-Duhamel}.
  
  \smallskip
  \noindent
  (i)  
  For initial condition $v_0=0$ (as in our standard convention \eqref{eq:initial-data-choice}),
  for each $t\geq 1$ and each ball $B=B_1(x)\subset\R^2$ in the spatial variable:
\begin{equation}
\sup_{0 \leq s \leq t} \|v^L_s\|_{\alpha',B}
+ \sup_{\substack{0 \leq \bar s< s \leq t \\ |s-\bar s|\leq 1}} \frac{\|v^L_s-v^L_{\bar s}\|_B}{|\bar s-s|^{\alpha'/2}}
\lesssim 
1+ 
  \sup_{0<s\leq t}\max_{n=1,2,3} \Big\{
  \Big( 
  (s^{n\alpha}\wedge 1)\|\wick{(Z^L_s)^n} \|_{-n\alpha,2B} 
  \Big)^{\frac{1}{n}}
\Big\}^\eta .
\label{eq_desired_bounds-BB}
\end{equation}
Furthermore, for any $\sigma>0$, 
\begin{equation}
\sup_{0 \leq s \leq t } \|v^L_s\|_{\alpha',\rho}
+ \sup_{\substack{0 \leq \bar s< s \leq t \\ |s-\bar s|\leq 1}} \frac{\|v^L_s-v^L_{\bar s}\|_\rho}{|\bar s-s|^{\alpha'/2}}
\lesssim 
1+ 
  \sup_{0<s \leq t}\max_{n=1,2,3} \Big\{
  \Big(
  (s^{n\alpha}\wedge 1)\|\wick{(Z^L_s)^n} \|_{-n\alpha,\rho^{\frac{n}{\eta}}} 
\Big)^{\frac{1}{n}}
\Big\}^\eta 
.
\label{eq_desired_bounds-rhorho}
\end{equation}

  \smallskip
  \noindent
  (ii)
  For arbitrary initial condition $v_0 \in C^{-\alpha}(\rho)$ and $t\geq 1$, one also has
\begin{equation}
  \|v^L_t\|_{\alpha',\rho}
  \lesssim 
  1+ 
  \sup_{0<s \leq t}\max_{n=1,2,3} \Big\{
  \Big(
  (s^{n\alpha}\wedge 1)\|\wick{(Z^L_s)^n} \|_{-n\alpha,\rho^{\frac{n}{\eta}}} 
  \Big)^{\frac{1}{n}}
  \Big\}^\eta 
.
\label{eq_desired_bounds-rhorho-alternative}
\end{equation}
The implicit constants  are all independent of $t,L$ and $v_0, \varphi_0$.
\end{theorem}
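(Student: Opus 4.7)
The bounds are a local and weighted version of the a priori estimate for the $\varphi^4_2$ remainder equation established in \cite{MR4164267}, and the plan is to adapt that strategy to the local Besov--H\"older framework of Section~\ref{sec:norms-def}. The starting point is the mild formulation \eqref{e:YL-Duhamel} together with the fact that the cubic term $-\lambda v^3$ provides strong pathwise dissipation, so the rough forcing $3v^2 Z^L + 3v^L\wick{(Z^L)^2} + \wick{(Z^L)^3}$ can be absorbed by Young's inequality. The distributional products are interpreted via paraproducts and controlled by Besov multiplication (Proposition~\ref{prop:besov-mult}), which makes sense because $v^L$ is a priori H\"older continuous of positive order $\alpha'$.

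To obtain a localised pointwise bound on $v^L$, I fix a smooth cutoff $\chi$ supported in $2B$ with $\chi \equiv 1$ on $B$ and test the equation against a suitable power of $v^L\chi$. Integration by parts produces the coercive quantity $\int(|\nabla v|^2 + v^2 + \lambda v^4)\chi^k\, dx$ from the linear and cubic terms, together with commutators involving $\nabla\chi$ that are of lower order. Against this coercive quantity each Wick product is balanced by Young, for instance $|v\wick{Z^3}|\leq \tfrac{\lambda}{8}v^4 + C|\wick{Z^3}|^{4/3}$ and similarly for $v^2Z$ and $v\wick{Z^2}$; iterating these energy estimates to high integrability (Moser-style) yields $\sup_{s\leq t}\|v_s^L\|_{\bbL^\infty(B)} \lesssim 1 + (\sup_{s\leq t} Y_s^L)^{\eta_0}$ for some intermediate exponent $\eta_0$, where $Y_s^L := \max_{n=1,2,3}((s^{n\alpha}\wedge 1)\|\wick{(Z_s^L)^n}\|_{-n\alpha,2B})^{1/n}$ is the data quantity appearing on the right of \eqref{eq_desired_bounds-BB}.

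The second step upgrades this $\bbL^\infty$ control to the H\"older norm on the left of \eqref{eq_desired_bounds-BB} by applying parabolic Schauder estimates to \eqref{e:YL-Duhamel}; the worst contribution is then $\lambda v^3$, cubic in the already bounded data. The exponent $\eta = (1+\alpha')/(1-3\alpha)$ arises from solving the resulting nonlinear inequality after optimising the Young exponents: the numerator $1+\alpha'$ records the Schauder gain at H\"older level $\alpha'$, and the denominator $1-3\alpha$ records the loss when the cubic nonlinearity meets a Wick power of order three. The time-increment estimate comes from the same Schauder bound applied to the differences $v_s^L - v_{\bar s}^L$. The weighted bound \eqref{eq_desired_bounds-rhorho} is obtained by replacing $\chi$ by the polynomial weight $\rho$ and tracking the commutators of $\rho$ with $A$ and $e^{-tA}$, which are lower order because $\rho$ is smooth and slowly varying; this is precisely why a weaker weight $\rho^{n/\eta}$ is tolerated on the Wick powers. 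Part (ii) then follows from the same argument, because for $t \geq 1$ the combination of heat-semigroup smoothing of $v_0$ and the super-linear damping from $-\lambda v^3$ makes the contribution of the initial data subdominant, giving a bound uniform in $v_0 \in C^{-\alpha}(\rho)$. The main technical obstacle I anticipate is handling the commutators between the cutoffs (or the weight) and the nonlocal operator $A$ inside the local Besov--H\"older norms of Section~\ref{sec:norms-def}, since these do not localise exactly and prevent a purely energy-based pathwise argument, forcing the more delicate Schauder-based bookkeeping outlined above.
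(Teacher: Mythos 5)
Your proposal correctly identifies the main ingredients — the nonlinear damping from $-\lambda v^3$, the reconstruction/Besov-multiplication estimates for the distributional products, Schauder estimates to upgrade $\bbL^\infty$ control to H\"older control, and the conversion from local to weighted norms — and your instinct to adapt the Moinat--Weber method of \cite{MR4164267} is exactly what the paper does. But the central technical step you propose does not work, and the paper takes a different route at precisely this point.

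Your energy/Moser-iteration step relies on testing the equation against powers of $v\chi$, integrating by parts to produce the coercive quantity $\int(|\nabla v|^2 + v^2 + \lambda v^4)\chi^k\,dx$, and then absorbing the singular forcing via pointwise Young inequalities such as $|v\,\wick{Z^3}|\leq \frac{\lambda}{8}v^4 + C|\wick{Z^3}|^{4/3}$. This last bound has no meaning: $\wick{Z^3}$ is a genuine distribution of regularity $-3\kappa^-$, not a measurable function, so $|\wick{Z^3}|^{4/3}$ is undefined, and the term $\int v\,\wick{Z^3}\,\chi^k\,dx$ can only be understood through a duality pairing that costs positive H\"older regularity of $v$. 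The same problem recurs with the products $v^2 Z$ and $v\,\wick{Z^2}$. This is not a bookkeeping issue that one can finesse with paraproducts inside the energy identity; it is the reason an $\bbL^2$-energy argument does not close for $\varphi^4_2$ in the continuum.

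The paper resolves this differently (Appendix~\ref{app:apriori}). Instead of testing the equation, it convolves the equation itself with the mollifier $\Psi_L$ at a scale $L$ that is \emph{coupled to the solution}, namely $L \sim \varepsilon\,\|v\|_{B_R}^{-1}$. After mollification the forcing terms $(v^2 Z_1)_L$, $(vZ_2)_L$, $(Z_3)_L$ and the commutator $(v_L)^3 - (v^3)_L$ are bona fide continuous functions, and the bound is obtained via the \emph{maximum principle} (Lemma~\ref{lem:maximum_principle}) for the ODE $\dot u + \lambda u^3 = f$ rather than via an integral inequality. The reconstruction/commutator estimate \eqref{e:reconstruction} is then used to control the mollified products by distributional norms at scale $L$, and the local Schauder estimate (Corollary~\ref{Cor:local_Schauder_estimate}) supplies the needed H\"older control of $v$ at scale $L$. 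This feeds into a genuine nonlinear recursion (Lemma~\ref{lem:recursion_input}): either $\|v\|_{B_R}$ is already dominated by the data $\mathcal{Z}$, or $\|v\|$ shrinks by a fixed factor when one moves inward by $\overline R\sim\|v\|_{B_R}^{-1}$. Iterating this recursion from a unit ball produces the bound $\|v\|\lesssim 1+\mathcal Z$, and the H\"older estimate and the exponent $\eta = \frac{1+\alpha'}{1-\kappa}\big|_{\kappa=3\alpha}$ then come out of the conversion from parabolic space-time norms to spatial norms in the proof of Corollary~\ref{cor:apriori_bounds}, not from an optimisation of Young exponents in an energy inequality. Your heuristic for where $\eta$ comes from is in the right neighbourhood, but the actual arithmetic is tied to the relation $\kappa=3\alpha$ between the space-time H\"older exponent $\kappa$ and the elliptic exponent $\alpha$ in \eqref{e:space-time-distribution-bound}.

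In short: replace the energy/Moser step by the mollification-plus-maximum-principle step (which makes the products functions before any pointwise inequality is invoked), and replace the iterated Young absorption by the nonlinear small-ball recursion; the remainder of your outline (Schauder to get $C^{\alpha'}$, covering $\R^2$ by unit balls to pass to the weighted statement, and using heat-semigroup smoothing for general initial data in part (ii)) is aligned with the paper.
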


The a priori bounds have a number of consequences that we need. The first consequence is that the SPDE on the full space $\R^2$ is globally well-posed
for initial data in $C^{-\alpha}(\rho)$ for $\alpha >0$ and $\rho = (1 +|x|^2)^{-\frac{\sigma}{2}} $ for any $\sigma >0$. This is essentially the main result of 
\cite{MR3693966} and  a version of this statement can also be found in \cite{MR3951704}.
Since we will need a version of this statement with sharper weights than the corresponding statement of  \cite{MR3693966} we provide it as a separate corollary.

\begin{corollary} \label{cor:infvol-existence}
  Let $\sigma,\alpha, \alpha'>0$ 
  with $\alpha$ small enough as in Theorem~\ref{thm:apriori_bounds} and $1>\alpha'>\alpha$, and let
    $\eta = \frac{1+\alpha'}{1-3\alpha}$. 
  For any $\varphi_0 \in C^{-\alpha}(\rho)$, 
  there is a unique solution $\varphi_t$ to the SPDE
  \eqref{e:SPDE} in the sense that $\varphi_t = v_t + Z_t$ where 
  $Z$ is the Ornstein--Uhlenbeck process \eqref{eq_heat} and $v \in \bbL^\infty_t (C^{\alpha'}(\rho^{\eta}))$ solves the remainder equation \eqref{e:Y-Duhamel}.
\end{corollary}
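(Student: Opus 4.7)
The plan is to carry out the Da Prato--Debussche pathwise scheme on the full space, with the a priori bounds of Theorem~\ref{thm:apriori_bounds} doing the heavy lifting for global control. First I would fix a realisation of the Ornstein--Uhlenbeck process $Z$ and its Wick powers $\wick{Z^n}$ for $n\leq 3$: by Proposition~\ref{prop_gaussian_bounds} (applied with weight $\rho^n$) these are almost surely locally bounded in time as elements of $C^{-n\alpha}(\rho^n)$, with the short-time singularity absorbed by the prefactor $s^{n\alpha}\wedge 1$. Everything that follows is then a deterministic, pathwise analysis of the remainder equation~\eqref{e:Y-Duhamel}, viewed as a fixed-point problem for the Duhamel map on the right-hand side of~\eqref{e:Y-Duhamel}.

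For local well-posedness on a small interval $[0,T]$, I would run a contraction argument in $\bbL^\infty_{[0,T]}(C^{\alpha'}(\rho^\eta))$ using the Schauder-type smoothing of the heat semigroup $e^{-tA}$ between weighted Besov--H\"older spaces (Proposition~\ref{prop:Besov-heat}) and the multiplicative estimates (Proposition~\ref{prop:besov-mult}). The specific exponent $\eta = (1+\alpha')/(1-3\alpha)$ is engineered precisely so that the weight loss in the cubic term $v^3 \in C^{\alpha'}(\rho^{3\eta})$ and in the mixed terms $v^{3-n}\wick{Z^n}$ is balanced by the regularisation of $e^{-tA}$ back into $C^{\alpha'}(\rho^\eta)$. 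One then obtains short-time contractivity and a unique maximal local solution, Lipschitz in the data $(v_0,\wick{Z^n})$. The short-time singularity $s^{-n\alpha}$ from the Wick powers is integrable against the Duhamel kernel thanks to $3\alpha<1$.

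To promote the local solution to a global one I would invoke Theorem~\ref{thm:apriori_bounds}(i) (and~(ii) after restarts with a nonzero starting value for $v$): it bounds $\sup_{0\leq s\leq t}\|v_s\|_{\alpha',\rho^\eta}$ by a deterministic functional of the Wick powers that is almost surely finite for every $t\geq 0$ by Proposition~\ref{prop_gaussian_bounds}. This rules out finite-time blow-up, and iterating the local construction yields a global solution in $\bbL^\infty_{\textup{loc}}(C^{\alpha'}(\rho^\eta))$. Uniqueness is inherited from uniqueness in each local fixed-point problem: two global solutions must coincide on $[0,T]$ and then on every successive restart interval, so they coincide for all times. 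The only genuinely delicate point is the weight bookkeeping in the contraction argument, where one must push a polynomial weight through a cubic nonlinearity and recover it via the parabolic smoothing of $e^{-tA}$ in a weighted H\"older space; this is exactly what fixes the choice of $\eta$. Once Theorem~\ref{thm:apriori_bounds} is available, the rest is a routine pathwise implementation of Da Prato--Debussche.
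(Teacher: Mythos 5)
Your plan has a genuine gap at the contraction step, and it is exactly the difficulty that forces the paper to take a different route. You propose to run a fixed-point argument in $\bbL^\infty_{[0,T]}(C^{\alpha'}(\rho^\eta))$ and claim that the weight loss in $v^3\in C^{\alpha'}(\rho^{3\eta})$ is ``balanced by the regularisation of $e^{-tA}$ back into $C^{\alpha'}(\rho^\eta)$.'' But the heat semigroup on $\R^2$ does not improve polynomial weights: the Schauder estimate of Proposition~\ref{prop:Besov-heat} gains regularity while \emph{preserving} the weight, not strengthening it. So after a single pass through the Duhamel map the cubic term lands in $C^{\alpha'}(\rho^{3\eta})$ and there is no mechanism to recover $\rho^\eta$; iterating only makes the weight worse. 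The exponent $\eta=\frac{1+\alpha'}{1-3\alpha}$ in Theorem~\ref{thm:apriori_bounds} is produced by the coercive damping of $-\lambda v^3$ via the local maximum-principle argument in Appendix~\ref{app:apriori}, not by any weight-for-smoothing exchange, so it cannot do the job you assign to it. For the same reason, ``uniqueness inherited from the local fixed-point'' does not come for free.

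The paper therefore avoids a direct contraction on $\R^2$ altogether. Existence is obtained by compactness: solve on tori $\T_L^2$ (where weighted contraction is not needed), use Theorem~\ref{thm:apriori_bounds} plus Proposition~\ref{prop_gaussian_bounds} to get bounds in $C^{\alpha'}(\rho^\eta)$ uniform in $L$, extract a limit by Arzel\`a--Ascoli, and identify the limit equation using the quantitative convergence of Wick powers in Proposition~\ref{prop_gaussian_difference}. Uniqueness is then a separate and genuinely non-trivial step, taken from \cite[Section~9]{MR3693966}: write the linear equation \eqref{eq:error_uniqueness} for the difference $E=v^{(1)}-v^{(2)}$ with potential $U$ controlled as in \eqref{eq:potential_bound}, and iterate it. Each iteration worsens the spatial weight but gains a small factor from the time integration, and under the a priori $\bbL^\infty_t(C^{\alpha'}(\rho^\eta))$ membership these small factors eventually dominate the deteriorating weight, forcing $E\equiv 0$. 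This iterated-integration mechanism is precisely the replacement for the contraction principle that fails on the whole plane, and it is the missing ingredient in your proposal. If you want to salvage a direct argument, you would need to reproduce something like that iteration explicitly rather than appeal to a contraction; otherwise the finite-volume route is the natural one.
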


\begin{proof}
The argument for existence of solutions is similar to \cite[Theorem 8.1]{MR3693966} and uses approximation by periodic solutions on larger and larger tori $\mathbb{T}_L^2$ and compactness as we now show. 

Let $v^L$ be the solution to the remainder equation \eqref{e:YL-Duhamel} on $\mathbb{T}^2_L$, with initial datum $\varphi_0^L\in C^{-\alpha}(\rho)$ as defined in \eqref{e:phi0L} 
(see \cite[Section 6]{MR3693966} for the existence and uniqueness of $v^L$). 
Let $\alpha',\eta$ be as in Theorem~\ref{thm:apriori_bounds} and define $\tilde\rho:=\rho^{\eta}\lesssim \rho$.  
Then $\|\varphi^L_0\|_{-\alpha,\tilde\rho}\lesssim \|\varphi^L_0\|_{-\alpha,\rho}$ and 
Proposition~\ref{prop_gaussian_bounds} gives:
\begin{equation}\label{e:compactness-for-vL}
\sup_{\substack{L\in\N \\ L\geq 3}}   
\sup_{0<t<T}(t^{\alpha n} \wedge 1)  \| \wick{(Z^L_t)^n}\big\|_{-n\alpha, \rho^{n}}
=
\sup_{\substack{L\in\N \\ L\geq 3}}   
\sup_{0<t<T}(t^{\alpha n} \wedge 1)  \| \wick{(Z^L_t)^n}\big\|_{-n\alpha, \tilde\rho^{\frac{n}{\eta}}}
 < \infty \qquad \text{a.s.}
\end{equation}
Plugging this information into \eqref{eq_desired_bounds-rhorho} we obtain
\begin{equation}
\sup_{\substack{L\in\N \\ L\geq 3}} 
\sup_{s\in [0,T]} \|v^L_s\|_{\alpha',\tilde \rho} < \infty,\qquad
\sup_{\substack{L\in\N \\ L\geq 3}} \sup_{\substack{0 \leq \bar s< s \leq T \\ |s-\bar s|\leq 1}} \frac{\|v^L_s-v^L_{\bar s}\|_{\tilde \rho}}{|\bar s-s|^{\alpha'/2}}
<
\infty
\qquad 
\text{a.s.}
\end{equation}
By the Arzela--Ascoli theorem there is a sequence $L \to \infty$ along which $v^L$ converges uniformly in $[0,T]\times\R^2$ with spatial weight $\tilde\rho$ to a limit $v$. 
The bound  $\sup_{L\geq 3}\sup_{s\in[0,T]}\|v^L_s\|_{\alpha',\tilde \rho}<\infty$ implies
that the limit satisfies $\sup_{s\in [0,T]} \|v_s\|_{\alpha',\tilde \rho}<\infty$ almost surely. 

To see that the limit $v$ satisfies the correct remainder equation, we start from
Proposition \ref{prop_gaussian_difference} which gives for any $n = 1,2,3$:
\begin{equation}
\lim_{L\to\infty}\E\Big[\, \int_0^T (t^{n\alpha}\wedge 1)\big\|\wick{Z_t^n}-\wick{(Z^L_t)^n}\big\|_{-n\alpha,\rho^n}\, dt \Big] 
=
0.
\end{equation}
There is therefore a further subsequence along which the  convergence
$\|\wick{Z_t^n}-\wick{(Z^L_t)^n}\|_{-n\alpha,\rho^n} \to 0$ takes places almost surely.  
 The topology is strong enough to pass to take a pointwise limit in~\eqref{e:YL-Duhamel} so that the limit $v$ indeed satisfies the correct remainder equation.

 The uniqueness argument in \cite[Section 9]{MR3693966} requires only weaker assumptions on the solution $v$ than we have established.
 Indeed, let $v^{(1)}, v^{(2)} \in  \bbL^\infty_t (C^{\alpha'}(\rho^{\eta}))$ be two solutions of the remainder equation  
\eqref{e:Y-Duhamel}. Then $E = v^{(1)} -  v^{(2)}$ solves 
\begin{equation}\label{eq:error_uniqueness}
E_t = \int_0^t   e^{-(t-s)A}\big( E_s U_s \big) ds ,
\end{equation}
where 
\begin{equation}
U_s = - \mu - \lambda \Big[ \big((v^{(1)})^2  + v^{(1)} v^{(2)} + (v^{(2)})^2 \big) + 3 (v^{(1)}+ v^{(2)}) Z   +3 \wick{Z^2} \Big].
\end{equation}
The assumption $v^{(1)}, v^{(2)} \in  \bbL^\infty_t (C^{\alpha'}(\rho^{\eta}))$, the estimate \eqref{e:compactness-for-vL} on $Z$ and $\wick{Z^2}$ as well as the multiplicative inequality \eqref{e:besov-mult-rho} imply that 
\begin{equation}\label{eq:potential_bound}
\sup_{0<t<T}(t^{2\alpha} \wedge 1)  \| U_t\big\|_{-2\alpha, \rho^{1+\eta}} < \infty   \qquad 
\text{a.s.}
\end{equation}
Now \cite[Theorem~9.1]{MR3693966} asserts that under an a priori regularity assumption on $E$ (which is implied by our $E\in  \bbL^\infty_t (C^{\alpha'}(\rho^{\eta}))$) and on $U$ (which is implied by \eqref{eq:potential_bound})
any solution $E$ to \eqref{eq:error_uniqueness}  vanishes.
This is shown by iterating \eqref{eq:error_uniqueness} where in each step one obtains a slightly better small constant (from the integration) at the price of a slightly worse spatial weight. 
Under our regularity assumptions, the small constant from iterated integration ultimately beats the effect of the deteriorating weight, see \cite[Proposition 22, Lemma 15]{MR3693966} for details. 
\end{proof}

Secondly, we will also apply bounds on the Wick powers of the $\varphi^L_t$ field ($t\geq 0, L\in (0,\infty]$),  
defined as in \eqref{eq_link_wick_powers_w_w/o_IC}:
\begin{equation}
\wick{(\varphi_t^L)^n}
=
\wick{(Z^L_t + v^L_t)^n}
=
\sum_{\ell=0}^n \binom{n}{\ell} \wick{(Z_t^L)^\ell} \ (v_t^L)^{n-\ell}
.
\label{eq_def_Wick_varphi}
\end{equation}

\begin{corollary}\label{cor:Wick-power-moment}
  Let $\alpha,\sigma>0$ be small enough, let $\alpha'>\alpha$, and let $n \geq 1 $ be an integer,
  and define $\eta_n=\frac{1+n\alpha'}{1-3\alpha}$. 
  Then there exists $\beta>0$ such that for all $r \in (0,2]$ and $\theta>\eta_n$, 
  there are $\varepsilon_r$, $\varepsilon_r'>0$ such that
\begin{equation} \label{e:apriori_wick}
  \E\qa{  \exp \Big[ \varepsilon_r    \Big(\sup_{s\leq t}(s^{n\alpha} \wedge 1)\|\wick{\varphi_s^{n}}\|_{-n\alpha,\rho^{n\eta_n}}\Big)^{\frac{r}{n \theta}} \Big]  } \lesssim  (1+ t)^{\beta}\exp\Big[\epsilon_r' \|\varphi_0\|_{-\alpha,\rho}^{r}\, \Big] .
\end{equation}
The implicit constant  is independent of $t,\varphi_0,L$.

Moreover,~\eqref{e:apriori_wick} also holds for $(\varphi^L_s)_{s\leq t}$ and with the $\exp$ function replaced on both sides by $x\mapsto 1+x^p$, 
for any $p\geq 1$.
\end{corollary}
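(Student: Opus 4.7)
The plan is to reduce the bound for $\wick{\varphi_s^n}$ to Proposition~\ref{prop_gaussian_bounds} by expanding $\wick{\varphi_s^n}$ via the binomial identity~\eqref{eq_def_Wick_varphi}, estimating each factor through Besov multiplicativity, and then controlling $v_s$ by the a priori bound of Theorem~\ref{thm:apriori_bounds}(i). I describe the argument for $L=\infty$; the case $L<\infty$ is identical, since both ingredients are uniform in $L$.

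First, I would take $\alpha$ small enough that $\alpha'>n\alpha$ and set $\eta:=(1+\alpha')/(1-3\alpha)$. The multiplicative Besov inequality (Proposition~\ref{prop:besov-mult}) with weights $\rho^\ell$ and $\rho^{(n-\ell)\eta}$, combined with the Besov algebra property iterated on $v^{n-\ell}$, yields for $1\leq \ell\leq n$ and $s>0$
\[
\|\wick{Z^\ell_s}\,v_s^{n-\ell}\|_{-n\alpha,\,\rho^{\ell+(n-\ell)\eta}}\lesssim \|\wick{Z^\ell_s}\|_{-\ell\alpha,\,\rho^\ell}\,\|v_s\|_{\alpha',\rho^\eta}^{n-\ell},
\]
while the $\ell=0$ term is controlled directly by $\|v_s\|_{\alpha',\rho^\eta}^n$. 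Since $\eta_n\geq \eta$, we have $n\eta_n\geq \ell+(n-\ell)\eta$, and $\rho\leq 1$ then gives $\rho^{n\eta_n}\leq \rho^{\ell+(n-\ell)\eta}$; similarly $s^{n\alpha}\wedge 1\leq s^{\ell\alpha}\wedge 1$. Summing the binomial expansion~\eqref{eq_def_Wick_varphi} and taking suprema over $s\leq t$, with $M_{t,\ell}:=\sup_{s\leq t}(s^{\ell\alpha}\wedge 1)\|\wick{Z^\ell_s}\|_{-\ell\alpha,\rho^\ell}$ (and $M_{t,0}:=1$), one obtains
\[
\sup_{s\leq t}(s^{n\alpha}\wedge 1)\|\wick{\varphi_s^n}\|_{-n\alpha,\rho^{n\eta_n}}\;\lesssim\;\sum_{\ell=0}^{n}M_{t,\ell}\,\bigl(\sup_{s\leq t}\|v_s\|_{\alpha',\rho^\eta}\bigr)^{n-\ell}.
\]

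Second, I would apply Theorem~\ref{thm:apriori_bounds}(i) with $\rho$ replaced by $\rho^\eta$ (so that $\rho^{m/\eta}$ on its right-hand side becomes $\rho^m$), giving $\sup_{s\leq t}\|v_s\|_{\alpha',\rho^\eta}\lesssim 1+M_t^\eta$ with $M_t:=\max_{m=1,2,3}M_{t,m}^{1/m}$. Since $M_{t,\ell}\leq M_t^\ell$ and $\eta\geq 1$, the worst term is $\ell=0$ and the previous display collapses to
\[
\sup_{s\leq t}(s^{n\alpha}\wedge 1)\|\wick{\varphi_s^n}\|_{-n\alpha,\rho^{n\eta_n}}\;\lesssim\;1+M_t^{n\eta}.
\]

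Finally, I would raise this bound to the power $r/(n\theta)$. Since $\theta>\eta_n\geq \eta$ and $r\leq 2$, the exponent $\tilde r:=r\eta/\theta$ satisfies $\tilde r<r\leq 2$, so
\[
\bigl(\text{LHS of~\eqref{e:apriori_wick}}\bigr)^{r/(n\theta)}\;\lesssim\;1+M_t^{\tilde r}.
\]
For $\varepsilon_r$ sufficiently small, Proposition~\ref{prop_gaussian_bounds} applied separately for $m=1,2,3$ (and summed) bounds $\E\bigl[\exp\bigl(c\,M_t^{\tilde r}\bigr)\bigr]$ by $(1+t)^\beta\exp\bigl(c'\|\varphi_0\|_{-\alpha,\rho}^{\tilde r}\bigr)$, and absorbing $\|\varphi_0\|^{\tilde r}\leq 1+\|\varphi_0\|^r$ into the constant yields~\eqref{e:apriori_wick}. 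The polynomial moment variant follows by the same chain of inequalities from the polynomial counterpart of Proposition~\ref{prop_gaussian_bounds}, which is immediate by Markov. \textbf{Main obstacle.} The only delicate point is the bookkeeping of weight and Hölder exponents: the chain $\alpha'>n\alpha$, $\eta_n\geq \eta$, and $\theta>\eta_n$ is exactly what is needed to close the Besov product rule, absorb the weight loss in the binomial expansion, and bring the final exponent $r\eta/\theta$ into the admissible range $(0,2)$ of Proposition~\ref{prop_gaussian_bounds}; no new analytic input is required.
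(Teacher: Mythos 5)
Your proposal follows the same high-level route as the paper: binomial expansion of $\wick{\varphi^n}$, the weighted multiplicative Besov inequality, the a priori bound of Theorem~\ref{thm:apriori_bounds}(i) for $v$, and the Gaussian estimate of Proposition~\ref{prop_gaussian_bounds}. The argument is sound in its broad lines, but two details deserve attention.

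First, to run the multiplicative estimate you pair $\wick{Z_s^\ell}\in C^{-\ell\alpha}$ against $v_s^{n-\ell}\in C^{\alpha'}(\rho^{(n-\ell)\eta})$ for all $\ell\leq n$, which forces you to assume $\alpha'>n\alpha$. The corollary is stated under the weaker $\alpha'>\alpha$, and that is all the paper uses: rather than a fixed H\"older exponent $\alpha'$ for the power $v^{n-\ell}$, it uses the $\ell$-dependent exponent $\ell\alpha'$ (so that $\ell\alpha'>\ell\alpha$ closes for every $\ell$), which forces the $v$-bound at regularity $n\alpha'$ and explains why the weight exponent is $\eta_n=\frac{1+n\alpha'}{1-3\alpha}$ rather than $\eta=\frac{1+\alpha'}{1-3\alpha}$. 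Your route gives a sharper weight but a strictly smaller parameter range; since the statement explicitly carries $\eta_n$, the restriction you impose is a real (if mild) departure from what is claimed.

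Second, after invoking Theorem~\ref{thm:apriori_bounds}(i) you set $M_t:=\max_{m=1,2,3}M_{t,m}^{1/m}$ and then assert $M_{t,\ell}\leq M_t^\ell$ for $\ell=0,\dots,n$. That inequality is only guaranteed for $\ell\leq 3$; for $n>3$ and $\ell\in\{4,\dots,n\}$ the asserted domination need not hold. The fix is trivial — either redefine $M_t$ as $\max_{1\leq m\leq n}M_{t,m}^{1/m}$ (Proposition~\ref{prop_gaussian_bounds} handles all $m\leq n$), or do as the paper does and split $\sum_\ell M_{t,\ell}(\sup_s\|v_s\|)^{n-\ell}$ by Young's inequality into $\max_{1\leq\ell\leq n}M_{t,\ell}^{n/\ell}+(\sup_s\|v_s\|)^n$, treating the Gaussian and $v$-contributions independently — but as written there is a small gap here. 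With these two adjustments your proof closes and matches the paper's.
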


\begin{proof}
Write for short $\rho' = \rho^{\eta_n}$. Then,
   by the multiplicative inequality for weighted Besov spaces \eqref{e:besov-mult-rho}, 
   for any $s\leq t$:
\begin{align}
(s^{n\alpha}\wedge 1)\, \|\wick{\varphi_s^{n}}\|_{-n\alpha,(\rho')^n} 
& 
\lesssim 
\sum_{\ell=0}^n  (s^{\ell\alpha}\wedge 1)\, \| \wick{Z_s^\ell} \|_{-\ell \alpha, (\rho')^{\ell} }  \| v_s \|_{ \ell \alpha', {\rho'} }^{n-\ell} 
\nnb
&\lesssim  
\max_{1\leq \ell\leq n} (s^{\ell\alpha}\wedge 1)\, \| \wick{Z_s^\ell} \|_{-\ell \alpha, (\rho')^{\ell} }^{\frac{n}{\ell}}  +   \| v_s \|_{ n\alpha', {\rho'} }^{n}
.
\label{e:phi-n-dec}
\end{align}
Let $r\in(0,2]$. 
For the $\wick{Z^\ell}$ terms on the right-hand side of \eqref{e:phi-n-dec},  Proposition~\eqref{prop_gaussian_bounds} yields, for some $\epsilon_r,\epsilon_r>0$ (note the weight $\rho$ rather than $\rho'\leq \rho$ since the proposition is valid for any value of $\sigma>0$):
\begin{equation}
 \E\bigg[ \exp\Big[\epsilon_r \sup_{s\in[0,t]}  \Big(    \max_{1\leq \ell\leq n} (s^{\ell\alpha}\wedge 1)\, \| \wick{(Z_s)^\ell}\|^{\frac{n}{\ell}}_{-\ell\alpha,\rho^\ell} \Big)^{\frac{r}{n}}\Big]\bigg] 
\lesssim
(1+ t)^{\beta}\exp\Big[\epsilon_r' \|\varphi_0\|_{-\alpha,\rho}^{r} \Big] .
\label{eq_concentration_Z_cor27}
\end{equation}
In particular the same estimate is true with weight $\rho'\leq \rho$.

For the  second term on the right hand side of \eqref{e:phi-n-dec} we invoke \eqref{eq_desired_bounds-rhorho} (with $n\alpha'$ in the role of $\alpha'$ and $\rho'$ in the role of $\rho$)
\begin{align}
\sup_{s\in [0,t]} \|v_s\|_{n\alpha',\rho'}
&\lesssim 
1+ 
  \sup_{s\leq t}\max_{m=1,2,3} \Big\{
  \Big(
  (s^{m\alpha}\wedge 1)\|\wick{(Z_s)^m} \|_{-m\alpha,(\rho')^{\frac{m}{\eta_n}}} 
\Big)^{\frac{1}{m}}
\Big\}^{\eta_n} 
\nnb
&=
1+ 
  \sup_{s\leq t}\max_{m=1,2,3} \Big\{
  \Big(
  (s^{m\alpha}\wedge 1)\|\wick{(Z_s)^m} \|_{-m\alpha,\rho^{m}} 
\Big)^{\frac{1}{m}}
\Big\}^{\eta_n} 
.
\end{align}
The bound~\eqref{eq_concentration_Z_cor27} then implies the claim. 
\end{proof}

Finally, 
Theorem~\ref{thm:apriori_bounds} implies that invariant measures are tight in $C^{-\alpha}(\rho)$.
In two dimensions,
the existing tightness results again apply in somewhat larger spaces (though the restriction is not for serious reasons),
see \cite{MR3693966,Shen2021AnSA,MR4252872}.
For our applications, the tightness in $C^{-\alpha}(\rho)$ with the optimal weight $\rho(x)=(1+|x|^2)^{-\sigma/2}$ with arbitrarily small $\sigma>0$ will be useful.

\begin{corollary} \label{cor:nuL-tight}
  Let $\sigma,\alpha>0$ be small enough. 
  For any $\theta\in(0,2)$ and some $\epsilon_\theta>0$,
  \begin{equation} \label{e:nu-expmoments}
    \sup_L \E_{\nu_L}\qa{\exp\Big[ \varepsilon_\theta\|\varphi\|_{-\alpha,\rho}^{2-\theta}\Big]} <\infty.
  \end{equation}
  In particular, the family $(\nu_{L})$ extended periodically to $\R^2$ is tight in $C^{-\alpha}(\rho)$
  and any limit point satisfies the bound \eqref{e:nu-expmoments} as well as the analogue of \eqref{e:apriori_wick}:
  for $\varepsilon_\theta>0$ small enough,
  \begin{equation}\label{eq:moment-estimates-Wick-powers}
    \sup_L \E_{\nu_L}  \qa{ \exp  \Big[ \varepsilon_\theta \|  \wick{\varphi^{n}}\|_{-n\alpha,\rho}^{\frac{2-\theta}{n}} \Big]} <\infty.
  \end{equation}
\end{corollary}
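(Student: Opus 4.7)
The strategy is to transfer the pathwise dynamical estimates of Theorem~\ref{thm:apriori_bounds} and Proposition~\ref{prop_gaussian_bounds} into unconditional exp-moment bounds under $\nu_L$, exploiting the invariance of $\nu_L$ for the torus dynamics (Theorem~\ref{thm:nuL-invariant}). For $\varphi^L_0\sim\nu_L$, I use the Da~Prato--Debussche decomposition $\varphi^L_T = e^{-TA}\varphi^L_0 + \tilde Z^L_T + v^L_T$ with the standard choice $v^L_0=0$, together with the weighted semigroup bound $\|e^{-TA}f\|_{-\alpha,\rho} \lesssim (1+T)^{O(\sigma)}e^{-T}\|f\|_{-\alpha,\rho}$ (exponential decay from the mass in $A=-\Delta+1$ against a polynomial loss from the weight), to produce a strict contraction
\begin{equation*}
\|\varphi^L_T\|_{-\alpha,\rho} \leq c_0 \|\varphi^L_0\|_{-\alpha,\rho} + W^L_T, \qquad W^L_T := \|\tilde Z^L_T\|_{-\alpha,\rho} + \|v^L_T\|_{-\alpha,\rho},
\end{equation*}
with $c_0$ arbitrarily small by taking $T$ large. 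By invariance, $\|\varphi^L_T\|_{-\alpha,\rho} \overset{d}{=} \|\varphi^L_0\|_{-\alpha,\rho}$ under $\nu_L$.

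Fix $\theta \in (0,2)$ and choose $\alpha' > \alpha > 0$ small enough that $\eta := (1+\alpha')/(1-3\alpha)$ satisfies $(2-\theta)\eta < 2$. Theorem~\ref{thm:apriori_bounds}(i) (applicable because $v^L_0 = 0$) pathwise bounds $\|v^L_T\|_{-\alpha,\rho} \leq \|v^L_T\|_{\alpha',\rho} \lesssim 1 + Y^\eta$, where $Y$ is a supremum over scales and orders of renormalised Wick powers of $Z^L$. Proposition~\ref{prop_gaussian_bounds} applied with the stretched exponent $r = (2-\theta)\eta \in (0,2)$ yields
\begin{equation*}
\E[\exp(\epsilon\|v^L_T\|_{-\alpha,\rho}^{2-\theta}) \,|\, \varphi^L_0] \lesssim \exp(\epsilon'\|\varphi^L_0\|_{-\alpha,\rho}^{(2-\theta)\eta}),
\end{equation*}
while the Gaussian component $\tilde Z^L_T$ (zero initial data) satisfies the analogous bound uniformly in $L$ with $\varphi_0 = 0$. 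Combining via the elementary inequality $(a+b+c)^{2-\theta} \leq C(a^{2-\theta}+b^{2-\theta}+c^{2-\theta})$, conditioning on $\varphi^L_0$, applying Cauchy--Schwarz, and invoking stationarity gives an inequality of the form $L_\theta \leq C L_{\theta_1}$, where $L_\theta := \sup_L \E_{\nu_L}[\exp(\epsilon_\theta \|\varphi\|_{-\alpha,\rho}^{2-\theta})]$ and $2-\theta_1 = (2-\theta)\eta$, so $\theta_1 < \theta$.

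Iterating this inequality finitely many times, each step shrinking the exponent, one arrives at a base case $\theta_k$ where $L_{\theta_k}$ is bounded uniformly in $L$. The base case is provided by the uniform log-Sobolev inequality for $\nu_L$ of~\cite{MR4720217}: Herbst's argument applied to the Lipschitz functional $\|\cdot\|_{-\alpha,\rho}$ gives uniform sub-Gaussian concentration of $\|\varphi\|_{-\alpha,\rho}$ around its $\nu_L$-mean, which is in turn uniformly controlled via a Brascamp--Lieb type bound against the GFF on linear functionals. The principal obstacle is the loss factor $\eta>1$ in Theorem~\ref{thm:apriori_bounds}, which defeats a one-step bootstrap; it is circumvented by picking $\alpha,\alpha'$ small enough depending on $\theta$ to maintain $(2-\theta)\eta<2$ together with the finite iteration just described.

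Tightness in $C^{-\alpha}(\rho)$ follows from \eqref{e:nu-expmoments} and the compact embedding from a slightly rougher weighted Besov--H\"older space (Arzel\`a--Ascoli), and any weak limit inherits \eqref{e:nu-expmoments} by Fatou. The Wick-power bound \eqref{eq:moment-estimates-Wick-powers} is derived analogously: one integrates the conditional bound of Corollary~\ref{cor:Wick-power-moment} against $\nu_L$ at time $t=1$ and uses \eqref{e:nu-expmoments} to absorb the $\|\varphi_0\|_{-\alpha,\rho}$ appearing in the right-hand side exponent, after adjusting the parameters so that the exponents on Wick powers and initial data both lie in $(0,2)$.
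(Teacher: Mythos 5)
The key idea in the paper's proof is missing from your argument, and the workaround you propose has two independent problems.

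\textbf{The missing idea.} The paper does not use the standard decomposition with $Z_0^L = \varphi_0^L$ and $v_0^L = 0$. Instead it puts the initial condition in the remainder: $\tilde v^L_0 = \varphi_0^L$, $\tilde Z^L_0 = 0$. With this alternative splitting, the Wick powers appearing in the a priori bound are those of the zero-initial-data process $\tilde Z^L$, whose moments (Proposition~\ref{prop_gaussian_bounds}) have no dependence on $\varphi_0$ whatsoever. Theorem~\ref{thm:apriori_bounds}(ii) (which is stated precisely for general $v_0$ and $t\geq 1$, for this purpose) then gives an unconditional bound on $\|\tilde v^L_t\|_{\alpha',\rho}$ at $t\geq 1$, completely independent of $\varphi_0$. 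After one fixed time step the dependence on the initial condition vanishes, invariance gives \eqref{e:nu-expmoments} directly, and no bootstrap is needed. With the standard decomposition this damping mechanism is unavailable: the Gaussian objects $\wick{(Z^L_s)^n}$ carry $\varphi_0$ and feed it back into the a priori bound.

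\textbf{Why your bootstrap fails.} You correctly identify that $\eta>1$ defeats a one-step absorption. But your recursion $L_\theta \lesssim C\,L_{\theta_1}$ with $2-\theta_1 = (2-\theta)\eta > 2-\theta$ produces $\theta_1 < \theta$, i.e.\ each iterate requires control of a \emph{larger} stretched-exponential moment. That is the wrong direction: you are reducing a given estimate to a strictly harder one, and there is no finite base case reachable this way — the exponents escape towards (and past) the threshold $2$, not towards $0$. "Each step shrinking the exponent" is the opposite of what happens.

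\textbf{Why the proposed base case is not available.} Even setting the direction of the recursion aside, the base case via log-Sobolev plus Herbst cannot be used here. First, Corollary~\ref{cor:nuL-tight} is stated for all $\lambda>0$, $\mu\in\R$ and is invoked in the paper prior to, and independently of, the log-Sobolev hypothesis (which only enters in Corollary~\ref{cor:unique} as an explicit additional assumption); the LSI of Theorem~\ref{thm:LS} requires bounded susceptibility, i.e.\ $\mu>\mu_c$. Assuming LSI here would make the tightness statement conditional when the paper needs it unconditionally. Second, Herbst's argument requires the functional to be Lipschitz for the carré du champ of the Dirichlet form, i.e.\ with $L^2$-gradient uniformly bounded. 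The linear functionals $\varphi\mapsto R^{\alpha}\rho(x)\,(\Psi_R*\varphi)(x)$ have $L^2$-gradient of norm $\sim R^{\alpha - d/2} = R^{\alpha-1}$, which diverges as $R\downarrow 0$ for $\alpha<1$, so $\|\cdot\|_{-\alpha,\rho}$ is not Lipschitz in the required sense and the Herbst bound does not apply.

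Your derivation of \eqref{eq:moment-estimates-Wick-powers} from \eqref{e:nu-expmoments} via Corollary~\ref{cor:Wick-power-moment} is fine once \eqref{e:nu-expmoments} is established, and the Arzel\`a--Ascoli/Fatou step for tightness and the limit-point bound is standard and matches the paper. The gap is entirely in the proof of \eqref{e:nu-expmoments} itself.
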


\begin{proof}
To get \eqref{e:nu-expmoments}, it suffices to show that, for any $\theta\in(0,2)$ and some $C_\theta>0$, for some $t>0$:
\begin{equation}
  \sup_L \E_{\nu_L}\qa{\exp\Big[ \varepsilon_\theta \|\varphi_t\|_{-\alpha,\rho}^{2-\theta}\Big]} < \infty
,
\label{eq_toprove_tightness}
\end{equation}
where $\E_{\nu_L}$ denotes the expectation of the dynamics started at $\varphi_0^L \sim \nu_L$. 
Indeed, if~\eqref{eq_toprove_tightness} holds, 
then by invariance $\varphi_t$ is also distributed according to $\nu_L$ so that
\eqref{e:nu-expmoments} follows.
The embedding $C^{-\alpha}(\rho) \subset C^{-\alpha'}(\rho')$ is compact for $\alpha'>0$ and $\rho'=(1+|x|^2)^{-\sigma'/2}$ with $\sigma'>\sigma$,
see Proposition~\ref{prop:arzela}.
Hence $K_r = \{\|\varphi\|_{-\alpha,\rho} \leq r\}$ is compact in $C^{-\alpha'}(\rho')$ and $\sup_L\nu_L(K_r^c) \to 0$
as $r\to\infty$
by \eqref{e:nu-expmoments}.
Thus $(\nu_L)_L$ is a tight family of probability measures on $C^{-\alpha'}(\rho')$.
Since $\alpha,\sigma>0$ are arbitrary, the statement follows.

To prove~\eqref{eq_toprove_tightness}, say for $t=3$,
decompose $\varphi^L_t$ as $\varphi^L_t = \tilde Z^L_t + \tilde v^L_t$, 
where now it is convenient to include the initial condition of the dynamics in $\tilde v^L_t$:
\begin{equation}
\tilde v^L_0
=
\varphi^L_0,\qquad 
\tilde Z^L_0
=
0
.
\end{equation}
This alternative decomposition is convenient here, because it allows to invoke
Theorem~\ref{thm:apriori_bounds}~(ii) which applies with general initial datum $v_0$.
Indeed, $\tilde{v}$ satisfies the remainder equation \eqref{e:Y-Duhamel}, with the processes $Z_t$, $\wick{Z_t^2}$ and $\wick{Z_t^3}$ replaced by 
$\tilde{Z}_t$, $\wick{\tilde{Z}_t^2}$ and $\wick{\tilde{Z}_t^3}$ defined in \eqref{e:def-Gaussian-without-Initial-datum} and \eqref{e:Wick-convention} above (or equivalently $Z_t$, $\wick{Z_t^2}$ and $\wick{Z_t^3}$ 
for initial datum $\varphi_0=0$). For these processes Proposition~\ref{prop_gaussian_bounds} takes the form  
\begin{equation}
\sup_{L\in[3,\infty]} \E\bigg[\exp\Big[\epsilon_2\, \Big(\sup_{s\in[0,t]}  (s^{n\alpha}\wedge 1)\| \wick{(\tilde{Z}^L_s)^n}\|_{-n\alpha,\rho^n}\Big)^{\frac{2}{n}}\, \Big]\bigg]
\lesssim
(1+ t)^{\beta}.
\end{equation}
Using this estimate, the a priori bound \eqref{eq_desired_bounds-rhorho-alternative} implies
\eqref{eq_toprove_tightness},
and the argument for \eqref{eq:moment-estimates-Wick-powers} is analogous to the proof of Corollary~\ref{cor:Wick-power-moment} in this modified decomposition so we omit it.
\end{proof}

\subsection{Markov process of the $\varphi^4_2$ dynamics}

The torus $\varphi^4_2$ dynamics (in the Da Prato--Debussche sense, see Section~\ref{sec_background})
coincides with the Markov process generated by the standard Dirichlet form
associated with the torus $\varphi^4$ measure. This was established in \cite{MR3626040} and is recalled now.

The state space of the Markov process can be taken to be $C^{-\alpha}(\T_L^d)$.
Given an initial condition $\varphi_0=\varphi \in C^{-\alpha}(\T_L^d)$ and $F: C^{-\alpha}(\T_L^d) \to \R$ bounded and Borel measurable, define the semigroup
\begin{equation} \label{e:semigroup}
  T_tF(\varphi)=\E_{\varphi_0=\varphi}[F(\varphi_t)].
\end{equation}
Let $\cF C_b^\infty(\T_L^d)$ denote the set of cylinder functions:
\begin{equation} \label{e:cylinder}
\cF C_b^\infty(\T_L^d)
:=
\bigcup_{n\in\N}\Big\{ \varphi\mapsto G((\varphi,f_1),\dots,(\varphi,f_n)): 
G\in C^\infty_b(\R^n,\R), f_i \in C^\infty_b(\T^2_L)\Big\}
.
\end{equation}
For any cylinder function $F \in \cF C_b^\infty(\T_L^d)$ as above, the $L^2(\T_L^d)$ gradient $\nabla F$ is defined by
\begin{equation}
  [\nabla F(\varphi)](x) = \sum_{i=1}^n \partial_i G( (\varphi,f_1), \dots, (\varphi,f_n)) f_i(x).
\end{equation}
The standard Dirichlet form is defined by
\begin{equation} \label{e:Dirichlet-continuum}
  D(F,G) = \E_{\nu_L}\qB{(\nabla F,\nabla G)_{\bbL^2(\T^2_L)}} \qquad
  \text{for } F, G \in \cF C^\infty_b(\T_L^d).
\end{equation}
The following was proven in \cite{MR1113223,MR3626040}, see \cite[Theorem~3.9]{MR3626040} and the discussion preceding it.

\begin{theorem} \label{thm:Dirichlet}
  The Dirichlet form  \eqref{e:Dirichlet-continuum} is closable on $\bbL^2(H, \nu_L)$,
  where $H$ is a suitable negative regularity Sobolev space on $\T_L^d$, 
  and it generates an associated Markov process.
  For initial conditions $\varphi_0 \in C^{-\alpha}(\T_L^d)$ with $\alpha>0$ sufficiently small,
  the semigroup of this Markov process   is given by the semigroup \eqref{e:semigroup} associated with the SPDE.
\end{theorem}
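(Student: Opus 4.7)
The plan is to follow the standard Dirichlet-form strategy for Gibbsian measures and then identify the resulting Markov process with the Da Prato--Debussche solution constructed via the SPDE.

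\emph{Closability and Fukushima construction.} First I would establish closability of $D$ on $\bbL^2(H,\nu_L)$ for a suitable weighted negative-regularity Sobolev space $H$ carrying the Gaussian free field on $\T_L^d$. The key input is the representation $\nu_L \propto \rho_L \, d\nu^{\rm GFF}_L$ with density $\rho_L = \exp\bigl(-\int_{\T_L^d}(\tfrac{\lambda}{4}\wick{\varphi^4}+\tfrac{\mu}{2}\wick{\varphi^2})\,dx\bigr)$. Using the finite moments of the Wick powers under $\nu^{\rm GFF}_L$ and the Nelson bound, both $\rho_L$ and $\rho_L^{-1}$ lie in $\bbL^p(\nu^{\rm GFF}_L)$ for every $p<\infty$. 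Integration by parts against Cameron--Martin directions then yields, for $F\in\cF C_b^\infty(\T_L^d)$, the symmetric pre-generator $\mathcal{L}F = \tfrac12\Delta_H F - (\nabla F,\, A\varphi + \lambda\wick{\varphi^3} + \mu\varphi)_{\bbL^2(\T_L^2)}$, and Hamza-type closability of $D$ follows from the general criterion in \cite{MR1113223}. Fukushima's theory then provides a $\nu_L$-symmetric strongly continuous contraction semigroup $(P_t)_{t\geq 0}$ on $\bbL^2(H,\nu_L)$ together with a properly associated Hunt process on $H$, defined $\nu_L$-quasi-everywhere.

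\emph{Identification with the SPDE semigroup.} Next I would show that $P_t F = T_t F$ in $\bbL^2(H,\nu_L)$ for $F\in\cF C_b^\infty(\T_L^d)$ by verifying that both semigroups solve the same stationary martingale problem for $\mathcal{L}$. Applying It\^o's formula to the Da Prato--Debussche solution $\varphi=Z+v$, with the renormalised cubic interpreted via \eqref{eq_def_Wick_varphi}, gives that $F(\varphi_t)-F(\varphi_0)-\int_0^t \mathcal{L}F(\varphi_s)\,ds$ is a martingale under the SPDE law started from $\nu_L$; integrability of the drift follows from Corollary~\ref{cor:Wick-power-moment}. The Fukushima process enjoys the same property by construction, and uniqueness of the stationary martingale problem associated with $(\mathcal{L},\cF C_b^\infty(\T_L^d))$ then forces $T_t = P_t$ on cylinder functions, hence on all of $\bbL^2(H,\nu_L)$ by density.

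The main obstacle is precisely this identification step, since It\^o's formula has to be applied to the \emph{renormalised} equation whose cubic only makes sense as a limit of mollified approximations. The natural route, carried out in \cite[Theorem~3.9]{MR3626040}, is to first run the argument at the regularised level $\epsilon>0$, where all objects are classical and Itô's formula is standard, and then to pass to the limit $\epsilon\to 0$ using the convergence of the regularised SPDE to the Da Prato--Debussche solution together with the quantitative Gaussian bounds of Section~\ref{sec_gaussian_estimates}. Rather than reproduce this lengthy argument I would simply invoke \cite[Theorem~3.9]{MR3626040} at this stage.
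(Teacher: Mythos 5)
Your proposal follows the same route as the paper: the paper does not prove this theorem but simply cites \cite{MR1113223} for the Dirichlet-form construction and \cite[Theorem~3.9]{MR3626040} for the identification with the SPDE semigroup, exactly as you do after your sketch. Two small imprecisions worth noting: with noise coefficient $\sqrt{2}$ the pre-generator should be $\mathcal{L}F = \Delta_H F - (\nabla F,\,A\varphi+\lambda\wick{\varphi^3}+\mu\varphi)_{\bbL^2}$ (no factor $\tfrac12$), and what \cite{MR3626040} actually establishes is \emph{restricted Markov uniqueness} (uniqueness among $\bbL^2$-Markov semigroups whose generators extend $\mathcal{L}|_{\cF C_b^\infty}$), which is weaker than, and not the same as, uniqueness of the stationary martingale problem for $(\mathcal{L},\cF C_b^\infty)$; since you ultimately defer to that theorem anyway, this does not affect the conclusion, but the phrasing should be adjusted.
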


\subsection{Log-Sobolev inequality}

The next theorem states that the Markov processes associated with the $\varphi^4_2$ dynamics on the torus satisfies a uniform log-Sobolev inequality
and converges exponentially to equilibrium for initial conditions with finite relative entropy.
For a probability measure $\nu$ and a nonnegative measurable function $F$ defined
on the probability space on which $\nu$ is defined, we denote the relative entropy by
\begin{equation}
  \ent_\nu(F) = \E_{\nu}[F \log F] - \E_\nu[F]\log \E_\nu[F].
\end{equation}
Given a probability measure $\mu$ with $d\mu/d\nu =F$ 
we also write the relative entropy as
\begin{equation} \label{e:H-ent-def}
  \bbH(\mu|\nu) =\ent_{\nu}(F),
\end{equation}
with the convention that $\bbH(\mu|\nu) = +\infty$ if $\mu$ is not absolutely continuous with respect to $\nu$.

\begin{theorem} \label{thm:LS}
  For $d\in\{2,3\}$, 
  let $\chi_L(\lambda,\mu)=|\T^d_L|^{-1}\E_{\nu_L}[(\varphi,1)^2]$, let $\bar\chi>0$, and assume that $\chi_L(\lambda,\mu)<\bar\chi$.  
  Then there is $\gamma>0$ depending only on $(\lambda,\mu,\bar\chi)$ but not on $L$ such that, 
  for every nonnegative cylinder function $F \in \cF C_b^\infty(\T_L^d)$ defined in \eqref{e:cylinder},
\begin{equation} \label{e:LS-cylinder}
  \ent_{\nu_{L}}(F) \leq \frac{2}{\gamma}\,  \E_{\nu_{L}} \qa{\|\nabla \sqrt{F}\|_{\bbL^2(\T_L^2)}^2}.
\end{equation}
As a consequence, if $m_t^L$ denotes the law of the $\varphi^4_2$ SPDE on $\T_L^d$ at time $t\geq 0$, for any $0<s\leq t$,
\begin{equation}
  \bbH(m^L_t|\nu_L) \leq e^{-2\gamma (t-s)} \bbH(m^L_s|\nu_L)
  .
  \label{eq_decay_entropy}
\end{equation}
\end{theorem}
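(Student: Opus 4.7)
The uniform log-Sobolev inequality \eqref{e:LS-cylinder} is, under the susceptibility bound $\chi_L(\lambda,\mu)<\bar\chi$, essentially the main result of \cite{MR4720217}, valid for both $d=2$ and $d=3$ with a log-Sobolev constant $\gamma>0$ depending only on $\lambda$, $\mu$, and $\bar\chi$, not on $L$. My plan is therefore to reduce the first statement to a direct citation, with only small translation work needed: verify that the Dirichlet energy used in \cite{MR4720217} coincides on cylinder functions with the right-hand side of \eqref{e:LS-cylinder} and that the renormalisation conventions match those of Section~\ref{sec_gaussian_estimates}. Both checks are immediate from the definitions.

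For the exponential decay \eqref{eq_decay_entropy}, the strategy is the classical Gross--Rothaus deduction of entropy decay from LSI for reversible Markov semigroups. If $\bbH(m_s^L|\nu_L)=+\infty$ there is nothing to prove, so I assume this is finite and set $F_u := dm_u^L/d\nu_L$ for $u\geq s$. Invariance of $\nu_L$ (Theorem~\ref{thm:nuL-invariant}) together with reversibility of the Dirichlet form \eqref{e:Dirichlet-continuum} gives $F_u = T_{u-s}F_s$. Differentiating the entropy along the semigroup yields the de Bruijn identity
\begin{equation*}
\frac{d}{du}\ent_{\nu_L}(F_u) \;=\; -4\,\E_{\nu_L}\qa{\|\nabla \sqrt{F_u}\|_{\bbL^2(\T_L^d)}^2}, \qquad u>s,
\end{equation*}
after which \eqref{e:LS-cylinder} applied to $\sqrt{F_u}$ produces the differential inequality $\frac{d}{du}\ent_{\nu_L}(F_u)\leq -2\gamma\,\ent_{\nu_L}(F_u)$, and Gronwall's lemma then gives \eqref{eq_decay_entropy}.

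The main technical point will be that \eqref{e:LS-cylinder} is stated only for smooth cylinder functions, whereas $\sqrt{F_u}$ is a priori merely an element of $\bbL^2(\nu_L)$. I expect to handle this by first extending \eqref{e:LS-cylinder} to the whole domain of the closed Dirichlet form, using the closability statement of Theorem~\ref{thm:Dirichlet} together with density of $\cF C^\infty_b(\T_L^d)$ in that domain, and then invoking the regularising effect of the reversible semigroup to place $\sqrt{F_u}$ in the form domain for each $u>s$; the same regularisation justifies the rigorous differentiation of the entropy. These steps are standard once closability is in hand (see for instance the textbook treatments \cite{MR2352327,MR1971582}) and do not require anything specific to $\varphi^4_2$ beyond Theorem~\ref{thm:Dirichlet}, which is also why the starting time in \eqref{eq_decay_entropy} is $s>0$ rather than $0$.
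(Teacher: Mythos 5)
Your treatment of the entropy decay \eqref{eq_decay_entropy} matches the paper's intent: the paper also dispatches this via the standard Gross/de Bruijn argument (it cites \cite[Definition 5.1.1 and Theorem 5.2.1]{MR3155209}), and your attention to extending \eqref{e:LS-cylinder} from cylinder functions to the form domain via closability and density is exactly the point the paper flags. That half is fine.

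The gap is in the first half. You treat \eqref{e:LS-cylinder} as a direct citation of \cite{MR4720217} plus bookkeeping on conventions, but \cite{MR4720217} establishes the log-Sobolev inequality for the \emph{lattice-regularised} measures $\nu_{\epsilon,L}$ with the discrete Dirichlet form $\E_{\nu_{\epsilon,L}}[\|\nabla_\epsilon \sqrt F\|_{\bbL^2(\Lambda_{\epsilon,L})}^2]$, uniformly in $\epsilon$ and $L$ — not for the continuum measure $\nu_L$. There is no sense in which the lattice Dirichlet energy ``coincides'' with the continuum one; one has to pass to the limit $\epsilon\to 0$. The paper's proof does nontrivial work here that your proposal omits: (i) it uses weak convergence $\nu_{\epsilon,L}\to\nu_L$ together with uniform integrability of $(\varphi,f)^2$ to deduce $\chi_{\epsilon,L}\to\chi_L$, so that the hypothesis $\chi_L<\bar\chi$ actually yields $\chi_{\epsilon,L}\le\bar\chi$ for small $\epsilon$ and the lattice LSI applies with the same $\gamma$; (ii) it verifies convergence of both sides of the LSI as $\epsilon\to 0$ for cylinder functions \emph{bounded below}; and (iii) it removes the lower bound by approximating $F$ by $F\vee\eta$, $\eta\searrow 0$, using lower semicontinuity of relative entropy. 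Step (i) is particularly substantive: without it the susceptibility assumption on $\nu_L$ does not transfer to $\nu_{\epsilon,L}$, and the uniform lattice LSI cannot even be invoked. Your proposal as written would leave all of this unaddressed.
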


The lattice discretised version of the statement of the theorem was proved in \cite{MR4720217}, uniformly in lattice spacing.
Lattice approximations of the SPDE on the discretised torus $\Lambda_{\epsilon,L} = L\T^d \cap \epsilon\Z^d$
are defined analogously to \eqref{e:SPDE} and have unique invariant measures $\nu_{\epsilon,L}$ given by
\begin{equation}
  \nu_{\epsilon,L}(d\varphi) \propto e^{-H_{\epsilon,L}(\varphi)}\prod_{x\in\Lambda_{\epsilon,L}} d\varphi(x),
\end{equation}
where
\begin{equation}
  H_{\epsilon,L}(\varphi) = \frac 12\|\nabla_\epsilon \varphi\|_{L^2(\Lambda_{\epsilon,L})}^2
  + \frac{\lambda}{4}\|\varphi\|_{L^4(\Lambda_{\epsilon,L})}^4 + \frac{1+\mu-a_\epsilon(\lambda)}{2} \|\varphi\|_{L^2(\Lambda_{\epsilon,L})}^2
\end{equation}
and
\begin{equation}
  \|f\|_{L^p(\Lambda_{\epsilon,L})} = \qa{\epsilon^d \sum_{x\in\Lambda_{\epsilon,L}} |f(x)|^p}^{1/p}.
\end{equation}
Standard arguments (see, for example, \cite{MR2352327}) show that the SDE of the lattice approximation is
the Markov process defined by the Dirichlet form
\begin{equation}
  \E_{\nu_{\epsilon,L}} \qa{\|\nabla_\epsilon F\|_{\bbL^2(\Lambda_{\epsilon,L})}^2},
  \qquad
  \nabla_\epsilon F(\varphi,x) = \epsilon^{-d}\ddp{F(\varphi)}{\varphi(x)},
\end{equation}
i.e., $\nabla_\epsilon$ denotes the gradient with respect to the $\bbL^2(\Lambda_{\epsilon,L})$ inner product.
The following uniform log-Sobolev inequality for the lattice regularised dynamics was proved in \cite{MR4720217}.
Assume
\begin{equation}
  \chi_{\epsilon,L}(\mu,\lambda) = \frac{\E_{\nu_{\epsilon,L}}[(\varphi,1)^2]}{|\T_L^d|} = \epsilon^d \sum_{x\in\Lambda_{\epsilon,L}} \E_{\nu_{\epsilon,L}}\big[\varphi(0)\varphi(x)\big]  \leq \bar\chi <\infty
  .
\end{equation}
Then there is a constant $\gamma>0$ depending only on the parameters $\lambda,\mu$ in \eqref{e:SPDE} and on $\bar\chi$, 
but independent of $\epsilon$ and $L$, 
such that for every $F: \Lambda_{\epsilon,L} \to \R_+$, the following log-Sobolev inequality holds:
\begin{equation} \label{e:LS-eps}
  \ent_{\nu_{\epsilon,L}}(F) \leq \frac{2}{\gamma} \E_{\nu_{\epsilon,L}} \qa{\|\nabla_\epsilon \sqrt{F}\|_{\bbL^2(\Lambda_{\epsilon,L})}^2}.
\end{equation}

\begin{proof}[Proof of Theorem~\ref{thm:LS}]
  It is well known that $\nu_L$ is the weak limit of $\nu_{\epsilon,L}$ extended appropriately to a probability measure on $C^{-\alpha}(\T_L^d)$ as $\epsilon\to 0$
  and, for any smooth bounded $f:\T^d_L\to\R$, $(\varphi,f)^2$ is uniformly integrable under $\nu_{\epsilon,L}$ as $\epsilon \to 0$, 
  see~\cite[Section 8]{MR723546}.
  Thus   $\E_{\nu_{\epsilon,L}}\q{(1,\varphi)^2} \to \E_{\nu_{L}}\q{(1,\varphi)^2}$ and for any $\bar\chi> \chi_L(\lambda,\mu)$
  and $\epsilon>0$ sufficiently small,   $\chi_{\epsilon,L}(\lambda,\mu) \leq \bar\chi$.
  For every cylinder functional $F$ that is also bounded below,
  it is easy to verify that the left- and right-hand sides of \eqref{e:LS-eps} converge to their continuum versions.
  The lower bound on test functions can be removed in a standard way, 
  approximating $F\in\cF C^\infty_b(\T^d_L)$ by $F_\eta=F\vee\eta$ with $\eta\searrow 0$ and using the lower semicontinuity of the relative entropy. 
  The log-Sobolev inequality~\eqref{e:LS-cylinder} for cylinder functionals thus follows from this and the
  log-Sobolev inequality \eqref{e:LS-eps} for $\nu_{\epsilon,L}$, recalling that the constant there depends only on $(\lambda,\mu,\bar\chi)$ but not directly on $\epsilon$ (or $L$).
  
  Using that the $\varphi^4_2$ SPDE dynamics coincides with the Markov process defined by the
  standard Dirichlet form, see Theorem~\ref{thm:Dirichlet},
  the entropy decay \eqref{eq_decay_entropy} is a standard consequence of the log-Sobolev inequality (see~\cite[Definition 5.1.1 and Theorem 5.2.1]{MR3155209}) 
  and the fact that the cylinder functionals \eqref{e:cylinder} are dense in the domain of the Dirichlet form. 
\end{proof}

\section{Main argument}\label{sec_main_argument}

The main result follows a version of the strategy proposed by Holley--Stroock \cite{MR893137}
and relies on a uniform (in the size $L$) bound on the log-Sobolev constant for the space-periodic dynamics~\eqref{e:SPDE-eps}. 

\begin{enumerate}
	\item We first show that the infinite volume dynamics~\eqref{e:SPDE} at time $t> 0$ can be approximated by the periodised dynamics~\eqref{e:SPDE-eps} on a large enough box of side length $L \gg t^C$ for a suitable constant $C>0$, 
	with an error that vanishes when $t \gg 1$ (Theorem~\ref{claim:propagation}). 
      \item In finite volume, we use the uniform in the volume log-Sobolev inequality of Theorem~\ref{thm:LS} to prove exponential convergence to equilibrium.
        The log-Sobolev inequality implies convergence for random initial conditions with finite entropy, and
        to apply it with determinstic initial condition
        requires a proof that the entropy of the law of the dynamics starting at a deterministic initial condition is finite at time $1$ (Theorem~\ref{thm:ent-claim}). 
\end{enumerate}
      
In the following statements, the dimension is always $d=2$,
the weight is $\rho(x)=(1+|x|^2)^{-\sigma/2}$ on $\R^2$ with $\sigma>0$,
and the Besov--H\"older space $C^{-\alpha}(\rho)$ with norm $\|\cdot\|_{-\alpha,\rho}$ is as in Section~\ref{sec:norms-def}.
Given a compactly supported test function $f: \R^2 \to \R$ we denote by $R_f$ the radius of the smallest ball containing the support of $f$:
\begin{equation}
  R_f = \inf\{r >0: \supp(f) \subset B_r(0)\}.
\end{equation}
The coupling constants $\lambda>0$ and $\mu \in \R$ of the $\varphi^4$ dynamics are fixed (and arbitrary),
and all constants are permitted to depend on them as well as on $\alpha,\sigma$
(but not on $L$, $t$ or the initial condition of the dynamics). The condition on $(\lambda,\mu)$ that restricts to the high temperature phase
only enters through the uniformity of the log-Sobolev constant in the assumption
that the conclusion of Theorem~\ref{thm:LS} holds, imposed in Corollary~\ref{cor:unique} below.

\begin{theorem}[Propagation speed estimate]  \label{claim:propagation}
Let $\lambda >0$ and $\mu \in \R$, and let $\alpha,\sigma>0$ be sufficiently small.
Then there are constants $\delta, C_0,C >0$ such that for any initial condition  $\varphi_0\in C^{-\alpha}(\rho)$, 
any test function $f\in C_c^\infty(\R^2)$, and any $t>0$, $L>0$ with $L\geq C_0(t^{C_0}+R_f+1)$,
\begin{equation} \label{e:propagation}
  \absa{\E\qb{e^{i(f,\varphi_t)} -e ^{i(f,\varphi^L_t)}}}
  \lesssim
  \|f\|_\alpha (1+\|\varphi_0\|^3_{-\alpha,\rho})\, e^{-\delta L^2/t}+
  e^{C\|\varphi_0\|_{-\alpha,\rho}}\, e^{-t^2\log L},
\end{equation}
with the implied constant depending on parameters $\lambda,\mu$ appearing in the SPDE~\eqref{e:SPDE} and on $\alpha,\sigma$.
\end{theorem}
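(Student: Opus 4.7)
The plan is to use the Da Prato--Debussche decomposition $\varphi = Z + v$, $\varphi^L = Z^L + v^L$ together with the Lipschitz bound $|e^{ix}-e^{iy}|\leq |x-y|\wedge 2$, reducing matters to estimating $\E[|(f,Z_t-Z_t^L)|] + \E[|(f,v_t-v_t^L)|\wedge 2]$. The Gaussian difference is handled directly by Proposition~\ref{prop_gaussian_difference}: provided $L\geq \frac{3}{2}R_f$ so that $\supp f\subset [-\tfrac{2}{3}L,\tfrac{2}{3}L]^2$, Cauchy--Schwarz gives $\E[|(f,Z_t-Z_t^L)|] \lesssim \|f\|_\alpha (1+\|\varphi_0\|_{-\alpha,\rho})(t^{-\alpha/2}\vee 1)\, e^{-cL^2/2t}$, matching the first error term in~\eqref{e:propagation}.

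For the remainder $E_t := v_t - v_t^L$, expanding the polynomial nonlinearities yields the integral equation
\begin{equation*}
E_t = \int_0^t e^{-(t-s)A}\bigl(U_s E_s - \lambda D_s\bigr)\,ds,
\end{equation*}
where the ``potential''
\begin{equation*}
U_s = -\mu - \lambda\bigl[v_s^2+v_s v_s^L+(v_s^L)^2 + 3(v_s+v_s^L)Z_s+3\wick{Z_s^2}\bigr]
\end{equation*}
is formally identical to the one appearing in the uniqueness proof of Corollary~\ref{cor:infvol-existence} (see~\eqref{eq:error_uniqueness}), while the ``forcing''
\begin{equation*}
D_s = 3(v_s^L)^2(Z_s-Z_s^L) + 3v_s^L\bigl(\wick{Z_s^2}-\wick{(Z_s^L)^2}\bigr) + \bigl(\wick{Z_s^3}-\wick{(Z_s^L)^3}\bigr)
\end{equation*}
collects the differences between the driving Wick powers.

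The bulk of the work is a good-event analysis. Introduce $\Omega_{\mathrm{good}}$ on which all the weighted norms $(s^{n\alpha}\wedge 1)\|\wick{(Z_s^{L'})^n}\|_{-n\alpha,\rho^n}$ ($n=1,2,3$, $L'\in\{L,\infty\}$) together with $\|v_s^{L'}\|_{\alpha',\rho^\eta}$ are bounded by a threshold $M$ for all $s\in[0,t]$. By Proposition~\ref{prop_gaussian_bounds}, Corollary~\ref{cor:Wick-power-moment}, and Theorem~\ref{thm:apriori_bounds}, $\mathbb{P}(\Omega_{\mathrm{good}}^c) \lesssim \exp(C\|\varphi_0\|_{-\alpha,\rho})\exp(-\varepsilon M^{r})$ for suitable $r>0$, and choosing $M$ polynomial in $(t^2\log L)^{1/r}$ absorbs this into the second error term. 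On $\Omega_{\mathrm{good}}$, write $(f,E_t)=\int_0^t (u_s, U_s E_s - \lambda D_s)\,ds$ with $u_s:=e^{-(t-s)A}f$; the Gaussian tails of the heat semigroup give $\|u_s\|_{K^c}\lesssim \|f\|_\alpha e^{-cL^2/t}$ for $K=B_{L/3}(0)$, while on $K\subset [-\tfrac{2}{3}L,\tfrac{2}{3}L]^2$ the forcing $(u_s, D_s)$ is exponentially small by Proposition~\ref{prop_gaussian_difference} times polynomial-in-$M$ factors from the bounds on $v^L$. The self-referential contribution $\int_0^t (u_s, U_s E_s)\,ds$ is then closed by the Gr\"onwall-type iteration of the uniqueness proof of Corollary~\ref{cor:infvol-existence} (see \cite[Proposition~22, Lemma~15]{MR3693966}), yielding a total bound of the form $e^{-cL^2/t}M^{O(1)}$ which combines with the threshold choice above to produce the claimed estimate when $L\geq C_0(t^{C_0}+R_f+1)$.

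The principal obstacle is precisely this last iteration. Since $v$ and $v^L$ only agree near the origin, $E_s$ is \emph{not} small in any global norm, so the iterated Duhamel argument has to be carried out in local or carefully weighted norms that simultaneously track (i) the Gaussian localisation of $u_s$ near $\supp f$, (ii) the polynomial-in-$M$ bounds on $U_s$ in degrading spatial weights, and (iii) the exponential smallness of $D_s$ on the balls where $u_s$ is effectively supported. Matching the small gain from each iteration against the deteriorating weight---while keeping all constants uniform in $L$---is the core technical step, and it is what dictates the quantitative condition $L\geq C_0(t^{C_0}+R_f+1)$ in the statement.
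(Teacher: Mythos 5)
Your setup matches the paper's up to the point of deriving the equation for $E_t=v_t-v_t^L$ and splitting the right-hand side into a potential term $U_sE_s$ and a forcing term $D_s$ (the paper's $F_s$); the good-event construction, the use of Proposition~\ref{prop_gaussian_difference} for the Gaussian difference, and the threshold choice polynomial in $t^2\log L$ are all in the paper's proof. But the treatment of the self-referential term $\int_0^t (u_s, U_sE_s)\,ds$ is where the proposal has a genuine gap, and it is the heart of the theorem.

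Pairing the Duhamel formula for $E_t$ against $f$ with $u_s=e^{-(t-s)A}f$ and then invoking the iteration of \cite[Proposition~22, Lemma~15]{MR3693966} does not close. That iteration is a \emph{qualitative} device for proving $E\equiv0$ when the forcing vanishes: it produces a small constant per step at the cost of a deteriorating spatial weight, and the conclusion $E=0$ is reached only after infinitely many steps. Here $D_s\neq 0$, and one needs a \emph{quantitative} bound $|(f,E_t)|\lesssim e^{-\delta L^2/t}$ with constants uniform in $L$ and at most polynomial in $t$. Two things go wrong if you try to push the [MR3693966] iteration through. First, each application of the free heat kernel $e^{-(t-s)A}$ spreads the test function over $O(\sqrt{t-s})$, and the iterated kernel $e^{-(t-s_1)A}U_{s_1}e^{-(s_1-s_2)A}U_{s_2}\cdots$ has a priori no reason to remain concentrated near $\supp f$; since $E_s$ (and $D_s$) are not small far from the origin, the iteration can re-inject $O(1)$ contributions that the Gronwall bookkeeping cannot suppress. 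Second, the Gaussian factor $e^{-\delta L^2/t}$ has to come from somewhere; the iteration's gain per step is a geometric small constant, not a Gaussian tail, so there is no mechanism to produce the claimed decay rate. Asserting that the weight degradation can be "matched" against the per-step gain, uniformly in $L$, is precisely the missing argument, and it is not supplied.

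The paper avoids the iteration entirely by a structural change: instead of treating $U$ perturbatively, it defines the inhomogeneous backward semigroup $\Pg_{s,t}$ with generator $\Delta-\lambda U$, so that $(E_t,f)=-\lambda\int_0^t(F_s,\Pg_{s,t}f)\,ds$ with \emph{no} $U_sE_s$ term remaining. The decay of $\Pg_{s,t}f$ away from $\supp f$ is then extracted from a Feynman--Kac representation together with a partial Girsanov transform (following \cite{MR3592748,KoenigPerkowskivanZuiljen_PAM}) that converts the $C^{0-}$ potential $U$ into a $C^{2-}$ function $\psi$, reducing the problem to tail estimates for an auxiliary diffusion with H\"older drift (Propositions~\ref{prop_FK_term}--\ref{prop_gaussian_difference_sec4}, Lemmas~\ref{lemm_LD_X},~\ref{lemm_gradientX}). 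This gives Gaussian decay $e^{-cL^{2k}/t}$ over the annuli $\cA_k$ directly, which is exactly what the Duhamel iteration cannot produce. If you want to develop the iteration route, you would in effect have to re-prove that the iterated kernel has Gaussian decay away from $\supp f$ uniformly in $L$ -- which is equivalent to (and harder than) what the semigroup/Feynman--Kac analysis achieves.
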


The proof of Theorem~\ref{claim:propagation} is the content of Section~\ref{sec:propagation}.
As a first application we observe that infinite volume limit points of the finite volume invariant measures $\nu_L$ are invariant measures.

\begin{corollary} \label{cor:nu-invariant}
  Any limit point $\nu$ of $(\nu_L)$ is an invariant measure of the SPDE \eqref{e:SPDE} on $\R^2$.
\end{corollary}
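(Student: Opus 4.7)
The plan is to verify, for every $f\in C_c^\infty(\R^2)$ and every $t>0$, the characteristic-function identity
\begin{equation}
\int e^{i(f,\varphi)}\, d\nu(\varphi)
=
\int \E\big[e^{i(f,\varphi_t^{\varphi_0})}\big]\, d\nu(\varphi_0),
\end{equation}
where $\varphi_t^{\varphi_0}$ denotes the full space solution of \eqref{e:SPDE} with initial datum $\varphi_0$, well-posed by Corollary~\ref{cor:infvol-existence}. Since the family of maps $\varphi\mapsto e^{i(f,\varphi)}$ with $f\in C_c^\infty(\R^2)$ is separating on $C^{-\alpha'}(\rho')$ (the tightness space from Corollary~\ref{cor:nuL-tight} with $\alpha'>\alpha$ and $\sigma'>\sigma$), this identity is equivalent to $T_t^*\nu=\nu$, where $T_t$ is the full space semigroup. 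I extract a subsequence $L_k\to\infty$ with $\nu_{L_k}\to\nu$ weakly on $C^{-\alpha'}(\rho')$ and abbreviate $F(\varphi):=e^{i(f,\varphi)}$.

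The argument then has three ingredients. (i)~By weak convergence and continuity of $F$ on $C^{-\alpha'}(\rho')$, $\int F\, d\nu_{L_k}\to\int F\, d\nu$; similarly, using that $\varphi_0\mapsto (T_tF)(\varphi_0)=\E[F(\varphi_t^{\varphi_0})]$ is bounded and continuous on $C^{-\alpha'}(\rho')$ (a consequence of the well-posedness with continuous dependence on initial data from Corollary~\ref{cor:infvol-existence} together with dominated convergence), $\int T_tF\, d\nu_{L_k}\to \int T_tF\, d\nu$. (ii)~By torus invariance (Theorem~\ref{thm:nuL-invariant}), $\int F\, d\nu_{L_k} = \E_{\varphi_0\sim\nu_{L_k}}\big[F(\varphi_t^{L_k,\varphi_0})\big]$, where $\varphi_t^{L_k,\varphi_0}$ is the torus dynamics on $\T^2_{L_k}$ started from $\varphi_0$ viewed as a $2L_k$-periodic function. (iii)~Theorem~\ref{claim:propagation} applied pointwise to $\varphi_0\in C^{-\alpha}(\rho)$, combined with the uniform moment bounds $\sup_k\E_{\nu_{L_k}}[\|\varphi_0\|_{-\alpha,\rho}^{3}]<\infty$ and $\sup_k\E_{\nu_{L_k}}[e^{C\|\varphi_0\|_{-\alpha,\rho}}]<\infty$ from Corollary~\ref{cor:nuL-tight} (take $\theta\in(0,1)$ so that $2-\theta>1$), yields $\big|\int T_t F\, d\nu_{L_k} - \E_{\nu_{L_k}}\big[F(\varphi_t^{L_k,\varphi_0^{L_k}})\big]\big|\to 0$ as $k\to\infty$.

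The hard part is to bridge (ii) and (iii): Theorem~\ref{claim:propagation} compares the full space dynamics to the torus dynamics started from the $\chi$-periodisation $\varphi_0^{L_k}$ of \eqref{e:phi0L}, rather than from the $2L_k$-periodic $\varphi_0\sim\nu_{L_k}$ to which torus invariance naturally applies. These two initial data agree on the ball $B(0,\tfrac{99}{100}L_k)$, which for $L_k$ large contains $\mathrm{supp}(f)$, but they differ in an annular boundary region at distance $\gtrsim L_k$ from $\mathrm{supp}(f)$. I therefore need the supplementary estimate
\begin{equation}
\E_{\nu_{L_k}}\Big[\big|F(\varphi_t^{L_k,\varphi_0^{L_k}})-F(\varphi_t^{L_k,\varphi_0})\big|\Big]\to 0 \text{ as } k\to\infty,
\end{equation}
which is a pure finite-speed-of-propagation statement inside a single torus, for two initial data agreeing on a large ball around $\mathrm{supp}(f)$. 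I expect it to follow from a direct repetition of the techniques used in the proof of Theorem~\ref{claim:propagation} (now applied to two torus dynamics driven by the same noise), or to drop out of the concrete form in which the propagation estimate of Section~\ref{sec:propagation} is actually proved. Once this is in place, combining (i)--(iii) gives $\int F\, d\nu = \int T_t F\, d\nu$, proving the invariance of $\nu$.
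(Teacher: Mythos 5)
Your proposal follows essentially the same route as the paper: weak convergence of $\nu_{L_k}\to\nu$ plus continuity of $T_tF$, the propagation speed estimate (Theorem~\ref{claim:propagation}) to replace $T_tF$ by the torus dynamics, uniform integrability from Corollary~\ref{cor:nuL-tight} to average the error, and finally torus invariance (Theorem~\ref{thm:nuL-invariant}). Two remarks on where you diverge from the paper's write-up.

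First, your step (i) credits continuity of $\varphi_0\mapsto T_tF(\varphi_0)$ to ``continuous dependence on initial data from Corollary~\ref{cor:infvol-existence}''. That corollary only asserts existence and uniqueness; it gives no continuity of the law of $\varphi_t$ with respect to $\varphi_0\in C^{-\alpha}(\rho)$. The paper instead derives continuity of $T_tF$ by a three-term decomposition: two applications of Theorem~\ref{claim:propagation} to pass from $T_tF$ to $T^L_tF$ (at both $\varphi$ and $\tilde\varphi$), combined with the known finite-volume strong-Feller/continuity property and the continuity of the truncation $\tilde\varphi\mapsto\tilde\varphi^L$ from \eqref{eq_cv_varphi_L0}. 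Your dominated-convergence appeal would need an a.s.\ continuity statement that Corollary~\ref{cor:infvol-existence} does not provide.

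Second, you have put your finger on a real subtlety that the paper's proof glosses over. Theorem~\ref{claim:propagation} compares the full-space dynamics started from $\varphi_0$ with the \emph{torus} dynamics started from the $\chi$-truncated periodisation $\varphi_0^L$ of \eqref{e:phi0L}; when $\varphi_0$ is the periodic extension of a sample from $\nu_L$, this $\varphi_0^L$ agrees with the original torus sample only on $\{|x|\le \tfrac{99}{100}L\}$ and vanishes in the corners of the fundamental cell. Torus invariance, however, applies to the torus dynamics started from $\varphi_0$ itself. The paper's line ``$\lim_L\E_{\nu_L}[T_tF]=\lim_L\E_{\nu_L}[T_t^LF]$'' silently identifies these two torus dynamics. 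Your supplementary estimate $\E_{\nu_{L_k}}[|F(\varphi_t^{L_k,\varphi_0^{L_k}})-F(\varphi_t^{L_k,\varphi_0})|]\to 0$ is exactly what is needed, and you are right that it should follow by the same finite-speed-of-propagation argument (two coupled torus dynamics, initial data agreeing on a ball of radius $\tfrac{99}{100}L$ around $\mathrm{supp}(f)$). Making this step explicit would actually improve the paper's argument.
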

\begin{proof}
  Denote the finite and infinite volume semigroups associated with the SPDE by
  \begin{alignat}{2}
    T_t^LF(\varphi) &= \E_{\varphi_0=\varphi}[F(\varphi_t^L)], &&\qquad\varphi\in C^{-\alpha}(\T_L^d),
                      \nnb
    T_tF(\varphi) &= \E_{\varphi_0=\varphi}[F(\varphi_t)], &&\qquad \varphi\in C^{-\alpha}(\rho).
  \end{alignat}
  Using the finite volume invariance $\E_{\nu_L}[T_t^LF] = \E_{\nu_L}[F]$, see Theorem~\ref{thm:nuL-invariant},
  our goal is to show that an infinite volume limit point $\nu$ is also invariant: $\E_{\nu}[T_tF] = \E_\nu[F]$.
  It suffices to consider $F(\varphi)=e^{i(\varphi,f)}$ with $f\in C^\infty_c(\R^2)$ since these test functions characterise the measure.
  The claim essentially follows from the propagation speed estimate from Theorem~\ref{claim:propagation} and the uniform integrability proven in Corollary~\ref{cor:nuL-tight}.

  As a preliminary,
  we first use Theorem~\ref{claim:propagation} to show that $\varphi_0\in C^{-\alpha}(\rho)\mapsto T_tF(\varphi_0)$ is continuous for each $t>0$,
  as a consequence of the well known finite volume version of this property, see~\cite{MR3825880,HairerMattinglystrongFeller}.
  For $\varphi,\tilde\varphi\in C^{-\alpha}(\rho)$ and $L>0$ to be chosen below, write:
 \begin{align}
 T_tF(\tilde\varphi)-T_tF(\varphi)
 &=
 \big(T_tF(\tilde\varphi)-T^L_tF(\tilde\varphi^L)\big)
 +\big(T^L_tF(\varphi^L)-T_tF(\varphi)\big)
 \nnb
 &\qquad \qquad
 +\big(T^L_tF(\tilde\varphi^L)-T^L_tF(\varphi^L)\big)
 .
 \end{align}
 Given $\epsilon>0$ it suffices to show that $|T_tF(\varphi)-T_tF(\tilde\varphi)|\leq 3\epsilon$ if $\|\varphi-\tilde\varphi\|_{-\alpha,\rho}$ is sufficiently small.
 We may assume that $\|\tilde\varphi\|_{-\alpha,\rho} \leq 2\|\varphi\|_{-\alpha,\rho}$.
 Then Theorem~\ref{claim:propagation} implies that for sufficiently large~$L$ (depending on $\|\varphi_0\|_{-\alpha,\rho}$)
 the first two terms in the right-hand side above are bounded by~$\epsilon$.
 The continuity of the truncation operator $\tilde\varphi\mapsto \tilde\varphi^L$ in $C^{-\alpha}(\rho)$,
 see \eqref{eq_cv_varphi_L0}, together with
 the continuity of the finite volume semigroup
 imply that the last term above is also bounded by $\epsilon$ if $\|\varphi-\tilde\varphi\|_{-\alpha,\rho}$ is sufficiently small.
 This gives the desired continuity.

 We now prove that limit points are invariant measures.
 Fix a sequence $(\nu_L)$ converging to $\nu$ (possibly passing to a subsequence from the original family of measures).  
  For each fixed $t>0$, 
  the weak convergence of $(\nu_L)$ and the continuity of $T_tF$ give:
  \begin{equation}
  \E_{\nu}[F] - \E_{\nu}[T_tF]
  =
  \lim_{L\to\infty} \E_{\nu_L}[F] - \lim_{L\to\infty}\E_{\nu_L}[T_tF]
  .
  \end{equation}
  Theorem~\ref{claim:propagation} controls the second expectation in terms of the finite volume dynamics:
  \begin{equation}
  \lim_{L\to\infty} \E_{\nu_L}[T_tF]
  =
  \lim_{L\to\infty} \E_{\nu_L}[T_t^LF]
  ,
  \end{equation}
  where 
  we used Corollary~\ref{cor:nuL-tight} to argue that the expectation of the error term of Theorem~\ref{claim:propagation} under $\nu_L$ is $o_L(1)$.
  Invariance of $\nu_L$ for the finite volume dynamics concludes the proof:
  \begin{equation}
  \E_{\nu}[F] - \E_{\nu}[T_tF]
  =
  \lim_{L\to\infty} \pB{ \E_{\nu_L}[F] - \E_{\nu_L}[T_t^LF] } = 0
  .
  \end{equation}
\end{proof}

The next theorem shows that in finite volume the relative entropy of the law of the dynamics with respect  to the invariant measure
becomes finite after a finite time.

\begin{theorem} \label{thm:ent-claim}
  Let $\lambda>0$ and $\mu\in \R$,
  and let $\alpha,\sigma>0$ be sufficiently small.
  Then there is a constant $\beta>0$ such that for any deterministic initial condition $\varphi_0 \in C^{-\alpha}(\rho)$:
  \begin{equation} \label{e:ent-claim}
    \bbH(m_1^L|m_\infty) \lesssim (1+\|\varphi_0\|_{-\alpha,\rho}^{8}) L^{2\beta},
  \end{equation}
  where $m_t^L$ denotes the distribution of $\varphi_t^L$ under the SPDE~\eqref{e:SPDE-eps} on $\T_L^2$ with initial condition $\varphi_0$.
\end{theorem}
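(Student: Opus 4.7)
The strategy is to introduce the Gaussian free field $\nu^{\rm GFF}_L$ as an intermediate reference measure, exploiting the explicit density $d\nu_L/d\nu^{\rm GFF}_L = Z_L^{-1}\,e^{-V^L(\varphi)}$ with $V^L(\varphi) = \int_{\T_L^2}\bigl(\tfrac{\lambda}{4}\wick{\varphi^4} + \tfrac{\mu}{2}\wick{\varphi^2}\bigr)\,dx$ from~\eqref{e:phi42measure}. Once absolute continuity $m_1^L \ll \nu^{\rm GFF}_L$ is established, the chain rule for relative entropy gives
\begin{equation}
\bbH(m_1^L|\nu_L) = \bbH(m_1^L|\nu^{\rm GFF}_L) + \log Z_L + \E_{m_1^L}[V^L],
\end{equation}
so it suffices to bound each of these three terms by $L^{2\beta}(1+\|\varphi_0\|_{-\alpha,\rho}^8)$.

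The two static contributions are standard. Classical constructive $\varphi^4_2$ field theory gives $0 \leq \log Z_L \lesssim L^2$, where the lower bound follows from Jensen together with $\E_{\nu^{\rm GFF}_L}[\wick{\varphi^n}] = 0$. The expectation $\E_{m_1^L}[V^L]$ is dominated by $L^2$ times suitable weighted Besov norms of $\wick{\varphi_1^2}$ and $\wick{\varphi_1^4}$ tested against the constant function $1$; Corollary~\ref{cor:Wick-power-moment} applied with $r<2$ and $n\in\{2,4\}$ supplies stretched exponential moments with dependence $\exp(c\|\varphi_0\|^r_{-\alpha,\rho})$, which H\"older converts to a polynomial bound $L^2(1+\|\varphi_0\|_{-\alpha,\rho}^q)$ for some $q<8$.

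The heart of the argument is the dynamical bound $\bbH(m_1^L|\nu^{\rm GFF}_L) \lesssim L^{2\beta}(1+\|\varphi_0\|_{-\alpha,\rho}^8)$, which I would prove in two steps. First, compare on $C([0,1], C^{-\alpha}(\T_L^2))$ the path laws $\mathbb{P}^L$ of the $\varphi^4$ SPDE~\eqref{e:SPDE-eps} and $\mathbb{Q}^L$ of the Ornstein--Uhlenbeck process~\eqref{eq_heat_periodised}, both started from $\varphi_0^L$, via Girsanov. The formal expression
\begin{equation}
\bbH(\mathbb{P}^L|\mathbb{Q}^L) = \tfrac14\,\E_{\mathbb{P}^L}\!\int_0^1 \bigl\|\lambda\wick{\varphi_s^3} + \mu\varphi_s\bigr\|^2_{\bbL^2(\T_L^2)}\,ds
\end{equation}
is not classically defined because $\wick{\varphi^3}$ is a distribution, but after noise regularisation at scale $\epsilon>0$ and the Da Prato--Debussche splitting $\varphi = Z + v$ the integrand separates into regular contributions ($v^3,\,v^2 Z,\,v\wick{Z^2},\,\mu v$, controlled by Theorem~\ref{thm:apriori_bounds} and Corollary~\ref{cor:Wick-power-moment}) and singular Gaussian contributions ($\wick{Z^3},\,\mu Z$, absorbed by a preliminary change of measure built from $Z$ alone and bounded using Proposition~\ref{prop_gaussian_bounds} and an It\^o-trick argument). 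Passing to $\epsilon\to 0$ and applying the data processing inequality yields $\bbH(m_1^L|q_1^L)\leq \bbH(\mathbb{P}^L|\mathbb{Q}^L)$, where $q_1^L$ is the Gaussian law of $Z_1^L$ started from $\varphi_0^L$. Second, compare $q_1^L$ to $\nu^{\rm GFF}_L$ by the explicit Gaussian-to-Gaussian Cameron--Martin formula: the mean shift $e^{-A}\varphi_0^L$ lies in the Cameron--Martin space $H^1(\T_L^2)$ (the heat semigroup smooths $C^{-\alpha}(\rho)$ to arbitrary regularity in unit time), and the covariance difference $A^{-1}e^{-2A}$ is trace-class in $d=2$, producing a contribution of order $L^2(1+\|\varphi_0\|_{-\alpha,\rho}^2)$.

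The main obstacle is the Girsanov step with the singular renormalised drift $\wick{\varphi^3}$: the naive path-space entropy is infinite, and taking the limit $\epsilon\to 0$ meaningfully requires the Da Prato--Debussche splitting together with uniform-in-$\epsilon$ Wick-power bounds. This is precisely where the ``simplifying feature'' of $d=2$ advertised in the introduction is used, namely absolute continuity of the fixed-time distribution with respect to the free field in finite volume, in the spirit of \cite{MR2122959,MR4476105}. A secondary burden is propagating the polynomial dependencies: the exponent $\eta$ from Theorem~\ref{thm:apriori_bounds}~(ii), combined with the $\|\varphi_0\|^r_{-\alpha,\rho}$ dependence (with $r<2$) from Proposition~\ref{prop_gaussian_bounds}, accumulates through the comparison so as to fit comfortably under the eighth power, while producing only polynomial growth in $L$ compatible with the exponential decay supplied by the log-Sobolev inequality in the main argument.
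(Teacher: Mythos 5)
Your three-term decomposition $\bbH(m_1^L|\nu_L) = \bbH(m_1^L|\nu^{\rm GFF}_L) + \log Z_L + \E_{m_1^L}[V^L]$ is a regrouping of the paper's decomposition~\eqref{e:ent-decomp}, and the treatment of $\log Z_L$ (Nelson's estimate) and $\E_{m_1^L}[V^L]$ (Besov duality plus Corollary~\ref{cor:Wick-power-moment}) is essentially what the paper does. However, the key step --- bounding $\bbH(m_1^L|\nu^{\rm GFF}_L)$ via an intermediate Gaussian time-$1$ law $q_1^L$ and a Girsanov argument --- has a genuine gap in your approach.

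The gap: you propose that after the Da Prato--Debussche split $\varphi = Z + v$, the terms $v^3$, $v^2 Z$, $v\wick{Z^2}$, $\mu v$ are ``regular contributions'' whose $\bbL^2(\T_L^2)$ norm stays bounded as $\epsilon \to 0$. This is false. In $d=2$ the field $Z$ and its Wick powers are spatial distributions of negative regularity $-\epsilon$ which are \emph{not} in $\bbL^2(\T_L^2)$ almost surely (already $\E\|Z_t\|^2_{\bbL^2(\T_L^2)} = \sum_{k}(1+|k|^2)^{-1} = \infty$), and multiplying by the continuous function $v$ does not improve this. So the would-be Girsanov cost $\int_0^1\|v^2Z_s\|^2_{\bbL^2}\,ds$ is still infinite. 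The split into regular-plus-singular therefore does not isolate the divergence, and the subsequent ``preliminary change of measure built from $Z$ alone'' faces exactly the same infinite Cameron--Martin norm problem for $\wick{Z^3}$. The It\^o trick is a device for controlling $\bbL^p$ norms of time-averaged additive functionals via the Dirichlet form; it is not clear how it would supply the required bound on a path-space relative entropy, and in any case the issue is not the singular Gaussian part alone but all the terms carrying a factor of $Z$.

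What the paper actually does --- and what you are missing --- is the \emph{time-shifted Girsanov} device of Lemma~\ref{lemm_def_tildephi}--\ref{lem:ent-Giransov}, following~\cite{MR4476105}: rather than comparing the path laws of $\varphi$ and $Z$ directly, one rewrites $\varphi_t$ as the time-$t$ marginal of a modified SDE whose drift is $\tilde B_{s,t} = 2\,\mathbf{1}_{[t/2,t]}(s)\,e^{-(t-s)A}B_{2s-t}$. The convolution with $e^{-(t-s)A}$ smooths the distributional drift into a genuine $\bbL^2$ function, so that the Novikov/Girsanov machinery applies and gives $\bbH(m_1^L|q_1^L) \leq \frac12 \E\int_0^1\|e^{-\frac12(1-s)A}B_s\|^2_{\bbL^2}\,ds$, which is finite by the a priori bounds. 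This single idea is what makes the dynamical entropy comparison possible, and there is no obvious substitute along the lines you sketched.

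A secondary imprecision: when you write ``compare $q_1^L$ to $\nu^{\rm GFF}_L$ by the Cameron--Martin formula,'' the quantity you actually need is $\int \log(dq_1^L/d\nu^{\rm GFF}_L)\,dm_1^L$, with the outer integral against the \emph{non-Gaussian} law $m_1^L$, not against $q_1^L$. This is not a relative entropy between two Gaussians; it is the expectation of a Gaussian log-density under $m_1^L$, and bounding the cross term $\int(\varphi, A(1-e^{-2A})^{-1}e^{-A}\varphi_0)\,dm_1^L$ requires the moment bounds of Corollary~\ref{cor:Wick-power-moment} applied to $m_1^L$. Your claimed $L^2(1+\|\varphi_0\|^2_{-\alpha,\rho})$ bound is not what comes out; the paper obtains $L^{2\beta}(1+\|\varphi_0\|^8_{-\alpha,\rho})$ after these moment bounds.
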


The proof of Theorem~\ref{thm:ent-claim} is given in Section~\ref{sec:entropy}.
The exponents $2\beta$ and $8$ on the right-hand side of \eqref{e:ent-claim} are certainly not optimal (but sufficient).
We remark that similarly crude bounds have been used even in the setting of lattice dynamics, see \cite[Lemma~2.2]{MR1370101}.

The uniqueness of the invariant measure stated in Theorem~\ref{thm:main-uniqueness} 
is a direct consequence of the following corollary, which follows from Theorems~\ref{claim:propagation} and~\ref{thm:ent-claim},
together with the uniform bound on the log-Sobolev constant of $\nu_L$ given by Theorem~\ref{thm:LS}.

\begin{corollary} \label{cor:unique}
  Assume the conclusion of Theorem~\ref{thm:LS} holds, i.e.,
  the log-Sobolev constant $\gamma$ of the $\varphi^4$ measure $\nu_L$ on $\T_L^2$ is uniform in $L$.
  Then the infinite-volume SPDE~\eqref{e:SPDE} has a unique invariant measure
  $\nu$ on $C^{-\alpha}(\rho)$, and for any deterministic initial condition $\varphi_0 \in C^{-\alpha}(\rho)$ and $f\in C_c^\infty(\R^2)$,
  for sufficiently large $t>0$ (depending on $f$):
  \begin{equation} \label{e:unique-rate}
    \Big|\E\big[e^{i(f,\varphi_t)}\big] -\E_{\nu}\big[e^{i(f,\varphi)}\big]\Big|
    \lesssim
    (1+\|\varphi_0\|_{-\alpha,\rho}^{4})e^{-\gamma t(1+o(1))}
    ,
  \end{equation}
  with the implied constants depending on $\lambda,\mu,\alpha,\sigma$.
\end{corollary}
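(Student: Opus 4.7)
\emph{Overall strategy.} My plan is to combine Theorems~\ref{claim:propagation}, \ref{thm:ent-claim} and \ref{thm:LS} via the Holley--Stroock template. First I would use the tightness of $(\nu_L)$ from Corollary~\ref{cor:nuL-tight} to extract a subsequence along which $\nu_{L_k}$ converges weakly to some $\nu$; by Corollary~\ref{cor:nu-invariant} this $\nu$ is an invariant measure of the infinite volume SPDE, and by Corollary~\ref{cor:nuL-tight} it inherits the stretched-exponential moment bound $\E_\nu[\exp(\epsilon \|\varphi\|_{-\alpha,\rho}^{2-\theta})] < \infty$. The argument then reduces to proving \eqref{e:unique-rate} for this specific $\nu$; uniqueness of the invariant measure on $C^{-\alpha}(\rho)$ will follow by invariance and dominated convergence.

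\emph{Finite-volume contraction via Pinsker.} The first computation is to convert the uniform log-Sobolev bound into a contraction of the finite volume semigroup against its invariant measure. Applying Pinsker's inequality to the real and imaginary parts of $F = e^{i(f,\cdot)}$, together with the entropy decay \eqref{eq_decay_entropy} and the crude bound of Theorem~\ref{thm:ent-claim}, this yields for every $\psi \in C^{-\alpha}(\rho)$ and every $t \geq 1$:
\begin{equation*}
  |T_t^L F(\psi^L) - \E_{\nu_L}[F]| \lesssim L^\beta (1 + \|\psi\|_{-\alpha,\rho}^{4}) e^{-\gamma (t-1)}.
\end{equation*}

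\emph{Combining with propagation.} Invariance of $\nu$ gives $\E_\nu[F] = \int T_t F(\varphi) \, d\nu(\varphi)$, so it suffices to estimate $\int |T_t F(\varphi_0) - T_t F(\varphi)| \, d\nu(\varphi)$. For each $\varphi$ I would insert the intermediate quantities $T_t^L F(\varphi_0^L)$, $\E_{\nu_L}[F]$, $T_t^L F(\varphi^L)$; the four resulting differences are controlled by Theorem~\ref{claim:propagation} (for the comparisons between infinite and finite volume) and by the display above (for the comparisons with $\E_{\nu_L}[F]$). The natural scaling is $L = t^{C_0}$ with $C_0$ from Theorem~\ref{claim:propagation}: both propagation errors $e^{-\delta L^2/t}$ and $e^{-t^2 \log L}$ then decay super-exponentially in $t$, while the entropy contribution becomes $t^{C_0 \beta}(1+\|\cdot\|_{-\alpha,\rho}^{4}) e^{-\gamma t} = (1+\|\cdot\|_{-\alpha,\rho}^{4})e^{-\gamma t(1+o(1))}$ with $o(1)$ depending only on $t$. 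Integration against $d\nu(\varphi)$ incurs a finite multiplicative factor since all polynomial moments of $\|\varphi\|_{-\alpha,\rho}$, as well as $\E_\nu[\exp(C\|\varphi\|_{-\alpha,\rho})]$, are finite by Corollary~\ref{cor:nuL-tight}. This yields \eqref{e:unique-rate}.

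\emph{Uniqueness and main obstacle.} For uniqueness, if $\tilde\nu$ is any invariant measure on $C^{-\alpha}(\rho)$, invariance gives $\E_{\tilde\nu}[F] = \E_{\tilde\nu}[T_t F]$; since $|T_t F| \leq 1$ and $T_t F(\psi) \to \E_\nu[F]$ pointwise in $\psi$ by \eqref{e:unique-rate}, dominated convergence forces $\E_{\tilde\nu}[F] = \E_\nu[F]$ for every $f \in C_c^\infty(\R^2)$, so $\tilde\nu = \nu$. The main technical point is the scale tuning: $L$ must simultaneously be at least $t^{C_0}$ (so the propagation estimate applies with super-exponentially small error) and at most polynomial in $t$ (so that the entropy prefactor $L^\beta e^{-\gamma t}$ preserves the sharp log-Sobolev rate); fortunately $L = t^{C_0}$ satisfies both, so the bulk of the work has already been carried out in Theorems~\ref{claim:propagation} and \ref{thm:ent-claim}. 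A secondary subtlety is that the $e^{C\|\varphi\|}$ prefactor in the propagation error must be integrated against $\nu$, which crucially requires that the stretched-exponential moments of Corollary~\ref{cor:nuL-tight} are inherited by the limit $\nu$.
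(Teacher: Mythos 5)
Your proposal is correct and relies on exactly the same ingredients as the paper (tightness of $(\nu_L)$, the Pinsker/log-Sobolev/entropy contraction, the propagation estimate, and dominated convergence for uniqueness), but the organisation of the rate estimate is different in a worthwhile way. The paper first fixes the scale $L(R_f,t) = C_0(t^{C_0}+R_f+1)$, then extracts a diagonal subsequence $t_k\to\infty$ along which $\nu_{L(R_f,t_k)}\to\nu$ weakly, so that a priori the limit $\nu$ depends on $f$ (and the subsequent uniqueness argument is needed to remove this dependence); the rate \eqref{e:unique-rate} is then obtained by averaging the finite-volume estimate \eqref{eq_main_estimate} under $\varphi_0\sim\nu$ and combining with the triangle inequality. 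You instead fix $\nu$ once and for all from a single tightness subsequence, write $\E_\nu[F]=\E_\nu[T_tF]$ by invariance, and telescope $T_tF(\varphi_0)-T_tF(\varphi)$ through $T_t^LF(\varphi_0^L)$, $\E_{\nu_L}[F]$, $T_t^LF(\varphi^L)$; the key observation is that $\E_{\nu_L}[F]$ cancels internally, so the scale $L=L(t)$ used in the rate estimate is completely decoupled from whichever scales realise the weak convergence $\nu_{L_k}\to\nu$. This buys a slightly cleaner argument and avoids the implicit $f$-dependence of $\nu$ in the intermediate steps. The technical content, in particular the need to integrate the propagation error (which carries an $e^{C\|\varphi\|_{-\alpha,\rho}}$ prefactor) against $\nu$, is identical; you handle it correctly by appealing to the stretched-exponential moments of Corollary~\ref{cor:nuL-tight}, which do pass to the weak limit. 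One shared small caveat, inherited from the paper: after plugging in $L\sim t^{C_0}$, the propagation term $e^{C\|\varphi_0\|_{-\alpha,\rho}}e^{-t^2\log L}$ applied to the fixed deterministic initial condition $\varphi_0$ is super-exponentially small in $t$ for each fixed $\varphi_0$ but is not uniformly dominated by $(1+\|\varphi_0\|_{-\alpha,\rho}^4)e^{-\gamma t(1+o(1))}$ over all $\varphi_0$, so strictly speaking the threshold time in \eqref{e:unique-rate} depends on $\varphi_0$ as well as $f$; this is cosmetic and affects the paper's own final display in the same way.
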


\begin{proof}
  We first prove that there is a probability measure on $C^{-\alpha}(\rho)$ which the dynamics converges to, starting from any initial condition
  in this space, and then use it to conclude on uniqueness of the invariant measure.  
  By the assumption that the conclusion of Theorem~\ref{thm:LS} holds,
  the log-Sobolev constant of $\nu_L$ is bounded below by some $\gamma>0$, uniformly in $L$,
  and thus the relative entropy decays exponentially in time, uniformly in $L$. 
  Together with the entropy estimate at time $1$ stated in Theorem~\ref{thm:ent-claim} this implies
  that for any deterministic initial condition $\varphi_0 \in C^{-\alpha}(\rho)$, 
\begin{equation}
  \bbH(m_t^L|\nu_L) \leq e^{-2\gamma(t-1)} \bbH(m_1^L|\nu_L) \lesssim (1+\|\varphi_0\|_{-\alpha,\rho}^8) L^{2\beta} e^{-2\gamma(t-1)}.
\end{equation}
By Pinsker's inequality, 
\begin{equation}
  \|m_t^L-\nu_L\|_{\rm TV} \leq \sqrt{2\, \bbH(m_t^L|\nu_L)} \lesssim (1+\|\varphi_0\|_{-\alpha,\rho}^4) L^{\beta} e^{-\gamma t},
\end{equation}
where we have absorbed the factor $e^{\gamma}$ in the implicit constant. In particular, 
for any $f\in C^\infty_c$:
\begin{equation}
  \big|\E[e^{i(f,\varphi_t^{L})}]-\E_{\nu_{L}}[e^{i(f,\varphi)}]\big| \lesssim (1+\|\varphi_0\|_{-\alpha,\rho}^4)L^\beta e^{-\gamma t}.
\end{equation}
To satisfy the conditions of Theorem~\ref{claim:propagation}, 
we choose $L= L(R_f,t) = C_0 (t^{C_0} + R_f+1)$ with large enough constant $C_0>0$,
where $\delta>0$ is the constant appearing in Theorem~\ref{claim:propagation}
and $R_f$ is such that $f$ has support in the ball $B(0,R_f)$.
Then, for all $t\geq 1$ and all initial conditions $\varphi_0 \in C^{-\alpha}(\rho)$:
\begin{align}
\big|\E\big[e^{i(f,\varphi_t)}\big]
-\E_{\nu_{L(R_f,t)}}\big[e^{i(f,\varphi)}\big]\big|
&\leq 
(1+\|\varphi_0\|_{-\alpha,\rho}^{4}) L(R_f,t)^{\beta} e^{-\gamma t} 
\nnb
&\qquad + \|f\|_\alpha (1+\|\varphi_0\|^3_{-\alpha,\rho}) e^{-\delta L(R_f,t)^2/t}
  \nnb
&\qquad +  e^{C\|\varphi_0\|_{-\alpha,\rho}} e^{-t^2\log L(R_f,t)}
.
\label{eq_main_estimate}
\end{align}

By Corollary~\ref{cor:nuL-tight}, the family of probability measures $(\nu_L)$ on $C^{-\alpha}(\rho)$ is tight.
It follows that there is a limit point $\nu$ and a sequence $t_k \to \infty$ such that $\nu_{L(R_f,t_k)} \to \nu$ as $k\to \infty$,
weakly as probability measures on $C^{-\alpha}(\rho)$.
Together with \eqref{eq_main_estimate} this gives, 
for any  $\varphi_0\in C^{-\alpha}(\rho)$:
\begin{equation}
  \label{eq_main_estimate_bis}
  \lim_{k\to\infty} \E\big[e^{i(f,\varphi_{t_k})}\big] = \E_{\nu}\big[e^{i(f,\varphi)}\big] .
\end{equation}
We now show that $\nu$ is in fact the only invariant measure of the $\varphi^4$ SPDE on $\R^2$. 
The fact that it is indeed invariant is Corollary~\ref{cor:nu-invariant}. 
To show uniqueness, let $\tilde \nu$ be any invariant probability measure supported on $C^{-\alpha}(\rho)$.
By Corollary~\ref{cor:infvol-existence}, the SPDE on $\R^2$ has a global solution starting from any initial condition in $\supp\tilde \nu \subset C^{-\alpha}(\rho)$. 
By invariance in the first equality below, 
dominated convergence in the second equality, and~\eqref{eq_main_estimate_bis} in the third,
\begin{equation}
  \E_{\tilde \nu}[F]
  = \lim_{k\to\infty}  \E_{\tilde \nu}[T_{t_k} F]
  = \E_{\tilde \nu}[\lim_{k\to\infty} T_{t_k} F]
  = \E_{\tilde \nu}[\E_{\nu} [F]] = \E_{\nu}[F],
\end{equation}
where $F(\varphi)=e^{i(f,\varphi)}$ for any $f\in C_c^\infty$
and
given an initial condition $\varphi_0\in C^{-\alpha}(\rho)$ we denote by $T_tF(\varphi_0)=\E_{\varphi_{t=0}=\varphi}[F(\varphi_t)]$
the semigroup for the $\varphi^4$ dynamics on $\R^2$. Since such test functions $F$ characterise the measure, we conclude that $\tilde\nu=\nu$.

Finally, we observe that equation~\eqref{eq_main_estimate} also provides the rate of convergence
stated in \eqref{e:unique-rate}.
Indeed, by Corollary~\ref{cor:nuL-tight}, we have the bound:
\begin{equation}
\E_{\nu}\Big[ \qa{\exp\Big[ C_\theta\|\varphi\|_{-\alpha,\rho}^{2-\theta}\, \Big]} 
<\infty,
\qquad 
\theta\in(0,2)
.
\end{equation}
Thus \eqref{eq_main_estimate} can be averaged under $\varphi_0\sim \nu$.
Using that $\nu$ is invariant (Corollary~\ref{cor:nu-invariant}), we find:
\begin{equation}
\Big|\E_{\nu}\big[e^{i(f,\varphi)}\big] - \E_{\nu_{L(R_f,t)}}\big[e^{i(f,\varphi)}\big]\Big|
\lesssim
L(R_f,t)^{\beta}e^{-\gamma t} + \|f\|_{\alpha} e^{-\delta L(R_f,t)^2/t} +e^{-t^2\log L(R_f,t)} 
.
\end{equation}
Plugging this bound back into~\eqref{eq_main_estimate} yields:
\begin{align}
&\Big|\E_{\nu}\big[e^{i(f,\varphi)}\big] -\E_{\nu}\big[e^{i(f,\varphi_t)}\big]\Big|
\nnb
&\qquad
\lesssim
(1+\|\varphi_0\|_{-\alpha,\rho}^{4}) L(R_f,t)^{\beta}e^{-\gamma t} + \|f\|_\alpha e^{-\delta L(R_f,t)^2/t} +e^{-t^2\log L(R_f,t)} 
,
\end{align}
which implies the claim.
\end{proof}

\section{Propagation speed estimate}
\label{sec:propagation}

In this section we prove Theorem~\ref{claim:propagation}. 
Recall the Da Prato-Debussche decomposition $\varphi = Z+v$ from Section~\ref{sec_background}. 
Since $|e^{ia} -e^{ib}| \leq |a-b|$ and $|e^{ia}|\leq 1$, 
for any event $\Omega$,
\begin{align} \label{e:propagation-firststep}
  \absa{\E\qb{e^{i(f,\varphi_t)} -e ^{i(f,\varphi^L_t)}}}
  &\leq \E\qb{ |(Z_t+v_t-Z_t^L-v_t^L,f)| {{\bf 1}_\Omega}} + 2 \P\qb{\Omega^c}
    \nnb
  &\leq \E\qb{ (Z_t-Z_t^L,f)^2}^{1/2} + \E\qb{ |(v_t-v_t^L,f)| {\bf 1}_{\Omega}} + 2 \P\qb{\Omega^c}.
\end{align}

The required estimates for the Gaussian terms $Z-Z^L$ are given by Proposition~\ref{prop_gaussian_difference}.
Given $f \in C_c^\infty(\R^2)$, 
recall that $R_f$ denotes the radius of the smallest ball containing the support $f$.
Proposition~\ref{prop_gaussian_difference} implies that if $L \geq 2R_f\vee 12$ and $t\geq 1$ then
\begin{equation}
\E\Big[\big(Z_t-Z^L_t,f\big)^2\Big]
\lesssim
\|f\|_{\alpha}^2 (1+\|\varphi_0\|^{2}_{-\alpha,\rho})\, e^{-cL^2/t}
.
\end{equation}
The H\"older norm $\|\cdot\|_{\alpha}$ and its weighted and local H\"older versions $\|\cdot\|_{\alpha,\rho}$ and $\|\cdot\|_{\alpha,C}$
are defined in Section~\ref{sec:norms-def} and $\sigma$ will always refer to the exponent in the weight $\rho(x)=(1+|x|^2)^{-\sigma/2}$.

Thus it only remains to estimate the remainder contribution from $v-v^L$.  
We will be able to bound this difference on an event of probability close to $1$, 
which is responsible for the second term in the bound of Theorem~\ref{claim:propagation}.
This event is determined in the following lemma and in Proposition~\ref{prop_finite_PS}.

\begin{lemma}[General bounds]\label{lemm_general_bounds}
Let $\alpha,\sigma>0$, $\alpha'>\alpha$ and assume $\alpha$ is sufficiently small that $\eta=\frac{1+\alpha'}{1-3\alpha}$ is well defined. 
The following hold for all $t\geq 0$ and $L\geq \max\{3,t\}$. 

\smallskip
\noindent
(i)
There is $\kappa>0$ and an event $\Omega'_{t,L}(\varphi_0,\alpha,\alpha')$ with probability $1-O(e^{-t^2\log L})$ such that any $(\varphi_s)_{s\leq t}\in\Omega'_{t,L}(\varphi_0,\alpha,\alpha')$ satisfies:
\begin{align}
\sup_{s\leq t}\max\Big\{\|v_s\|_{\alpha',\rho^\eta}, \|v^L_s\|_{\alpha',\rho^\eta}\Big\}
&\leq 
(\log L)^{\kappa} (1+t)^\kappa,
\label{eq_bounds_YZ}
\\
\sup_{s\leq t}\, (s^{n\alpha}\wedge 1)\, \max\Big\{\, \|\wick{(Z_s)^n}\|_{-n\alpha,\rho^n}, \|\wick{(Z^L_s)^n}\|_{-n\alpha,\rho^n}\Big\}
&\leq
(\log L)^{n\kappa} (1+t)^{n\kappa},\qquad 
n\in\{1,2,3\}
\nonumber
.
\end{align}

\smallskip
\noindent
(ii)
More precisely, 
for any $\alpha,\alpha'$ small enough, 
one can choose $\kappa=5\eta$ and $\Omega'_{t,L}(\varphi_0,\alpha,\alpha')$ depending only on $\alpha,\alpha',\sigma$ such that, for some $\epsilon'_0>0$ and $C>0$ independent of $t,\varphi_0,L$: 
\begin{equation}
\bbP\big((\varphi_s)_{s\leq t}\notin \Omega'_t(\varphi_0,\alpha,\alpha')\big)
\leq 
C\exp\Big[ \epsilon'_0 \|\varphi_0\|_{-\alpha,\rho}\Big] e^{-t^2\log L }
.
\label{eq_bound_prob_Omega'}
\end{equation}
\end{lemma}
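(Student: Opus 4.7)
The plan is to define $\Omega'_{t,L}(\varphi_0,\alpha,\alpha')$ as the intersection of six events controlling the Wick powers $\wick{(Z_s^L)^n}$ and $\wick{(Z_s)^n}$ for $n\in\{1,2,3\}$, and then to invoke Theorem~\ref{thm:apriori_bounds} on this intersection to promote those bounds to control of $v^L$ and $v$. The tail estimates for the Wick powers come from Proposition~\ref{prop_gaussian_bounds} combined with Markov's inequality, and the choice of exponent $r=1$ in that proposition is what yields the linear dependence on $\|\varphi_0\|_{-\alpha,\rho}$ in~\eqref{eq_bound_prob_Omega'}.

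Concretely, for each $n\in\{1,2,3\}$ and both $L$ finite and $L=\infty$, Proposition~\ref{prop_gaussian_bounds} with $r=1$ gives
\begin{equation}
\E\Big[\exp\big(\epsilon_1\, M_n^{1/n}\big)\Big]\lesssim (1+t)^\beta \exp\big(\epsilon'_1\|\varphi_0\|_{-\alpha,\rho}\big),\qquad M_n := \sup_{s\leq t}(s^{n\alpha}\wedge 1)\|\wick{(Z_s^L)^n}\|_{-n\alpha,\rho^n}.
\end{equation}
Fix $\tilde\kappa\geq 2$ and set $A_n := \{M_n > (\log L)^{n\tilde\kappa}(1+t)^{n\tilde\kappa}\}$. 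Markov's inequality then yields
\begin{equation}
\P[A_n]\lesssim (1+t)^\beta\exp\big(\epsilon'_1\|\varphi_0\|_{-\alpha,\rho}\big)\exp\big(-\epsilon_1(\log L)^{\tilde\kappa}(1+t)^{\tilde\kappa}\big).
\end{equation}
Since $\tilde\kappa\geq 2$ and $\log L\geq \log 3>0$, the last exponential absorbs the $(1+t)^\beta$ prefactor and is bounded above by $C\,e^{-t^2\log L}$. A union bound over the six relevant events (varying $n$ and the choice $L$ vs $\infty$) then produces~\eqref{eq_bound_prob_Omega'}.

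On the complement of this union the Wick bound $(s^{n\alpha}\wedge 1)\|\wick{(Z_s^L)^n}\|_{-n\alpha,\rho^n}\leq (\log L)^{n\tilde\kappa}(1+t)^{n\tilde\kappa}$ holds, which is the second line of~\eqref{eq_bounds_YZ} with $\tilde\kappa$ in place of $\kappa$. I then apply~\eqref{eq_desired_bounds-rhorho} of Theorem~\ref{thm:apriori_bounds}(i) with weight $\rho^\eta$ in place of $\rho$; this requires control of the Wick norms with weight $(\rho^\eta)^{n/\eta}=\rho^n$, which is exactly what the intersection provides. The result is
\begin{equation}
\sup_{s\leq t}\|v^L_s\|_{\alpha',\rho^\eta}\lesssim 1+\big((\log L)^{\tilde\kappa}(1+t)^{\tilde\kappa}\big)^\eta=1+(\log L)^{\tilde\kappa\eta}(1+t)^{\tilde\kappa\eta},
\end{equation}
and likewise for $v$. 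Setting $\kappa=\tilde\kappa\eta$ and using $\eta\geq 1$ to relax the Wick-power bound from exponent $n\tilde\kappa$ to $n\kappa$ yields both lines of~\eqref{eq_bounds_YZ} with a common $\kappa$. Choosing $\tilde\kappa=5$ gives $\kappa=5\eta$ as stated in~(ii); part~(i) follows for any $\kappa\geq 5\eta$.

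The only delicate point is the calibration of $\tilde\kappa$: it must be large enough that $(\log L)^{\tilde\kappa}(1+t)^{\tilde\kappa}\gtrsim t^2\log L$ (already ensured by $\tilde\kappa\geq 2$), and simultaneously that after inflation by the factor $\eta$ in the a priori bound the final exponent $\kappa=\tilde\kappa\eta$ still matches the form of~\eqref{eq_bounds_YZ}. Since $\eta$ is close to $1$ for small $\alpha,\alpha'$, this is easily arranged. No singular-in-$t$ prefactor is generated because the small-$s$ singularity of $\|\wick{(Z_s^L)^n}\|_{-n\alpha,\rho^n}$ is absorbed into the multiplicative factor $(s^{n\alpha}\wedge 1)$ already present in the statement of Proposition~\ref{prop_gaussian_bounds}.
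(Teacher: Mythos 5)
Your proof is correct and follows essentially the same route as the paper: exponential moments from Proposition~\ref{prop_gaussian_bounds} with $r=1$, Markov's inequality for the tail on the Wick powers, and the a priori bound~\eqref{eq_desired_bounds-rhorho} (with $\rho$ replaced by $\rho^\eta$, so that the Wick weight becomes $\rho^n$) to propagate the bound to $v$ and $v^L$. If anything, your use of a single threshold $\tilde\kappa$ for the six Wick-power events, then setting $\kappa=\tilde\kappa\eta$, is slightly cleaner than the paper's, which quietly uses a different threshold for the Gaussian part than for the $v$ part. Both you and the paper gloss over one harmless point: whether the specific numerical choice ($\tilde\kappa\ge 2$, resp.\ $\kappa=5\eta$) actually makes $\epsilon_1(\log L)^{\tilde\kappa}(1+t)^{\tilde\kappa}\ge t^2\log L+\beta\log(1+t)$ depends on the unquantified constants $\epsilon_1,\beta$ from Proposition~\ref{prop_gaussian_bounds}; the honest statement is that some sufficiently large absolute multiple of $\eta$ works, which is all that is needed downstream.
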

We track the dependence on $\alpha$ in the notation $\Omega'_{t,L}(\varphi_0,\alpha)$ since we will later need this event with different values of $\alpha$. 
Lemma~\ref{lemm_general_bounds} essentially follows from the a priori bounds of Sections~\ref{sec_gaussian_estimates}--\ref{sec_background} and is proven in Section~\ref{sec_proof_lem_general_bounds}.

\begin{remark}
All terms in~\eqref{eq_bounds_YZ} are on average bounded by some power of $\log(1+t)$. 
The much looser polynomial bounds stated  in~\eqref{eq_bounds_YZ}  are useful to ensure that the exceptional  event has negligible probability 
$O(e^{- t^2\log L})$, 
where the $t^2$ factor ensures that this terms vanishes mush faster than the speed at which the dynamics converges to its steady state in~\eqref{eq_main_estimate} (which is exponentially in $t$), 
and the factor $\log L$ is so that the finite propagation speed estimate of Theorem~\ref{claim:propagation} improves as $L$ becomes large for fixed $t$.
The $t^2$ in the exponential is however arbitrary, 
we could for instance have any $t^r$, $r\geq 2$ by taking a larger $\kappa$. The $\log L$ could similarly be improved, for instance to a small power of $L$.

The stretched exponential dependence on $\varphi_0$ is useful in Section~\ref{sec_main_argument} to get a rate of convergence of the dynamics. 
The fact that we can enforce it comes for free, again up to increasing $\kappa$. 

Note finally that for the Gaussian terms $\wick{(Z^L)^n}$ the value of $\kappa$ is explicit:  $\kappa= n$. 
The fact that $\kappa$ is not explicit above comes from the a priori bounds on $v$, Theorem~\ref{thm:apriori_bounds}.

\end{remark}
A bound on the difference $(v_t-v_t^L,f)$ when the bounds of Lemma~\ref{lemm_general_bounds} hold is
the main result of this section, and is stated next.

\begin{proposition}\label{prop_finite_PS}
  Let $\alpha,\sigma>0$ be sufficiently small.
  Let $\varphi_0 \in C^{-\alpha}(\rho)$, let $f \in C_c^\infty(\R^2)$ and let $t>0$. 
  There are then $C_0,\delta>0$ independent of $f,\varphi_0,t,L$   
  and an event $\Omega_{t,L}(\varphi_0)$ such that the following hold if $L\geq C_0 (t^{C_0}+R_f+1)$:

\smallskip\noindent (i)
There is $C_1$ independent of $f,t,L,\varphi_0$ such that, on $\Omega_{t,L}(\varphi_0)$,
\begin{equation}
|(v_t-v_t^L,f)|
\leq 
C_1\|f\|_{\alpha}(1+\|\varphi_0\|_{-\alpha,\rho}^3)\, e^{-\delta L^2/t}
.
\end{equation}

\smallskip\noindent (ii)
  There are $C_2,\epsilon_0>0$ independent of $f,t,L,\varphi_0$ such that the event $\Omega_{t,L}(\varphi_0)$ satisfies:
  \begin{equation}
  \bbP\big((\varphi_s)_{s\leq t}\notin \Omega_{t,L}(\varphi_0)\big)
   \leq 
  C_2\exp\Big[ \epsilon_0 \|\varphi_0\|_{-\alpha,\rho}\Big]\, e^{-t^2\log L}
  .
  \label{eq_bound_prob_Omega}
\end{equation}
\end{proposition}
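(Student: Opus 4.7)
The plan is to derive a Duhamel equation for $w_s := v_s - v_s^L$, split its right-hand side into a part linear in $w$ with coefficients polylogarithmically controlled on $\Omega'_{t,L}$ and a pure error part that is $O(e^{-cL^2/t})$ on $[-\tfrac{2L}{3},\tfrac{2L}{3}]^2$, and then to close a Gronwall loop using that the heat semigroup spreads at speed $O(\sqrt{u\log L})$ over time $u$.

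Subtracting \eqref{e:Y-Duhamel} from \eqref{e:YL-Duhamel} and expanding the cubic differences via $v^3-(v^L)^3 = w(v^2+vv^L+(v^L)^2)$, $v^2Z-(v^L)^2Z^L = w(v+v^L)Z + (v^L)^2(Z-Z^L)$, and $v\wick{Z^2}-v^L\wick{(Z^L)^2} = w\wick{Z^2} + v^L(\wick{Z^2}-\wick{(Z^L)^2})$, one obtains $w_t = \int_0^t e^{-(t-s)A}[-\mu w_s - \lambda(G^{\mathrm{lin}}_s + G^{\mathrm{err}}_s)]\,ds$, with $G^{\mathrm{lin}}_s$ linear in $w_s$ with coefficients built from $v_s, v_s^L, Z_s, \wick{Z_s^2}$, and
\begin{equation*}
G^{\mathrm{err}}_s = 3(v_s^L)^2(Z_s-Z_s^L) + 3 v_s^L\bigl(\wick{Z_s^2}-\wick{(Z_s^L)^2}\bigr) + \bigl(\wick{Z_s^3}-\wick{(Z_s^L)^3}\bigr).
\end{equation*}
I would take $\Omega_{t,L}(\varphi_0) := \Omega'_{t,L}(\varphi_0,\alpha,\alpha') \cap \Omega''_{t,L}$, where $\Omega''_{t,L}$ enforces $\|\wick{Z_s^n}-\wick{(Z_s^L)^n}\|_{-n\alpha,[-\tfrac{2L}{3},\tfrac{2L}{3}]^2} \le (1+\|\varphi_0\|^n_{-\alpha,\rho})e^{-cL^2/t}$ for $n\in\{1,2,3\}$ and all $s\in[0,t]$ (imposed on a dense discrete grid of times and interpolated using parabolic time regularity of the Gaussian differences). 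Chebyshev applied to Proposition~\ref{prop_gaussian_difference} combined with Lemma~\ref{lemm_general_bounds}~(ii) then yields \eqref{eq_bound_prob_Omega} provided $L \ge C_0(t^{C_0}+1)$.

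On $\Omega_{t,L}(\varphi_0)$, I would exploit the Gaussian heat-kernel tail to localize Duhamel: for $r(s) := R_f + C\sqrt{(t-s)\log L}$, which remains $\le 2L/3$ whenever $L \ge C_0(t^{C_0}+R_f+1)$,
\begin{equation*}
\|e^{-(t-s)A}h\|_{\bbL^\infty(B_{R_f})} \lesssim \|h\|_{\bbL^\infty(B_{r(s)})} + e^{-L^2/(Ct)}\|h\|_{\bbL^\infty(\T_L^2)}
\end{equation*}
for any $h$ on the torus. Applied to $G^{\mathrm{lin}}_s + G^{\mathrm{err}}_s$, with $G^{\mathrm{lin}}_s$ estimated by $(\log L)^{\kappa'}\|w_s\|_{\bbL^\infty(B_{r(s)})}$ via Lemma~\ref{lemm_general_bounds} and the Besov multiplicative inequality (to control the distributional product $w_s\wick{Z_s^2}$ on balls, using that $w_s$ has positive regularity $\alpha'>2\alpha$), and $G^{\mathrm{err}}_s$ bounded by $(\log L)^{\kappa'}(1+\|\varphi_0\|^3_{-\alpha,\rho})e^{-cL^2/t}$ via $\Omega''_{t,L}$, one gets a Gronwall-type inequality for $s\mapsto \|w_s\|_{\bbL^\infty(B_{r(s)})}$. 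Iterating it yields $\|w_t\|_{\bbL^\infty(B_{R_f})} \lesssim e^{Ct(\log L)^{\kappa'}}(1+\|\varphi_0\|^3_{-\alpha,\rho})e^{-cL^2/t}$, and the prefactor is absorbed into $e^{-\delta L^2/t}$ with a slightly smaller $\delta>0$ once $L \ge C_0(t^{C_0}+R_f+1)$. Pairing $|(w_t,f)| \le \|f\|_{\bbL^1}\|w_t\|_{\bbL^\infty(B_{R_f})} \lesssim R_f^2\|f\|_\alpha\|w_t\|_{\bbL^\infty(B_{R_f})}$ and absorbing the polynomial-in-$R_f$ factor into the exponential by enlarging $C_0$ gives (i).

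The main technical obstacle is the Gronwall iteration on the growing family of balls $B_{r(s)}$: the distributional products $w_s\wick{(Z_s^L)^n}$ need to be estimated through a local Besov--H\"older multiplicative inequality with constants uniform in the ball, and one must verify that the enlarged balls remain inside $[-\tfrac{2L}{3},\tfrac{2L}{3}]^2$ throughout $[0,t]$. This last geometric constraint is precisely what forces $L$ to grow polynomially in $t$.
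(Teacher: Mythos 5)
Your decomposition of $v - v^L$ into a term linear in $w$ with polylogarithmically controlled coefficients and a pure error term $G^{\mathrm{err}}$, and your choice of good event built from Lemma~\ref{lemm_general_bounds} together with the finite-volume difference estimate from Proposition~\ref{prop_gaussian_difference}, both match the paper exactly. The divergence is in how the $e^{-\delta L^2/t}$ decay is extracted, and your Gronwall-on-shrinking-balls step contains a genuine error.

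Your claimed heat-kernel localization
\begin{equation*}
\|e^{-(t-s)A}h\|_{\bbL^\infty(B_{R_f})} \lesssim \|h\|_{\bbL^\infty(B_{r(s)})} + e^{-L^2/(Ct)}\|h\|_{\bbL^\infty(\T_L^2)},
\qquad r(s) = R_f + C\sqrt{(t-s)\log L},
\end{equation*}
is not correct. The Gaussian tail of $e^{-(t-s)A}$ over distance $r(s)-R_f$ and time $t-s$ gives an error of size $e^{-(r(s)-R_f)^2/(4(t-s))} = L^{-C^2/4}$, which is only polynomial in $L$ and hence is hugely larger than the target $e^{-\delta L^2/t}$ in the regime $L \gtrsim t^{C_0}$. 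To produce a genuinely $e^{-cL^2/t}$ error from a single application of the heat kernel you would need $r(s) - R_f$ comparable to $L$; but then the balls $B_{r(s)}$ do not nest correctly under the Gronwall iteration. Concretely, for any monotone cone $r(\cdot)$ interpolating between $r(t)=R_f$ and $r(0)\lesssim L$, there must exist intermediate time increments $s-u$ over which $r(u)-r(s)\lesssim L(s-u)/t$; the heat-kernel leakage over that increment is of order $e^{-cL^2(s-u)/t^2}$, which tends to $1$ as $s-u\to 0$. Summing these per-step leakages over the iteration destroys the $e^{-\delta L^2/t}$ bound. This is not an artefact of a loose estimate: any Gronwall scheme that controls $w_s$ on a deterministic shrinking region at each intermediate time will suffer from the same obstruction, because the tail control must be paid at every intermediate scale rather than over the whole path at once.

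The paper circumvents this precisely by not iterating in time. From the equation for $E = v - v^L$ it passes to the representation
\begin{equation*}
(E_t,f) = -\lambda\int_0^t (F_s, \Pg_{s,t}f)\, ds,
\end{equation*}
where $\Pg_{s,t}$ is the backward semigroup generated by $\Delta - \lambda U_\cdot$, and then writes $\Pg_{s,t}f$ via a (regularised, Girsanov-transformed) Feynman--Kac formula: $\Pg_{s,t}f(x)$ is an expectation over paths of an auxiliary diffusion started at $x$. The crucial decay $e^{-cL^{2k}/t}$ then comes in one shot, as a large-deviation estimate for the path starting at distance $\sim L^k$ from the support of $f$ and reaching it within time $t$ (Lemma~\ref{lemm_LD_X}), after which the annular decomposition $(\cA_k)_{k\ge 0}$ separates the near-field contribution, where $F_s$ itself is $O(e^{-cL^2/t})$, from the far-field contribution, where the path tail is small. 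Making the Feynman--Kac representation rigorous is itself nontrivial, since the potential $U$ has regularity $0^-$: the paper mollifies $U$ at scale $R_0$, applies the partial Girsanov transform of \cite{MR3592748,KoenigPerkowskivanZuiljen_PAM} to replace $U$ by the more regular $\psi$ solving \eqref{eq_def_psi}, and then needs the gradient estimate on the auxiliary diffusion (Lemma~\ref{lemm_gradientX}, proved via the conjugate SDE of \cite{MR2593276}) to control the H\"older seminorm of $\Pg_{s,t}f$. None of these ingredients appear in your proposal, and they are not cosmetic; they are what allows the propagation speed to be read off from a single large-deviation estimate rather than an accumulation of per-step heat-kernel tails.
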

\begin{remark}
The statement of Proposition~\ref{prop_finite_PS} is in fact valid even if the infinite volume remainder equation~\eqref{e:Y-Duhamel} admits several different solutions $v_t$, 
in which case it holds for each of them. 
\end{remark}
Before proving Proposition~\ref{prop_finite_PS}, note that Theorem~\ref{claim:propagation}
follows from the proposition by choosing $\Omega = \Omega_{t,L}(\varphi_0)$  in \eqref{e:propagation-firststep}:
\begin{equation} \label{e:propagation-bis}
  \absa{\E\qb{e^{i(f,\varphi_t)} -e ^{i(f,\varphi^L_t)}}}
  \lesssim
  \|f\|_\alpha (1+\|\varphi_0\|^3_{-\alpha,\rho}) \, e^{-\delta L^2/t}+
  \exp\Big[ \epsilon_{0}\|\varphi_0\|_{-\alpha,\rho}\Big]\, e^{-t^2\log L}.
\end{equation}

\subsection{Proof of Proposition~\ref{prop_finite_PS}}\label{subsec_prop_finitePS}

Define $E:=v-v^L$ and recall $E_0=0$ as the initial condition is in the Gaussian part of the Da Prato-Debussche decomposition. 
The quantity $E$ then satisfies the following equation: 
for each $s\leq t$,
\begin{multline}
  E_t = \int_0^t e^{(t-s)\Delta}\bigg( - \lambda E_s \underbrace{\qB{v^2_s+(v^L_s)^2+v_sv^L_s + 3(v+v^L_s)Z_s + 3\wick{Z^2}+(\mu+1)/\lambda}}_{U_s}
  \\
  - \lambda \underbrace{\qB{3v^2_s (Z-Z^L_s) + 3v_s (\wick{Z^2_s}-\wick{(Z^L_s)^2}) + \wick{Z^3_s}-\wick{(Z^L_s)^3}}}_{F_s}\bigg)\, ds
  .
  \label{eq_diff_Y_YL}
\end{multline}
Fix $f\in C^\infty_c(\R^2)$ and $t\geq 0$.  

Let $(\Pg_{s,t})_{0\leq s\leq t}$ be the inhomogeneous semigroup with inhomogeneous generator $\Delta-\lambda U_{t}$, i.e.\ for each test function $g$: 
\begin{equation}
\partial_s \Pg_{s,t}g
=
(-\Delta+\lambda U_{s})\Pg_{s,t}g,
\qquad 
s\leq t
.
\label{eq_def_Pg}
\end{equation}
As a mild solution of~\eqref{eq_diff_Y_YL}, 
$E_\cdot$ also solves~\eqref{eq_diff_Y_YL} weakly and we can write:
\begin{equation}
(E_t,f)
=
(E_t,\Pg_{t,t}f) 
= 
-\lambda \int_0^t \big(F_s,\Pg_{s,t}f\big)\, ds
.
\label{eq_FK_for_Et}
\end{equation}

To estimate the right-hand side of~\eqref{eq_FK_for_Et}, 
we split the integral against $f$ in different regions of space. 
We will then use Feynman-Kac formula to argue that, 
for points sufficiently far away from the region $\{|x_1|\geq L\text{ or }|x_2|\geq L\}$, 
tail bounds for Brownian motion implies that $P^{(R_0)}_{s,t}f$ is very small for $L^2 \gg t$ because $f$ has compact support $S(f)\subset[-L/2,L/2]^2$. 
For points close to the origin, smallness will come from the $Z-Z^L$ term in $F$.  

For $k\in\N$, let $\tilde\cA_k$ denote the annulus:
\begin{equation} \label{e:Ak-def}
\tilde \cA_k 
:=
\Big\{ (x_1,x_2)\in\R^2 : \frac{L^k}{2^k}-1\leq |x_1|,|x_2|< 
\frac{L^{k+1}}{2^{k+1}}-1\Big\}
.
\end{equation}
As the $\tilde\cA_k$ partition $\R^2$ and using the multiplicative estimate of Proposition~\ref{prop:besov-mult} (specifically,~\eqref{eq_besovmult_cubes}) with parameters $\alpha,\alpha'$,  
the quantity to estimate becomes:
\begin{align}
\sum_{k=0}^\infty\int_0^t \big(F^{(R_0)}_s,{\bf 1}_{\tilde \cA_k}\Pg^{(R_0)}_{s,t}f\big)\, ds
&\lesssim 
\sum_{k=0}^\infty L^{(k+1)d}\int_0^t \|F^{(R_0)}_s \|_{-\alpha,\,\cA_k}\big\| \Pg^{(R_0)}_{s,t}f\big\|_{\alpha',\, \cA_k}\, ds
,
\label{eq_splitting_on_Ak_R_0}
\end{align}
where we set:
\begin{equation}
\cA_k 
:= 
\tilde\cA_k + B_3(0)
,\quad 
k\geq 0
.
\label{e:Ak-def2}
\end{equation}
Then:
\begin{equation}
(E_t,f)
\lesssim
\sum_{k=0}^\infty L^{(k+1)d}\int_0^t \|F_s \|_{-\alpha/2,\,\cA_k}\big\| \Pg_{s,t}f\big\|_{\alpha,\, \cA_k}\, ds
.
\label{eq_splitting_on_Ak}
\end{equation}
To estimate each term above,  
we will need the bounds of Lemma~\ref{lemm_general_bounds}. 
Equation~\eqref{eq_bounds_YZ} takes care of the estimate of the irregular terms, i.e., of $F_\cdot$, for $k\geq 1$. 
It remains to estimate $\Pg_{s,t}f$ on any $\cA_k$ ($k\geq 0$) and to get a bound on $F_s$ that vanishes with $L$ close to the origin. 
This is the content of the next two propositions, 
for which we recall that $\sigma>0$ is the parameter in the weight $\rho(x)=(1+|x|^2)^{-\sigma/2}$ appearing in Lemma~\ref{lemm_general_bounds}.

\begin{proposition}[decay for $k\geq 1$]\label{prop_FK_term}
Let $\alpha,\sigma>0$ be small enough. 

There is $\theta>0$ that can be made arbitrarily small if $\alpha,\sigma$ are small and $C_0=C_0(\theta)>0$ such that, if
\begin{equation}
  L\geq C_0(t^{C_0}+ R_f+1), \qquad t>0,
\end{equation}
then, on the event of Lemma~\ref{lemm_general_bounds}: 
\begin{align}
\sup_{s\leq t}\|\Pg_{s,t}f\|_{\alpha,\, \cA_k}
&\lesssim 
{\bf 1}_{k=0}\|f\|_{\alpha} \, e^{L^\theta}
+{\bf 1}_{k\geq 1}\|f\|_{\alpha}\, e^{-cL^{2k}/t}
.
\end{align}
\end{proposition}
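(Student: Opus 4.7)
The plan is to derive a Feynman--Kac representation for the backward evolution defining $\Pg$ and then exploit Gaussian tails of Brownian motion together with the quantitative bounds of Lemma~\ref{lemm_general_bounds} on the ingredients of the potential $U_s$. Reversing time via $h_\tau:=\Pg_{t-\tau,t}f$, the equation~\eqref{eq_def_Pg} becomes $\partial_\tau h=\Delta h-\lambda U_{t-\tau}h$ with $h_0=f$, which formally gives
\begin{equation}
  \Pg_{s,t}f(x) = \E_x\bigg[f(B_{t-s})\, \exp\Big(-\lambda\int_0^{t-s}U_{s+r}(B_r)\,dr\Big)\bigg],
\end{equation}
where $B$ is the diffusion with generator $\Delta$ started at $x$. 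Because $\wick{Z^2}$ is a distribution, this representation must be interpreted via mollification of $U_s$, or equivalently through the Duhamel identity $\Pg_{s,t}f=P_{t-s}f-\lambda\int_s^t P_{r-s}(U_r\Pg_{r,t}f)\,dr$ combined with the Besov multiplicative inequality, using that $\Pg_{r,t}f\in C^{\alpha'}$ and $U_r$ lies in a negative Besov space on the good event.

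The estimate then combines two ingredients. The \emph{spatial localization} is standard: the hypothesis $L\geq C_0(t^{C_0}+R_f+1)$ ensures that any $x\in\cA_k$ with $k\geq 1$ satisfies $d(x,\supp f)\gtrsim L^k$, so Gaussian tails give
\begin{equation}
  \P_x\Big(\sup_{r\leq t-s}|B_r-x|\geq d(x,\supp f)\Big)\lesssim e^{-cL^{2k}/t}.
\end{equation}
The \emph{bound on the Feynman--Kac exponential} uses Lemma~\ref{lemm_general_bounds}: $v,v^L$ are bounded in $C^{\alpha'}(\rho^\eta)$ by $(\log L)^{\kappa}(1+t)^{\kappa}$, and $Z,\wick{Z^n}$ are controlled in $C^{-n\alpha}(\rho^n)$ by the same factor modulo an integrable short-time singularity $s^{-n\alpha}$; hence $U_s$ is controlled in a weighted negative Besov space by a polylogarithmic factor in $L$. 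On the region $\cA_0\subset[-L/2,L/2]^2$ the weight $\rho^\eta$ degenerates at most polynomially, $\rho^\eta\gtrsim L^{-C\sigma}$, so combining the Besov norm bound with the It\^o trick of~\cite{MR2593276,KoenigPerkowskivanZuiljen_PAM} to give rigorous meaning and a quantitative pathwise control of the time integral of $\wick{Z^2_{s+r}}$ along the Brownian trajectory, one obtains
\begin{equation}
  \lambda\int_0^{t-s}U_{s+r}(B_r)\,dr\leq L^{\theta/2}
\end{equation}
on the good event, with $\theta>0$ a small constant proportional to $\sigma$ and $\alpha$, which can be made arbitrarily small. The power $L^{\theta/2}$ arises because the factor $t^{C}(\log L)^{C\kappa}$ produced by the norm bound is rendered smaller than $L^{\theta/2}$ by the assumption $L\geq C_0 t^{C_0}$.

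Combining the two ingredients, for $k\geq 1$ the Gaussian decay absorbs the factor $e^{L^{\theta/2}}$ once $L^2/t\gg L^{\theta/2}$, yielding $\sup_{x\in\cA_k}|\Pg_{s,t}f(x)|\lesssim\|f\|_{\alpha}\, e^{-cL^{2k}/t}$, while for $k=0$ only the exponential of the potential remains. The passage from the sup-norm to the H\"older seminorm $\|\cdot\|_{\alpha,\cA_k}$ is obtained either by coupling two Brownian motions starting at distinct points $x\neq y\in\cA_k$ (using the H\"older regularity of $f$ to bound $|f(B^x_{t-s})-f(B^y_{t-s})|$), or by applying the sup-norm estimate together with Schauder-type estimates for the perturbed heat equation in a slightly enlarged region. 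The main obstacle is the rigorous pathwise control of $\int_0^\tau\wick{Z^2_{s+r}}(B_r)\,dr$ with a distributional integrand: this is precisely where the It\^o trick is crucial, rewriting the time integral via a Dirichlet form identity as a martingale plus a well-controlled bracket and reducing the estimate to BDG-type inequalities, with constants explicitly controlled by the Besov norm of $\wick{Z^2}$ provided by Lemma~\ref{lemm_general_bounds}.
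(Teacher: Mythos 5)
Your proposal captures the right broad architecture — Feynman--Kac representation after mollification, Gaussian tails for spatial localisation, and an It\^o-trick-type transformation to tame the distributional potential — and these are indeed the ingredients of the paper's proof. However, there are three concrete gaps that prevent the sketch from closing.

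\emph{First}, the claim that $\lambda\int_0^{t-s}U_{s+r}(B_r)\,dr\leq L^{\theta/2}$ pathwise on the good event is not tenable as stated. The bound on $U_s$ from Lemma~\ref{lemm_general_bounds} is in a \emph{weighted} Besov space, so $|U_s(x)|$ degenerates polynomially in $|x|$; the It\^o-trick/Girsanov transform correspondingly produces a functional $\psi$ whose size grows polynomially with the radius of the region the path visits. A Brownian (or drifted) path started in $\cA_k$ may wander far, and on those trajectories the integral is \emph{not} bounded by $L^{\theta/2}$. The paper therefore decomposes the probability space over the events $I^x_\ell(s,t)=\{\sup_{u\leq t-s}|X^x_u|\in[\frac12L^\ell-4,\frac12L^{\ell+1}-4)\}$, pays $e^{CC_{t,L}(\ell)}$ on each, and compensates with the exponentially small probability of visiting the $\ell$-th annulus (Lemma~\ref{lemm_LD_X}). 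Without this layered decomposition the argument does not close even for the $\bbL^\infty$ bound.

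\emph{Second}, the description of the regularisation as a ``Dirichlet form identity ... martingale plus bracket ... BDG-type inequalities'' does not match what is actually needed. After solving the backward equation \eqref{eq_def_psi} for $\psi$ and applying It\^o's formula, the stochastic integral $\int\nabla\psi\,dB$ appears \emph{inside} the Feynman--Kac exponential; BDG controls moments of such integrals, not expectations of their exponentials. The key move (from \cite{MR3592748,KoenigPerkowskivanZuiljen_PAM}) is a \emph{Girsanov change of measure} under which $\exp[-\int\nabla\psi\,dB-\int|\nabla\psi|^2]$ is absorbed as a Radon--Nikodym density; what remains is a pathwise-bounded functional $v_{s,t}(X^x)$ evaluated under the law of a drifted diffusion $X^x$ with drift $-\nabla\psi$. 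This is where Lemma~\ref{lemm_bound_psi_gradpsi} and the displacement of the process take over; you need to say this explicitly, as it is the central mechanism.

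\emph{Third}, obtaining the H\"older seminorm $[\Pg_{s,t}f]_{\alpha,\cA_k}$ via ``coupling two Brownian motions'' is too quick once the Girsanov transform has been performed: the comparison has to be between the \emph{drifted} diffusions $X^x,X^y$, whose drift $-\nabla\psi$ is merely H\"older (of regularity $1^{-}$ after regularisation), not Lipschitz, uniformly in the mollification parameter. Controlling $\E_{\Qg}|X^x_u-X^y_u|^2$ in this non-Lipschitz setting requires the Flandoli--Gubinelli--Priola machinery, which the paper develops as Lemma~\ref{lemm_gradientX} (proved in Appendix~\ref{app_gradientX}); it cannot be replaced by a naive synchronous coupling of the driving Brownian motions, nor by a Schauder estimate (the potential $U$ has negative regularity, so Schauder directly on $\Pg_{s,t}f$ faces the same singularity you are trying to circumvent). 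Moreover, one must additionally decompose over pairs $(\ell,\ell')$ of annuli visited by $X^x,X^y$ and be careful about the terms where one endpoint lands in $\supp f$ and the other does not.

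In summary: right conceptual blueprint, but the decomposition by the range of the process, the Girsanov absorption of the stochastic integral, and the coupling estimate for the singular-drift diffusion are essential steps that are either missing or misdescribed.
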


\begin{proposition}[decay for $k=0$]\label{prop_gaussian_difference_sec4}
For each $\alpha,\sigma>0$, there is $c>0$ such that, for each $n\in\{1,2,3\}$, $L\geq 3$ and $t>0$:
\begin{equation}
\E\Big[\big\|\wick{Z_t^n}-\wick{(Z^L_t)^n}\|_{-\alpha/2,\, \cA_0+B_2(0)}^2\Big]
\leq
\frac{1}{c}\, (t^{-n\alpha}\wedge 1)\, \big(1+\|\varphi_0\|^{2n}_{-\alpha, \rho}\big)e^{-cL^2/t}
,
\end{equation}
where we recall that $\E$ denotes the expectation of the probability space on which the $Z^L$ are defined, i.e., expectation with respect to the noise in the SPDE~\eqref{eq_heat_periodised}. 
\end{proposition}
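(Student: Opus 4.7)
The statement is essentially a local version of Proposition~\ref{prop_gaussian_difference}, restricted to a region near the origin. My plan is to reduce directly to the second inequality of~\eqref{eq_bound_Z-ZL_sec2} via an elementary set inclusion and monotonicity of the local Besov--H\"older norm, so that no new Gaussian analysis is required.

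First, I would verify the inclusion $\cA_0+B_2(0)\subset[-\tfrac{2}{3}L,\tfrac{2}{3}L]^2$ for $L$ sufficiently large. Unpacking~\eqref{e:Ak-def}--\eqref{e:Ak-def2} one has $\tilde\cA_0\subset\{(x_1,x_2):|x_i|<L/2-1\}$, so $\cA_0=\tilde\cA_0+B_3(0)\subset\{|x_i|<L/2+2\}$ and therefore $\cA_0+B_2(0)\subset\{|x_i|<L/2+4\}$, which lies in $[-\tfrac{2}{3}L,\tfrac{2}{3}L]^2$ as soon as $L\geq 24$. For smaller $L$ the inequality is trivial by absorbing the $L$-dependence into the constant $c$ on the right.

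Second, I would invoke the monotonicity of $\|\cdot\|_{-\beta,C}$ in $C$: from the very definition~\eqref{e:norm-minus-C}, enlarging $C$ only enlarges the admissible set of pairs $(R,x)$ in the supremum, so $\|g\|_{-\beta,C_1}\leq\|g\|_{-\beta,C_2}$ whenever $C_1\subset C_2$. Combined with the previous step,
\begin{equation*}
\|\wick{Z_t^n}-\wick{(Z_t^L)^n}\|_{-\beta,\,\cA_0+B_2(0)}\leq\|\wick{Z_t^n}-\wick{(Z_t^L)^n}\|_{-\beta,\,[-\tfrac{2}{3}L,\tfrac{2}{3}L]^2}
\end{equation*}
for any admissible $\beta>0$. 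Squaring, taking expectation, and applying the second line of~\eqref{eq_bound_Z-ZL_sec2} (which requires only $L\geq 12$) then yields the claim, with the parameter $\alpha$ of Proposition~\ref{prop_gaussian_difference} chosen small enough relative to the $\alpha$ here to match the regularity index of the present statement.

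I do not expect any substantive obstacle: all of the heavy lifting --- the Gaussian moment bounds and the exponential decay $e^{-cL^2/t}$ coming from the heat-kernel periodisation $p_t^L-p_t$ --- has already been carried out in Appendix~\ref{app_gaussian_estimates}. The only technical care lies in book-keeping the regularity exponent and ensuring the set inclusion above, both of which are elementary.
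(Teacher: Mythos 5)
Your approach is exactly the paper's: it declares Proposition~\ref{prop_gaussian_difference_sec4} ``immediate from Proposition~\ref{prop_gaussian_difference}'', and your reduction via set inclusion and monotonicity of the local norm is the intended argument. That said, there are three places where the details are not quite right as written. First, and most substantively, your argument delivers the factor $(t^{-n\alpha}\vee 1)$ appearing in Proposition~\ref{prop_gaussian_difference}, \emph{not} the $(t^{-n\alpha}\wedge 1)$ appearing in the statement you are asked to prove. These are genuinely different: $(t^{-n\alpha}\wedge 1)\leq (t^{-n\alpha}\vee 1)$ with strict inequality unless $t=1$, so your bound does not imply the claimed one, and indeed the claimed one is false as stated for large $t$ (the right-hand side tends to $0$ while the left-hand side tends to a non-vanishing Gaussian quantity). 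This is evidently a typo in the paper --- the subsequent application to the integral $\int_0^t\|\cdot\|^2 ds$ in the proof of Proposition~\ref{prop_finite_PS} works equally well with $\vee$ --- but you should flag the discrepancy rather than silently reproduce a bound that your argument does not give.

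Second, the set inclusion $\cA_0 + B_2(0)\subset[-\tfrac23L,\tfrac23L]^2$ requires $L\geq 24$, whereas Proposition~\ref{prop_gaussian_difference} only asks for $L\geq 12$; so there is a genuine mismatch over the range $L\in[12,24)$ as well, not only $L<12$. Your suggestion to ``absorb the $L$-dependence into $c$'' for small $L$ does not repair this: the constant $c$ must be uniform over $L\geq 3$, and for small $L$ one cannot invoke Proposition~\ref{prop_gaussian_difference} at all. (In practice this range of $L$ is never used, since the statement is applied only with $L\geq C_0(t^{C_0}+R_f+1)$ and $C_0$ large, but one should either say so or handle the remaining $L$ directly.) Third, ``choosing the parameter $\alpha$ of Proposition~\ref{prop_gaussian_difference} small enough'' is not quite the right move: lowering $\alpha$ there simultaneously lowers the regularity exponent in $\|\varphi_0\|_{-\alpha,\rho}$, and $\|\varphi_0\|_{-\alpha',\rho}$ with $\alpha'<\alpha$ cannot be bounded by the $\|\varphi_0\|_{-\alpha,\rho}$ appearing in the target. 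The paper's parenthetical remark that ``the claim is valid for arbitrary $\alpha$'' refers to the Besov exponent of the local norm alone, which can be decoupled from the regularity class of $\varphi_0$ (as is visible in the proof of Proposition~\ref{prop_gaussian_difference_app}: the exponent in $\|\cdot\|_{-n\alpha,\cdot}$ enters only through the choice of $p$ in the Besov embedding, while the $\varphi_0$-norm exponent enters only through the initial-condition estimate). Making that decoupling explicit, rather than changing $\alpha$ outright, is what ``immediate'' means here.
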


Proposition~\ref{prop_gaussian_difference_sec4} is immediate from Proposition~\ref{prop_gaussian_difference} (stated with a norm with exponent $-n\alpha$, but note that the claim is valid for arbitrary $\alpha$) and we complete below the proof of Proposition~\ref{prop_finite_PS}
assuming Lemma~\ref{lemm_general_bounds} and Proposition~\ref{prop_FK_term}. 
It will then remain to establish these two results. 
This is the content of the next two subsections.

\begin{proof}[Proof of Proposition~\ref{prop_finite_PS}]
To prove Proposition~\ref{prop_finite_PS}, 
we start from the splitting~\eqref{eq_splitting_on_Ak} on the different annuli $\cA_k$ ($k\in\N$), recall \eqref{e:Ak-def2}.  

Consider first the $k=0$ term. 
By Proposition~\ref{prop_gaussian_difference_sec4}, 
\begin{equation}
\bbP\bigg(\int_0^t\big\|\wick{Z_s^n}-\wick{(Z^L_s)^n}\big\|_{-\alpha/2,\, \cA_0+B_2(0)}^2 \, ds
\ \geq \ \frac{t}{c} \,
\big(1+\|\varphi_0\|^{2n}_{-\alpha, \rho}\big)e^{-cL^2/2t}\bigg)
\leq 
e^{-cL^2/2t}
.
\end{equation}
Define: 
\begin{align}
\Omega_{t,L}(\varphi_0) 
&:=
\bigcap_{n=1}^3\Big\{ \int_0^t\big\|\wick{Z_s^n}-\wick{(Z^L_s)^n}\big\|_{-\alpha/2,\, \cA_0+B_2(0)}^2 \, ds
\ \leq \ 
\frac{t}{c}\, \big(1+\|\varphi_0\|^{2n}_{-\alpha, \rho}\big)e^{-cL^2/2t}\Big\}
\nnb
&\qquad
\cap \Omega'_{t,L}(\varphi_0,\alpha,\alpha') 
\cap \Omega'_{t,L}(\varphi_0,\alpha/2,\alpha') 
,
\label{eq_def_event_Omega_t}
\end{align}
where $\Omega'_{t,L}(\varphi_0,\beta,\alpha')$ is the event of Lemma~\ref{lemm_general_bounds} ($\beta\in\{\alpha/2,\alpha\}$).  
Then, on this event $\Omega_{t,L}(\varphi_0)$, 
recalling the expression~\eqref{eq_diff_Y_YL} for $F_\cdot$, 
the multiplicative inequality for $\|\cdot\|_{-\alpha,C}$ (Proposition~\ref{prop:besov-mult}), 
the bounds of Lemma~\ref{lemm_general_bounds},
and that the weight satisfies $\rho \gtrsim L^{-\sigma}$ on $\cA_0$, we have for each $s\in(0,t]$:
\begin{align}
\|F_s\|_{-\alpha/2,\, \cA_0}
&\lesssim 
\max_{1\leq n\leq 3} \, \|v_s\|^{3-n}_{\alpha,\, \cA_0 + B_2(0)}\, \|\wick{Z^n_s}-\wick{(Z^L_s)^n}\|_{-\alpha/2,\, \cA_0 +B_2(0)}
\nnb
&\lesssim 
L^{2\sigma\eta} (\log L)^{2\kappa}(1+t)^{2\kappa} \max_{1\leq n\leq 3} \|\wick{Z^n_s}-\wick{(Z^L_s)^n}\|_{-\alpha/2,\, \cA_0 +B_2(0)}
.
\end{align}
The bounds on $\Pg_{s,t}f$ from Proposition~\ref{prop_FK_term} then imply that, on $\Omega_{t,L}(\varphi_0)$,
for $L \geq C_0 (t^{C_0}+R_f+1)$ and some $\delta>0$:
\begin{align}
&L^d\int_0^t \|F_s\|_{-\alpha/2,\, \cA_0}\, \|\Pg_{s,t}f\|_{\alpha,\, \cA_0}\, ds
\nnb
&\lesssim 
\sup_{s\leq t}\|\Pg_{s,t}f\|_{\alpha,\, \cA_0} \,
L^{d+2\sigma\eta}(\log L)^{2\kappa} (1+t)^{2\kappa}
\, \int_0^t \max_{1\leq n\leq 3}\|\wick{(Z^L_t)^n}-\wick{Z_t^n}\|_{-\alpha/2,\, \cA_0+B_2(0)} \, ds
\nnb
&\lesssim 
(1+\|\varphi_0\|^{6}_{-\alpha,\rho})^{1/2}\, \|f\|_{\alpha}\,L^{d+2\sigma\eta}\, (\log L)^{2\kappa}\, (1+t)^{3\kappa +1} \, e^{L^\theta}\, e^{-cL^2/2t}
\nnb
&\lesssim 
(1+\|\varphi_0\|^{3}_{-\alpha,\rho})\, \|f\|_{\alpha}\,  e^{-\delta L^2/4t}
,
\end{align}
where the $\lesssim$ hide constants independent of $t,L,\varphi_0,f$.  
Moreover, by definition~\eqref{eq_def_event_Omega_t} of $\Omega_{t,L}(\varphi_0)$:
\begin{equation}
\bbP\big((\varphi_s)_{s\leq t}\notin\Omega_{t,L}(\varphi_0)\big)
\leq 
2\max_{\beta\in\{\alpha/2,\alpha\}}\bbP\big((\varphi_s)_{s\leq t}\notin\Omega'_{t}(\varphi_0,\beta,\alpha')\big)
+3 e^{-cL^2/2t}
.
\end{equation}
and the probabilities on the right-hand side are bounded by~\eqref{eq_bound_prob_Omega'}. 
Our choice of $L$ makes $3e^{-cL^2/2t}$ smaller than these probabilities, 
and~\eqref{eq_bound_prob_Omega'} holds for $\Omega_{t,L}(\varphi_0)$ as well.

Consider now the $k\geq 1$ terms in~\eqref{eq_splitting_on_Ak}. 
On $\Omega_{t,L}(\varphi_0)$, 
Lemma~\ref{lemm_general_bounds} similarly implies: 
\begin{align}
\sup_{s\leq t}\, (s^{n\alpha}\wedge 1)\, \|F_s\|_{-\alpha/2,\, \cA_k}
&\lesssim 
\max_{1\leq n\leq 3}\sup_{s\leq t}\, (s^{n\alpha}\wedge 1)\, \|v_s\|^{3-n}_{\alpha,\, \cA_k + B_2(0)}\|\wick{Z^n_s}-\wick{(Z^L_s)^n}\|_{-\alpha/2,\, \cA_k +B_2(0)}
\nnb
&\lesssim 
L^{3(k+1)\sigma\eta}(1+t)^{3\kappa}(\log L)^{3\kappa}
.
\end{align}
Again using Proposition~\ref{prop_FK_term} to bound $\Pg_{s,t}f$ on $\Omega_{t,L}(\varphi_0)$, 
we therefore obtain:
\begin{align}
\int_0^t \|F_s\|_{-\alpha/2,\, \cA_k} \|\Pg_{s,t} f\|_{\alpha,\, \cA_k}\, ds 
&=
\|f\|_\alpha  e^{-c'L^{2k}/t} \int_0^t s^{-3\alpha}\big(s^{3\alpha}\|F_s\|_{-\alpha/2,\cA_k}\big) \, ds 
\nnb
&\leq 
\|f\|_\alpha (1+t)^{3\kappa+1} L^{3(k+1)\sigma\eta}  (\log L)^{3\kappa}e^{-c'L^{2k}/t}
  .
\end{align}
Note that this bound does not depend on the initial condition because all dependence on $\varphi_0$ is hidden in the definition of the set $\Omega_{t,L}(\varphi_0)$. 
The last bound in particular implies:
\begin{align}
\sum_{k\geq 1}L^{(k+1)d}\int_0^t \|F_s\|_{-\alpha/2,\, \cA_k} \|\Pg_{s,t} f\|_{\alpha,\, \cA_k}\, ds 
\leq 
\|f\|_{\alpha} e^{-\delta' L^2/t}
.
\end{align}
This concludes the proof: 
for $L \geq C_0 (t^{C_0}+R_f+1)$ as in Proposition~\ref{prop_FK_term},
on the event $\Omega_{t,L}(\varphi_0)$ defined in~\eqref{eq_def_event_Omega_t}, 
which satisfies~\eqref{eq_bound_prob_Omega} by construction, 
it holds that:
\begin{align}
|(E_t,f)|
\lesssim
\|f\|_{\alpha}\, \big(1+\|\varphi_0\|_{-\alpha,\rho}^3\big)\, \exp\big[-cL^2/t\big]
,
\end{align}
with $\lesssim$ independent of $f,\varphi_0,t,L$. 
\end{proof}
\subsection{Proof of Proposition~\ref{prop_FK_term}}\label{sec_proof_finite_prop_speed}
Let $t>0$ and $L>0$ to be chosen large enough below as a function of $t$ and the test function $f$. 
Fix $\alpha,\alpha',\sigma>0$ that will be taken small enough as needed along the proof, and such that all bounds stated in Lemma~\ref{lemm_general_bounds} apply.

To estimate $\Pg_{s,t}f$ we would like to use Feynman-Kac formula:
\begin{equation}
\Pg_{s,t}f(x)
=
 \Eg\bigg[ 
 f(x+\sqrt{2}B_{t-s})\exp\Big[-\lambda\int_0^{t-s} U_{s+u}(x+\sqrt{2}B_{u})\, du\Big]\bigg]
 ,\qquad 
 x\in\R^2
 .
\end{equation}
The function $U$ has regularity $0_-$ so this formula does not directly make sense and we first consider a regularised version. 
Let $R_0>0$ and define:
\begin{equation}
U^{(R_0)}_s
:=
U_s\ast\Psi_{R_0},
\qquad 
s\in(0,t] 
,
\end{equation}
where $\Psi$ is the mollifier from Section~\ref{sec:norms-def}. 
Introduce the corresponding regularised semigroup
$(\Pg^{(R_0)}_{s,t})_{s\leq t}$ solving, 
for each test function $g$:
\begin{equation}
\partial_s \Pg^{(R_0)}_{s,t}g
=
(-\Delta+\lambda U^{(R_0)}_{s})\Pg^{(R_0)}_{s,t}g,
\qquad 
s \in (0,t]
.
\label{eq_def_Pg_R0}
\end{equation}
We then have a Feynman--Kac formula for $\Pg^{(R_0)}_{s,t}f$ as stated next. 
\begin{lemma}
Let $s\in(0,t]$ and $x\in\R^2$.  
Then, for $\sigma$ small enough depending only on $\alpha$:
\begin{equation}\label{eq:Feynman-Kac}
\Pg^{(R_0)}_{s,t}f(x)
=
 \Eg\bigg[ 
 f(x+\sqrt{2}B_{t-s})\exp\Big[-\lambda\int_0^{t-s} U^{(R_0)}_{s+u}(x+\sqrt{2}B_{u})\, du\Big]\bigg]
 ,\qquad 
 x\in\R^2
 .
\end{equation}
\end{lemma}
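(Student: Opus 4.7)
The strategy is the classical Itô-calculus derivation of the Feynman--Kac formula, which becomes straightforward once the time-dependent potential has been mollified and its regularity and growth have been quantified. In a nutshell, I would apply Itô's formula to a natural candidate martingale $M_u$ built from $\phi_u := \Pg^{(R_0)}_{s+u,t}f$ and the exponential weight, and then identify its endpoints at $u=0$ and $u=t-s$.

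First, I would establish pointwise bounds on $U_s^{(R_0)}$. The potential $U_s = v_s^2+(v_s^L)^2+v_sv_s^L+3(v_s+v_s^L)Z_s+3\wick{Z_s^2}+(\mu+1)/\lambda$ is, by Lemma~\ref{lemm_general_bounds} combined with the weighted multiplicative inequality (Proposition~\ref{prop:besov-mult}), controlled in some weighted Besov space: $\|U_s\|_{-\alpha,\rho^\kappa}<\infty$ for a fixed exponent $\kappa=\kappa(\alpha,\eta)$ depending only on $\alpha$. Convolution with $\Psi_{R_0}$ upgrades this to the pointwise bound $|U_s^{(R_0)}(y)|\lesssim R_0^{-\alpha}\,\rho(y)^{-\kappa}\,\|U_s\|_{-\alpha,\rho^\kappa}$, i.e.\ polynomial spatial growth of exponent $\kappa\sigma$. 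Choosing $\sigma$ small enough (depending on $\alpha$) makes $\kappa\sigma$ arbitrarily small, so that standard Gaussian tail estimates ensure that $\int_0^{t-s}|U^{(R_0)}_{s+u}(x+\sqrt{2}B_u)|\,du$ is almost surely finite and has all exponential moments.

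Standard linear parabolic theory then provides a unique classical solution $\phi_u:=\Pg^{(R_0)}_{s+u,t}f$ to \eqref{eq_def_Pg_R0} that is $C^{1,2}$ in $(u,y)$, with $\phi_u$ and $\nabla\phi_u$ of at most polynomial spatial growth (by a Gronwall comparison against the heat kernel). I would then apply Itô's formula to
\begin{equation*}
M_u \;=\; \phi_u(x+\sqrt{2}B_u)\, \mathcal{E}_u,
\qquad
\mathcal{E}_u := \exp\!\Bigl(-\lambda\int_0^u U^{(R_0)}_{s+r}(x+\sqrt{2}B_r)\,dr\Bigr),
\qquad u\in[0,t-s].
\end{equation*}
The PDE $\partial_u\phi_u = -\Delta\phi_u+\lambda U^{(R_0)}_{s+u}\phi_u$ cancels the Itô correction $\Delta\phi_u\,du$ from the first factor against the drift of $\mathcal{E}_u$, leaving only the stochastic term $dM_u=\sqrt{2}\,\mathcal{E}_u\,\nabla\phi_u(x+\sqrt{2}B_u)\cdot dB_u$. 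The growth bounds from the previous steps, combined with the moments of $B$, give $\Eg[\sup_{u\leq t-s}M_u^2]<\infty$, so $M$ is a genuine $L^2$-martingale; evaluating $\Eg[M_0]=\Eg[M_{t-s}]$ together with $\phi_0=\Pg^{(R_0)}_{s,t}f$ and $\phi_{t-s}=f$ yields \eqref{eq:Feynman-Kac}.

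The main (relatively mild) obstacle is the first step: translating the weighted Besov control on $U$ into a pointwise polynomial bound that is compatible with Brownian tails. This is precisely where the restriction ``$\sigma$ small enough depending only on $\alpha$'' enters, through the need to keep the growth exponent $\kappa\sigma$ below the threshold at which the Feynman--Kac exponential would fail to be uniformly integrable along Brownian trajectories. The remaining steps are essentially mechanical once $U^{(R_0)}$ is smooth with controlled growth and $\phi_u$ is a classical solution.
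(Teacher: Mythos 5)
Your proof is correct and essentially coincides with the paper's argument: the paper cites Karatzas--Shreve (whose proof is precisely the Itô-formula martingale calculation you carry out), and the key input in both cases is the same pointwise polynomial-growth estimate $|U^{(R_0)}_u(y)|\lesssim R_0^{-\alpha}\rho(y)^{-2\eta}\sup_{s\leq t}\|U_s\|_{-\alpha,\rho^{2\eta}}$, with $\sigma$ taken small so that the growth exponent is sub-quadratic and compatible with Gaussian tails. Your version merely spells out the martingale argument that the paper delegates to the reference.
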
 
\begin{proof}
  We check the assumptions of (the proof of)~\cite[Theorem 7.6]{KaratzasShreeve}.
  We note that while this theorem is stated under the assumption that the potential $U^{(R_0)}$ (which corresponds to $-k$ in the notation of ~\cite[Theorem 7.6]{KaratzasShreeve}) is bounded, the proof goes through without change under the assumption that its growth at infinity is sub-quadratic locally uniformly in the time variable, ensuring that the expectation on the right hand side of \eqref{eq:Feynman-Kac} is finite. The bounds in
 Lemma~\ref{lemm_general_bounds} and the multiplicative inequality (Proposition~\ref{prop:besov-mult}) provide this necessary control. Indeed, they imply that $|U^{(R_0)}_u(x)|$ has at most polynomial growth in $x$,
\begin{align}
\sup_{u\in[s,t]}\sup_{|x|\leq A}|U^{(R_0)}_u(x)|
&\leq 
R_0^{-\alpha}\sup_{|x|\leq A} \rho^{-2\eta}(x) \sup_{u\in(0,t]}\|U_s\|_{-\alpha,\rho^{2\eta}}
\nnb
&\lesssim 
R_0^{-\alpha} u^{-2\alpha} A^{2\eta\sigma} (\log L)^{2\kappa}\, (1+t)^{2\kappa} 
,
\end{align}
and the power is arbitrarily small if $\sigma$ is small.
\end{proof} 
Note that the multiplicative inequality \eqref{e:besov-mult} implies $\lim_{R_0\to 0}\|U_s^{(R_0)}-U_s\|_{-\alpha,\, \cA_k} =0$ for each $k\geq 1$ and $s>0$. 
As a result, $\Pg^{(R_0)}_{s,t}f$ has a well-defined limit:
\begin{equation}
\lim_{R_0\to 0} \|\Pg^{(R_0)}_{s,t} f -\Pg_{s,t}f\|_{\alpha,\, \cA_k}
=
0
,\qquad 
k\geq 1
.
\label{eq_convergence_Pg}
\end{equation}
It is therefore enough to prove Proposition~\ref{prop_FK_term} for $\Pg^{(R_0)}_{s,t} f$ with bounds uniform in the regularisation parameter $R_0$. 

\begin{notation}
In the rest of this section we only work with the regularised semigroup $\Pg_{s,t}^{(R_0)}$. 
To lighten the notation we drop the dependence on $R_0$ in $U^{(R_0)},\Pg_{s,t}^{(R_0)}$ when no confusion may arise.
\end{notation}
As $U^{(R_0)}$ is only of regularity $0_-$ when $R_0\to0$, 
we cannot hope to get regularisation-independent $\bbL^\infty$ bounds on $\Pg^{(R_0)}_{s,t}f$ (much less $C^\alpha$) by a direct pathwise estimate.
Instead, following~\cite{KoenigPerkowskivanZuiljen_PAM}, 
we use the Girsanov formula to turn $U^{(R_0)}$ into a more regular function (using the so-called partial Girsanov transform of~\cite{MR3592748}). 
Introduce $\psi=(\psi_u)_{u\in[s,t]}$ such that (recall $A=-\Delta+1$):
\begin{equation}
(\partial_u -A)\psi_u
= 
-\lambda U_{u}
,\qquad
\psi_t
=
0
,
\label{eq_def_psi}
\end{equation}
where above and in the following $U$ is implicitly regularised. 
Then $u\mapsto\psi_{t-u}$ solves the forward equation with source term $\lambda U_{t-u}$ and initial condition $0$:
\begin{equation}
\psi_{t-u}(x)
=
\lambda\int_0^u e^{-(u-v)A}U_{t-v}(x)\, dv
\quad\text{or} \quad
\psi_{u}(x)
=
\lambda\int_0^{t-u} e^{-(t-u-v)A}U_{t-u-v}(x)\, dv
.
\label{eq_formula_psi}
\end{equation}
Thus $\psi$ is of regularity $2_-$ in the limit of vanishing regularisation.
Write for short $\tilde B^x_\cdot:=x+\sqrt{2}B_\cdot$.  
The Ito formula applied to $\psi_{s}(\tilde B^x_s)$ gives:
\begin{align}
\psi_t(\tilde B^x_{t-s})
&=
\psi_s(x) + \int_0^{t-s} (\partial_u+\Delta)\psi_{u+s}(\tilde B^x_{u})\, du + \int_0^{t-s} \nabla \psi_{u+s}(\tilde B^x_{u})\, d B_{u}
\nonumber\\
&=
\psi_s(x) -\lambda\int_0^{t-s} U_{u+s}(\tilde B^x_{u})\, du + \int_0^{t-s} \psi_{u+s}(\tilde B^x_{u})\, du + \int_0^{t-s} \nabla \psi_{u+s}(\tilde B^x_{u})\, d B_{u}
.
\end{align}
Thus:
\begin{align}
\Pg_{s,t}f(x)
&=
\Eg\bigg[ f(\tilde B^x_{t-s}) \exp\Big[\psi_t(\tilde B^x_{t-s}) - \psi_s(x) - \int_0^{t-s} \psi_{s+u}(\tilde B^x_{u})\, du 
\nnb
&\hspace{6cm}- \int_0^{t-s} \nabla\psi_{u+s}(\tilde B^x_{u})\, dB_u\Big]\bigg]
.
\end{align}
The Girsanov formula then implies that $\sqrt{2}W_u := \tilde B^x_u-x+\int_0^u\nabla \psi_{s+v}(\tilde B^x_v)\, dv$ ($u\leq t-s$) is $\sqrt{2}$ times a Brownian motion starting at $0$ under the measure:
\begin{equation}
\mathrm{d}\Qg_{t-s} ((x_u)_{u\leq t-s})
=
\exp\bigg[- \int_0^{t-s} (\nabla\psi_{u+s}(x_u),\, dx_u)
-\int_0^{t-s} \big|\nabla \psi_{u+s}(x_u)\big|^2\, du
\bigg]
\, \mathrm{d}\Pg((x_u)_{u\leq t-s})
.
\label{eq_def_law_aux_diffusion}
\end{equation}
It follows that the coordinate process $(X^x_u)_{u\leq t-s}$ under $\Qg_{t-s}$ solves:
\begin{equation} \label{e:X-SDE}
  \mathrm{d}X^x_u = -\nabla \psi_{u+s}(X^x_u)\, \mathrm{d}u + \sqrt{2}\, \mathrm{d} W_u, \qquad X^x_0=x
\end{equation}
and:
\begin{align}
\Pg_{s,t}f(x)
=
  \Eg_{\Qg_{t-s}}\bigg[ f(X^x_{t-s}) e^{v_{s,t}(X^x)} 
\bigg]
.
\label{eq_E_t_interm}
\end{align}
where we define the shorthand:
\begin{equation}
  v_{s,t}(X^x_\cdot) 
:= 
\psi_t(X^x_{t-s}) - \psi_s(x) - \int_0^{t-s} \psi_{s+u}(X^x_u)\, du + \int_0^{t-s} \big|\nabla \psi_{u+s}(X^x_u)\big|^2\, du 
.
\label{eq_def_vst}
\end{equation}
The functions $\psi$, $\nabla\psi$ satisfy the following estimates.
\begin{lemma}\label{lemm_bound_psi_gradpsi}
Let $K>0$. 
Recall that the weight $\rho$ satisfies $\rho(x)\lesssim K^\sigma$ if $|x|\leq K$. 
Recall also that $\kappa>0$ is the constant appearing in Lemma~\ref{lemm_general_bounds}. 
Then, on the event of Lemma~\ref{lemm_general_bounds},
\begin{align}
\sup_{s\leq t}\|\psi_s\|_{\alpha, \, [-K,K]^2}
&\lesssim 
(\log L)^{2\kappa}(1+t)^{2\kappa} K^{2\sigma\eta} 
,\label{eq_bound_psi}\\
\sup_{s\leq t} \|\nabla\psi_s\|_{\alpha,\, [-K,K]^2}
&\lesssim 
(\log L)^{2\kappa}(1+t)^{2\kappa} K^{2\sigma\eta}
.
\label{eq_bound_grad_psi}
\end{align}
\end{lemma}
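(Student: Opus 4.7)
The plan is to combine the Duhamel representation~\eqref{eq_formula_psi} of $\psi$ with Schauder-type estimates for the heat semigroup $e^{-vA}$ in weighted Besov--H\"older spaces and the bounds available on the event of Lemma~\ref{lemm_general_bounds}.

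First I would control the potential $U_s$ appearing in~\eqref{eq_diff_Y_YL} in an appropriate weighted negative-regularity norm. It splits into three classes of terms: the quadratic polynomials $v_s^2,(v_s^L)^2,v_sv_s^L$, which lie in $C^{\alpha'}$ with weight $\rho^{2\eta}$ by the multiplicative inequality (Proposition~\ref{prop:besov-mult}) applied to~\eqref{eq_bounds_YZ}; the mixed terms $(v_s+v_s^L)Z_s$, of regularity $-\alpha$ with weight $\rho^{1+\eta}$; and the Wick square $\wick{Z_s^2}$, of regularity $-2\alpha$ with weight $\rho^2$. Since $\eta>1$ and $\rho\leq 1$, the single weight $\rho^{2\eta}$ dominates each of these, so that uniformly in the regularisation parameter $R_0$,
\begin{equation}
(s^{2\alpha}\wedge 1)\|U^{(R_0)}_s\|_{-2\alpha,\rho^{2\eta}}\lesssim (\log L)^{2\kappa}(1+t)^{2\kappa},\qquad s\in(0,t],
\end{equation}
on the event of Lemma~\ref{lemm_general_bounds}.

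Next I would invoke the Schauder estimates for the heat semigroup in weighted spaces (Proposition~\ref{prop:Besov-heat}): for $\alpha,\sigma$ small,
\begin{equation}
\|e^{-vA}g\|_{\alpha,\rho^{2\eta}}\lesssim v^{-\frac{3\alpha}{2}}\|g\|_{-2\alpha,\rho^{2\eta}},\qquad \|\nabla e^{-vA}g\|_{\alpha,\rho^{2\eta}}\lesssim v^{-\frac{1+3\alpha}{2}}\|g\|_{-2\alpha,\rho^{2\eta}}.
\end{equation}
Plugging these into~\eqref{eq_formula_psi} and using the previous bound on $U$, the time integrals $\int_0^{t-u} v^{-3\alpha/2}(t-u-v)^{-2\alpha}\,dv$ and $\int_0^{t-u}v^{-(1+3\alpha)/2}(t-u-v)^{-2\alpha}\,dv$ converge for $\alpha$ small (both exponents stay strictly above $-1$) and produce an extra polynomial factor in $(1+t)$ that is absorbed by slightly enlarging $\kappa$. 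This yields
\begin{equation}
\sup_{s\leq t}\|\psi_s\|_{\alpha,\rho^{2\eta}}+\sup_{s\leq t}\|\nabla\psi_s\|_{\alpha,\rho^{2\eta}}\lesssim (\log L)^{2\kappa}(1+t)^{2\kappa}.
\end{equation}

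Finally I would translate these weighted bounds into the local ones on $[-K,K]^2$ asserted in~\eqref{eq_bound_psi}--\eqref{eq_bound_grad_psi}. Since $\rho(x)\gtrsim K^{-\sigma}$ on the slightly enlarged box $[-K-1,K+1]^2$, and the local H\"older seminorm only uses increments $|z|\leq 1$, both $\|f\|_{[-K,K]^2}$ and $[f]_{\alpha,[-K,K]^2}$ are at most a factor $K^{2\sigma\eta}$ times their $\|\cdot\|_{\alpha,\rho^{2\eta}}$ counterparts, and the claim follows. The main subtlety is to pin down the common weight $\rho^{2\eta}$ that simultaneously dominates all contributions to $U_s$ while producing the advertised $K^{2\sigma\eta}$ factor; once this is done, everything reduces to standard parabolic smoothing, and the regularisation $R_0$ plays no role because all the required norms are stable under mollification.
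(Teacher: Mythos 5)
The proposal is correct and follows essentially the same route as the paper's proof: bound $\|U^{(R_0)}_s\|_{-2\alpha,\rho^{2\eta}}$ on the good event using the multiplicative inequality (the common weight $\rho^{2\eta}$ is exactly what the paper uses), plug the Duhamel formula for $\psi$ into the weighted parabolic Schauder estimate of Proposition~\ref{prop:Besov-heat}, and trade the weight $\rho^{2\eta}$ for the factor $K^{2\sigma\eta}$ on $[-K,K]^2$. The only cosmetic differences are that you carry the weighted bound to the end and only then localize to $[-K,K]^2$ (the paper factors out $K^{2\sigma\eta}$ at the start), and your written integrand assigns the Schauder singularity and the $U$-blow-up singularity to different arguments $v$ and $t-u-v$, whereas a consistent parametrisation puts them at the same end; this has no effect since both singularities are integrable for small $\alpha$ in either bookkeeping.
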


\begin{proof}
In this proof we write the regularisation explicitly to avoid confusion, that is we differentiate between $U^{(R_0)}$ (appearing in the definition of $\psi=\psi^{(R_0)}$) and its limit $U$. 
Recall that $\eta=\frac{1+\alpha}{1-3\alpha}$ is the parameter appearing in Lemma~\ref{lemm_general_bounds}. 
Recall that, by definition of $U$ in~\eqref{eq_diff_Y_YL} and of the event $\Omega'_{t,L}(\varphi_0,\alpha,\alpha')$ of Lemma~\ref{lemm_general_bounds}, 
the multiplicative inequality (Proposition~\ref{prop:besov-mult}) implies that, 
on this event:
\begin{equation}
\sup_{s\leq t} (s^{2\alpha}\wedge 1) \|U_s\|_{-\alpha,\rho^{2\eta}}
\lesssim 
(\log L)^{2\kappa}(1+t)^{2\kappa}
. 
\end{equation}
Recall also the elementary bound $\|U^{(R_0)}_s\|_{-\alpha,\rho^{2\eta}}\leq \|U_s\|_{-\alpha,\rho^{2\eta}}$. 
Let $K>0$. 
Using the smoothing effect of the heat kernel (see Proposition~\ref{prop:Besov-heat}) and the bounds on the field of Lemma~\ref{lemm_general_bounds}, 
\begin{align}
\|\psi_s\|_{\alpha, \, [-K,K]^2}
&\leq 
\int_0^{t-s} \|e^{-(t-s-u)A} U^{(R_0)}_u\|_{\alpha, \, [-K,K]^2}\ du 
\lesssim 
\int_0^{t-s} K^{2\sigma\eta }\| e^{-uA} U^{(R_0)}_{u}\|_{\alpha, \rho^{2\eta}}\ du 
\nnb
&\lesssim 
K^{2\sigma \eta}\int_0^{t-s} u^{-2\alpha/2} u^{-2\alpha} e^{-u}u^{2\alpha} \|U_u\|_{-\alpha, \rho^{2\eta}}\ du 
\nonumber\\
&\lesssim 
K^{2\sigma\eta}(\log L)^{2\kappa} (1+t)^{2\kappa} 
,\label{eq_bound_psi-pf}
\\
 \|\nabla\psi_s\|_{\alpha, \, [-K,K]^2}
&\leq 
\int_0^{t-s} \|\nabla e^{-uA} U^{(R_0)}_u\|_{\alpha,\,  [-K,K]^2}\ du 
\lesssim 
K^{2\sigma\eta} \int_0^{t-s} \|e^{-uA} U^{(R_0)}_u\|_{1+\alpha, \rho^{2\eta}}\ du 
\nonumber\\
& \lesssim
K^{2\sigma\eta}\int_0^{t-s} u^{-(1+2\alpha)/2}u^{-2\alpha} e^{-u} u^{2\alpha}\|U_u\|_{-\alpha,\rho^{2\eta}}\ du
\nnb
&\lesssim 
K^{2\sigma\eta} (\log L)^{2\kappa}(1+t)^{2\kappa}
.
\label{eq_bound_grad_psi-pf}
\end{align}
\end{proof}

We now estimate $\|\Pg_{s,t}f\|_{\alpha,\, \cA_k}$, starting with $\|\Pg_{s,t}f\|_{\mathbb L^{\infty}(\cA_k)}$. 
To simplify notations, it will be convenient to introduce
a constant $c_0>0$ and, for $\ell\in\N$, the quantity:
\begin{equation} \label{e:CtL-def}
  C_{t,L}(\ell)
  := c_0L^{4(\ell+1)\sigma\eta}(\log L)^{4\kappa}(1+t)^{4\kappa+1}.
\end{equation}
The constant $c_0>0$ is defined such that, for each $\ell\in\N$,
\begin{align}
&t\sup_{u\leq t}\max\Big\{\|\nabla \psi_u\|^2_{\alpha,Q_\ell},\|\psi_u\|_{\alpha,Q_\ell} \Big\}
\leq 
C_{t,L}(\ell)
,
\qquad
Q_\ell 
:=
[-\frac12 L^{\ell+1}-3,\, \frac12L^{\ell+1}+3]^2
.
\label{eq_def_CtL}
\end{align}
which is possible by the bounds~\eqref{eq_bound_psi}--\eqref{eq_bound_grad_psi}.
We also assume $C_{t,L}(\ell) \geq 1$.
To use the bounds on $\psi,\nabla\psi$, 
we need to know where in $\R^2$ the process $X^x_\cdot$ takes values. 
To do so, couple all $(X^x_\cdot)_{x\in\R^2}$ with the same noise and write $\Eg_{\Qg_{t-s}}$ for the corresponding expectation. 
For $\ell\in\N$, introduce the event: 
\begin{equation}
I^x_{\ell} (s,t)
:=
\Big\{ \sup_{u\leq t-s}\big|X^x_{u}\big|\in \big[\frac12 L^\ell-4, \frac12 L^{\ell+1} -4\big)\Big\}
.
\label{eq_def_I_ell}
\end{equation}
The $-4$ ensures that $\{x\in \cA_k: X^x_\cdot \in I^x_\ell(s,t)\}=\emptyset$ for $\ell<k$ (see the definitions~\eqref{e:Ak-def}--\eqref{e:Ak-def2} of $\cA_k$). 
For each $k\geq 0$, using  
\eqref{eq_def_CtL} and that $I^x_{\ell}(s,t)$ is incompatible with $x\in \cA_k$ if $\ell<k$, 
we decompose:
\begin{align}
\|\Pg_{s,t}f\|_{\bbL^\infty(\cA_k)}
&\leq 
\|f\|_\infty \sum_{\ell\geq k}\sup_{x\in \cA_k}\Eg_{\Qg_{t-s}}\Big[ {\bf 1}_{I^x_\ell(s,t)}{\bf 1}_{X^x_{t-s}\in S(f)}e^{v_{s,t}(X^x_\cdot)}\Big]
\nnb
&\leq
\|f\|_\infty
\sum_{\ell\geq k}e^{4C_{t,L}(\ell)}\sup_{x\in \cA_k}\Qg_{t-s}\big(I^x_\ell(s,t), X^x_{t-s}\in S(f)\big)
.
\label{eq_chi_kPstf_last}
\end{align}
It remains to estimate this last probability. 
This is the content of the next lemma.

\begin{lemma}[Bounds on the diffusion $X$]\label{lemm_LD_X}
Let $k\in\N$, $\ell\geq k$, $t>0$, $s\leq t$ and recall the definition~\eqref{eq_def_I_ell} of the event $I^x_\ell(s,t)$. 
Let $L\geq 32$ and let $R_f$ denote the radius of the smallest ball containing $S(f)$. 
If $\ell\geq 1$, assume that:
\begin{equation}
\frac{L^\ell(1-L^{-1})}{2} - C_{t,L}(\ell)-R_f-4
\geq 
\frac14 L^{\ell}
.
\label{eq_assumption_L_LemmaLD}
\end{equation}
Then, on the event of Lemma~\ref{lemm_general_bounds}:
\begin{align}
\sup_{s\leq t}\sup_{x\in \cA_k}\Qg_{t-s}\big(I^x_\ell(s,t), X^x_{t-s}\in S(f)\big)
&\leq 
2\exp\Big[-\frac{L^{2\ell}}{16t}\Big]
\nnb
\sup_{s\leq t}\sup_{x\in \cA_k}\Qg_{t-s}\big(I^x_\ell(s,t)\big)
&\leq 
{\bf 1}_{\ell\in\{k,k+1\}}+{\bf 1}_{\ell>k+1}2\exp\Big[-\frac{L^{2\ell}}{16t}\Big]
.
\end{align}
\end{lemma}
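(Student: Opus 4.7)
\emph{Proof plan.} The indicator terms $\mathbf{1}_{\ell\in\{k,k+1\}}$ in the second bound reflect cases where the claim is trivially satisfied by a probability $\leq 1$; the nontrivial regimes are $\ell>k+1$ for the second bound and all $\ell\geq k$ for the first, where the constraint $X^x_{t-s}\in S(f)$ provides extra information. In both situations the plan is to reduce the estimate to a Gaussian tail bound for the Brownian motion $W$, which by \eqref{e:X-SDE} is a standard 2-D Brownian motion under $\Qg_{t-s}$.

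First, I establish drift confinement: on $I^x_\ell(s,t)$ one has $|X^x_u|<\tfrac12 L^{\ell+1}-4$ for every $u\leq t-s$, so the whole trajectory lies in the cube $Q_\ell$. Combining the embedding $\|\cdot\|_{\bbL^\infty(Q_\ell)}\leq\|\cdot\|_{\alpha,Q_\ell}$ with \eqref{eq_def_CtL} yields $\|\nabla\psi_{v+s}\|_{\bbL^\infty(Q_\ell)}\leq\sqrt{C_{t,L}(\ell)/t}$, and integrating along the path gives, for every $u\leq t-s$,
\begin{equation}
\Big|\int_0^u\nabla\psi_{v+s}(X^x_v)\,dv\Big|\leq (t-s)\sqrt{C_{t,L}(\ell)/t}\leq\sqrt{t\,C_{t,L}(\ell)}\leq C_{t,L}(\ell),
\end{equation}
the last inequality using $C_{t,L}(\ell)\geq t$ (a consequence of \eqref{e:CtL-def} and $L\geq\max\{3,t\}$).

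Next, I reduce the event to a large Brownian excursion. Rearranging \eqref{e:X-SDE} gives $\sqrt{2}\,W_u=(X^x_u-x)+\int_0^u\nabla\psi_{v+s}(X^x_v)\,dv$. For the second bound with $\ell>k+1$, I select $u^*\in[0,t-s]$ realising $|X^x_{u^*}|\geq\tfrac12 L^\ell-4$; the geometric description of $\cA_k$ for $k\leq\ell-2$ gives $|x|\lesssim L^{\ell-1}$, and combined with the previous step and the hypothesis \eqref{eq_assumption_L_LemmaLD} yields $\sqrt{2}\sup_{u\leq t-s}|W_u|\geq L^\ell/4$. For the first bound the additional constraint $|X^x_{t-s}|\leq R_f$ forces a round-trip, and applying the same analysis to the Brownian increment $W_{u^*}-W_{t-s}$ gives
\begin{equation}
\sqrt{2}\sup_{u,v\leq t-s}|W_u-W_v|\geq\tfrac12 L^\ell-4-R_f-C_{t,L}(\ell)\geq L^\ell/4,
\end{equation}
once more by \eqref{eq_assumption_L_LemmaLD}.

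Finally, since $W$ is a standard 2-D $\Qg_{t-s}$-Brownian motion on a time interval of length $\leq t$, the reflection principle applied coordinate-wise yields a tail bound $\Qg_{t-s}(\sup_{u\leq t-s}|W_u|\geq a)\leq C\,e^{-c\,a^2/t}$, with an analogous bound for suprema of increments; plugging in $a$ proportional to $L^\ell$ produces the stated $2\exp[-L^{2\ell}/(16t)]$ after tracking constants. The overall mechanism---confine, bound the drift, invoke Brownian tails---is classical, so the main obstacle is purely bookkeeping: aligning the constants in \eqref{eq_assumption_L_LemmaLD} with the precise definitions of $\cA_k$, $Q_\ell$ and $I^x_\ell$, and recovering the exact exponent $1/(16t)$. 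The mild subtlety is that the drift bound scales like $\sqrt{t\,C_{t,L}(\ell)}$ while the hypothesis is phrased in terms of $C_{t,L}(\ell)$, which is precisely why the assumption $L\geq\max\{3,t\}$ (so that $C_{t,L}(\ell)\geq t$) is imposed.
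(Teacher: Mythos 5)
Your proof mirrors the paper's argument: decompose $X^x_u = x - \int_0^u\nabla\psi_{s+v}(X^x_v)\,dv + \sqrt{2}W_u$ via It\^o, bound the drift integral by $C_{t,L}(\ell)$ using confinement to $Q_\ell$ on the event $I^x_\ell(s,t)$, and convert the geometric constraints ($X^x_{t-s}\in S(f)$ or $x\in\cA_k$ with $\ell>k+1$) into a Gaussian tail estimate via the hypothesis~\eqref{eq_assumption_L_LemmaLD}. The only cosmetic difference is that for the round-trip case you track the increment supremum $\sup_{u,v}|W_u-W_v|$ while the paper writes $\sup_u|W_u|$ (equivalent by time-reversal); otherwise the reduction, the use of~\eqref{eq_def_CtL}, and the final tail estimate are the same.
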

\begin{proof}
If $k=\ell=0$ there is nothing to prove. 
Assume instead $k=0$, $\ell\geq 1$ or $\ell\geq k\geq 1$.  
Then Ito's formula gives:
\begin{equation}
X^x_u
=
x- \int_0^u \nabla\psi_{s+v}(X^x_v)\, dv +\sqrt{2}W_u
,
\end{equation}
with $W_\cdot$ a standard Brownian motion. 
The uniform bound~\eqref{eq_bound_grad_psi} on $|\nabla\psi_u|$ implies (recall that $C_{t,L}(\ell)$ is defined in~\eqref{eq_def_CtL}):
\begin{equation}
{\bf 1}_{I^x_\ell(s,t)}\Big|\int_0^u \nabla\psi_{s+v}(X^x_v)\, dv\Big|
\leq 
C_{t,L}(\ell)
.
\end{equation}
As $\ell\geq 1$, on the event $I_\ell(s,t)\cap\{X^x_{t-s}\in S(f)\}$ it must be that $X^x_{t-s}$ is a distance at least $L^{\ell}/2-4-R_f$ from its position at an intermediate time.   
Since $W_\cdot$ does not depend on $x$ and~\eqref{eq_assumption_L_LemmaLD} holds, we find:
\begin{align}
&\sup_{s\leq t}\sup_{x\in \R^2}\Qg_{t-s}\big(I^x_\ell(s,t), X^x_{t-s}\in S(f)\big)
\nnb
&\qquad \leq 
\sup_{s\leq t}\Qg_{t-s}\Big(\sup_{u\leq t-s}\big|\sqrt{2}W_u\big|
\geq L^\ell/2-4-R_f- C_{t,L}(\ell)\Big)
\nnb
&\qquad \leq 
2\exp\Big[ -\frac{\big(L^\ell-4-R_f- C_{t,\ell}(L)\big)^2}{t}\Big]
\leq 
2e^{-L^2/(16t)}
,
\end{align}
where the last line is Lemma~\ref{lemm_bound_psi_gradpsi}. 
Similarly, on the event $I^x_{s,t}(\ell)$ with $x\in \cA_k$ it must be that:
\begin{equation}
\sup_{u\leq t-s}|X^x_u-x|
\geq 
L^{\ell}/2-4-\sqrt{2}(L^{k+1}/2+1)
,
\end{equation}
where the $\sqrt{2}$ comes from the different norms used to define $I^x_{\ell}(s,t)$, $\cA_k$.  
If $\ell\in\{k,k+1\}$ we have nothing to prove, otherwise the above is positive as we assume $L\geq  32$, 
and Assumption~\eqref{eq_assumption_L_LemmaLD} on $L$ gives:
\begin{equation}
L^{\ell}/2-4-\sqrt{2}\big(L^{k+1}/2 - 1)- C_{t,L}(\ell)
\geq 
\frac{L^{\ell}(1-L^{-1})}{2}- C_{t,L}(\ell)
-
4
\geq 
L^{\ell}/4
.
\end{equation}
Thus, when $\ell>k+1$:
\begin{equation}
\sup_{s\leq t}\sup_{x\in \cA_k}\Qg_{t-s}\big(I^x_\ell(s,t)\big)
\leq 
2e^{-L^{2\ell}/(16t)}
. \qedhere
\end{equation}
\end{proof}

To apply the lemma, we will impose its assumption in the rest of this section.
More precisely, we assume the following stronger constraint on $L$: 
for each $\ell\geq 1$, 
\begin{equation}
\frac{L^\ell(1-L^{-1})}{2} - C_{t,L}(\ell)^{\frac{1}{1-\alpha}}-R_f-4
\geq 
\frac14 L^\ell 
.
\label{eq_assumption_L_proof_decay_kgeq1}
\end{equation}
Since $C_{t,L}(\ell)$ is given by \eqref{e:CtL-def},
this is true if $\sigma,\alpha$ are small enough and $L$ satisfies the main assumption:
\begin{equation}
L
\geq 
C_0 (t^{C_0}+R_f+1)
,
\label{eq_def_L_proof_finite_propspeed}
\end{equation}
for a constant $C_0>0$ large enough uniformly on $\alpha,\sigma$ (taking an exponent strictly larger than $8\kappa+2$ e.g. works if $1-\alpha-8\sigma\eta\geq 1/2$). 
Moreover, for any $\theta>0$, 
if $\alpha,\sigma$ are small enough and $C_0=C_0(\theta)$ large enough, one can have:
\begin{equation}
4C_{t,L}(0)
\leq 
L^\theta
.
\label{eq_upperbound_C_tL0}
\end{equation}

To prove Proposition~\ref{prop_FK_term}, we need to bound $\|\Pg_{s,t}f\|_{\bbL^\infty(\cA_k)}$
and the H\"older seminorm $[\Pg_{s,t}]_{\alpha,\cA_k}$.
We first apply Lemma~\ref{lemm_LD_X} to conclude the bound of $\|\Pg_{s,t}f\|_{\bbL^\infty(\cA_k)}$.

\begin{proof}[Proof of $\bbL^\infty$ bound]
Equation~\eqref{eq_chi_kPstf_last} and Lemma~\ref{lemm_LD_X} give:
\begin{align}
\|\Pg_{s,t}f\|_{\bbL^\infty(\cA_k)}
\lesssim 
\|f\|_\infty\sum_{\ell\geq k} \exp\Big[4C_{t,L}(\ell) -\frac{L^{2\ell}}{16}\Big]
.
\end{align}
Recall that $4C_{t,L}(0)\leq L^\theta$ from~\eqref{eq_upperbound_C_tL0}. 
For $\ell\geq 1$,  
Assumption~\eqref{eq_assumption_L_LemmaLD} on $L$ implies:
\begin{equation}
\forall \ell\geq 1,\qquad 
C_{t,L}(\ell)
\leq 
\frac{L^{\ell}}{4}
\quad 
\Rightarrow\quad 
4C_{t,L}(\ell) - \frac{L^{2\ell}}{16t}
\leq 
- \frac{L^{2\ell}}{32t}
,
\label{eq_log_leq_power}
\end{equation}
where the implication relies on the lower bound~\eqref{eq_def_L_proof_finite_propspeed} on $L$ with a large enough $C_0$. 
In particular we can take $C_0$ to ensure $L\geq 32t$, so that $L^\ell-L^{2\ell}/(16t)\leq -L^{2\ell}/(32t)$. 
This concludes the bound on $\|\Pg_{s,t}f\|_{\bbL^\infty(\cA_k)}$:
\begin{align}
\|\Pg_{s,t}f\|_{\bbL^\infty(\cA_k)}
&\lesssim
{\bf 1}_{k=0}\|f\|_{\bbL^\infty(\R^2)} e^{4 C_{t,L}(0)} + {\bf 1}_{k\geq 1}\|f\|_{\bbL^\infty(\R^2)} e^{- L^{2k}/(32t)}
\nnb
&\lesssim
{\bf 1}_{k=0}\|f\|_{\bbL^\infty(\R^2)}
e^{L^\theta} + {\bf 1}_{k\geq 1}\|f\|_{\bbL^\infty(\R^2)}e^{-L^{2k}/t}
.
\end{align}
\end{proof}

We now estimate the H\"older seminorm $[\Pg_{s,t}f]_{\alpha,\, \cA_k}$.
This will require a bound on the expected variation of $X^{x}_\cdot$. 
This is not standard since $X^x_\cdot$ does not have Lipschitz drift. 
Using results of~\cite{MR2593276}, we prove the following estimate in Appendix~\ref{app_gradientX}.

\begin{lemma}\label{lemm_gradientX}
For each $\alpha\in(0,1)$, 
there is a constant $C(\alpha)>0$ such that, for each $\ell,\ell'\in\N$:
\begin{align}
&\sup_{s\leq t}\sup_{\substack{x,y\in\R^2 \\ x\neq y}}\frac{1}{|x-y|}
\sup_{u\leq t-s}\Eg_{\Qg_{t-s}}\Big[{\bf 1}_{J^{x,y}_{s,t}(\ell,\ell')}|X^x_{u}-X^y_u|^2\Big]^{1/2}
\nnb
&\hspace{5cm}\lesssim
\exp\Big[C(\alpha)\max\big\{C_{t,L}(\ell)^{\frac{1}{1-\alpha}},C_{t,L}(\ell')^{\frac{1}{1-\alpha}}\big\}\Big]
, 
\end{align}
where $J^{x,y}_{\ell,\ell'}(s,t) := I^x_\ell(s,t) \cap I^y_{\ell'}(s,t)$
and  $I^x_{\ell}(s,t)$ was defined in \eqref{eq_def_I_ell}.
\end{lemma}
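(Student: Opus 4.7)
The plan is to prove this bound via the It\^o--Tanaka transformation (Zvonkin's trick) of \cite{MR2593276}, which is designed precisely for SDEs with merely H\"older continuous drifts. The main obstacle is that the drift $\nabla \psi_{s+\cdot}$ appearing in~\eqref{e:X-SDE} is only $C^\alpha$ by Lemma~\ref{lemm_bound_psi_gradpsi}, so a direct Gronwall argument on $|X^x_u - X^y_u|$ does not apply. The exponent $\frac{1}{1-\alpha}$ in the target estimate is the characteristic scaling that emerges from the Schauder estimates underlying the transformation.

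\medskip

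First, I localise. On the event $J^{x,y}_{s,t}(\ell,\ell')$ both trajectories remain in the box $Q_{\ell\vee\ell'}$ of~\eqref{eq_def_CtL}, so by smooth truncation one may replace the drift by a globally defined, compactly supported vector field $b=b_u(x)$ that agrees with $\nabla\psi_{s+u}$ on $Q_{\ell\vee\ell'}$ and satisfies
\begin{equation}
M := \sup_{u\leq t-s}\|b_u\|_{C^\alpha(\R^2)} \lesssim C_{t,L}(\ell\vee\ell'),
\end{equation}
without affecting the law of the processes restricted to $J^{x,y}_{s,t}(\ell,\ell')$.

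\medskip

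Next, for a parameter $\lambda>0$ I solve the vector-valued backward parabolic PDE
\begin{equation}
-\partial_u U_u + \lambda U_u - \Delta U_u + b_u\cdot \nabla U_u = b_u,\qquad U_{t-s}\equiv 0.
\end{equation}
Parabolic Schauder estimates give a bound of the form $\|\nabla U\|_\infty \lesssim \lambda^{-\beta(\alpha)} M$ for some $\beta(\alpha)>0$, so the choice $\lambda = C(\alpha)M^{1/\beta(\alpha)}$ (which one verifies to be of order $M^{1/(1-\alpha)}$) ensures $\|\nabla U\|_\infty \leq \tfrac12$. The map $\Phi_u(x) := x + U_u(x)$ is then a $C^1$-diffeomorphism of $\R^2$, bi-Lipschitz uniformly in $u$ with constants in $[1/2, 3/2]$. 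Applying It\^o's formula to $Y^x_u := \Phi_u(X^x_u)$ and using the PDE to cancel the H\"older part of the drift, the process $Y^x$ solves an SDE whose drift and diffusion are globally Lipschitz with constants of order $\lambda$.

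\medskip

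Finally, a standard Gronwall argument on the transformed system yields
\begin{equation}
\Eg_{\Qg_{t-s}}\bigl[|Y^x_u - Y^y_u|^2\bigr] \leq |\Phi_0(x)-\Phi_0(y)|^2 \exp\bigl[C(\alpha)\, M^{1/(1-\alpha)}(t-s)\bigr].
\end{equation}
Since $C_{t,L}(\ell)$ already contains a factor $(1+t)^{4\kappa+1}$, the product $M^{1/(1-\alpha)}(t-s)$ is dominated by $C_{t,L}(\ell\vee\ell')^{1/(1-\alpha)}$, up to a constant depending only on $\alpha$. Transferring back via $|X^x_u - X^y_u|\leq 2|Y^x_u - Y^y_u|$ and $|\Phi_0(x)-\Phi_0(y)|\leq \tfrac32|x-y|$, and inserting the indicator ${\bf 1}_{J^{x,y}_{s,t}(\ell,\ell')}$ (which can only decrease the left-hand side), completes the proof.
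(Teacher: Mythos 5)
Your proof follows essentially the same route as the paper: you localise the drift on the good event $J^{x,y}_{\ell,\ell'}(s,t)$, pass to a conjugated SDE with Lipschitz coefficients via the Zvonkin transformation (solve a resolvent PDE, apply Schauder estimates, choose $\lambda$ large to make $\Phi_u = \mathrm{id} + U_u$ a bi-Lipschitz diffeomorphism), and finish with Gronwall. This is exactly the argument of \cite{MR2593276} which the paper also follows in Appendix~\ref{app_gradientX} (there phrased as Proposition~\ref{prop_gradientX_appendix}, with the general quantitative bound $\lesssim \exp[C(\alpha)\,t\,(1\vee\sup_u\|b(u,\cdot)\|_{C^\alpha}^{2/(1-\alpha)})]$, followed by the localisation step).

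However, there is a genuine gap in your final bookkeeping. You claim $M := \sup_u\|b_u\|_{C^\alpha}\lesssim C_{t,L}(\ell\vee\ell')$ and $\lambda\sim M^{1/(1-\alpha)}$, and then assert that $\lambda(t-s)\lesssim M^{1/(1-\alpha)}(t-s)$ is ``dominated by $C_{t,L}(\ell\vee\ell')^{1/(1-\alpha)}$'' because $C_{t,L}$ already contains the factor $(1+t)^{4\kappa+1}$. This does not close: with your stated scalings the exponent is $\sim C_{t,L}^{1/(1-\alpha)}\, t$, and the extra multiplicative $t$ is not absorbed merely because $C_{t,L}$ itself grows in $t$ — the ratio of what you want to what you have is still $t$. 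Two corrections are needed. First, the Schauder/resolvent argument (smoothing estimate $\|P_u g\|_{C^{1+\alpha}}\lesssim u^{-(1+\alpha)/2}\|g\|_\infty$ integrated against $e^{-\lambda u}\,du$) gives $\|\nabla U\|_\infty\lesssim \lambda^{-(1-\alpha)/2}M$, so the correct choice is $\lambda\sim M^{2/(1-\alpha)}$, not $M^{1/(1-\alpha)}$. Second, and more importantly, the definition \eqref{eq_def_CtL} of $C_{t,L}(\ell)$ is tailored to bound $t\sup_u\|\nabla\psi_u\|^2_{\alpha,Q_\ell}$, so the sharp bound for the truncated drift is $M\lesssim (C_{t,L}(\ell^*)/t)^{1/2}$. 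With these two corrections the product in the exponent becomes
\[
\lambda\, t \;\sim\; M^{\frac{2}{1-\alpha}}\,t \;\lesssim\; \Big(\tfrac{C_{t,L}(\ell^*)}{t}\Big)^{\frac{1}{1-\alpha}}\,t
\;=\; C_{t,L}(\ell^*)^{\frac{1}{1-\alpha}}\, t^{-\frac{\alpha}{1-\alpha}} \;\leq\; C_{t,L}(\ell^*)^{\frac{1}{1-\alpha}}
\]
for $t\geq 1$, which is the desired form. As stated, your version of these two scaling exponents leads to an estimate weaker by a factor $e^{Ct}$ than the claim.
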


\begin{proof}[Proof of $C^\alpha$ bound]
Let $x,y\in \cA_k$ with $x\neq y$. Then
\begin{equation}
\Pg_{s,t}f(x) - \Pg_{s,t}f(y)
=
\Eg_{\Qg_{t-s}}\big[f(X^x_{t-s})e^{v_{s,t}(X^x_\cdot)} - f(X^y_{t-s})e^{v_{s,t}(X^y_\cdot)}\big]
.
\end{equation}
To control $[\Pg_{s,t}f]_{\alpha,\, \cA_k}$ we will bound $\|f(X^\cdot)\|_{\alpha,\, \cA_k}$ and $\|e^{v_{s,t}}\|_{\alpha,\, \cA_k}$. 
We can bound $e^{v_{s,t}}$ on the set $I^x_\ell(s,t)$ for $\ell\in\N$,   
but the resulting bound is only
summable only we can argue that the final value $X^x_{t-s}$ of the diffusion is sufficiently far from $\sup_{u\leq t-s}|X^x_{t-s}|$. 
For $\ell,\ell'\in\N$, with
\begin{equation}
J^{x,y}_{\ell,\ell'}(s,t) 
:=
I^x_\ell(s,t) \cap I^y_{\ell'}(s,t)
,
\label{eq_def_J_ell}
\end{equation}
we therefore split $[\Pg_{s,t}f]_{\alpha,\, \cA_k}$ as:
\begin{align}
\sup_{\substack{x\neq y\in \cA_k \\ |x-y|\leq 1}}
&\frac{|\Pg_{s,t}f(x)-\Pg_{s,t}f(y)|}{|x-y|^\alpha}
\nnb
&\qquad\leq  
\sup_{\substack{x,y\in \cA_k \\ x\neq y}}\frac{1}{|x-y|^\alpha}\sum_{\ell,\ell'\geq k}
\Eg_{\Qg_{t-s}}\Big[{\bf 1}_{J^{x,y}_{\ell,\ell'}(s,t)} \big[f(X^x_{t-s})e^{v_{s,t}(X^x_\cdot)} - f(X^y_{t-s})e^{v_{s,t}(X^y_\cdot)}\big]\Big]
\nnb
&\qquad= 
\sup_{\substack{x,y\in \cA_k \\ x\neq y}}
\frac{1}{|x-y|^\alpha}\Big\{\sum_{\ell,\ell'\geq k}T^{x,y}_1(\ell,\ell') + \sum_{\ell,\ell'\geq k}T^{x,y}_2(\ell,\ell') \Big\}
,
\label{eq_splitting_Hoelder_Pst}
\end{align}
where:
\begin{align}
T^{x,y}_1(\ell,\ell')
&:=
\Eg_{\Qg_{t-s}}\Big[{\bf 1}_{J^{x,y}_{\ell,\ell'}(s,t)} {\bf 1}_{X^x_{t-s},X^y_{t-s}\in S(f)}\big[f(X^x_{t-s})e^{v_{s,t}(X^x_\cdot)} - f(X^y_{t-s})e^{v_{s,t}(X^y_\cdot)}\big]\Big]
\nnb
T^{x,y}_2(\ell,\ell') 
&:=
2\Eg_{\Qg_{t-s}}\Big[{\bf 1}_{J^{x,y}_{\ell,\ell'}(s,t)} {\bf 1}_{X^x_{t-s}\in S(f),X^y_{t-s}\notin S(f)}f(X^x_{t-s})e^{v_{s,t}(X^x_\cdot)}\Big]
.
\end{align}
The term $T^{x,y}_1(\ell,\ell')$ will have good summability in $\ell,\ell',k$ when $x,y\in \cA_k$ thanks to the event $X^x_{t-s},X^y_{t-s}\in S(f)$ which ensures that the diffusion has travelled a distance of order $\max\{L^\ell,L^{\ell'}\}$ on $[0,t-s]$, 
together with the bounds of Lemma~\ref{lemm_LD_X}. 
The summability for $T^{x,y}_2(\ell,\ell')$ is worse since we do not have $X^y_{t-s}\in S(f)$, 
i.e., $X^y_{\cdot}$ may not have come close to the origin than $\cA_k$ on $[0,t-s]$.  
This is mitigated by the fact that then $f(X^y_{t-s})=0$ cancels the $e^{v_{s,t}(X^y_\cdot)}$ term which would otherwise make the sum on $\ell'$ divergent. 

Let us first bound $T^{x,y}_1(\ell,\ell')$. 
Recall the following elementary identities, valid for real-valued functions $g,h$ and any $a,b\in\R$:
\begin{align}
\big|e^{g(a)}-e^{g(b)}\big|
&\leq 
|g(a)-g(b)| e^{\|g\|_{\bbL^\infty(\R^2)}}
\nnb
|h(a)e^{g(a)}-h(b)e^{g(b)}|
&\leq 
|h(a)-h(b)|e^{\|g\|_{\bbL^\infty(\R^2)}} 
+\|h\|_\infty e^{\|g\|_{\bbL^\infty(\R^2)}}|g(a)-g(b)|
.
\end{align}
In addition, Equation~\eqref{eq_def_vst} defining $v_{s,t}$ implies:
\begin{align}
|v_{s,t}(X^x_\cdot)-v_{s,t}(X^y_{\cdot})|
&\leq 
|\psi_t(X^x_{t-s})-\psi_t(X^y_{t-s})| + |\psi_s(x)-\psi_s(y)|
\nnb
&\quad +\int_0^{t-s}\Big[ |\psi_{s+u}(X^x_u)-\psi_{s+u}(X^y_u)| + \big||\nabla \psi_{s+u}(X^x_u)|^2-|\nabla\psi_{s+u}(X^y_u)|^2\big|\Big]\, du
.
\end{align}
We therefore find (recall definition~\eqref{eq_def_CtL} of $C_{t,L}(\cdot)$):
\begin{align}
\sup_{\substack{x,y\in \cA_k \\ x\neq y}}\frac{|T^{x,y}_1(\ell,\ell')|}{|x-y|^\alpha}
&\leq 
4\|f\|_{{\alpha}}\max\Big\{C_{t,L}(\ell)e^{6C_{t,L}(\ell)}, C_{t,L}(\ell')e^{6C_{t,L}(\ell')}\Big\}
\nnb
&\qquad \times 
\sup_{\substack{x,y\in \cA_k\\ x\neq y}}\frac{\max\{t-s,1\}}{|x-y|^\alpha}\sup_{u\leq t-s}\Eg_{\Qg_{t-s}}\Big[{\bf 1}_{J^{x,y}_{\ell,\ell'}(s,t)} {\bf 1}_{X^x_{t-s},X^y_{t-s}\in S(f)}|X^x_{u}-X^y_u|^{\alpha}\Big]
.
\end{align}
H\"older's inequality then gives:
\begin{align}
\sup_{\substack{x,y\in \cA_k \\ x\neq y}}\frac{|T^{x,y}_1(\ell,\ell')|}{|x-y|^\alpha}
&\lesssim 
\|f\|_{{\alpha}}\max\Big\{C_{t,L}(\ell)e^{6C_{t,L}(\ell)}, C_{t,L}(\ell')e^{6C_{t,L}(\ell')}\Big\}
\nnb
&\qquad 
\times \sup_{x\in \cA_k}\Qg_{t-s}\big( I^x_{s,t}(\ell), X^x_{t-s}\in S(f)\big)^{1/3}
\sup_{y\in \cA_k}\Qg_{t-s}\big( I^y_{s,t}(\ell'), X^y_{t-s}\in S(f)\big)^{1/3}
\nnb
&\qquad 
\times \sup_{\substack{x,y\in \cA_k\\ x\neq y}}\frac{\max\{t-s,1\}}{|x-y|^\alpha}\sup_{u\leq t-s}
\Eg_{\Qg_{t-s}}\Big[{\bf 1}_{J^{x,y}_{s,t}(\ell,\ell')}|X^x_{u}-X^y_u|^{3\alpha}\Big]^{1/3}
\label{eq_bound_T_1}
.
\end{align}
Consider now $T_2^{x,y}(\ell,\ell')$. 
A similar argument gives:
\begin{align}
\sup_{\substack{x,y\in \cA_k \\ x\neq y}}\frac{|T^{x,y}_2(\ell,\ell')|}{|x-y|^\alpha}
&\lesssim 
\|f\|_{\alpha} e^{6C_{t,L}(\ell)}
\sup_{x\in \cA_k}\Qg_{t-s}\big( I^x_{s,t}(\ell), X^x_{t-s}\in S(f)\big)^{1/2}
\nnb
&\qquad
\times \sup_{\substack{x,y\in \cA_k\\ x\neq y}}\frac{\max\{t-s,1\}}{|x-y|^\alpha}\sup_{u\leq t-s}
\Eg_{\Qg_{t-s}}\Big[{\bf 1}_{J^{x,y}_{s,t}(\ell,\ell')}|X^x_{u}-X^y_u|^{2\alpha}\Big]^{1/2}
\label{eq_bound_T_2}
.
\end{align}
The probabilities appearing on the right-hand side of~\eqref{eq_bound_T_1}--\eqref{eq_bound_T_2} are estimated by Lemma~\ref{lemm_LD_X},
and the expected variation of $X^{x}_\cdot$ is estimated by Lemma~\ref{lemm_gradientX}.
Indeed, these give with $C(\alpha)>0$,
\begin{align}
&\sup_{\substack{x,y\in \cA_k \\ x\neq y}}\frac{|\Pg_{s,t}f(x)-\Pg_{s,t}f(y)|}{|x-y|^\alpha}
\nnb
&\qquad\lesssim 
\|f\|_{\alpha}\bigg[\Big(\sum_{\ell\geq k} C_{t,L}(\ell)e^{C(\alpha)C_{t,L}(\ell)^{\frac{1}{1-\alpha}}} e^{-L^{2\ell}/16t}\Big)^2
\nnb
&\hspace{2cm}
+ \sum_{\ell\geq k} e^{C(\alpha)C_{t,L}(\ell)^{\frac{1}{1-\alpha}}} e^{-L^{2\ell}/16t}\sum_{\ell'\geq k} e^{C(\alpha)C_{t,L}(\ell')^{\frac{1}{1-\alpha}}}\Big[{\bf 1}_{\ell'\in\{k,k+1\}} + {\bf 1}_{\ell'>k+1}e^{-L^{2\ell'}/16t}\Big]\bigg]
.
\end{align}
Since~\eqref{e:CtL-def} and \eqref{eq_assumption_L_proof_decay_kgeq1}
imply $C_{t,L}(p)^{\frac{1}{1-\alpha}}\leq \frac14 L^p$ for $p\geq 1$ and since $C_{t,L}(0)\leq C_{t,L}(1)$, 
the right-hand side above is bounded for some $c,c'>0$ by:
\begin{equation}
c\|f\|_{\alpha}\Big[C_{t,L}(1)^2e^{2C(\alpha)C_{t,L}(1)^{\frac{1}{1-\alpha}}}{\bf 1}_{k=0} + {\bf 1}_{k\geq 1}e^{-c'L^{2k}/t}\Big]
\lesssim
{\bf 1}_{k=0}\|f\|_{\alpha}e^{L^\theta} + {\bf 1}_{k\geq 1}\|f\|_{\alpha}e^{-c'L^{2k}/t}
.
\end{equation}
This concludes the proof of Proposition~\ref{prop_FK_term}.
\end{proof}

\subsection{Proof of Lemma~\ref{lemm_general_bounds}}\label{sec_proof_lem_general_bounds}
Proposition~\ref{prop_gaussian_bounds} establishes bounds on the Wick powers of $Z_\cdot$, $Z^L_\cdot$. 
These imply, for some $\beta>0$, each $r\in(0,2],A>0$ and some $\epsilon_r>0$:
\begin{equation}
\bbP\Big( \sup_{s\leq t}\, (s^{n\alpha}\wedge 1)\|\wick{(Z_s)^n}\|_{-n\alpha,\rho^n}
>A^{n/r}\Big)
\lesssim 
\exp\big[\epsilon_r \|\varphi_0\|_{-\alpha,\rho}^{r}\big] e^{-A^{r/2}}(1+t)^\beta
,
\end{equation}
where the $\lesssim$ hides a constant independent of $t,\varphi_0,L$. 
Taking $r=1$ and choosing $A^{r/2}=(1+t)^3(\log L)^3$, 
the bounds of Lemma~\ref{lemm_general_bounds} on Wick powers of $Z_\cdot$, $Z^L_\cdot$ hold on an event with probability at most $C\exp[C \|\varphi_0\|_{-\alpha,\rho}] e^{-t^2\log L}$ for $C$ independent of $t,\varphi_0,L$ as claimed. 

From Theorem~\ref{thm:apriori_bounds} we obtain a similar claim for the first bound in~\eqref{eq_bounds_YZ} involving $v^L,v$. 
Indeed,  
let $\eta=\frac{1+\alpha'}{1-3\alpha}$ with $\alpha'>\alpha>0$ small enough. 
For any $A>0$, 
some $C_0$ depending only on $\alpha,\rho$ and a different $\epsilon_r>0$, 
\begin{align}
\bbP\Big( \sup_{s\leq t}\|v^L_s\|_{\alpha',\rho^\eta}>A^{\eta/r}\Big)
&\leq 
\bbP\Big( 1+\max_{1\leq n\leq 3}\sup_{s\leq t}\, (s^{n\alpha}\wedge 1)\|\wick{(Z^L_s)^n}\|_{-n\alpha,\rho^{n}}
\geq 
C_0A^{n/r}\Big)
\nnb
&\lesssim
 (1+t)^\beta\, \exp\big[\epsilon_r \|\varphi_0\|_{-\alpha,\rho}^{r}\big]\, e^{-C_0A^{r/2}}
,
\end{align}
and the same holds for $v$. 
Choosing $r=1$ and $A^{\eta/r}=(1+t)^{5\eta}(\log L)^{5\eta}$, 
the first bound in~\eqref{eq_bounds_YZ} is seen to hold except with probability $\lesssim\exp\big[\epsilon_1\|\varphi_0\|_{-\alpha,\rho}\big]\, e^{-t^2\log L}$, 
where the $\lesssim$ is independent of $t,\varphi_0,L$.  
This concludes the proof of Lemma~\ref{lemm_general_bounds}.

\section{Finite time bound on the relative entropy}
\label{sec:entropy}

The goal of this section is to prove Theorem~\ref{thm:ent-claim} (restated in Proposition~\ref{prop:ent-claim} below).
Throughout this section, we consider the finite volume dynamics $\varphi^L_\cdot$ ($L>0$),
and assume that the initial condition $\varphi_0=\varphi_0^L$ is given in terms of $\varphi_0 \in C^{-\alpha}(\rho)$ defined on the full plane by \eqref{e:phi0L}.
As the infinite volume dynamics will not play a role in this section,
the dependence on $L$ is omitted from the notation.
Thus we denote by $m_t$ the law of $\varphi_t$ such that $m_0 = \delta_{\varphi_0}$ and by $m_\infty$ the $\varphi^4$ measure $\nu_L$ on $\T_L^d$.
The relative entropy of a probability measure $\mu$ with respect to another measure $\nu$ is denoted by $\bbH(\mu|\nu)$,
see \eqref{e:H-ent-def}.

\begin{proposition} \label{prop:ent-claim}
  Let $d=2$, and let $\alpha,\sigma>0$ be sufficiently small.
  Then there is a constant $\beta>0$ such that for any deterministic initial condition $\varphi_0 \in C^{-\alpha}(\rho)$:
  \begin{equation} \label{e:ent-claim_sec5}
    \bbH(m_1|m_\infty) \lesssim  (1+\|\varphi_0\|_{-\alpha,\rho}^{8}) L^{2\beta},
  \end{equation}
  with the implied constant depending on $\lambda,\mu,\alpha,\sigma$.
\end{proposition}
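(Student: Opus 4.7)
Plan. The strategy is to express both $m_1$ and $\nu_L$ as densities with respect to explicit Gaussian reference measures and sum the logarithmic contributions. By \eqref{e:phi42measure}, $\nu_L=Z_L^{-1}H\cdot\nu_L^{\rm GFF}$ with $H(\varphi)=\exp\bigl[-\int_{\T_L^2}\bigl(\tfrac{\lambda}{4}\wick{\varphi^4}+\tfrac{\mu}{2}\wick{\varphi^2}\bigr)dx\bigr]$. Using the Da Prato-Debussche decomposition $\varphi_1=Z_1+v_1$ with $v_1$ of positive H\"older regularity (Theorem~\ref{thm:apriori_bounds}), one aims to show $m_1\ll\mu_1$, where $\mu_1$ denotes the Gaussian law of $Z_1$, i.e.\ the shifted Gaussian with mean $e^{-A}\varphi_0^L$ and covariance $C_1=A^{-1}(1-e^{-2A})$. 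Finally $\mu_1\sim\nu_L^{\rm GFF}$ by Feldman-H\'ajek: in $d=2$, the covariance difference $A^{-1}e^{-2A}$ is trace class on $\T_L^2$ and the mean shift lies in the Cameron-Martin space $H^1(\T^2_L)$. Chaining $\frac{dm_1}{d\nu_L}=\frac{dm_1}{d\mu_1}\cdot\frac{d\mu_1}{d\nu_L^{\rm GFF}}\cdot\frac{Z_L}{H}$ yields
\begin{equation*}
\bbH(m_1|\nu_L) = \bbH(m_1|\mu_1) + \E_{m_1}\Bigl[\log\tfrac{d\mu_1}{d\nu_L^{\rm GFF}}\Bigr] + \log Z_L + \E_{m_1}\Bigl[\int_{\T_L^2}\Bigl(\tfrac{\lambda}{4}\wick{\varphi_1^4}+\tfrac{\mu}{2}\wick{\varphi_1^2}\Bigr)dx\Bigr].
\end{equation*}

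Three of these four terms are fairly explicit. The Feldman-H\'ajek log-density $\log(d\mu_1/d\nu_L^{\rm GFF})$ is the sum of a Cameron-Martin linear form in $\varphi$ (from the mean shift), a quadratic form governed by the trace-class covariance correction $A^{1/2}e^{-2A}A^{1/2}(1-e^{-2A})^{-1}$, and a $\log$-determinant of size $L^{O(1)}$; its expectation under $m_1$ is $O\bigl((1+\|\varphi_0\|_{-\alpha,\rho}^2)L^{O(1)}\bigr)$ by the $n=2$ moment bound of Corollary~\ref{cor:Wick-power-moment}. The Wick-power expectations are bounded directly by Corollary~\ref{cor:Wick-power-moment} with $n=2,4$; the fourth Wick power combined with Cauchy-Schwarz on the available stretched-exponential moments produces the exponent $8$ on $\|\varphi_0\|_{-\alpha,\rho}$, while the volume $|\T_L^2|$ supplies the factor $L^{2\beta}$. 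The partition function satisfies $|\log Z_L|\lesssim L^{2\beta}$ by the standard $\varphi^4_2$ estimate.

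The hard part is the remaining term $\bbH(m_1|\mu_1)$. The natural route is an anticipative Cameron-Martin shift by $v_1$: almost surely $v_1$ belongs to the Cameron-Martin space of $\mu_1$ because the a priori bounds of Theorem~\ref{thm:apriori_bounds} actually deliver a higher regularity $v_1\in H^{2-3\alpha-\epsilon}(\T_L^2)\subset H^1(\T^2_L)$ for small $\alpha$, so the shift is Cameron-Martin valued. Concretely one works with a regularised SPDE and applies Girsanov on path space to change the OU drift into the $\varphi^4$ drift $-\lambda\wick{\varphi^3}-\mu\varphi$; using $\wick{\varphi^3}=\wick{Z^3}+3\wick{Z^2}v+3Zv^2+v^3$, the Girsanov exponent decomposes into a singular piece $\int_0^1(\wick{Z_t^3},dW_t)-\tfrac12\int_0^1\|\wick{Z_t^3}\|^2 dt$, interpreted via the It\^o trick of \cite{MR2593276,KoenigPerkowskivanZuiljen_PAM}, and regular $v$-dependent pieces controlled by Theorem~\ref{thm:apriori_bounds} and Proposition~\ref{prop_gaussian_bounds}; passage to the limit in the regularisation together with data processing yields $\bbH(m_1|\mu_1)\lesssim (1+\|\varphi_0\|_{-\alpha,\rho}^p)L^{O(1)}$. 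This is the main obstacle: the naive Cameron-Martin computation formally diverges because $\wick{Z^3}$ is a genuine space-time distribution, and in the continuum one cannot invoke ultracontractivity as on the lattice. The It\^o-trick interpretation is the continuum analogue of the fixed-time absolute-continuity arguments in \cite{MR2122959,MR4476105}, which the introduction identifies as the simplifying feature exploited in this section.
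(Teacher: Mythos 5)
Your overall decomposition is the same as the paper's: your $\mu_1$ is the paper's $m_1^0$ (the Gaussian law of $Z_1$) and your $\nu_L^{\rm GFF}$ is the paper's $m_\infty^0$, so the chain of three log-densities in \eqref{e:ent-decomp} matches yours exactly, and your treatment of the Feldman--H\'ajek piece and of the Wick-power/Nelson piece is in line with what the paper does. But your treatment of the key term $\bbH(m_1\,|\,\mu_1)$ has a genuine gap.

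The path-space relative entropy $\bbH({\bf Q}_1\,|\,{\bf Q}_1^0)$ is \emph{infinite}: the drift $B_s=\lambda\wick{\varphi_s^3}+(\mu-1)\varphi_s$ takes values only in negative-regularity spaces, so $\int_0^1 \|B_s\|^2_{\bbL^2}\,ds=\infty$ and the Girsanov exponent you write down, including the piece $\tfrac12\int_0^1\|\wick{Z_t^3}\|^2\,dt$, simply does not exist as a real-valued quantity. The path laws ${\bf Q}_t$ and ${\bf Q}_t^0$ are mutually singular (this is \cite{MR1661767}, cited in the paper). You acknowledge the formal divergence but propose to resolve it with the ``It\^o trick''; that trick is useful for regularising \emph{time-averaged linear functionals} $\int_0^t(\wick{Z_s^n},h)\,ds$ via the martingale decomposition of a stationary process, and the paper uses it for exactly that purpose in the propagation-speed estimate (Lemma~\ref{lemm_gradientX}), but it offers no mechanism to renormalise the divergent \emph{quadratic} energy $\int_0^1\|\wick{Z_s^3}\|^2_{\bbL^2}\,ds$. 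Likewise, the anticipative Cameron--Martin shift by the random, noise-dependent $v_1$ does not give a usable entropy bound directly: $v_1$ is not deterministic and not independent of $Z_1$.

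What actually closes the gap in the paper is the time-shifted Girsanov of \cite{MR4476105}, implemented in Lemmas~\ref{lemm_def_tildephi} and \ref{lem:ent-Giransov}: one rewrites $\varphi_t$ as the terminal value of a \emph{different} equation with drift
\begin{equation*}
\tilde B_{s,t}=2\,\mathbf 1_{[t/2,t]}(s)\,e^{-(t-s)A}B_{2s-t},
\end{equation*}
which has the same time-$t$ marginal (Lemma~\ref{lemm_def_tildephi}) but is $\bbL^2$-valued because the heat semigroup $e^{-(t-s)A}$ raises the regularity of the distributional cubic term to a genuine function. Then Girsanov applies on path space, and $\bbH(m_1\,|\,m_1^0)\le \bbH(\tilde{\bf Q}_1\,|\,{\bf Q}_1^0)\le\tfrac12\E\int_0^1\|e^{-\frac12(1-s)A}B_s\|^2_{\bbL^2}\,ds$ is finite and bounded by the a priori estimates. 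You cite the right references but attribute the wrong technique; without the time shift your argument for $\bbH(m_1\,|\,\mu_1)$ does not go through.
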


The corresponding statement of course also holds in dimension $d=1$.
We expect an analogous statement to hold also in dimension $d=3$, but we do not have a proof at the moment.
The power $L^{2\beta}$ instead of what would be the optimal estimate $L^d$ results from pathwise estimates involving
the parameter $\sigma>0$ in the weight.

\subsection{Strategy}

We will estimate the entropy on the left-hand side of \eqref{e:ent-claim_sec5} in terms of the following decomposition
and show that each term satisfies the claimed bound:
\begin{align} \label{e:ent-decomp}
  \bbH(m_t|m_\infty)
  &= \int \log\Big(\frac{dm_t}{dm_\infty}\Big)\, dm_t
    \nnb
  &= 
  \int \log\Big(\frac{dm_t}{dm^0_t}\Big)\, dm_t
  +\int \log\Big(\frac{dm^0_t}{dm^0_\infty}\Big)\, dm_t
  +\int \log\Big(\frac{dm^0_\infty}{dm_\infty}\Big)\, dm_t
  .
\end{align}
The superscript $0$ indexes the Gaussian version of the measures.
Thus $m_t^0(d\varphi)$ is the Gaussian law of
the density at time $t$ of the Ornstein--Uhlenbeck dynamics (see Section~\ref{sec_gaussian_estimates})
\begin{equation}
  (\partial_t - A)Z_t = \sqrt{2}\dot W,
\end{equation}
with the same initial condition $Z_0=\varphi_0$, i.e., $m_0^0 = m_t = \delta_{\varphi_0}$
(where we again recall that all these objects are defined on $\T^2_L$ but the $L$ is dropped from the notation). 
The Gaussian invariant measure is denoted $m_\infty^0$ and formally reads:
\begin{equation}
  m_\infty^0(d\varphi) \propto e^{-\frac12 (\varphi,A\varphi)} \, d\varphi.
\end{equation}
We will see that the integrands in the right-hand side of~\eqref{e:ent-decomp} are well defined densities and provide an explicit bound on each integral, 
thereby justifying the decomposition~\eqref{e:ent-decomp}. 
For $m^0_t\ll m^0_\infty$ this can be checked by direct computations, 
while $m^0_\infty\ll m_\infty$ follows from the Nelson estimate.  
The estimate of the first term $\bbH(m_t|m^0_t)$ is more difficult and will take up the rest of this subsection.

Note that the strategy of comparing with the Gaussian case necessarily produces a divergent bound in dimensions $d>2$, 
but in dimension $2$ it will be enough.

To compute $\bbH(m_t|m^0_t)$, 
we first express it in the next lemmas in terms of a relative entropy on path-space. 
Since for any $t>0$ the law ${\bf Q}_t$ of $(\varphi_s)_{s\in[0,t]}$ and ${\bf Q}_t^0$ of the Ornstein-Uhlenbeck process $(Z_s)_{s\in[0,t]}$ are mutually singular
(as shown in \cite{MR1661767}),
the direct bound $\bbH(m_t|m^0_t)\leq \bbH({\bf Q}_t|{\bf Q}_t^0)$ would not be useful. 
Following the method of~\cite{MR4476105}, this is circumvented by rewriting the law of $\varphi^4_t$ for fixed $t>0$
as the solution of a time-shifted equation with a more regular drift. 

To this end, write $B$ for the drift in the equation defining $\varphi_\cdot$: 
for $t>0$, 
\begin{equation}
B_t
=
B(\varphi_t,\wick{\varphi^3_t})
,\qquad 
B: (\phi,\psi) \in C^{-\alpha}(\rho)\times C^{-3\alpha}(\rho)
\longmapsto 
\lambda \psi+ (\mu-1)\phi
,
\label{eq_def_driftB}
\end{equation}
where we recall that Wick powers of $\varphi$ are defined in~\eqref{eq_def_Wick_varphi}. 
We also define the following modified drift which appears in the time-shifted dynamics:
\begin{equation}
\tilde B_{s,t}(x)
:=
2{\bf 1}_{[t/2,t]}(s)\big(e^{-(t-s) A}B_{2s-t}\big)(x)
,\qquad 
x\in\T^2_L
.
\end{equation}
Since $2s-t \leq t$ for $s\in [t/2,t]$, the process $s\mapsto \tilde B_{s,t}$ is a preditable process of the noise.
 
\begin{lemma}[Time-shifted dynamics]\label{lemm_def_tildephi}
Let $t>0$ and let $(\varphi_s)_{s\in [0,t]}$ and $(\tilde\varphi_s)_{s\in[0,t]}$ respectively be mild solutions on $[0,t]$ of:
\begin{equation}
(\partial_s + A)\varphi_s 
=
-B_s
+\sqrt{2}\dot W_s
\label{eq_time_unchanged_SDE}
\end{equation}
and
\begin{equation}
(\partial_s + A)\tilde\varphi_s 
=
-\tilde B_{s,t} 
+\sqrt{2}\dot W_s
\label{eq_time_changed_SDE}
.
\end{equation}
Assume $\tilde\varphi_0=\varphi_0$. 
Then $\varphi_t=\tilde\varphi_t$.
\end{lemma}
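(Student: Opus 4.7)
The plan is to verify $\varphi_t=\tilde\varphi_t$ directly by comparing the mild formulations of the two equations and reducing the claim to a deterministic integral identity. Writing both equations in mild form on $[0,t]$ with common initial datum $\varphi_0$ gives
\begin{align*}
\varphi_t &= e^{-tA}\varphi_0 - \int_0^t e^{-(t-s)A} B_s\, ds + \sqrt{2}\int_0^t e^{-(t-s)A}\, dW_s, \\
\tilde\varphi_t &= e^{-tA}\varphi_0 - \int_0^t e^{-(t-s)A} \tilde B_{s,t}\, ds + \sqrt{2}\int_0^t e^{-(t-s)A}\, dW_s.
\end{align*}
The initial-condition and stochastic-convolution contributions match identically because both equations are driven by the same Wiener process $W$ and start from $\varphi_0$, so the statement reduces to the deterministic identity
$$\int_0^t e^{-(t-s)A} B_s\, ds = \int_0^t e^{-(t-s)A} \tilde B_{s,t}\, ds.$$

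The core of the argument is then a one-line computation. Inserting the definition $\tilde B_{s,t} = 2\mathbf{1}_{[t/2,t]}(s)\, e^{-(t-s)A} B_{2s-t}$ and using the semigroup property $e^{-(t-s)A}\circ e^{-(t-s)A} = e^{-2(t-s)A}$, the right-hand side becomes
$$2\int_{t/2}^t e^{-2(t-s)A} B_{2s-t}\, ds.$$
The substitution $u = 2s-t$ gives $du = 2\,ds$ and $2(t-s) = t-u$, and sends $s\in[t/2,t]$ bijectively to $u\in[0,t]$, transforming the integral into $\int_0^t e^{-(t-u)A} B_u\, du$, which is exactly the left-hand side. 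This proves the identity and hence $\varphi_t=\tilde\varphi_t$.

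There is no genuine obstacle in the proof; the entire content of the lemma is the change-of-variable reorganization of the drift. The only preliminary items to verify are that the Bochner integrals involved are well defined in an appropriate weighted Besov--H\"older space, which follows from the a priori bounds of Theorem~\ref{thm:apriori_bounds} and the Wick-power estimates of Corollary~\ref{cor:Wick-power-moment} (providing an integrable singularity of $B_s$ at $s=0$) combined with the parabolic smoothing of $e^{-(t-s)A}$. The useful feature encoded in the identity is that $\tilde B_{s,t}$ is supported on $[t/2,t]$ and, thanks to the extra semigroup factor $e^{-(t-s)A}$, is much more regular than $B_s$; this is what will make $\tilde\varphi_\cdot$ amenable to a Girsanov-type argument for the relative-entropy estimate, whereas the laws of $\varphi_\cdot$ and of the Ornstein--Uhlenbeck process are mutually singular on path space.
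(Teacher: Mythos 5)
Your proof is correct and follows essentially the same approach as the paper: writing both equations in mild form, noting that the initial-condition and stochastic-convolution terms agree, and verifying the remaining deterministic integral identity by the change of variables $u=2s-t$ (the paper performs the substitution in the opposite direction, from $B$ to $\tilde B$, but the computation is identical). The brief remark on well-posedness of the Bochner integrals is a reasonable addition that the paper leaves implicit.
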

\begin{proof}
By assumption,
\begin{equation}
\varphi_t 
=
e^{-tA}\varphi_0 - \int_0^t e^{-(t-s)A}B_s\, ds + \sqrt{2}\int_0^t e^{-(t-s)A}\, dW_s
.
\end{equation}
Changing variables from $s \in [0,t]$ to $2s-t$ with $s\in [t/2,t]$,  the drift term can be rewritten as:
\begin{align}
- \int_0^t e^{-(t-s)A}B_s\, ds
&=
- \int_0^t e^{-\frac{1}{2}(t-s)A}\Big(e^{-\frac12(t-s)A}B_s\Big)\, ds
\nnb
&=
- 2\int_{t/2}^t e^{-(t-s)A}\Big(e^{-(t-s)A}B_{2s-t}\Big)\, ds
\nnb
&= 
-\int_0^t e^{-(t-s)A}\Big(2{\bf 1}_{[t/2,t]}(s) \, e^{-(t-s)A}B_{2s-t}\Big)\, ds
.
\end{align}
Thus $\varphi_t=\tilde\varphi_t$, 
with $\tilde\varphi_\cdot$ as in the lemma.
\end{proof}
We next use the dynamics $(\tilde\varphi_s)_{s\in[0,t]}$ to express the relative entropy $\bbH(m_t|m^0_t)$.
While $\bbH({\bf Q}_t|{\bf Q}_t^0)$ is infinite, 
the path space relative entropy between the law of $(\tilde\varphi_s)_{s\in[0,t]}$ and the corresponding Ornstein--Uhlenbeck process if finite 
thanks to the time shift.
\begin{lemma} \label{lem:ent-Giransov}
Let ${\bf Q}^0_t$ denote the law of the solution $( Z_s)_{s\in[0,t]}$ of the stochastic heat equation on $[0,t]$ with initial condition $\varphi_0$:
\begin{equation}
  (\partial_s+A) Z_s 
  =
   \sqrt{2}\dot W_s,\qquad 
   Z_0 = \varphi_0
  .
  \end{equation}  
 Let also $\tilde{\bf Q}_t$ denote the law of $(\tilde\varphi_s)_{s\in[0,t]}$. 
 Then
  \begin{align} \label{e:ent-Girsanov}
    \bbH(m_t|m^0_t)
    &\leq 
      \bbH(\tilde {\bf Q}_t|{\bf Q}_t^0)
      \nnb
    &\leq  
    \E\qa{ \frac14 \int_0^t\big\|\tilde B_{s,t}\big\|^2_{\bbL^2(\T^2_L)} \, ds}
    =
    \E\qa{ \frac12\int_{0}^t\big\|e^{-\frac12(t-s)A}B_s\big\|^2_{\bbL^2(\T^2_L)} \, ds}
    .
  \end{align}
\end{lemma}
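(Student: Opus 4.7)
The plan is to bound $\bbH(m_t | m^0_t)$ by a path-space relative entropy and then evaluate the latter using Girsanov's theorem.

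First, Lemma~\ref{lemm_def_tildephi} gives $\varphi_t = \tilde\varphi_t$ almost surely, so $m_t$ is the pushforward of $\tilde{\bf Q}_t$ under evaluation at time $t$, and likewise $m^0_t$ is the pushforward of ${\bf Q}^0_t$ under the same map. The data processing inequality for relative entropy under measurable maps then immediately yields $\bbH(m_t | m^0_t) \leq \bbH(\tilde{\bf Q}_t | {\bf Q}^0_t)$.

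Next, I would apply Girsanov to remove the additional drift $\tilde B_{s,t}$ in \eqref{eq_time_changed_SDE}. Since $\tilde B_{s,t}$ is a predictable functional of $W$ (built from the solution $\varphi$ of the original equation \eqref{eq_time_unchanged_SDE}), and since the factor ${\bf 1}_{[t/2,t]}(s)$ ensures that the heat kernel in $e^{-(t-s)A}B_{2s-t}$ always acts for a positive time, the process $\tilde B_{s,t}$ is $\bbL^2(\T_L^2)$-valued. Provided the appropriate integrability holds, the exponential
\begin{equation*}
Z_t := \exp\left(\frac{1}{\sqrt{2}}\int_0^t (\tilde B_{s,t}, dW_s)_{\bbL^2(\T_L^2)} - \frac14 \int_0^t \|\tilde B_{s,t}\|_{\bbL^2(\T_L^2)}^2 \, ds\right)
\end{equation*}
is a true martingale, and $d\tilde\Pg := Z_t\, d\Pg$ defines a probability measure under which $W'_s := W_s - \tfrac{1}{\sqrt{2}}\int_0^s \tilde B_{u,t}\, du$ is a cylindrical Wiener process. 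Substituting $\sqrt{2}\,dW_s = \sqrt{2}\,dW'_s + \tilde B_{s,t}\,ds$ in \eqref{eq_time_changed_SDE} shows that, under $\tilde\Pg$, the process $\tilde\varphi$ solves the Ornstein--Uhlenbeck equation driven by $W'$ with initial condition $\varphi_0$, so the $\tilde\Pg$-law of $\tilde\varphi$ is exactly ${\bf Q}^0_t$. A second application of the data processing inequality, to the map $\omega \mapsto (\tilde\varphi_s(\omega))_{s\in[0,t]}$, then yields
\begin{equation*}
\bbH(\tilde{\bf Q}_t | {\bf Q}^0_t) \leq \bbH(\Pg | \tilde\Pg) = -\E_\Pg[\log Z_t] = \frac14 \, \E\int_0^t \|\tilde B_{s,t}\|_{\bbL^2(\T_L^2)}^2 \, ds,
\end{equation*}
the final equality because the stochastic integral in $\log Z_t$ has zero $\Pg$-expectation. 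The change of variables $u = 2s - t$ on $[t/2, t]$ together with the definition of $\tilde B_{s,t}$ rewrites this as the claimed $\tfrac12\, \E\int_0^t \|e^{-(t-s)A/2} B_s\|_{\bbL^2(\T_L^2)}^2\, ds$.

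The main obstacle is the rigorous justification of Girsanov, since $B_s = \lambda \wick{\varphi_s^3} + (\mu-1)\varphi_s$ is only a distribution of negative regularity and Novikov's condition is not obviously available. The expected remedy is a localisation argument: introduce a stopping time $\tau_M$ at which, say, $\|\varphi_s\|_{-\alpha,\rho}$ or $\int_0^s \|\tilde B_{u,t}\|_{\bbL^2}^2\, du$ first exceeds $M$, apply Girsanov on $[0, t \wedge \tau_M]$ where the drift is bounded and everything is legitimate, and pass $M \to \infty$ using the lower semicontinuity of the relative entropy and the a priori moment bounds on $\varphi$ and its Wick powers from Corollary~\ref{cor:Wick-power-moment}. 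The quantitative estimate of the right-hand side in terms of $L$ and $\|\varphi_0\|_{-\alpha,\rho}$, which eventually yields the $L^{2\beta}$ factor in Theorem~\ref{thm:ent-claim}, is a separate task addressed in the remaining parts of this section.
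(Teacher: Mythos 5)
Your proof is correct and follows essentially the same route as the paper's: the first inequality comes from data processing via $\varphi_t = \tilde\varphi_t$, and the second from a Girsanov transform that removes the drift $\tilde B_{s,t}$, followed by another application of data processing and a localisation argument to bypass Novikov. Two small remarks. First, you set up the Girsanov shift in the direction that makes the presentation cleanest: you start from the genuine dynamics $\Pg$, construct $\tilde\Pg$ under which $W'$ is Wiener and $\tilde\varphi$ becomes an Ornstein--Uhlenbeck process, and compute $\bbH(\Pg|\tilde\Pg) = -\E_{\Pg}[\log Z_t]$. The paper formally sets up the shift in the opposite direction (calling the original measure $\hat{\bf P}_t$ and the shifted one ${\bf P}_t$, so that the OU process becomes the time-shifted dynamics under the shift); the two are equivalent, but yours avoids having to argue that the Ornstein--Uhlenbeck process has law $\tilde{\bf Q}_t$ under the shifted measure, which requires a mild additional argument about the joint law of $(\hat W, \tilde B)$. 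Second, the paper localises by stopping when $\int_0^s\|\psi_u\|_0^2\,du$ exceeds $N$ (your second suggested option), then passes to the limit via lower semicontinuity of the entropy and monotone convergence; your first option, stopping when $\|\varphi_s\|_{-\alpha,\rho}$ exceeds $M$, would not directly give the boundedness of the integrand needed for the Girsanov exponential to be a true martingale, so the $\int_0^s\|\tilde B_{u,t}\|_{\bbL^2}^2\,du$-based stopping time is the right choice.
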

\begin{proof}
The first inequality in~\eqref{e:ent-Girsanov} comes from the fact that $\varphi_t$ and $\tilde\varphi_t$ have the same law
and thus that $m_t$ and $m_t^0$ are marginals of $\tilde {\bf Q}_t$ and ${\bf Q}^0_t$ respectively. 
The last equality in~\eqref{e:ent-Girsanov} is just a change of variable.
The middle equality is a consequence of the Girsanov formula, as follows.

We will apply the version of the Girsanov theorem stated in \cite[Theorem 10.14]{MR3236753}.
There $U$ is a (large) Hilbert space in which the process $W$ takes values
and $\cov(W_t)=tQ$ where $Q: U \to U$ a symmetric, 
positive definite, trace class operator. The reproducing kernel of the process is the space $U_0=Q^{1/2}(U)$
with inner product $(\cdot,\cdot)_0$ and norm $\|\cdot\|_0$.
The cylindrical Wiener process on $\bbL^2(\T^2_L)$ we are interested in is by construction so that $U_0=\bbL^2(\T_L^2)$,
see \cite[Section 4.1.2]{MR3236753}.
Denote by ${\bf P}_t$ the probability measure on a filtered probability space on which $(W_s)_{s\leq t}$ is defined. 
The Girsanov theorem states that if $\psi: [0,t] \to U_0=\bbL^2(\T_L^2)$ is a predictable process then
\begin{equation}
  \hat W_s = W_s -\int_0^s\psi_u\, du, \qquad s\in [0,t],
\end{equation}
is a $Q$-Wiener process with respect to the measure 
\begin{equation}
  {\rm d}\hat {\bf P}_t= \exp\pa{\int_0^t (\psi_s, dW_s)_{0}- \frac12 \int_0^t \|\psi_s\|^2_{0}  \, ds}
  {\rm d}{\bf P}_t
  ,
\end{equation}
provided the exponential has expectation $1$ with respect to ${\bf P}_t$. The last condition is implied by
Novikov's condition \cite[Proposition~10.17]{MR3236753}:
\begin{equation} \label{e:Girsanov-condition}
  \E\qa{\exp\pa{\frac12 \int_0^t \|\psi_s\|_{0}^2 \, ds}} < \infty.
\end{equation}
Under this assumption, it follows that
\begin{align} \label{e:Girsanov-entropy}
  \bbH(\hat {\bf  P}_t | {\bf P}_t)
  &= \hat \E\qa{\int_0^t (\psi_s, dW_s)_{0}- \frac12 \int_0^t \|\psi_s\|_{0}^2 \, ds}
    \nnb
      &= \hat \E\qa{\int_0^t (\psi_s, d\tilde W_s)_{0}+ \frac12 \int_0^t \|\psi_s\|_{0}^2\, ds}
  = \hat \E\qa{\frac12 \int_0^t \|\psi_s\|_{0}^2 \, ds }.
\end{align}
In general, even if \eqref{e:Girsanov-condition} does not hold, it is true that
\begin{equation} \label{e:Girsanov-entropy-general}
  \bbH(\hat {\bf  P}_t | {\bf P}_t)
  \leq \hat \E\qa{\frac12 \int_0^t \|\psi_s\|_{0}^2\, ds }.
\end{equation}
This follows by a localization argument.
Define the stopping times $\tau_N = \inf\{t\geq 0: \int_0^t \|\psi_s\|_0^2 \, ds \geq N\}$,
and notice that $\tau_N \to \infty$ almost surely if the right-hand side  of \eqref{e:Girsanov-entropy-general} is finite (and otherwise the assertion is trivial).
Then lower semicontinuity of the entropy with respect to weak convergence in the first inequality below, the equality
\eqref{e:Girsanov-entropy} applied to the bounded process $\psi_{s\wedge \tau_N}$,
and monotone convergence in the last equality below give
\begin{equation}
  \bbH(\hat {\bf  P}_t | {\bf P}_t)
  \leq \liminf_{N\to\infty}
  \bbH(\hat {\bf  P}_t^{(N)} | {\bf P}_t)
  = \liminf_{N\to\infty} \hat \E\qa{\frac12 \int_0^t \|\psi_{s\wedge \tau_N}\|_{0}^2 \,ds }
  = \hat \E\qa{\frac12 \int_0^t \|\psi_{s}\|_{0}^2 \,ds},
\end{equation}
where $\hat {\bf P}_t^{(N)}$ denotes the law corresponding to the shift $\psi_{s\wedge \tau_N}$.

We will apply this with $\psi_s =  -\frac{1}{\sqrt{2}}\tilde B_{s,t}$. Then if
\begin{equation}
  d\varphi_s
  = -A\varphi_s \, ds + \sqrt{2} dW_s
    = -A\varphi_s\, ds - \tilde B_{s,t} \, ds + \sqrt{2}d\hat W_s
  ,
\end{equation}
so that by the first equality of the last display $(\varphi_s)_{s\leq t}$ has law ${\bf Q}_t^0$  under ${\bf P}_t$,
by the second equality and the fact that $\hat W$ is a cylindrical Wiener process under $\hat {\bf P}_t$ as discussed above,
$(\varphi_s)_{s\leq t}$ has law $\tilde {\bf Q}_t$ under $\hat{\bf P}_t$. Therefore
\begin{equation}
  \bbH(\tilde {\bf Q}_t | {\bf Q}_t^0) \leq \bbH(\hat {\bf  P}_t | {\bf P}_t)
  \leq
  \E\qa{\frac14 \int_0^t \|\tilde B_{s,t}\|_{\bbL^2(\T^2_L)}^2 \, ds},
\end{equation}
as claimed.
\end{proof}

\subsection{Proof of Proposition~\ref{prop:ent-claim}}

We estimate the three terms in the entropy decomposition \eqref{e:ent-decomp}. 

\paragraph{First term}
Let $\sigma>0$, $\sigma'>\sigma$ to be chosen below and write $\rho=(1+|\cdot|^2)^{-\sigma/2}$ and idem for $\rho'$. 
The right-hand side of \eqref{e:ent-Girsanov} was nearly already bounded in the proof of Lemma~\ref{lem:ent-Giransov}.
Indeed, by definition of $(\tilde\varphi_s)_{s\in[0,t]}$ in Lemma~\ref{lemm_def_tildephi} and
the smoothing property of the heat kernel from Proposition~\ref{prop:Besov-heat}:
\begin{align}
&\E\Big[\int_0^t\big\|e^{-\frac{1}{2}(t-s)A}B_s\big\|^2_{\bbL^2(\T^2_L)}\, ds\Big]\nnb
&\lesssim 
L^2\E\Big[\int_{0}^t\Big[ \|e^{-\frac12(t-s)A}\wick{\varphi^3_{s}}\|^2_{\bbL^\infty(\T^2_L)}+ \|e^{-\frac{1}{2}(t-s)A}\varphi_{s}\|^2_{\bbL^\infty(\T^2_L)}\Big]\, ds\Big]
\nnb
&\lesssim
L^{2+2\sigma'}\E\Big[\int_{0}^t\Big[ \|e^{-\frac12(t-s)A}\wick{\varphi^3_{s}}\|^2_{\rho'}+ \|e^{-\frac{1}{2}(t-s)A}\varphi_{s}\|^2_{\rho'}\Big]\, ds\Big]
\nnb
&\lesssim
  L^{2+2\sigma'} \E\Big[ \sup_{s\leq t} \Big[  (s^{6\alpha} \wedge 1)\|\wick{\varphi^3_{s}}\|^2_{-3\alpha,\rho'}+ (s^{2\alpha} \wedge 1)\|\varphi_{s}\|^2_{-\alpha,\rho'}\Big]\Big]\int_{0}^t e^{-(t-s)} (t-s)^{-\alpha} s^{-6\alpha}\, ds
   .
\end{align}
The apriori estimates of Theorem~\ref{thm:apriori_bounds} in the form of Corollary~\ref{cor:Wick-power-moment} complete the bound: 
for $t\leq 1$, 
\begin{align}
\E\Big[\int_0^t\big\|e^{-\frac{1}{2}(t-s)A}B_s\big\|^2_{\bbL^2(\T^2_L)}\, ds\Big]
&\lesssim 
  L^{2+2\sigma'}
  (1+\|\varphi_0\|_{-\alpha,\rho}^{6+\kappa})
  \leq
    L^{2+2\sigma'}
  (1+\|\varphi_0\|_{-\alpha,\rho}^{8})
,
\label{eq_bound_first_term_entropy}
\end{align}
where the last equation comes from the fact that $\kappa>0$ can be made arbitrarily small (for $\alpha>0$ small enough), 
and where $\sigma'>\sigma$ can be chosen appropriately depending on $\rho$ (in particular it can be made arbitrarily close to $\sigma$ if $\alpha$ is small).

\paragraph{Second term}
The second term involves the relative density of the Ornstein--Uhlenbeck process at time $t>0$ and its invariant measure.
The invariant measure $m_\infty^0$ is a Gaussian measure with covariance $A^{-1}$ where $A= -\Delta+1$ and mean $0$,
and the law $m_t^0$ of the Ornstein--Uhlenbeck process at time $t>0$ is a Gaussian measure with covariance and mean given by \eqref{e:OU-cov}--\eqref{e:OU-mean}:
\begin{equation}
  2\int_0^t e^{-2sA}\, ds = A^{-1} (1-e^{-2tA})
  \quad \text{and} \quad    e^{-tA} \varphi_0.
\end{equation}
In the finite-dimensional analogue, it is a straightforward computation that the relative density of $m_t^0$ with respect to $m_\infty^0$ is given by
\begin{align}
  \frac{dm_t^0}{dm_\infty^0}(\varphi)
  &=
    \exp\qa{-\frac{1}{2}\big(e^{-tA} \varphi, A(1-e^{-2tA})^{-1}e^{-tA} \varphi\big) - \frac12 \log\det(1-e^{-2tA})}
    \nnb
  &\qquad\qquad \times \exp\qa{-\frac12 \big(e^{-At}\varphi_0,A(1-e^{-2tA})^{-1}e^{-At}\varphi_0\big) + \big(\varphi, A(1-e^{-2tA})^{-1}e^{-At}\varphi_0)\big)}
    ,
\end{align}
where we used that
\begin{gather}
  \frac{A}{1-e^{-2tA}}-A
  = \frac{Ae^{-2tA}}{1-e^{-2tA}}
  .
\end{gather}
Since the covariances $A^{-1}$ and $A^{-1}(1-e^{-2tA})$ can be diagonalised simultaneously (in the Fourier basis)
and since $e^{-2tA}$ is trace class,  the same formula holds in our infinite dimensional situation by truncating Fourier modes and taking limits,
with the determinant interpreted in the limit as a Fredholm determinant:
\begin{equation}
  \log \det(1-e^{-2tA})= \tr \log (1-e^{-2tA}). 
\end{equation}
Therefore
\begin{align} \label{e:secondterm-decomp}
  \int \log\Big(\frac{dm^0_t}{dm^0_\infty}\Big)\, dm_t
  &=
    -\frac12 \log\det(1-e^{-2tA})
    \nnb
    &\quad -\frac12 \big(e^{-At}\varphi_0,A(1-e^{-2tA})^{-1}e^{-At}\varphi_0\big)
  \nnb
  &\quad
    -\frac12 \int \big(e^{-tA}\varphi, A(1-e^{-2tA})^{-1}e^{-tA}\varphi\big)   \, dm_t
    \nnb
    &\quad
    + \int \big(\varphi, A(1-e^{-2tA})^{-1}e^{-At}\varphi_0\big)   \, dm_t
.
\end{align}
The Fredholm determinant term can be estimated as
\begin{equation}
  -\tr \log (1-e^{-2tA})
  =
 -\sum_{p\in\frac{\pi}{L}\Z^2}
 \log\big(1-e^{-2t(|p|^2+1)}\big)
  \lesssim
   \frac{L^2}{t}e^{-2t}
   \leq 
   \frac{L^2}{t}
   .
 \end{equation}
The second and third terms on the right-hand side of \eqref{e:secondterm-decomp} are negative.
The integrand in the fourth term on the right-hand side of \eqref{e:secondterm-decomp} is bounded by
  \begin{align}
    \big(\varphi, A(1-e^{-2tA})^{-1}e^{-tA}\varphi_0\big)
    &\leq  \|e^{-\frac14 tA}\varphi\|_{\bbL^2}\|e^{-\frac14 tA}\varphi_0\|_{\bbL^2}  \|e^{-\frac12 tA}A(1-e^{-2tA})^{-1}\|
      \nnb
    &\lesssim \frac{L^{2+2\sigma}}{t^{1+\alpha}} \|\varphi\|_{-\alpha,\rho}\|\varphi_0\|_{-\alpha,\rho}
      ,
\end{align}
where we wrote $\|\cdot\|$ for the operator norm on $\bbL^2=\bbL^2(\T^2_L)$ and used that (as quadratic forms on $\bbL^2$):
\begin{equation}
  e^{-\frac12 tA}A(1-e^{-2tA})^{-1}
  \leq \frac{1}{t} \sup_{\lambda>0} \frac{\lambda}{e^{\frac12 \lambda}-e^{-\frac32 \lambda}}
  \leq \frac{1}{t},
\end{equation}
and (by the definition of the weight and Proposition~\ref{prop:Besov-heat}), for any test function $\psi$,
\begin{gather}
  \|\psi\|_{\bbL^2}^2 \leq (2L)^2\|\psi\|_{\bbL^\infty}^2 \lesssim L^{2+2\sigma'} \|\psi\|_{\bbL^\infty(\rho')}^2\\
  \|e^{-\frac14 tA}\psi\|_{\bbL^\infty(\rho')} \lesssim t^{-\alpha/2} \|\psi\|_{-\alpha,\rho'}
  .
\end{gather}
Using the apriori bounds of Corollary~\ref{cor:Wick-power-moment} 
yields $\int \|\varphi\|_{-\alpha,\rho'} \, dm_t \lesssim 1+\|\varphi_0\|_{-\alpha,\rho}^{1+\kappa}$ for $t\leq 1$,
some $\kappa>0$ than can be chosen arbitarily small, and $\rho'$ chosen as in~\eqref{eq_bound_first_term_entropy}.
As a result:
\begin{equation}
  \int     \big(\varphi, A(1-e^{-2tA})^{-1}e^{-tA}\varphi_0\big) \, dm_t
  \lesssim \frac{L^{2+2\sigma'}}{t^{1+\alpha}} (1+\|\varphi_0\|_{-\alpha,\rho}^8)
  .
\end{equation}
Together these bounds yield, for $t=1$,
\begin{equation}
\int \log\Big(\frac{dm^0_t}{dm^0_\infty}\Big)\, dm_t
\lesssim 
L^2
+L^{2+2\sigma'} (1+\|\varphi_0\|_{-\alpha,\rho}^{8})
\lesssim
L^{2\beta} (1+\|\varphi_0\|_{-\alpha,\rho}^8).
\end{equation}

\paragraph{Third term}

The third term in the relative entropy decomposition is the relative entropy of the Gaussian invariant measure with respect to the $\varphi^4$ invariant measure. Since
\begin{equation}
  \frac{dm_\infty}{dm_\infty^0}(\varphi) = \frac{e^{-V}}{\E_{m_{\infty}^0}[e^{-V}]},
\end{equation}
where, by \eqref{e:phi42measure},
\begin{equation}
  V =   \int_{\T_L^2}\pa{ \frac{\lambda}{4}\wick{\varphi^4}  + \frac{\mu}{2}  \wick{\varphi^2}} \,dx,
\end{equation}
one has
\begin{equation}
\int \log\Big(\frac{dm^0_\infty}{dm_\infty}\Big)\, dm_t
=
\log \E_{m_\infty^0}[e^{-V}]
+
\E_{m_t}[V]
.
\end{equation}
The first term on the last right-hand side is $O(L^2)$ 
by Nelson's estimate, see \cite[Lemma~V.6 and Theorem~V.7]{MR0489552}. 
The second term is bounded using the apriori bounds of Corollary~\ref{cor:Wick-power-moment}.
Indeed, if $\Lambda=[-L,L)^2$ then
$\dnorm{1_\Lambda}_{\beta,\rho^{-1}} \lesssim L^{d+\sigma}$
by \eqref{e:1A-B11-bd},
where $\dnorm{\cdot}_{\beta,\rho^{-1}}$ is defined in  \eqref{e:dnorm-def} (a version of the $B^\beta_{1,1}$ norm).
Therefore, for $1>\beta>\alpha>0$, by the duality pairing \eqref{e:Besov-duality},
\begin{equation}
  (\wick{\varphi^n_t}, 1_\Lambda) \lesssim \|\wick{\varphi^n_t}\|_{-\alpha,\rho'} \dnorm{1_\Lambda}_{\beta,(\rho')^{-1}}
  \lesssim L^{d+\sigma'} \|\wick{\varphi^n_t}\|_{-\alpha,\rho'} .
\end{equation}
Applying the apriori bounds of Corollary~\ref{cor:Wick-power-moment} gives, for $t=1$ and sufficiently small $\alpha$:
\begin{equation}
  \E_{m_t}[V]\lesssim
  L^{2+\sigma'} \E\qbb{ \|\wick{\varphi^4_t}\|_{-\alpha,\rho'}+ \|\wick{\varphi^2_t}\|_{-\alpha,\rho'} }
  \lesssim 
  L^{2+2\sigma' } (1+\|\varphi_0\|_{-\alpha,\rho}^{8}),
\end{equation}
where again $\sigma'>\sigma$ is chosen depending on $\alpha,\sigma$.

\appendix
\section{Properties of local Besov--H\"older norms}
\label{app:norms}

This appendix provides proofs of several properties of the local Besov--H\"older norms that we need.
We focus on the standard ``elliptic'' versions of these norms defined in Section~\ref{sec:norms-def}.
The properties of the parabolic versions used in Appendix~\ref{app:apriori} are completely analogous.

\subsection{Weighted $\bbL^p$ spaces}
For $p\in [1,\infty]$, we denote the weighted $\bbL^p$ norm by
\begin{equation}
  \|f\|_{\bbL^p(\rho)} = \|\rho f\|_{\bbL^p(\R^d)},
\end{equation}
and for $p=\infty$ we also write
\begin{equation}
  \|f\| = \|f\|_{\bbL^\infty(\R^d)}, \qquad
  \|f\|_\rho = \|f\|_{\bbL^\infty(\rho)}.
\end{equation}
These definitions do not agree with \cite{MR3693966},
where $\|f\|_{\bbL^p(\rho)}=\|\rho^{1/p}f\|_{\bbL^p}$,
but they do agree with \cite{TriebelIII}, for example.
Throughout we only consider the polynomial weight $\rho$ defined in \eqref{e:weight} and observe that it
satisfies the inequalities
\begin{equation} \label{e:weight-ineq}
  \rho(x)/\rho(z) \leq C\rho(x-z)^{-1}, \qquad
  \rho(x-z)^{-1} \leq C\rho(x)^{-1}\rho(z)^{-1},
  \qquad \rho_R(x)^{-1} \leq \rho(x)^{-1}
\end{equation}
where $\rho_R(x)=\rho(Rx)$, uniformly in $x,z\in \R^d$ and $R\leq 1$.
In particular,
\begin{align} \label{e:Lp-weight}
  \|f*g\|_{\bbL^\infty(\rho)}
  &= \sup_x \rho(x) \absa{\int g(x-y)f(y) \, dy}
    \nnb
  &\leq \qa{\sup_x  \int \frac{\rho(x)}{\rho(y)} g(x-y) \, dy} \|f\|_{\bbL^\infty(\rho)}
    \nnb
  &\leq C \qa{\sup_x  \int \rho(x-y)^{-1} g(x-y) \, dy} \|f\|_{\bbL^\infty(\rho)}
    = C \|g\|_{\bbL^1(\rho^{-1})} \|f\|_{\bbL^\infty(\rho)}
\end{align}
and
\begin{align} \label{e:L1-weight}
    \|g_R\|_{\bbL^1(\rho^{-1})}
  &= \int R^{-d} |g(x/R)| \rho(x)^{-1} \, dx
    \nnb
  &= \int  |g(x)| \rho(Rx)^{-1} \, dx
    = \|g\|_{\bbL^1(\rho_R^{-1})}
    \leq \|g\|_{\bbL^1(\rho^{-1})},
  \end{align}
 where we recall the notation (and the $\rho_R$ convention is only used for weights)
\begin{equation}
g_R(x) = R^{-d} g\Big(\frac{x}{R}\Big), \qquad \rho_R(x) = \rho(Rx).
\end{equation}
Also, if $g$ has support in $B_S(0)$ then
\begin{equation}
  |f*g(x)|
  \leq \|g\|_{\bbL^1} \|f\|_{\bbL^\infty(B_S(x))}
  .
\end{equation}

\subsection{Properties of local Besov--H\"older spaces}

From Section~\ref{sec:norms-def},
we recall our definitions of the local and weighted Besov--H\"older norms
(which are versions of the $B^\alpha_{\infty,\infty}$ norms): for $\alpha<0$,
\begin{align}
    \label{e:norm-minus-C-app}
  \|f\|_{\alpha,C} &= \sup_{\substack{R\leq 1, x\in C:\\B_R(x) \subset C}} |\Psi_R*f(x)| R^{-\alpha},
  \\
  \label{e:norm-minus-rho-app}
  \|f\|_{\alpha,\rho} &= \sup_{R\leq 1} \|\Psi_R*f\|_{\rho} R^{-\alpha}.
\end{align}
We will denote by $\dsemnorm{\cdot}_{\beta,\rho^{-1}}$ a version of the $B^\beta_{1,1}$ seminorm with $\beta \in (0,1)$ and weight $\rho(x)^{-1}$:
\begin{align}
\label{e:dsemnorm-def}
  \dsemnorm{g}_{\beta,\rho^{-1}}
  &= \int \rho(x)^{-1} \int_{B_1(0)} \frac{|g(x)-g(x+y)|}{|y|^{\beta}} \, \frac{dy}{|y|^d} \, dx
  \nnb
  &\sim
  \int_0^1 \qbb{ \int \rho(x)^{-1} \mint_{B_R(0)} \frac{|g(x)-g(x+y)|}{|y|^{\beta}} \, dy \, dx} \frac{dR}{R}
\end{align}
and a corresponding version of the $B^\beta_{1,1}$ norm by:
\begin{equation} \label{e:dnorm-def}
  \dnorm{g}_{\beta,\rho^{-1}} = \|g\|_{\bbL^1(\rho^{-1})} + \dsemnorm{g}_{\beta,\rho^{-1}}
  .
\end{equation}
The weight is always assumed to satisfy \eqref{e:weight-ineq},
and we write $\|f\|_\alpha$ and $\dnorm{g}_\beta$ if $\rho(x)=1$ for all $x$,
and analogously for the seminorms.

The main estimate from which essentially all the remaining ones
in this section are derived is the following convolution estimate.

\begin{proposition}
 \label{prop:norm-tilde}
 Let $\varphi: \R^d \to \R$ be a bounded function with support in $B_1(0)$ and $\int \varphi \, dx = 1$.
 Then for any $1>\beta>\alpha>0$,
 there is a constant $C(\alpha,\beta,\varphi)$ such that for any sufficiently integrable function $g: \R^d \to \R$ with support in $B_S(0)$
 with $S\in (0,\infty]$ and $R \in (0,1]$,
 \begin{equation} \label{e:normbd-local}
   R^\alpha |g_R* f(x)|  
   \leq C(\alpha,\beta,\varphi)
   \qbb{ \|g\|_{\bbL^1} +  \dsemnorm{g}_{\beta}}
  \times \sup_{r \leq R}   r^{\alpha}  \|\varphi_r*f\|_{B_{R(S+1)}(x)} 
  .
\end{equation}
Moreover, for any weight $\rho$ satisfying  \eqref{e:weight-ineq},
\begin{equation} \label{e:normbd-weight}
  R^{\alpha}
  \|g_R*f\|_{\rho}
  \leq C(\alpha,\beta,\rho,\varphi)
  \qbb{\|g\|_{\bbL^1(\rho^{-1}_R)} +  \dsemnorm{g}_{\beta,\rho^{-1}_R}  }
  \times \sup_{r \leq R}r^{\alpha} \|\varphi_r*f\|_{\rho} 
  .
\end{equation}
\end{proposition}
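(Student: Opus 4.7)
Let $N := \sup_{r \leq R} r^\alpha \|\varphi_r * f\|_{\bbL^\infty(B_{R(S+1)}(x))}$, the right-hand factor in \eqref{e:normbd-local}. Set $\psi := \varphi_{1/2} - \varphi$ (supported in $B_1(0)$ with $\int \psi = 0$) and $r_k := R/2^k$. The telescoping identity $\sum_{k=0}^{K-1} \psi_{r_k} = \varphi_{r_K} - \varphi_R$, combined with $\varphi_{r_K} * f \to f$ in $\cS'$ as $K \to \infty$, yields the Littlewood--Paley-type decomposition
\[
g_R * f(x) = g_R * \varphi_R * f(x) + \sum_{k \geq 0} g_R * \psi_{r_k} * f(x).
\]
The low-frequency piece is immediate from $\supp g_R \subset B_{RS}(0)$: one has $|g_R * \varphi_R * f(x)| \leq \|g\|_{\bbL^1} \|\varphi_R * f\|_{\bbL^\infty(B_{RS}(x))} \leq \|g\|_{\bbL^1} N R^{-\alpha}$, producing the $\|g\|_{\bbL^1}$ contribution to \eqref{e:normbd-local}.

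For the dyadic pieces the plan is to establish the key $\bbL^1$ estimate
\[
\|g_R * \psi_{r_k}\|_{\bbL^1} \lesssim (r_k/R)^\beta \dsemnorm{g}_\beta
\]
by using $\int \psi_{r_k} = 0$ to write $g_R * \psi_{r_k}(y) = \int [g_R(y-z) - g_R(y)]\psi_{r_k}(z)\,dz$, then applying Fubini, the scaling identity $\|g_R(\cdot - z) - g_R\|_{\bbL^1} = \|g(\cdot - z/R) - g\|_{\bbL^1}$, and the elementary inequality $\int_{|h|\leq \rho} \|g(\cdot - h) - g\|_{\bbL^1}\,dh \leq \rho^{d+\beta} \dsemnorm{g}_\beta$ (immediate from $|h|^{-d-\beta} \geq \rho^{-d-\beta}$ inside the definition of $\dsemnorm{g}_\beta$) with $\rho = r_k/R$. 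To pass from this $\bbL^1$ bound to a pointwise estimate on $(g_R * \psi_{r_k}) * f(x)$, I would split
\[
g_R * \psi_{r_k} * f = (g_R * \psi_{r_k}) * (\varphi_{r_k} * f) + (g_R * \psi_{r_k}) * (f - \varphi_{r_k} * f).
\]
The first summand is bounded by the $\bbL^1 \times \bbL^\infty$ pairing $\|g_R * \psi_{r_k}\|_{\bbL^1} \cdot \|\varphi_{r_k}*f\|_{\bbL^\infty(B_{R(S+1)}(x))} \lesssim (r_k/R)^\beta \dsemnorm{g}_\beta \cdot N r_k^{-\alpha}$, and the second is handled by iterating the same strategy at sub-scales $r_k \cdot 2^{-j-1}$, where analogous $\bbL^1$ estimates on $\|g_R * \psi_{r_k} * \psi_{r_k/2^{j+1}}\|_{\bbL^1}$ (exploiting $\int \psi_{r_k/2^{j+1}} = 0$ together with the smoothness of $g_R * \psi_{r_k}$) gain an additional factor $2^{-j}$ that, paired with $\|\varphi_{r_k/2^{j+1}}*f\|_{\bbL^\infty} \leq N 2^{(j+1)\alpha} r_k^{-\alpha}$, produces a convergent geometric series in $j$. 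Multiplying by $R^\alpha$ and summing over $k$,
\[
R^\alpha \sum_{k \geq 0} |g_R * \psi_{r_k} * f(x)| \lesssim \dsemnorm{g}_\beta \cdot N \cdot \sum_{k \geq 0} (r_k/R)^{\beta - \alpha}
\]
converges geometrically because $\beta > \alpha$, yielding \eqref{e:normbd-local}.

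The weighted version \eqref{e:normbd-weight} is obtained by repeating the above argument with the weight $\rho$ inserted into every $\bbL^1$ and $\bbL^\infty$ norm, using the pointwise inequalities \eqref{e:weight-ineq} (in particular $\rho(x)/\rho(y) \leq C \rho(x-y)^{-1}$ and $\rho(x-y)^{-1} \leq \rho_R(x-y)^{-1}$) to transfer the weight between different arguments of the convolutions; the Fubini-type estimates then naturally produce $\|g\|_{\bbL^1(\rho_R^{-1})}$ and $\dsemnorm{g}_{\beta, \rho_R^{-1}}$ in place of their unweighted counterparts.

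The main technical obstacle is the conversion of the $\bbL^1$ bound on $g_R * \psi_{r_k}$ into the pointwise bound on the convolution with the distribution $f$: the naive Young's inequality using $\|g_R\|_{\bbL^1}$ in place of $\|g_R * \psi_{r_k}\|_{\bbL^1}$ gives only $\|g\|_{\bbL^1} N r_k^{-\alpha}$, which is not summable in $k$. The iterated regularization is essential to exploit simultaneously the cancellation $\int \psi_{r_k} = 0$ (which is what extracts the $(r_k/R)^\beta$ factor from $\dsemnorm{g}_\beta$) and the localization of the effective scale at $r_k$ rather than $R$ (yielding $r_k^{-\alpha}$ rather than $R^{-\alpha}$), so that the exponent $\beta - \alpha > 0$ controls the geometric sum.
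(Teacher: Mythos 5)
Your overall architecture --- a Littlewood--Paley decomposition $g_R*f = g_R*\varphi_R*f + \sum_{k\geq 0}g_R*\psi_{r_k}*f$ together with the key $\bbL^1$ estimate $\|g_R*\psi_{r_k}\|_{\bbL^1}\lesssim(r_k/R)^\beta\dsemnorm{g}_\beta$ --- is sound and in the same spirit as the paper's Step~2, but the iteration step has a genuine gap. You claim to bound the $j$-th error term $(g_R*\psi_{r_k})*\psi_{r_k/2^{j+1}}*f(x)$ by $\|g_R*\psi_{r_k}*\psi_{r_k/2^{j+1}}\|_{\bbL^1}\cdot\|\varphi_{r_k/2^{j+1}}*f\|_{\bbL^\infty}$. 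That pairing is not available: once both $\psi$'s have been collected onto $g_R$, the resulting $h := g_R*\psi_{r_k}*\psi_{r_k/2^{j+1}}$ is a smooth $\bbL^1$ function, what remains is the distributional pairing $h*f(x)$, and there is no residual $\varphi_{r_k/2^{j+1}}$ factor to extract a bounded function from $f$; estimating $h*f$ pointwise by $\|h\|_{\bbL^1}$ requires pairing against a bounded function, which is exactly the kind of statement under proof. Producing such a factor by a further split $f = \varphi_{r_k/2^{j+1}}*f + (f - \varphi_{r_k/2^{j+1}}*f)$ generates a new error and an infinite regress that you neither set up nor verify converges, while the alternative pairing $\|g_R*\psi_{r_k}\|_{\bbL^1}\cdot\|\psi_{r_k/2^{j+1}}*f\|_{\bbL^\infty}$ is valid but costs $N(r_k/2^{j+1})^{-\alpha}$ and diverges in $j$.

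The gap can be repaired by commuting the two band-pass kernels: write $g_R*\psi_{r_k}*\psi_{r_k/2^{j+1}}*f = (g_R*\psi_{r_k/2^{j+1}})*(\psi_{r_k}*f)$, so the high-frequency $\psi$ provides the $\bbL^1$ gain $\|g_R*\psi_{r_k/2^{j+1}}\|_{\bbL^1}\lesssim(r_k/(2^{j+1}R))^\beta\dsemnorm{g}_\beta$ while $\psi_{r_k}*f = \varphi_{r_k/2}*f-\varphi_{r_k}*f$ is a genuine function bounded by $\lesssim Nr_k^{-\alpha}$; after multiplying by $R^\alpha$, the double series $\sum_{k,j\geq 0}2^{-(j+1)\beta}(r_k/R)^{\beta-\alpha}$ converges precisely because $\beta>\alpha>0$. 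The paper avoids the issue altogether by the telescoping $\delta = \sum_{k\geq 1}\Phi^{k-1}*\varphi_{\theta^k}$ with $\Phi^{k} = \Delta_\theta*\cdots*\Delta_{\theta^{k}}$, in which every summand carries an explicit $\varphi_{\theta^k}$ factor to pair with $f$, and the iterated bound $\|g*\Phi^k\|_{\bbL^1}\leq(C\theta^\beta)^k\dsemnorm{g}^{\theta^k}_{\beta}$ rests on the crucial observation that $\Delta_r$ preserves the seminorm $\dsemnorm{\cdot}^R_\beta$; this yields a single geometric sum rather than a double one, and is the idea your proposal is missing.
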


\begin{remark} \label{rk:completion}
  In particular,
  \begin{equation}
    \sup_{R\leq 1}
    R^\alpha \|g_R*f\|_\rho \lesssim \dnorm{g}_{\beta,\rho^{-1}} \|f\|_{\alpha,\rho}.
  \end{equation}
This implies that elements of $C^{-\alpha}(\rho)$, which we defined as the completion of $C_c^\infty(\R^d)$ with respect to $\|\cdot\|_{-\alpha,\rho}$,
can indeed be identified with Schwartz distributions. Indeed, taking $R=1$,
\begin{equation} \label{e:Besov-duality}
	\int g(x)f(x)\, dx \lesssim  \dnorm{g}_{\beta,\rho^{-1}} \|f\|_{-\alpha,\rho}.
\end{equation}
Using that $\dnorm{g}_{\beta,\rho^{-1}}= \|g\|_{\bbL^1(\rho^{-1})} + \dsemnorm{g}_{\beta,\rho^{-1}}$ can be controlled by Schwartz seminorms,
equivalence classes of Cauchy sequences with respect to $\|\cdot\|_{-\alpha,\rho}$ can be identified with Schwartz distributions.
\end{remark}

\begin{proof}
  \emph{Step 1:}
  Denote $\varphi_0= \delta_0$  and $\Delta_r * g = \varphi_0 * g - \varphi_r*g = g-\varphi_r * g$.
  Then there is a constant $C$ (depending on $\rho$ and $\varphi$) such that for any $r,R \in (0,1]$ and $\beta\in (0,1)$:
  \begin{equation} \label{e:Deltarbd}
   \|\Delta_r * g\|_{\bbL^1(\rho^{-1})} \leq C r^\beta \dsemnorm{g}_{\beta,\rho^{-1}}^r, \qquad
   \dsemnorm{\Delta_r * g}_{\beta,\rho^{-1}}^R \leq C \dsemnorm{g}_{\beta,\rho^{-1}}^R,
  \end{equation} 
  where we define
  \begin{equation}
   \dsemnorm{g}_{\beta,\rho^{-1}}^R =
   \int \rho(x)^{-1} \int \frac{|g(x)-g(x+y)|}{|y|^\beta} |\varphi_R(y)| \, dy\, dx.
  \end{equation}
  Indeed,  since $\varphi_r$ has support in $B_r(0)$,
  \begin{align}
    \int \rho(x)^{-1} |\Delta_r * g(x)|\, dx
    &\leq \iint  \rho(x)^{-1}|g(x)-g(x+y)| |\varphi_r(y)| \, dy \, dx
      \nnb
    &\leq r^\beta \iint \rho(x)^{-1} \frac{|g(x)-g(x+y)|}{|y|^\beta} |\varphi_r(y)| \, dy \, dx
    = r^\beta  \dsemnorm{g}_{\beta,\rho^{-1}}^r.
  \end{align}
  For the second inequality, it suffices to show
  that $\dsemnorm{\varphi_r* g}_{\beta,\rho^{-1}}^R \leq C \dsemnorm{g}_{\beta,\rho^{-1}}^R$:
  \begin{align}
    &\int\rho(x)^{-1}  \int_{B_R(0)} \frac{|\varphi_r*g(x)-\varphi_r*g(x+y)|}{|y|^\beta} |\varphi_R(y)| \, dy\, dx
      \nnb
    &=
      \int \rho(x)^{-1} \int_{B_R(0)} \frac{|\int (g(x+z)-g(x+y+z)) \varphi_r(z)\, dz| }{|y|^\beta} |\varphi_R(y)| \, dy\, dx
      \nnb
    &\leq
      \int \rho(x)^{-1} \int_{B_R(0)} \int_{B_r(0)} \frac{|g(x+z)-g(x+y+z)|}{|y|^\beta} |\varphi_R(y)| |\varphi_r(z)|\, dz\,dy\, dx 
      \nnb
    & =\int \int_{B_r(0)} \rho(x-z)^{-1}  \int_{B_R(0)} \frac{|g(x)-g(x+y)|}{|y|^\beta} |\varphi_R(y)| |\varphi_r(z)|\,dy\, dz\, dx 
      \nnb
     & \leq C \int \rho(x)^{-1}\int_{B_r(0)} \rho(z)^{-1} |\varphi_r(z)| \int_{B_R(0)} \frac{|g(x)-g(x+y)|}{|y|^\beta} |\varphi_R(y)|\, dy\,dz\, dx 
      \nnb
    &  \lesssim \int \rho(x)^{-1} \int_{B_R(0)}  \frac{|g(x)-g(x+y)|}{|y|^\beta} |\varphi_R(y)| \, dy\, dx
      ,
  \end{align}
  where we used \eqref{e:weight-ineq} for the weight. It follows that
  $\dsemnorm{\Delta_r * g}_{\beta,\rho^{-1}}^R \leq \dsemnorm{g}_{\beta,\rho^{-1}}^R  + \dsemnorm{\varphi_r* g}_{\beta,\rho^{-1}}^R \lesssim \dsemnorm{g}_{\beta,\rho^{-1}}^R$
  with the implicit constant only depending on $\rho$ and $\varphi$.
  
  \medskip
  \noindent\emph{Step 2:}
  For $\theta\in (0,\frac12]$, define the distributions
  \begin{equation}
    \Phi^{k} = \Delta_\theta * \cdots *\Delta_{\theta^k}, \qquad \Phi^{0} = \delta_0.
  \end{equation}
  Then  $\Phi^k$ has support in a ball of radius $\sum_{n=1}^k \theta^n = \theta(1-\theta^{k})/(1-\theta) \leq 1$ and
  \begin{equation}
    \Phi^0 =  \Phi^0 * \varphi_\theta + \Phi^1
    = \Phi^0 * \varphi_\theta + \Phi^1 * \varphi_{\theta^2} + \Phi^2
    = \dots
    = \sum_{k=1}^n \Phi^{k-1}* \varphi_{\theta^{k}} + \Phi^n
    .
  \end{equation}
  Choosing $0<\theta\ll 1$, the bounds \eqref{e:Deltarbd} imply that $g * \Phi^n \to 0$ in $\bbL^1$ for all smooth $g$ and:
  \begin{equation}
    \|g * \Phi^k\|_{\bbL^1(\rho^{-1})}
    \leq C\theta^{\beta k} \dsemnorm{g*\Phi^{k-1}}_{\beta,\rho^{-1}}^{\theta^k}
      \leq (C\theta^\beta)^{k} \dsemnorm{g}_{\beta,\rho^{-1}}^{\theta^k},
      \qquad k\geq 1
    ,
  \end{equation}
  and trivially $\|g * \Phi^0\|_{\bbL^1(\rho^{-1})} = \|g\|_{\bbL^1(\rho^{-1})}$.
  Therefore, for any $\beta>\alpha$,
  \begin{align} \label{e:gRf-local}
    |g_R* f(x)|
    &\leq \sum_{k \geq 1}
      |g_R * \Phi^{k-1}_R * \varphi_{\theta^k R} * f(x)|
      \nnb
    &\leq \sum_{k\geq 1}
      \int |g_R * \Phi^{k-1}_R(y)||\varphi_{\theta^k R} * f(x-y)| \, dy
      \nnb
    &\leq \sum_{k \geq 1}
      \|g_R*\Phi^{k-1}_R\|_{\bbL^1}
      \|\varphi_{\theta^k R} * f\|_{\bbL^\infty(B_{SR+R}(x))}
            \nnb
   &= \sum_{k\geq 1}
      \|g*\Phi^{k-1}\|_{\bbL^1}
      \|\varphi_{\theta^k R} * f\|_{\bbL^\infty(B_{SR+R}(x))}
            \nnb
    &\lesssim
      R^{-\alpha}
      \qbb{
      \|g\|_{\bbL^1} +
      \sum_{k\geq 1} (C\theta^{\beta})^k \theta^{-\alpha k} \dsemnorm{g}_{\beta}^{\theta^{k-1}}}
      \qbb{ \sup_{r\leq R} r^\alpha \|\varphi_r * f\|_{\bbL^\infty(B_{SR+R}(x))} }
            \nnb
    &\lesssim
      R^{-\alpha} 
      \qbb{ \|g\|_{\bbL^1} + \dsemnorm{g}_{\beta}}
      \qbb{ \sup_{r\leq R} r^\alpha \|\varphi_{r} * f\|_{\bbL^\infty(B_{R(S+1)}(x))}}
    .
  \end{align}
  The last inequality  in \eqref{e:gRf-local} is explained at the end of proof,  after observing that
  the weighted version of \eqref{e:gRf-local} follows in the same way using \eqref{e:Lp-weight}--\eqref{e:L1-weight}:
  \begin{align} \label{e:gRf-weighted}
    \|g_R * f\|_{\bbL^\infty(\rho)}
    &\leq \sum_{k\geq 1}
      \|g_R * \Phi^{k-1}_R * \varphi_{\theta^k R} * f\|_{\bbL^\infty(\rho)}
      \nnb
      &\leq C\sum_{k\geq 1}\|g_R*\Phi^{k-1}_R\|_{\bbL^1(\rho^{-1})}
      \|\varphi_{\theta^k R} * f\|_{\bbL^\infty(\rho)}
        \nnb
    &= C \sum_{k\geq 1}\|g*\Phi^{k-1}\|_{\bbL^1(\rho_R^{-1})}
      \|\varphi_{\theta^k R} * f\|_{\bbL^\infty(\rho)}
            \nnb
    &\lesssim R^{-\alpha} \qbb{ \|g\|_{\bbL^1(\rho^{-1}_R)} + \sum_{k\geq 1} (C\theta^{\beta})^k \theta^{-\alpha k} \dsemnorm{g}_{\beta,\rho_R^{-1}}^{\theta^{k-1}}}
      \qa{ \sup_{r\leq R} r^\alpha \|\varphi_{r} * f\|_{\bbL^\infty(\rho)} }
            \nnb
    &\lesssim  R^{-\alpha} 
    \qbb{ \|g\|_{\bbL^1(\rho_R^{-1})} + \dsemnorm{g}_{\beta,\rho_R^{-1}} }
      \qbb{ \sup_{r\leq R} r^\alpha \|\varphi_{r} * f\|_{\bbL^\infty(\rho)} }
      .
  \end{align}
  Finally, in the last inequalities of \eqref{e:gRf-local} and \eqref{e:gRf-weighted}, we used that
  for any $\beta>\alpha$ and $\theta \ll 1$,
  \begin{equation}
    \sum_{k\geq 1} (C\theta^{\beta})^k \theta^{-\alpha k} \dsemnorm{g}_{\beta,\rho^{-1}}^{\theta^{k-1}}
    \lesssim \iint \rho(x)^{-1}\frac{|g(x)-g(x+y)|}{|y|^{\beta}} \, \frac{dy}{|y|^d} \, dx =\dsemnorm{g}_{\beta,\rho^{-1}}.
  \end{equation}
  Indeed, with $R_k = \theta^k$ and $C \leq \theta^{\alpha-\beta}$ for $\theta$ sufficiently small since $\beta>\alpha$,
  \begin{equation}
    \sum_{k\geq 1} (C\theta^{\beta})^k \theta^{-\alpha k} \dsemnorm{g}_{\beta,\rho^{-1}}^{\theta^{k-1}}
    \lesssim \sum_{k\geq 1} \int \rho(x)^{-1}\mint_{B_{\theta^{k-1}}(0)} \frac{|g(x)-g(x+y)|}{|y|^{\beta}} \, dy \, dx.
  \end{equation}
  This is bounded by
  \begin{align}
    &\lesssim \int_0^1 \frac{dR}{R} R^{-d} \int \rho(x)^{-1}  \int_{B_R(0)} \frac{|g(x)-g(x+y)|}{|y|^{\beta}} \, dy \, dx
      \nnb
    & =\int \rho(x)^{-1} \int_{B_1(0)} \frac{|g(x)-g(x+y)|}{|y|^{\beta}} \qa{ \int_{|y|}^1 \frac{dR}{R} R^{-d}}  \, dy \, dx
    \nnb
    &\lesssim \int \rho(x)^{-1} \int_{B_1(0)}\frac{|g(x)-g(x+y)|}{|y|^{\beta}} \, \frac{dy}{|y|^d} \, dx.
  \end{align}
  This completes the proof.
\end{proof}

\begin{proposition}[Reconstruction / Commutator estimate] \label{prop:besov-mult}
  Let $\alpha \in (0,1)$ and $0<\alpha< \beta$. Then for any $R\leq 1$,
  \begin{equation} \label{e:besov-commutator}
    | (\Psi_R*(uv))(x) - u(x) (\Psi_R*v)(x) | \lesssim  R^{\beta -\alpha} [u]_{\beta,B_{2R}(x)} \|v\|_{-\alpha,B_{2R}(x)}
    .
  \end{equation}
  In particular,
  \begin{equation} \label{e:besov-mult}
    \|uv\|_{-\alpha,C} \lesssim
    \|u\|_{\beta,C+B_{2}(x)} \|v\|_{-\alpha,C+B_{2}(x)},
  \end{equation}
  and for the weighted  norms, if $\rho' \lesssim 1$,
  \begin{equation}  \label{e:besov-mult-rho}
    \|uv\|_{-\alpha,\rho\rho'}
    \lesssim \|v\|_{-\alpha,\rho} \|u\|_{\beta,\rho'}.
  \end{equation}
\end{proposition}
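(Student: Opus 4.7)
The plan is to first prove the commutator estimate \eqref{e:besov-commutator} and then deduce the two multiplicative inequalities as corollaries. The key algebraic observation, for fixed $x$, is
\begin{equation*}
  (\Psi_R*(uv))(x) - u(x)(\Psi_R*v)(x) = \int \Psi_R(x-y)[u(y)-u(x)]v(y)\,dy = (g^{(x)}_R*v)(x),
\end{equation*}
where $g^{(x)}(w) := \Psi(w)[u(x-Rw)-u(x)]$ is supported in $B_1(0)$; the second equality uses the change of variables $y=x-Rw$ together with $g_R(y)=R^{-d}g(y/R)$. This identifies the commutator as a convolution to which Proposition~\ref{prop:norm-tilde} can be applied pointwise, with $S=1$.

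The main technical step is to check that, uniformly in $x$ and $R\leq 1$,
\begin{equation*}
  \|g^{(x)}\|_{\bbL^1} + \dsemnorm{g^{(x)}}_{\beta'} \lesssim R^\beta [u]_{\beta, B_R(x)}
\end{equation*}
for some auxiliary exponent $\beta' \in (\alpha, \beta)$. The $\bbL^1$ bound is immediate from $|g^{(x)}(w)| \leq \|\Psi\|_\infty R^\beta |w|^\beta [u]_{\beta, B_R(x)}$ on $B_1(0)$. For the seminorm I would split $|g^{(x)}(w)-g^{(x)}(w')|$ into a ``H\"older-of-$u$'' piece controlled by $\|\Psi\|_\infty R^\beta |w-w'|^\beta [u]_{\beta}$ and a ``Lipschitz-of-$\Psi$'' piece controlled by $R^\beta [u]_\beta |w-w'|$; since $\beta<1$ both are dominated by $|w-w'|^\beta$ on $B_1(0)\times B_1(0)$, and the integral $\int_{B_1(0)} |y|^{\beta-\beta'-d}\,dy$ converges because $\beta' < \beta$. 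Feeding these bounds into \eqref{e:normbd-local} then yields \eqref{e:besov-commutator} (up to slightly enlarging the target ball $B_{2R}(x)$ to absorb the domain of dependence created by $g^{(x)}_R$).

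Both multiplicative estimates then follow from the triangle inequality applied to $\Psi_R*(uv) = [\Psi_R*(uv) - u\,\Psi_R*v] + u\,\Psi_R*v$: the commutator contribution is controlled by \eqref{e:besov-commutator}, while the product term is controlled by $|u(x)|\, R^{-\alpha}\|v\|_{-\alpha, B_R(x)}$. For the unweighted bound \eqref{e:besov-mult} one takes the supremum over $(R,x)$ with $B_R(x)\subset C$, enlarging $C$ by $B_2$ to accommodate the $B_{2R}$ neighbourhoods. For the weighted bound \eqref{e:besov-mult-rho} one multiplies the pointwise estimate by $\rho(x)\rho'(x)$ and uses the weight inequality \eqref{e:weight-ineq}, which ensures $\rho(y)\gtrsim \rho(x)$ and $\rho'(y)\gtrsim \rho'(x)$ for $y$ in any ball of radius $\lesssim 1$ around $x$; consequently $\rho(x)\|v\|_{-\alpha, B_{2R}(x)} \lesssim \|v\|_{-\alpha,\rho}$ and $\rho'(x)[u]_{\beta, B_{2R}(x)} \lesssim [u]_{\beta,\rho'}$, while the product term is absorbed via $\rho(x)|\Psi_R*v(x)| \leq R^{-\alpha}\|v\|_{-\alpha,\rho}$ and $\rho'(x)|u(x)|\leq \|u\|_{\rho'}$. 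The assumption $\rho'\lesssim 1$ enters only to guarantee that $\rho\rho'$ is a valid bounded weight. The principal obstacle in this argument is purely technical: the careful choice of the intermediate exponent $\beta' \in (\alpha, \beta)$ (needed so that Proposition~\ref{prop:norm-tilde} applies while the integrand of $\dsemnorm{g^{(x)}}_{\beta'}$ remains integrable) and the bookkeeping of the enlargements $B_R \to B_{2R} \to B_{3R}$ inherited from the convolution structure.
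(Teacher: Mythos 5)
Your proof is essentially the same as the paper's: you identify the commutator as the convolution $(g^{(x)}_R * v)(x)$ with $g^{(x)}(w) = \Psi(w)[u(x-Rw)-u(x)]$ supported in $B_1(0)$, invoke Proposition~\ref{prop:norm-tilde}, estimate $\|g^{(x)}\|_{\bbL^1}$ and $\dsemnorm{g^{(x)}}_{\cdot}$ by $R^\beta[u]_{\beta,B_{2R}(x)}$, and then deduce the multiplicative bounds via the triangle inequality and the weight comparison $\rho(y)\sim\rho(x)$ for $|x-y|\lesssim 1$. This is exactly the route taken in the paper.

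One point in your favour: your explicit introduction of the auxiliary exponent $\beta'\in(\alpha,\beta)$ is not a superfluous complication but a genuine technical necessity. As you observe, bounding $|g^{(x)}(w)-g^{(x)}(w')|$ crudely by $R^\beta|w-w'|^\beta[u]_\beta$ and plugging into $\dsemnorm{g^{(x)}}_\beta$ gives an integrand $\sim|y|^{-d}$, which is not locally integrable; more specifically, the piece $\Psi(z+w)[u(x-Rz)-u(x-Rz-Rw)]$ is genuinely only of size $(R|w|)^\beta$ when $u$ is exactly $C^\beta$, so $\dsemnorm{g^{(x)}}_\beta$ with the same $\beta$ can fail to be finite. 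The paper's proof reuses the symbol $\beta$ when applying Proposition~\ref{prop:norm-tilde} and thus glosses over this; one should really take the $\beta'$ in Proposition~\ref{prop:norm-tilde} to be strictly smaller than the $\beta$ measuring the regularity of $u$, exactly as you do. (Alternatively, one could keep track of the fact that the $\Psi$-increment piece is in fact Lipschitz-better, i.e.\ $|\Psi(z)-\Psi(z+w)|\lesssim|w|$, so only the $u$-increment piece forces the strict inequality $\beta'<\beta$; either way the conclusion is the same.)

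The rest of your write-up is correct. You might note that in the weighted step you do not strictly need the weighted version \eqref{e:normbd-weight} of Proposition~\ref{prop:norm-tilde}: as you argue, multiplying the \emph{local} estimate \eqref{e:besov-commutator} by $\rho(x)\rho'(x)$ and using \eqref{e:weight-ineq} suffices. The paper instead passes through \eqref{e:normbd-weight} and the identity $[u]_{\beta,\rho'}\sim\sup_x\rho'(x)[u]_{\beta,B_{2R}(x)}$; the two routes are interchangeable here.
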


\begin{remark}
  As an application, we mention that if $A \subset \R^d$ is a cube (or other sufficiently regular set) then,
  by Proposition~\ref{prop:norm-tilde} and Proposition~\ref{prop:besov-mult}:
  \begin{align}
    \int_{A} f(x)g(x) \, dx
    &\lesssim \dnorm{1_A}_{\beta} \norm{fg}_{-\alpha, A+B_1(0)}
      \nnb
    &\lesssim \dnorm{1_A}_{\beta} \dnorm{g}_{\beta,A+B_3(0)}\norm{f}_{-\alpha, A+B_3(0)}
    \lesssim |A| \dnorm{g}_{\beta,A+B_3(0)}\norm{f}_{-\alpha, A+B_3(0)}
      \label{eq_besovmult_cubes}
  \end{align}
  where we used in the last inequality that, for a cube $A$ and $\beta \in (0,1)$:
  \begin{equation}
    \dnorm{1_A}_\beta \lesssim  |A|+|\partial A| \lesssim |A|.
  \end{equation}
  More generally, we record for use in Section~\ref{sec:entropy} that   if $A=[-L,L]^d$ then
  \begin{equation} \label{e:1A-B11-bd}
    \dnorm{1_A}_{\beta,\rho^{-1}} \lesssim  L^{d+\sigma}.
  \end{equation}
  Indeed, clearly $\|1_A\|_{\bbL^1(\rho^{-1})} \lesssim L^{d+\sigma}$, and 
\begin{align}
  \dsemnorm{1_A}_{\beta,\rho^{-1}}
  &= \iint_{\R^2\times B_1(0)} \rho(x)^{-1} \frac{|1_A(x)-1_A(x+y)|}{|y|^{\beta}} \, \frac{dy}{|y|^d} \, dx
    \nnb
  &\lesssim \sup_{x\in\partial A}\rho(x)^{-1} \int_{B_1(0)} \frac{|\partial A| |y|}{|y|^\beta} \frac{dy}{|y|^d}
  \lesssim |\partial A|\sup_{x\in\partial A}\rho(x)^{-1}  \int_0^1 r^{-\beta} \, dr
    \lesssim L^{d-1+\sigma}.
\end{align}

\end{remark}

\begin{proof}[Proof of Proposition~\ref{prop:besov-mult}]
  The left-hand side of \eqref{e:besov-commutator} is the absolute value of
  \begin{equation}
    \Psi_R * (uv)(x) - u(x) (\Psi_R * v)(x) = g_R^{x,R}* v(x),
  \end{equation}
  where
  \begin{equation}
    g^{x,R}(z) =\Psi(z)\qB{ u(x-Rz) - u(x) }.
  \end{equation}
  Since $\Psi$ has support in $B_1(0)$, so does $g^{x,R}$.
  By Proposition~\ref{prop:norm-tilde}, 
  \begin{equation}
    R^{\alpha}| \Psi_R* (uv)(x) - u(x) (\Psi_R*v)(x) |  \lesssim \|v\|_{-\alpha,B_{2R}(x)} \dnorm{g^{x,R}}_{\beta}.
  \end{equation}
  The following bound (with $\rho=1$)
  therefore completes the proof of \eqref{e:besov-commutator}:
  \begin{align}
    \dsemnorm{g^{x,R}}_{\beta,\rho^{-1}}
    &\lesssim \sup_{r\leq 1} \int \rho(z)^{-1} \mint_{B_r(0)} \frac{|g^{x,R}(z)-g^{x,R}(z+w)|}{|w|^\beta} \, dw\, dz
      \nnb
    &\leq
      \sup_{r\leq 1} \int \rho(z)^{-1} \mint_{B_r(0)} \frac{|\Psi(z)-\Psi(z+w)|}{|w|^\beta}|u(x-Rz)-u(x)| \,dw\,dz
      \nnb
      &\quad+\sup_{r\leq 1}\int \rho(z)^{-1}\mint_{B_r(0)}  |\Psi(z+w)|\frac{|u(x-Rz-Rw)-u(x-Rz)|}{|w|^\beta} \, dw\, dz
      \nnb
    &\lesssim
      R^\beta [u]_{\beta,B_{2R}(x)},
  \end{align}
  where we used that, for $|w|\leq 1$,
  \begin{equation}
    \frac{|\Psi(z)-\Psi(z+w)|}{|w|^\beta} \lesssim  1_{|z|\leq 2}, \qquad
    |\Psi(z+w)| \lesssim  1_{|z|\leq 2},
  \end{equation}
  and similarly
  \begin{align}
    \dsemnorm{g^{x,R}}_{\bbL^1(\rho^{-1})}
    &= \int \rho(z)^{-1} |g^{x,R}(z)|\, dz
      \nnb
    &= \int \rho(z)^{-1} |\Psi(z)||u(x-Rz)-u(x)|\, dz
    \lesssim R^\beta [u]_{\beta,B_R(x)} .
  \end{align}
  The multiplication estimate \eqref{e:besov-mult} is a direct consequence.

  The weighted version is analogous.   By Proposition~\ref{prop:norm-tilde} and the above estimates for $g^{x,R}$,
  \begin{equation}
    R^{\alpha}| \Psi_R* (uv)(x) - u(x) (\Psi_R*v)(x) |
    \lesssim \|v\|_{-\alpha,\rho} \dnorm{g^{x,R}}_{\beta,\rho^{-1}}
    \lesssim R^\beta \|v\|_{-\alpha,\rho} [u]_{\beta,B_{2R}(x)}.
  \end{equation}
  Using that $[u]_{\beta,\rho} \sim \sup_{x} \rho(x)[u]_{\beta,B_{2R}(x)}$, therefore
  \begin{equation}
    R^\alpha\| \Psi_R* (uv) - u(\Psi_R*v)\|_{\rho}
    \lesssim R^{\beta} \|v\|_{-\alpha,\rho} [u]_{\beta,\rho}.
  \end{equation}
  In particular, \eqref{e:besov-mult-rho} follows.
\end{proof}

Write $e^{\Delta t}f$ for $p_t*f$ where $p_t$ denotes the heat kernel on $\R^d$: 
\begin{equation} \label{e:ptdef-app}
  p_t(x) = (4\pi t)^{-d/2}e^{-|x|^2/4t}. 
\end{equation}

\begin{proposition} \label{prop:Besov-heat}
  For $\alpha,\beta \in (0,1)$ and $t\leq 1$,
  \begin{equation} \label{e:Besov-heat1}
    [e^{\Delta t} v]_{\beta,\rho} \leq C(\alpha,\beta,\rho) t^{-\frac{\alpha+\beta}{2}}\|v\|_{-\alpha,\rho}
  \end{equation}
  and
  \begin{equation} \label{e:Besov-heat2}
    \|e^{\Delta t} v\|_\rho \leq C(\alpha,\rho) t^{-\frac{\alpha}{2}} \|v\|_{-\alpha,\rho}.
  \end{equation}
\end{proposition}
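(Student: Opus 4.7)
The plan is to derive both bounds from the convolution estimate of Proposition~\ref{prop:norm-tilde}, exploiting the scaling identity $p_t = (p_1)_R$ with $R = \sqrt{t} \leq 1$, where $p_t$ is the heat kernel \eqref{e:ptdef-app}. Writing $p_t * v = (p_1)_R * v$ and noting that the support hypothesis in Proposition~\ref{prop:norm-tilde} allows $S = \infty$ (the weighted bound \eqref{e:normbd-weight} does not involve a support radius), the $\bbL^\infty$ bound \eqref{e:Besov-heat2} will follow from
\begin{equation}
R^\alpha \|p_t * v\|_\rho \leq C \qbb{ \|p_1\|_{\bbL^1(\rho^{-1}_R)} + \dsemnorm{p_1}_{\beta,\rho^{-1}_R} } \|v\|_{-\alpha,\rho},
\end{equation}
once I check that the bracket is bounded uniformly in $R \in (0,1]$. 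This uniformity follows from $\rho_R^{-1}(x) = (1+R^2|x|^2)^{\sigma/2} \leq \rho^{-1}(x)$ combined with the Gaussian decay and smoothness of $p_1$, which make the weighted $\bbL^1$ and $B^\beta_{1,1}$ seminorms finite.

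For the Hölder seminorm \eqref{e:Besov-heat1}, I would split according to the relative sizes of $|z|$ and $\sqrt{t}$. When $|z| > \sqrt{t}$, the already-established $\bbL^\infty$ bound and the weight inequality $\rho(x) \leq C \rho(x+z)$ for $|z|\leq 1$ from \eqref{e:weight-ineq} give
\begin{equation}
\rho(x)\frac{|p_t*v(x) - p_t*v(x+z)|}{|z|^\beta} \lesssim |z|^{-\beta} \|p_t*v\|_\rho \lesssim t^{-\beta/2} \cdot t^{-\alpha/2}\|v\|_{-\alpha,\rho},
\end{equation}
and the $|z|^{-\beta}$ is absorbed into the $|z|^\beta$ we are supposed to produce times $t^{-\beta/2}$. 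When $|z|\leq \sqrt{t}$, I write $p_t*v(x) - p_t*v(x+z) = g_R * v(x)$ with $R=\sqrt{t}$ and $g = p_1 - T_{z/R} p_1$, where $T_a p_1(y) := p_1(y+a)$, and apply Proposition~\ref{prop:norm-tilde} again to obtain
\begin{equation}
\|g_R*v\|_\rho \lesssim R^{-\alpha} \qbb{\|g\|_{\bbL^1(\rho_R^{-1})} + \dsemnorm{g}_{\beta,\rho_R^{-1}}} \|v\|_{-\alpha,\rho}.
\end{equation}

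The crux of the argument is to show that, for $a = z/R$ with $|a| \leq 1$, the bracket above is bounded by $C|a|^\beta$. This uses smoothness and Gaussian decay of $p_1$ interpolated against the trivial bound: $\|p_1 - T_a p_1\|_{\bbL^1(\rho_R^{-1})} \leq |a| \|\nabla p_1\|_{\bbL^1(\rho_R^{-1})}$ combines with $\|p_1-T_a p_1\|_{\bbL^1(\rho_R^{-1})} \leq 2\|p_1\|_{\bbL^1(\rho_R^{-1})}$ to give $\leq C \min(|a|,1) \leq C|a|^\beta$, and analogously for $\dsemnorm{\cdot}_{\beta,\rho_R^{-1}}$. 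Plugging this in yields $\|g_R*v\|_\rho \lesssim t^{-\alpha/2}(|z|/\sqrt{t})^\beta\|v\|_{-\alpha,\rho} = |z|^\beta t^{-(\alpha+\beta)/2}\|v\|_{-\alpha,\rho}$, as required.

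The main technical step — and thus the place to be careful — is the interpolation bound on $p_1 - T_a p_1$ in both the weighted $\bbL^1$ and $B^\beta_{1,1}$ seminorms, where one must confirm that the constants are uniform in $R \leq 1$. This is essentially automatic because $\rho_R^{-1}$ is dominated by the $R$-independent weight $\rho^{-1}$ on the full space, and all translation-difference bounds reduce to standard estimates for a fixed Schwartz function.
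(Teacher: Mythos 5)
Your proof is correct, and both parts rely on Proposition~\ref{prop:norm-tilde} in the same spirit as the paper, but the Hölder estimate \eqref{e:Besov-heat1} is handled by a genuinely different decomposition. For \eqref{e:Besov-heat2} you and the paper coincide: apply \eqref{e:normbd-weight} with $g=p_1$, $R=\sqrt{t}$, and observe $\|p_1\|_{\bbL^1(\rho^{-1})}+\dsemnorm{p_1}_{\beta,\rho^{-1}}\lesssim 1$ (and $\rho_R^{-1}\leq\rho^{-1}$ by \eqref{e:weight-ineq}).

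For \eqref{e:Besov-heat1}, the paper treats all increments $|z|\leq 1$ at once by introducing the kernel $g^{x,y}(w)=\frac{p_1(x-w)-p_1(y-w)}{|x-y|^\beta}$ and establishing the pointwise bound $|g^{x,y}(w)|\lesssim g(x-w)+g(y-w)$ with the mildly singular envelope $g(w)=(1+|w|^{-\beta})e^{-c|w|}$, which still has finite weighted $\bbL^1$ and $B^\beta_{1,1}$ norms. You instead split on $|z|\lessgtr\sqrt{t}$: for $|z|>\sqrt{t}$ you reduce to the $\bbL^\infty$ bound and pay $|z|^{-\beta}\leq t^{-\beta/2}$; for $|z|\leq\sqrt{t}$ you keep a smooth translated-difference kernel $p_1-T_{z/R}p_1$ and estimate its weighted $\bbL^1$ and $B^\beta_{1,1}$ seminorms by $\lesssim |a|\leq|a|^\beta$ via the mean value theorem (the ``interpolation'' with the trivial bound is superfluous here since $|a|=|z|/\sqrt{t}\leq 1$ automatically). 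Both routes are valid. The paper's version is more unified and avoids the case distinction at the cost of working with a kernel singular at the origin; yours is a bit more elementary in that all kernel estimates reduce to smoothness and Gaussian decay of $p_1$, at the cost of invoking \eqref{e:Besov-heat2} as a lemma inside the proof of \eqref{e:Besov-heat1}. Either way, the underlying mechanism — rescaling, self-improving against the norm $\dnorm{\cdot}_{\beta,\rho_R^{-1}}$, and the weight inequalities \eqref{e:weight-ineq} — is the same.
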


\begin{proof}
  As preliminary warning, note that $p_t= (p_1)_{\sqrt{t}}$ with the notation $f_R(x) = R^{-d}f(x/R)$.
  We begin with the estimate \eqref{e:Besov-heat2} which follows immediately from Proposition~\ref{prop:norm-tilde}
  with $g=p_1$:
  \begin{equation}
    t^{\alpha/2}\|p_t*v\|_\rho \leq C(\alpha,\rho) \|v\|_{-\alpha,\rho}
  \end{equation}
  since $\|p_1\|_{\bbL^1(\rho^{-1})} \lesssim 1$ and 
  \begin{equation}
    \dsemnorm{p_1}_{\beta,\rho^{-1}} \lesssim
    \sup_{R\leq 1} \int \rho(x)^{-1}\int_{B_R(0)} \frac{|p_1(x+y)-p_1(x)|}{|y|} \, dy \, dx
    \lesssim  \int \rho(x)^{-1}e^{-c|x|} \, dx
    \lesssim 1.
  \end{equation}
  To show \eqref{e:Besov-heat1}, we summarise the following elementary estimates for the heat kernel
  (which can all be seen from  \eqref{e:ptdef-app}).
  For any $\beta \in [0,1]$ and $|x-y|\leq \frac12|x-z|$,
  \begin{equation}
    |p_t(x-z)-p_t(y-z)|
    \lesssim \min\ha{1,\frac{|x-y|}{\sqrt{t}}} \frac{e^{-c|x-z|/\sqrt{t}}}{t^{d/2}}
    \lesssim \frac{|x-y|^\beta}{t^{\beta/2}} \frac{e^{-c|x-z|/\sqrt{t}}}{t^{d/2}}.
  \end{equation}
  On the other hand, for $|x-y|\geq \frac12 |x-z|$,
  \begin{equation}
    |p_t(x-z)-p_t(y-z)| \leq p_t(x-z)+p_t(y-z) \lesssim \frac{e^{-c|x-z|/\sqrt{t}}+e^{-c|y-z|/\sqrt{t}}}{t^{d/2}}
  \end{equation}
  so that
  \begin{equation}
    |p_t(x-z)-p_t(y-z)| \lesssim \frac{|x-y|^\beta}{t^{\beta/2}}\qa{ (\frac{\sqrt{t}}{|x-z|})^\beta \frac{e^{-c|x-z|/\sqrt{t}}}{t^{d/2}}+(\frac{\sqrt{t}}{|y-z|})^\beta\frac{e^{-c|y-z|/\sqrt{t}}}{t^{d/2}}}
    .
  \end{equation}
  In summary,
  with $g(x)=  (1+|x|^{-\beta}) e^{-c|x|}$,
  \begin{equation}
    |p_t(x-z)-p_t(y-z)| \lesssim \frac{|x-y|^\beta}{t^{\beta/2}} \qa{g_{\sqrt{t}}(x-z)+g_{\sqrt{t}}(y-z)}
    .
  \end{equation}
  Thus
  \begin{equation}
    g^{x,y}(z) =\frac{p_1(x-z)-p_1(y-z)}{|x-y|^\beta}  
  \end{equation} 
  satisfies
  \begin{equation}
    |g^{x,y}(z)| \lesssim g(x-z)+g(y-z)
  \end{equation}
  and analogous estimates hold for derivatives:
  \begin{equation}
    |\nabla g^{x,y}(z)|
    \lesssim g(x-z)+g(y-z)
  \end{equation}
  In particular, for $\beta \in (0,1]$, uniformly in $|x-y|\leq 1$,
  \begin{align}
    \rho(x) \dsemnorm{g^{x,y}}_{\beta,\rho^{-1}}
    &\lesssim \sup_{R\leq 1} \int \rho(x)\rho(z)^{-1} \mint_{B_R(0)} \frac{|g^{x,y}(z+w)-g^{x,y}(z)|}{|w|^\beta}  \, dw \, dz
      \nnb
    &\leq C \sup_{R\leq 1} \int \rho(x-z)^{-1} \mint_{B_R(0)} \frac{|g^{x,y}(z+w)-g^{x,y}(z)|}{|w|^\beta} \, dw \, dz
      \nnb
    &\lesssim \int \rho(x-z)^{-1} g(x-z) \, dz \lesssim 1,
  \end{align}
  and
  \begin{align}
    \rho(x) \dsemnorm{g^{x,y}}_{\bbL^1(\rho^{-1})}
    &= \int \rho(x)\rho(z)^{-1} \int |g^{x,y}(z)| \, dz
     \nnb
    &\lesssim \int \rho(x-z)^{-1} g(x-z) \, dz \lesssim 1.
  \end{align}
  By Proposition~\ref{prop:norm-tilde} with $g=g^{x,y}$ and $\varphi=\Psi$, therefore
  \begin{align}
    t^{\beta/2}t^{\alpha/2}[e^{\Delta t} v]_{\beta,\rho}
	  &= t^{\alpha/2} \sup_{\substack{x\neq y \\ |x-y|\leq 1}} \rho(x) \absa{\int g_{\sqrt{t}}^{x/\sqrt{t},y/\sqrt{t}}(z)v(z) \, dz}
    \nnb
	  &= t^{\alpha/2} \sup_{\substack{x\neq y \\ |x-y|\leq 1}} \rho(x) \absa{g_{\sqrt{t}}^{x/\sqrt{t},y/\sqrt{t}}*v(0)}
    \nnb
	  &\lesssim \sup_{\substack{x\neq y \\ |x-y|\leq 1/\sqrt{t}}} \rho(x) \dnorm{g^{x,y}}_{\beta,\rho^{-1}} \sup_{r \leq \sqrt{t}} r^{\alpha} \|\Psi_{r} * v\|_\rho
    \lesssim  \|v\|_{-\alpha,\rho}
    ,
  \end{align}
  as claimed.
\end{proof}

The following proposition is a localised version of the Besov embedding of $B_{p,p}^{\alpha+d/p}$ into $B_{\infty,\infty}^\alpha$.
  
\begin{proposition} \label{prop:Besov-embedding}
Let $\alpha>0$ and $p\geq 1$.
There is then $C>0$ independent of $p$ such that:  
\begin{equation}
  R^{\alpha p}|\Psi_R * f(x)|^p
  \leq
    C^p\int_0^R  t^{p\alpha-d} \|\Psi_t*f\|_{\bbL^p(B_{3R}(x))}^p\, \frac{dt}{t}.
\end{equation}
In particular, for any weight $\rho$ as above and a different $C$,
\begin{equation}
  \|f\|_{-\alpha,\rho}^p
  \leq C^p \int_0^1 R^{p\alpha-d}\|\Psi_R*f\|_{\bbL^p(\rho)}^p  \, \frac{dR}{R}.
\end{equation}
\end{proposition}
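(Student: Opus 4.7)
The proof combines a scale decomposition adapted from Step~2 of the proof of Proposition~\ref{prop:norm-tilde} with $\bbL^p$--$\bbL^{p'}$ duality. Fix $\theta\in(0,1)$ small enough, depending on $\alpha$, $d$, $\Psi$ but not on $p$, to be chosen below. Applied at scale $R$, the identity $\delta_0=\sum_{k\geq 1}\Phi^{k-1}*\Psi_{\theta^k}$ (with $\Phi^k=\Delta_\theta*\cdots*\Delta_{\theta^k}$ and $\Delta_r=\delta_0-\Psi_r$) becomes $\delta_0=\sum_{k\geq 1}\Phi^{k-1}_R*\Psi_{\theta^k R}$, yielding
\begin{equation*}
\Psi_R*f(x)=\sum_{k\geq 1}(h_k)_R*\Psi_{\theta^k R}*f(x),\qquad h_k:=\Psi*\Phi^{k-1},
\end{equation*}
where each $h_k$ has support in $B_2(0)$ for $\theta$ small. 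Iterating the commutator estimate $\|g-g*\Psi_r\|_{\bbL^\infty}\lesssim r^{\beta'}[g]_{\beta'}$ for a fixed $\beta'\in(\alpha,1)$, combined with the crude bound $\|\Phi^{k-1}\|_{\bbL^1}\leq(1+\|\Psi\|_{\bbL^1})^{k-1}$, yields $\|h_k\|_{\bbL^\infty}\leq C\lambda^k$ with $\lambda:=\theta^{\beta'}(1+\|\Psi\|_{\bbL^1})$. For $\theta$ sufficiently small one has $\lambda\leq\theta^\alpha/2$, and since $h_k$ is supported in $B_2(0)$, $\|(h_k)_R\|_{\bbL^{p'}}\leq C\lambda^k R^{-d/p}$ \emph{uniformly in} $p\geq 1$ (the factor $|B_2|^{1/p'}$ is bounded uniformly in $p$).

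Young's inequality applied termwise then gives
\begin{equation*}
|\Psi_R*f(x)|\leq CR^{-d/p}\sum_{k\geq 1}\lambda^k\|\Psi_{\theta^k R}*f\|_{\bbL^p(B_{2R}(x))}.
\end{equation*}
To raise this to a $p$-th power while keeping the constant of the form $C^p$ with $C$ independent of $p$, I apply weighted Hölder to the sum with weights $c_k=(\theta^k R)^{p\alpha-d}$. The dual series $\sum_k\lambda^{kp'}c_k^{-p'/p}$ equals $R^{-(p\alpha-d)p'/p}\sum_k r^k$ with $r=(\lambda\theta^{(d-p\alpha)/p})^{p'}$; the choice $\lambda\leq\theta^\alpha/2$ gives $\lambda\theta^{(d-p\alpha)/p}\leq\theta^{d/p}/2\leq 1/2$ uniformly in $p\geq 1$, so the geometric series is uniformly bounded and raising to the power $p/p'$ produces at most $C^pR^{-(p\alpha-d)}$. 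Cancelling $R$-powers leaves
\begin{equation*}
R^{p\alpha}|\Psi_R*f(x)|^p\leq C^p\sum_{k\geq 1}(\theta^k R)^{p\alpha-d}\|\Psi_{\theta^k R}*f\|_{\bbL^p(B_{2R}(x))}^p.
\end{equation*}
A dyadic Riemann-sum comparison then converts this into the integral on the right-hand side of the main claim: on each shell $t\in[\theta^{k+1}R,\theta^k R]$ one has $t^{p\alpha-d}\sim(\theta^k R)^{p\alpha-d}$, and a commutator estimate of the same type as \eqref{e:besov-commutator} gives $\|\Psi_{\theta^k R}*f\|_{\bbL^p(B_{2R}(x))}\leq C\|\Psi_t*f\|_{\bbL^p(B_{3R}(x))}$ with $C$ independent of $p$. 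The weighted ``In particular'' version then follows by multiplying the pointwise inequality by $\rho(x)^p$, using $\rho(x)\sim\rho(y)$ for $|x-y|\leq 3R\leq 3$ (from \eqref{e:weight-ineq}) to absorb the weight into the $\bbL^p$ norm on the right-hand side, and taking suprema over $x\in\R^d$ and $R\leq 1$.

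The main obstacle is ensuring the sum-to-integral comparison in the final step proceeds with a constant uniform in $p$; this requires careful bookkeeping in the commutator bound relating $\Psi_{\theta^k R}*f$ to $\Psi_t*f$ at nearby scales, particularly at the endpoints $p=1$ and $p\to\infty$. A cleaner route avoiding the dyadic decomposition entirely is to use a continuous reproducing formula based on the identity $\partial_s\Psi_s=-s^{-1}\nabla\cdot(y\Psi_s)$, which generates the scale integral $\int_0^R\cdots\,dt/t$ from the outset.
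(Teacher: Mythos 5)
Your dyadic decomposition $\Psi_R*f=\sum_{k\geq 1}(h_k)_R*\Psi_{\theta^kR}*f$ with $h_k=\Psi*\Phi^{k-1}$ is a correct rescaling of the identity from Step~2 of the proof of Proposition~\ref{prop:norm-tilde}, the geometric bound $\|h_k\|_{\bbL^\infty}\lesssim\lambda^k$ can be justified (track $[h_k]_{\beta'}$ and $\|h_k\|_{\bbL^\infty}$ together under convolution by $\Delta_{\theta^j}$), and the weighted H\"older computation genuinely produces, with $C$ independent of $p$,
\begin{equation*}
R^{p\alpha}|\Psi_R*f(x)|^p\leq C^p\sum_{k\geq 1}(\theta^kR)^{p\alpha-d}\,\|\Psi_{\theta^kR}*f\|^p_{\bbL^p(B_{2R}(x))},
\end{equation*}
since the ratio $\lambda\theta^{(d-p\alpha)/p}\leq 1/2$ is controlled uniformly over $p\geq 1$.

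The gap is the final sum-to-integral step, and it is not a matter of ``careful bookkeeping.'' You want, for each $k$, $\|\Psi_{\theta^kR}*f\|_{\bbL^p(B_{2R}(x))}\lesssim\|\Psi_t*f\|_{\bbL^p(B_{3R}(x))}$ for $t$ ranging over the shell $[\theta^{k+1}R,\theta^kR]$, but there is no scale-comparison inequality of this kind for a general bump function $\Psi$: the two convolutions sample different frequency bands and need not be comparable. Concretely, for $f(y)=\cos(\xi\cdot y)$ one has $\|\Psi_t*f\|_{\bbL^p(B)}=|\hat\Psi(t\xi)|\,\|\cos(\xi\cdot)\|_{\bbL^p(B)}$, and the entire function $\hat\Psi$ can vanish at $t\xi$ for $t$ in the interior of the shell while being bounded away from zero at $t=\theta^kR$; there is no quantitative, $f$- and scale-uniform constant relating the value at the discrete scale to the average over the shell. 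The commutator estimate \eqref{e:besov-commutator} controls the error of freezing a H\"older coefficient, which is a different object and does not supply the missing comparison.

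The ``cleaner route'' you gesture at in your last sentence is exactly the paper's argument, and it is not optional --- it is how the gap is closed. The paper introduces the auxiliary kernel $\tilde\Psi(x)=\int_0^{1/2}t^\beta(\Psi_t*\Psi_t)(x)\,\frac{dt}{t}$ (compactly supported in $B_1(0)$, with $\dnorm{\tilde\Psi}_\beta<\infty$), whose rescaling satisfies $\tilde\Psi_R=R^{-\beta}\int_0^{R/2}t^\beta\,\Psi_t*\Psi_t\,\frac{dt}{t}$. The elementary bound $|\Psi_t*\Psi_t*f(x)|\leq\|\Psi\|_{\bbL^\infty}\,t^{-d/p}\|\Psi_t*f\|_{\bbL^p(B_t(x))}$ followed by H\"older in the $t$-variable (against $dt/t$) produces the continuous scale integral directly, with no discrete sum to convert; Proposition~\ref{prop:norm-tilde} applied with $\varphi=\tilde\Psi$, $g=\Psi$ then transfers the bound from $\tilde\Psi_R*f$ back to $\Psi_R*f$. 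Your dyadic computation is a correct warm-up, but the discretisation itself is the obstruction and you should replace the last step by the continuous construction rather than try to patch it.
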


\begin{proof}
  By H\"older's inequality,
  \begin{align}
    |f * \Psi_{R}(x)|
    \leq \|f\|_{\bbL^p(B_R(x))}\|\Psi_R\|_{\bbL^q}
    &= R^{-d/p} \|f\|_{\bbL^p(B_R(x))}\|\Psi\|_{\bbL^q(B_1(0))}
    \nnb
    &\leq R^{-d/p} \|f\|_{\bbL^p(B_R(x))}\|\Psi\|_{\bbL^\infty}
    .
 \end{align}
  Let $\beta>\alpha>0$ and define
  \begin{equation}
    \tilde \Psi(x)
    = \int_0^{1/2} t^{\beta-d}(\Psi*\Psi)(\frac{x}{t}) \, \frac{dt}{t}
    = \int_0^{1/2} t^{\beta} (\Psi_{t}*\Psi_{t})(x) \, \frac{dt}{t}.
  \end{equation}
  The second equality is $(\Psi*\Psi)_t(x) = (\Psi_t*\Psi_t)(x)$.
  Since $\beta>0$ and $\Psi$ is integrable and supported in $B_1(0)$,
  it follows that $\tilde \Psi$ is integrable and is supported in $B_1(0)$. 
  Moreover,
  \begin{align}
    \tilde \Psi_R(x)
    &= \int_0^{1/2} t^{\beta} (\Psi_t*\Psi_t)_R(x) \, \frac{dt}{t}
      \nnb
      &= \int_0^{1/2} t^{\beta} (\Psi_{tR}*\Psi_{tR})(x) \, \frac{dt}{t}
    = R^{-\beta}  \int_0^{R/2}t^{\beta} (\Psi_{t}*\Psi_{t})(x) \, \frac{dt}{t}.
  \end{align}
 The above and H\"older's inequality then give, for $p>1$ and $1/q=1-1/p$,
    \begin{align}
      R^{\alpha}|\tilde\Psi_R * f(x)|
      &\leq R^{\alpha-\beta} \int_0^{R/2}t^{\beta} |\Psi_t*\Psi_t*f(x)| \frac{dt}{t}
      \nnb
    &\leq  R^{\alpha-\beta} \|\Psi\|_{\bbL^\infty}\int_0^{R/2}t^{\beta}  t^{-d/p} \|\Psi_t*f\|_{\bbL^p(B_t(x))} \frac{dt}{t}
      \nnb
    &\leq R^{\alpha-\beta} \|\Psi\|_{\bbL^\infty}\pa{\int_0^{R/2}  t^{p\alpha-d} \|\Psi_t*f\|_{\bbL^p(B_t(x))}^p \frac{dt}{t}}^{1/p}\pa{\int_0^{R/2} t^{q(\beta-\alpha)} \frac{dt}{t}}^{1/q}
      \nnb
    &= \|\Psi\|_{\bbL^\infty}\pa{\int_0^{R/2}  t^{p\alpha-d} \|\Psi_t*f\|_{\bbL^p(B_t(x))}^p \frac{dt}{t}}^{1/p}\pa{\int_0^{1/2} t^{q(\beta-\alpha)} \frac{dt}{t}}^{1/q}
      \nnb
    &\lesssim \pa{\int_0^{R/2}  t^{p\alpha-d} \|\Psi_t*f\|_{\bbL^p(B_t(x))}^p \frac{dt}{t}}^{1/p}
    ,
    \end{align}
    where the proportionality constant does not depend on $p$. 
    This gives, for some $C>0$:
    \begin{equation}
      R^{\alpha p}|\tilde\Psi_R * f(x)|^p 
      \leq 
       C^p\int_0^{R/2}  t^{p\alpha-d} \|\Psi_t*f\|_{\bbL^p(B_t(x))}^p \frac{dt}{t}.
    \end{equation}
    Applying Proposition~\ref{prop:norm-tilde} with $\varphi = \tilde \Psi$ and $g=\Psi$, which are both supported in $B_1(0)$, gives, 
    for a different $C>0$ which is allowed to change:
    \begin{equation}
      R^{\alpha p}|\Psi_R * f(x)|^p \leq 
      C^p\sup_{r\leq R} r^{\alpha p}\|\tilde \Psi_r*f\|_{B_{2R}(x)}^p
      \leq 
      C^p\int_0^{R/2}  t^{p\alpha-d} \|\Psi_t*f\|_{\bbL^p(B_{3R}(x))}^p \frac{dt}{t}.
    \end{equation}
  This is the first claim and the second claim easily follows.
\end{proof}

\begin{proposition}[Compact embedding] \label{prop:arzela}
  Let $\alpha'>\alpha\geq 0$ and $\rho'(x)/\rho(x) \to 0$ as $|x|\to\infty$. Then the embedding $C^{-\alpha}(\rho) \subset C^{-\alpha'}(\rho')$
  is compact.
\end{proposition}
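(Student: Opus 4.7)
The plan is to show that the unit ball of $C^{-\alpha}(\rho)$ is precompact in $C^{-\alpha'}(\rho')$ by an Arzel\`a--Ascoli argument combined with tail/small-scale truncation. Let $(f_n)$ be a sequence with $\|f_n\|_{-\alpha,\rho}\leq M$. I want to extract a subsequence which is Cauchy for the norm $\|\cdot\|_{-\alpha',\rho'}=\sup_{R\leq 1}R^{\alpha'}\|\Psi_R*\cdot\|_{\rho'}$.

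The first step is the truncation estimate, which uses both gaps $\alpha'>\alpha$ and $\rho'/\rho\to 0$. For any $R\leq 1$ and $x\in\R^d$,
\begin{equation}
R^{\alpha'}\rho'(x)|\Psi_R*f_n(x)|
=R^{\alpha'-\alpha}\frac{\rho'(x)}{\rho(x)}\cdot R^{\alpha}\rho(x)|\Psi_R*f_n(x)|
\leq R^{\alpha'-\alpha}\frac{\rho'(x)}{\rho(x)}\,M.
\end{equation}
For the polynomial weights of interest, $\rho'/\rho$ is bounded on $\R^d$ and tends to $0$ at infinity. Hence, given $\eps>0$, I can choose $\delta=\delta(\eps)>0$ and $K=K(\eps)>0$ so that the contribution coming from either $R<\delta$ or $|x|>K$ is at most $\eps$, uniformly in $n$. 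This isolates all the difficulty to the compact window
\begin{equation}
A_{\delta,K}=\{(R,x):\delta\leq R\leq 1,\ |x|\leq K\}.
\end{equation}

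On this window I apply Arzel\`a--Ascoli to the family $(R,x)\mapsto \Psi_R*f_n(x)$. Uniform boundedness is immediate since $|\Psi_R*f_n(x)|\leq R^{-\alpha}/\rho(x)\leq \delta^{-\alpha}/\inf_{|x|\leq K}\rho(x)$. Equicontinuity follows by differentiating under the convolution: $\partial_{x_i}\Psi_R*f_n = (\partial_i\Psi)_R*f_n$ and an analogous identity holds for $\partial_R$ (up to a factor $R^{-1}$ and a linear combination of bump functions in the same class as $\Psi$), and all these bumps satisfy the hypotheses of Proposition~\ref{prop:norm-tilde}, so their convolutions against $f_n$ are bounded by a constant (depending on $\delta,K$) times $M$. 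Hence the partial derivatives of $(R,x)\mapsto \Psi_R*f_n(x)$ are uniformly bounded on $A_{\delta,K}$, giving equicontinuity.

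To conclude, I diagonalize: take sequences $\delta_j\downarrow 0$ and $K_j\uparrow\infty$, and iteratively extract subsequences converging uniformly on $A_{\delta_j,K_j}$. The resulting diagonal subsequence $(f_{n_k})$ is Cauchy in $\|\cdot\|_{-\alpha',\rho'}$: for any $\eps>0$, pick $\delta,K$ so that the truncation contribution is $\leq \eps$ uniformly in $n$, then apply uniform convergence on $A_{\delta,K}$ to make the remaining piece $\leq\eps$ for large enough $k,\ell$. Since $C^{-\alpha'}(\rho')$ is complete (defined as the completion of $C_c^\infty$), the Cauchy subsequence converges in $C^{-\alpha'}(\rho')$, proving compactness of the embedding. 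The only real work is the Arzel\`a--Ascoli step, and the only subtlety is verifying equicontinuity in the $R$ variable uniformly on $A_{\delta,K}$, which I expect to be the main (but routine) technical point.
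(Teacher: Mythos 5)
Your proof is correct, and it takes a genuinely different route to the Arzel\`a--Ascoli step. The paper changes variables by setting $\tilde v(s,x)=v*\Psi_{e^{-s}}(x)$, so that $\|v\|_{-\alpha,\rho}=\|\tilde v\|_{\tilde\rho}$ with $\tilde\rho(s,x)=e^{-s\alpha}\rho(x)$, proves a uniform \emph{H\"older} bound $\|\tilde u_j\|_{\delta,\tilde\rho^\delta}\lesssim 1$ in the joint $(s,x)$ variable (with $\alpha<\alpha+\delta<\alpha'$), and then applies Arzel\`a--Ascoli once on the noncompact domain $[0,\infty)\times\R^d$, using that $\tilde\rho'/\tilde\rho^\delta\to 0$ at infinity; this also automatically identifies the limit $\tilde u(R,\cdot)=u*\Psi_R$ via weak-$*$ convergence. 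You instead keep the $(R,x)$ variables, truncate to compact windows $A_{\delta,K}$ (using both gaps $\alpha'>\alpha$ and $\rho'/\rho\to 0$ to make the tails small uniformly), prove \emph{$C^1$} equicontinuity on each window by differentiating under the convolution and applying Proposition~\ref{prop:norm-tilde} to $\partial_i\Psi$ and the derived bump from $\partial_R\Psi_R=R^{-1}(\tilde\Psi)_R$ with $\tilde\Psi(y)=-d\,\Psi(y)-\nabla\Psi(y)\cdot y$, and then diagonalize. Both exploit exactly the same two inequalities ($\alpha'>\alpha$ to dampen small scales, $\rho'/\rho\to 0$ to dampen large $|x|$) and both lean on Proposition~\ref{prop:norm-tilde}; the paper's log-scale/H\"older formulation is slightly more compact and self-identifying, while your truncate-and-diagonalize argument is more elementary and modular. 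One small remark on the equicontinuity step: note that $\partial_i\Psi$ and $\tilde\Psi$ do not have unit integral, but Proposition~\ref{prop:norm-tilde} only requires $\int\varphi=1$ for the reference bump on the right-hand side, so using $g=\partial_i\Psi$ (resp.\ $\tilde\Psi$) with $\varphi=\Psi$ is exactly the intended usage.
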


\begin{proof}
    Let $(u_j)_j \subset C^{-\alpha}(\rho)$ be a sequence in the unit ball of $C^{-\alpha}(\rho)$.
    It follows from the Banach--Alaoglu theorem that there is a weak-$*$ convergent subsequence, i.e., $(u_j,f) \to (u,f)$
    for every $f$ in $C_c^\infty(\R^d)$. 
    We will show that $u_j \to u$ in $C^{-\alpha'}(\rho')$ after possibly passing to another subsequence.
    
    For any $v \in C^{-\alpha}(\rho)$,
    define $\tilde v(s,x) = v*\Psi_{e^{-s}}(x)$ on $[0,\infty) \times \R^d$. Then
    $\|v\|_{-\alpha,\rho} = \|\tilde v\|_{\tilde \rho}$ where $\tilde \rho(s,x) = e^{-s\alpha}\rho(x)$
    and the norm on the right-hand side is the weighted $\bbL^\infty$ norm in the variables $(s,x) \in [0,\infty) \times \R^d$. Thus tildes denote joint functions of $x$ and the scale parameter $s$.
    The $(\tilde u_j)$ are in the unit ball of $C^0(\tilde \rho)$:
    \begin{equation}
      \|\tilde u_j\|_{\tilde \rho} = \|u_j\|_{-\alpha,\rho} \leq 1.
    \end{equation}
    Let $\tilde \rho'(s,x) = e^{-s\alpha'}\rho'(x)$ and $\tilde\rho^\delta(s,x) = e^{-s (\alpha+\delta)}\rho(x)$
    with $\alpha < \alpha+\delta<\alpha'$.
    Then $\tilde \rho'(s,x)/\tilde \rho^\delta(s,x) \to 0$ as $|x|+s \to \infty$
    and the embedding $C^{\delta}(\tilde \rho^\delta) \subset C^0(\tilde \rho')$ is compact by the Arzela--Ascoli theorem.
    It thus suffices to show that
    \begin{equation} \label{e:tildeuj}
      \|\tilde u_j\|_{\delta,\tilde \rho^\delta} \lesssim 1,
    \end{equation}
    where the left-hand side is a weighted H\"older norm on $[0,\infty)\times \R^d$.
    Assuming~\eqref{e:tildeuj}, 
    the Arzela--Ascoli theorem implies that   $\|\tilde u_j-\tilde u\|_{\tilde \rho'} \to 0$ along some subsequence still denoted by $(u_j)_j$:
    \begin{equation}
      \sup_{R\leq 1} \|u_j*\Psi_R - \tilde u(R,\cdot)\|_{\rho'} R^{\alpha'} \to 0.
    \end{equation}
    By the weak-$*$ convergence $u_j \to u$ and uniqueness of limits it holds that $\tilde u(R,\cdot) = u*\Psi_R$ for each $R$, 
    hence $u_j \to u \in C^{-\alpha'}(\rho')$ as claimed.
    
    To show \eqref{e:tildeuj}, let $h\in(0,1)$, $z\in B_1(0)\setminus\{0\}$, $x\in \R^d$, and define
    for $y\in \R^d$:
    \begin{equation}
      g^{x,h}(y) = \frac{\Psi_{e^{-h}}(x+y)-\Psi_{1}(x+y)}{h^\delta},
      \qquad
      g^{x,z}(y) =  \frac{\Psi_{1}(x+y+z)-\Psi_{1}(x+y)}{|z|^\delta}.
    \end{equation}
    Since $\Psi$ is smooth and compactly supported, for any $\delta,\beta\in(0,1)$, uniformly in $h,z,x$,
    \begin{equation}
      \rho(x) \dnorm{g^{x,h}}_{\beta,\rho^{-1}} \lesssim 1,
      \qquad
      \rho(x) \dnorm{g^{x,z}}_{\beta,\rho^{-1}} \lesssim 1.
    \end{equation}
    Therefore Proposition~\ref{prop:norm-tilde} implies, taking $\beta>\alpha$
    and using that $\rho_R(x)=\rho(Rx)$ by definition:
    \begin{align}
      \frac{\tilde u(s,x+z)-\tilde u(s,x)}{|z|^\delta}
      &= e^{\delta s}(g_{e^{-s}}^{e^{s}x,e^{s}z}*u)(0)
        \nnb
      &    \lesssim e^{(\alpha+\delta) s} \dnorm{g^{e^sx,e^sz}}_{\beta,\rho_{e^{-s}}^{-1}} \|u\|_{-\alpha,\rho}        \nnb
      &    \lesssim e^{(\alpha+\delta) s} \rho_{e^{-s}}(e^sx)^{-1} \|u\|_{-\alpha,\rho}
        =
        \tilde\rho^\delta(s,x)^{-1}
        \|u\|_{-\alpha,\rho}
      \\
            \frac{\tilde u(s+h,x)-\tilde u(s,x)}{h^\delta}
      &= (g^{e^{-s}x,h}_{e^{-s}} * u)(0)
        \nnb
        &\lesssim e^{\alpha s} \dnorm{g^{e^{s}x,h}}_{\beta,\rho_{e^{-s}}^{-1}} \|u\|_{-\alpha,\rho}
          \lesssim \tilde\rho(s,x)^{-1} \|u\|_{-\alpha,\rho}.
    \end{align}
    In detail, the first equality above is
    \begin{align}
      \frac{\tilde u(s,x+z)-\tilde u(s,x)}{|z|^\delta}
      &= \frac{1}{|z|^\delta} \Big( \Psi_{e^{-s}} \ast u (x+z) - \Psi_{e^{-s}} \ast u(x) \Big)
        \nnb
      &= \frac{1}{|z|^\delta}  \int \Big[ \Psi ( e^{s}(x+z-y) ) - \Psi ( e^{s}(x-y))  \Big]    u (y)  \, e^{sd}\, dy
        \nnb
      &= e^{s\delta}\frac{1}{(e^{s}|z|)^\delta}  \int \Big[ \Psi ( e^{s}(x+z-y) ) - \Psi ( e^{s}(x-y))  \Big] \, u (y)   \,  e^{sd} \, dy
        \nnb
      &= e^{s\delta} \int g^{e^{s}x,e^{s}z}(-e^{s}y)   u (y)\, e^{sd}  \, dy
      = e^{s\delta} (g^{e^{s}x,e^{s}z}_{e^{-s}}*   u) (0),
    \end{align}
    and the second equality is
    \begin{align}
      \frac{\tilde u(s+h,x)-\tilde u(s,x)}{h^\delta}
      &= \frac{1}{h^\delta} \Big( \Psi_{e^{-(s+h)}} \ast u (x) - \Psi_{e^{-s}} \ast u(x) \Big)
        \nnb
      &= \frac{1}{h^\delta} \int  \pB{ \Psi_{e^{-h}}(e^{s}x-e^{s}y) - \Psi_{1}(e^{s}x-e^{s}y) } u(y) \, e^{sd}\,  dy
        \nnb
      &= \int  g^{e^{s}x,h}(-e^{s}y) u(y) \, e^{sd}\,dy
      =(g^{e^{s}x,h}_{e^{-s}} * u)(0) .
    \end{align}
    This shows $\|\tilde u\|_{\delta,\tilde\rho^\delta} \lesssim 1$. 
\end{proof}

\begin{proposition}[Continuity theorem]\label{prop:kolmogorov}
  Let $n\geq 1$ and consider a random distribution-valued process $(X_t)_{t\geq 0}$. 
  Assume that there are $\epsilon,C>0$ and, for each $\lambda\in(0,1)$, 
  constants $\epsilon_\lambda,C_\lambda>0$ such that, 
  for any $t,s\geq 0$, $x\in\R^2$ and $R\in(0,1]$:
  \begin{align}
    \E\qB{\exp\Big[\epsilon |\log R|^{-1}|(\Psi_R\ast X_t)(x)|^{2/n}\Big]} &\leq C
    \label{eq_assump_exp_moment_Kolmogorov_line1}\\
\E\qbb{\exp\bigg[\epsilon_\lambda |\log R|^{-1}R^{2\lambda/n}\frac{|(\Psi_R \ast X_t)(x) - (\Psi_R \ast X_s)(x)|^{2/n}}{|s-t|^{\lambda/n}}\bigg]}
&\leq  C_\lambda
,
    \label{eq_assump_exp_moment_Kolmogorov}
  \end{align}
  Then, for any $\alpha,\sigma >0$ and $\rho(x)=(1+|x|^2)^{-\frac{\sigma}{2}}$,
  there is a modification of $X$ with values in $C(\R_+,C^{-\alpha}(\rho))$ and $\epsilon',\kappa>0$ independent of $\alpha$ such that:
  \begin{equation}
  \E\bigg[ \exp\bigg[\epsilon'\sup_{s\in[0,t]}\|X_s\|^{2/n}_{-\alpha,\rho}\bigg]\bigg]
  \lesssim 
(1+t)^{\kappa}
  .
  \label{eq_statement_Kolmogorov}
  \end{equation}
\end{proposition}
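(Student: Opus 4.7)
The plan is a Kolmogorov-style chaining argument, carried out simultaneously in scale via the Besov embedding of Proposition~\ref{prop:Besov-embedding} and in time via a dyadic subdivision of $[0,t]$, exploiting the Weibull-type tails encoded in the two hypotheses.

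First I would convert the assumptions into polynomial moments with explicit dependence on the order. Expanding the exponential in \eqref{eq_assump_exp_moment_Kolmogorov_line1} term by term gives, for any even integer $p$ with $pn/2$ a positive integer, $\E[|(\Psi_R\ast X_s)(x)|^{p}]\leq (Cp\,|\log R|)^{pn/2}$ uniformly in $s,x,R$, with $C$ independent of $p$. Applying Proposition~\ref{prop:Besov-embedding} with $p$ chosen so that $p\alpha>d$ and $p\sigma>d$, integrating in $x$ against $\rho^p$, and evaluating $\int_0^1 R^{p\alpha-d-1}|\log R|^{pn/2}\,dR$ via the substitution $u=|\log R|$ together with Stirling, yields the key fixed-time bound
\begin{equation}\label{eq:plan-moment}
\E\big[\|X_s\|_{-\alpha,\rho}^{p}\big]\leq (Cp)^{pn/2},
\end{equation}
with $C$ independent of $s$ and $p$. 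The identical argument applied to \eqref{eq_assump_exp_moment_Kolmogorov}, with any fixed $\lambda\in(0,\alpha)$ and $p$ large enough that $p(\alpha-\lambda)>d$, yields the increment bound $\E[\|X_t-X_s\|_{-\alpha,\rho}^{p}]\leq (Cp)^{pn/2}|t-s|^{p\lambda/2}$, the factor $R^{-p\lambda}$ from the hypothesis being absorbed into the scale integral at the cost of replacing $\alpha$ by $\alpha-\lambda$.

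Second I would convert these moment bounds into Weibull tails. By Chebyshev and optimisation in $p\sim A^{2/n}$,
\begin{equation}
\bbP(\|X_s\|_{-\alpha,\rho}>A)\leq C_0 e^{-c A^{2/n}},\qquad \bbP(\|X_t-X_s\|_{-\alpha,\rho}>A|t-s|^{\lambda/2})\leq C_0 e^{-c A^{2/n}},
\end{equation}
for some $c,C_0>0$. A dyadic chaining in time then converts the second tail into a bound on the supremum. For each $k\geq 0$, set $D_k=\{j2^{-k}:0\leq j\leq \lceil t2^k\rceil\}$ and $M_k=\max_j\|X_{(j+1)2^{-k}}-X_{j2^{-k}}\|_{-\alpha,\rho}$. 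With $A_k=\bigl(C(k+\log(1+t)+x)\bigr)^{n/2}$ and $C$ large enough, a union bound over the $\lesssim t2^k$ pairs gives $\bbP(M_k>A_k 2^{-k\lambda/2})\leq e^{-x-k}$, so that with probability at least $1-C e^{-x}$ the bound $M_k\leq A_k 2^{-k\lambda/2}$ holds for every $k$. Summing the telescoping series (the decay $2^{-k\lambda/2}$ beats the polynomial growth of $A_k$) gives
\begin{equation}
\sup_{s,s'\in D_\infty\cap[0,t]}\|X_s-X_{s'}\|_{-\alpha,\rho}\lesssim (x+\log(1+t))^{n/2}.
\end{equation}
This simultaneously shows that $(X_s)_{s\in D_\infty\cap[0,t]}$ is uniformly Cauchy in $C^{-\alpha}(\rho)$ on $[0,t]$, hence extends by continuity to the desired modification, and provides a Weibull tail of shape $2/n$ for $\sup_{s\leq t}\|X_s\|_{-\alpha,\rho}$ centred on a quantity of order $\log(1+t)^{n/2}$. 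Combining with the fixed-time estimate for $\|X_0\|_{-\alpha,\rho}$ and integrating against $e^{\epsilon' \cdot}$ for $\epsilon'>0$ sufficiently small yields \eqref{eq_statement_Kolmogorov} with $\kappa$ proportional to $\epsilon'$.

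The main technical obstacle is the sharp tracking of the $p$-dependence leading to \eqref{eq:plan-moment}: the precise growth $(Cp)^{pn/2}$ is exactly what produces a Weibull tail of shape $2/n$, matching the shape of the hypothesis. Any slack at this step (for instance an extra factor of $p^p$ from a loose use of Stirling, or a lossy extraction of moments from the exponential moment) would degrade the tail exponent, and the concluding exponential moment of shape $2/n$ in \eqref{eq_statement_Kolmogorov} could no longer be recovered; the choice of $\lambda\in(0,\alpha)$ coupled with $p>d/(\alpha-\lambda)$ in the Besov embedding integral is, by contrast, a routine parameter juggling.
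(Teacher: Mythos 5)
Your proof is correct, but it follows a genuinely different route from the paper's. Both arguments start from the Besov embedding of Proposition~\ref{prop:Besov-embedding} to trade the $C^{-\alpha}(\rho)$ norm for an integral over scales $R$. From there the paper works entirely at the level of $p$-th moments: it splits $[0,t]$ into intervals of length $\sim R^2$ (coupling the time partition to the scale), bounds the oscillation over each interval via a Garsia--Rodemich--Rumsey argument applied to the exponential Orlicz function $\psi(x)=e^{\delta_R x^{2\lambda/n}}-1$, and handles the grid values by a union bound. A subtle feature of that route is that $\lambda$ must be taken tending to $1$ as $p\to\infty$ (roughly $\lambda=1-O(1/(np\log(np)))$), to prevent the factor $(np/2\lambda)^{np/2\lambda}$ produced by Jensen inside the logarithm from overwhelming the target bound $p^{np/2}$. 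You instead decouple time from scale: you first derive the clean fixed-time and increment $p$-th moment bounds $\E[\|X_s\|^p_{-\alpha,\rho}]\lesssim (Cp)^{pn/2}$ and $\E[\|X_t-X_s\|^p_{-\alpha,\rho}]\lesssim (Cp)^{pn/2}|t-s|^{p\lambda/2}$ (with a \emph{fixed} $\lambda\in(0,\alpha)$, so that $R^{-p\lambda}$ is absorbed into the scale integral at the cost of shifting $\alpha$ to $\alpha-\lambda$), convert these to Weibull tails of shape $2/n$, and run a standard dyadic chaining in time. The chaining produces the tail estimate in its additive form $\bbP\bigl(\sup_{s\leq t}\|X_s\|_{-\alpha,\rho}>M(x+\log(1+t))^{n/2}\bigr)\lesssim e^{-x}$, from which the exponential moment $\E[e^{\epsilon'\sup_s\|X_s\|^{2/n}}]\lesssim(1+t)^{\kappa}$ follows immediately with $\epsilon'$ and $\kappa$ that do not depend on $t$. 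This is in fact a point where your route is somewhat cleaner: the paper's reduction goes through the intermediate moment bound $\E[\sup_s\|X_s\|^p]\lesssim (cp\log(1+t))^{pn/2}\log(1+t)$, whose multiplicative $p$-and-$\log(1+t)$ structure does not by itself yield a $t$-uniform $\epsilon'$ when one re-sums the exponential series (as the convergence radius appears to shrink like $1/\log(1+t)$), whereas your chaining captures the additive split between the deterministic $\log(1+t)^{n/2}$ drift and the $t$-uniform Weibull fluctuation and so closes the estimate directly. One minor remark on your write-up: make sure to state explicitly that in the increment moment bound the constant $C$ depends on $\lambda$, and that the union-bound constant $C$ in $A_k=(C(k+\log(1+t)+x))^{n/2}$ must be taken large enough both to dominate $C_0$ and to make $cC>1+\log 2$; these are routine but worth recording, since the $t$-uniformity of $\epsilon'$ rests on $c$ being independent of $t$.
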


\begin{proof}
To bound the exponential moment, 
we bound all moments using the expansion:
\begin{equation}
\E[e^{\epsilon Y}]
=
1+\sum_{p\geq 1}\frac{\epsilon^p}{p!}\E[Y^p]
,\qquad 
Y\geq 0
.
\end{equation}
We will prove the existence of $c>0$ such that, for each large enough $p$ and $t\geq 0$:
\begin{equation}
 \E\Big[ \sup_{s\in[0,t]}\|X_s\|^{p}_{-\alpha,\rho}\Big]
 \leq 
 c^{pn/2} p^{pn/2} \log(1+t)^{pn/2}\log(1+t)
 .
 \label{eq_to_prove_estimate_moment}
\end{equation}
Together with the elementary identity $p^p/p!\geq e^p$ ($p\geq 1$) and the observation $\E[Y]\leq \E[Y^{n/2}]^{2/n}$ if $n\geq 2$ and $Y\geq 0$, 
this will imply the claim.

By Proposition~\ref{prop:Besov-embedding}, 
there is $C>0$ such that, for each $p\geq 1$:
\begin{align}
  \E\Big[\sup_{s\in[0,t]} \|X_s\|_{-\alpha,\rho}^p \Big]
  &\leq 
  C^p\, \E\bigg[\sup_{s\in[0,t]} \int dx \, \rho(x)^p \int_0^1 R^{\alpha p-d}|(\Psi_R*X_s)(x)|^p\, \frac{dR}{R}\bigg]
    \nnb
  &\leq
  C^p\int dx \, \rho(x)^p \int_0^1 R^{\alpha p-d}\, \E\Big[\sup_{s\in[0,t]} |(\Psi_R*X_s)(x)|^p\Big]\frac{dR}{R}.
\end{align}
It is thus enough to prove~\eqref{eq_to_prove_estimate_moment} for each $p$ large enough for $\rho^p$ to be integrable, 
with the left-hand side there replaced with: 
\begin{equation}
\sup_{x\in\R^2}\int_0^1 R^{\alpha p-d}\, \E\Big[\sup_{s\in[0,t]}|(\Psi_R*X_s)(x)|^p\Big]\, \frac{dR}{R}
.
\end{equation}
Such a $p\geq 2$ is henceforth fixed. 
Let $t\geq 0$, $x\in\R^2$ and $R\in(0,1]$. 
Split $[0,t]$ into intervals of length at most $2R^{-2}$ as follows:
\begin{equation}
[0,t]
=
\bigcup_{k=0}^{\lfloor R^{-2}t\rfloor-2}[kR^2,(k+1)R^2]
\cup 
\big[(\lfloor R^{-2}t\rfloor-1)R^2,t\big]
.
\end{equation}
Write $I_R := \{kR^2:k\leq \lfloor R^{-2}t\rfloor-1\}$. 
Then:
\begin{align}
\E\Big[\sup_{s\in[0,t]}|(\Psi_R\ast X_s)(x)|^p\Big]
&\leq
2^{p-1}\E\Big[\sup_{u\in I_R}\sup_{s\in[u,u+2R^2]}\big|(\Psi_R\ast X_s)(x)-(\Psi_R*X_u)(x)\big|^p\Big]
\nnb
&\qquad+
2^{p-1}\E\Big[\sup_{u\in I_R}|(\Psi_R\ast X_u)(x)|^p\Big]
\label{eq_splitting_Kolmogorov}
.
\end{align}
In the rest of the proof, $\lesssim$ symbols hide multiplicative factors of the form $c^p$ for some $c>0$ that may vary from line to line.

We first consider the first term on the right-hand side (involving the difference).
Recall the Garsia--Rodemich--Rumsey inequality~\cite[Theorem~2.1.3]{StroockVaradhan_book}: 
if $f$ is a continuous function and $\gamma,\psi$ are strictly increasing functions vanishing at $0$ with $\lim_{s\to\infty}\psi(s)=+\infty$, 
then:
\begin{equation}
\sup_{\substack{u,s\in[0,t]\\ |u-s|\leq 2R^2}}
|f(s)-f(u)|
\leq 
8 \int_0^{2R^2}\psi^{-1}(4B/r^2)\, \gamma(dr)
,
\end{equation}
where
\begin{equation}
B:=
\int_0^t\int_0^t \psi\Big(\frac{|f(s)-f(u)|}{\gamma(|s-u|)}\Big)\, ds\, du
.
\end{equation}
Define, 
for $\lambda\in(0,1)$ to be chosen later as a function of $n,p$ and $\epsilon_\lambda>0$ as in~\eqref{eq_assump_exp_moment_Kolmogorov}:
\begin{equation}
\delta_R
:=
\epsilon_\lambda |\log R|^{-1} R^{2\lambda/n}
.
\end{equation}
Taking
\begin{equation}
  f(s)= (\psi_R\ast X_s)(x), \qquad \gamma(s)=\sqrt{s}, \qquad \psi(x)=e^{\delta_R x^{2\lambda/n}}-1
\end{equation}
so that $\psi^{-1}(x)=\delta_R^{-n/2\lambda}\log(1+x)^{n/2\lambda}$ yields: 
\begin{align}
&\E\Big[\sup_{u\in I_R}\sup_{s\in[u,u+2R^2]}\big|(\Psi_R\ast X_s)(x)-(\Psi_R\ast X_u)(x)\big|^p\Big]
\nnb
&\quad 
\lesssim 
\E\Bigg[\bigg[\delta_R^{-n/2\lambda}\int_0^{2R^2}\frac{dr}{\sqrt{r}}\log^{n/2\lambda}\bigg(1 + \frac{4B}{r^2}\bigg)\bigg]^p\Bigg]
\nnb
&\quad 
\lesssim
\delta_R^{-np/2\lambda}R^{p-1}
\\
&\qquad  \times \int_0^{2R^2}\frac{dr}{\sqrt{r}}\, \E\Bigg[\log^{np/2\lambda}\bigg(1 + \frac{4}{r^2}\int_0^t\int_0^t \exp\Big[\frac{\delta_R|(\Psi_R\ast X_s)(x)- (\Psi_R\ast X_t)(x)|^{2\lambda/n}}{|t-s|^{\lambda/n}}\Big]\, ds\, dt\bigg)\Bigg]
\nonumber
,
\end{align}
where in the last line we used the convexity of $x\geq 0\mapsto x^p$ on the integral on $r$, 
noting that $\int_0^{2R^2}r^{-1/2}\, dr=2\sqrt{2}R$. 

For $p\geq 2$, $\lambda<1$ it holds that $1\leq e^{np/2\lambda-1}$. 
Using this fact, the concavity of $x\mapsto \log(e^{np/2\lambda-1}+x)^{np/2\lambda}$ and 
Jensen's inequality applied to the expectation gives:
\begin{align}
&\E\Big[\sup_{u\in I_R}\sup_{s\in[u,u+2R^2]}\big|(\Psi_R*X_s)(x)-(\Psi_R*X_u)(x)\big|^p\Big]
\nnb
&
\lesssim |\log R|^{np/2\lambda}R^{-1}
\nnb
&\quad
\times \int_0^{2R^2}\frac{dr}{\sqrt{r}} \log^{np/2\lambda}\Bigg(e^{np/2\lambda-1} + \frac{4}{r^2}\int_0^t\int_0^t \E\bigg[\exp\Big[\frac{\delta_R|(\Psi_R*X_s)(x)- (\Psi_R*X_t)(x)|^{2\lambda/n}}{|t-s|^{\lambda/n}}\Big]\bigg] ds\, dt\Bigg)
\nnb
&
\hspace{3cm}\lesssim
|\log R|^{np/2\lambda}R^{- 1} \Big(\frac{np}{2\lambda}-1\Big)^{np/2\lambda}\int_0^{2R^2}\frac{dr}{\sqrt{r} } \Big[ 1+ \log^{np/2\lambda}\Big(1 + \frac{t^2}{r^2}\Big)\Big]
  \nnb
  &\hspace{3cm}\lesssim 
|\log R|^{np/\lambda}\Big(\frac{np}{2\lambda}-1\Big)^{np/2\lambda}\log(1+t)^{np/2\lambda}
.
\label{eq_to_bound_expectation_sup_interm}
\end{align}
The last quantity does not depend on $x$. 
For it to grow at most as $p^{np/2}$ when $p$ is large, 
we take $\lambda$ satisfying:
\begin{equation}
\Big(\frac{np}{2\lambda}-1\Big)^{np/2\lambda}
=
\Big(\frac{np}{2\lambda}\Big)^{np/2}
\quad
\Rightarrow\quad
\lambda = 1-\frac{2(1+o_{p\to\infty}(1))}{np\log(np/2)}
.
\end{equation}
For such a $\lambda$ we can in particular write that, for all $p$ large enough as a function of $n$ only:
\begin{equation}
\log(1+t)^{np/2\lambda}
\leq 
\log(1+t) \log(1+t)^{np/2}
.
\end{equation}
Integrating the right-hand side of~\eqref{eq_to_bound_expectation_sup_interm} in $R$ against $R^{\alpha p -d-1}\, dR$ therefore yields a bound of the form~\eqref{eq_to_prove_estimate_moment} on the difference term in the right-hand side of~\eqref{eq_splitting_Kolmogorov},  
whenever $p$ is large enough depending only on $\alpha,n,\rho$. 

Consider now the second term on the right-hand side of~\eqref{eq_splitting_Kolmogorov}. 
Using $\E[Y]=\int_0^\infty\P(Y>a)\, da$ for a nonnegative random variable $Y$, 
it reads, for $A=A(R,p,t)>0$ to be chosen later:
\begin{align}
  \E\Big[\sup_{u\in I_R}|(\Psi_R*X_u)(x)|^p\Big]
&\leq 
A
+\int_{A}^\infty 
\P\Big(\sup_{u\in I_R}|(\Psi_R*X_u)(x)|^p>a\Big)\, da
\nnb
&\leq 
A
+\lfloor R^{-2} t\rfloor \sup_{u\in [0,t]}\int_{A}^\infty 
\P\Big(|(\Psi_R*X_u)(x)|^p>a\Big)\, da
.
\end{align}
The exponential moment bound~\eqref{eq_assump_exp_moment_Kolmogorov_line1} allows one to bound the integral term by:
\begin{align} 
C\lfloor R^{-2} t\rfloor \int_{A}^\infty 
\exp\Big[ -\epsilon a^{\frac{2}{np}}/|\log R|\Big]\, da
&
\lesssim 
\lfloor R^{-2}t\rfloor |\log R|^{np/2}\int_{A/|\log R|^{np/2}}^{\infty} e^{-\epsilon b^{\frac{2}{np}}}\, db
.
\end{align}
An integration by parts gives, for any $\alpha<1$ and $B>0$ with $\alpha \epsilon B^\alpha\geq 2$:
\begin{equation}
\int_{B}^\infty e^{-\epsilon b^{\alpha}}\, db
\leq 
\frac{2B^{1-\alpha}}{\epsilon\alpha }e^{-\epsilon B^\alpha}
.
\end{equation}
In our case $\alpha=(2/np)$ satisfies $\alpha>1$ as soon as $p>2$, 
while 
$B=A/|\log R|^{np/2}$. 
Choosing $A=\epsilon^{-np/2}|\log R|^{np/2}\log(\lfloor R^{-2}t\rfloor)^{np/2}$ concludes the proof:
\begin{equation}
\E\Big[\sup_{u\in I_R}|(\Psi_R*X_u)(x)|^p\Big]
\lesssim 
|\log R|^{np}\log(1+t)^{np/2}
.
\end{equation}
\end{proof}

\section{Proof of Gaussian estimates}
\label{app_gaussian_estimates}

In this appendix we include the proofs of the estimates on the Ornstein--Uhlenbeck process and
its Wick powers that we need.
Throughout this section we work in dimension $d=2$. 

\subsection{Standard Wick powers}
First recall that the Wick powers with $t$- and $L$-independent counterterms for the centred field $\tilde Z$ were defined in~\eqref{e:Wick-convention} for $n\in\{2,3\}$. 
More generally, 
Wick powers of $\tilde Z_t$ are given by:
\begin{equation}\label{e:Wick-convention-app}
\wick{\tilde Z^n_t} 
=
\lim_{\epsilon\to 0}
P_n(a_\epsilon,\eta_\epsilon* \tilde Z_t)
,\qquad 
 a_\epsilon
 =
 \frac{1}{2\pi}\log\frac{1}{\epsilon}
 ,
\end{equation}
and the same definition applies to the non-centred field $Z_t$. 
Above, 
\begin{equation}
  P_n(a,X) = e^{-\frac{a}{2} \partial_X^2} X^n.
\end{equation}
For the analysis of these Wick powers, 
it is convenient to first define the standard (homogeneous) Wick powers (which correspond to $t$- and $L$-dependent counterterms) 
as elements of the $n$-th homogeneous Wiener chaos.
This can for example be done using iterated stochastic integrals (as in~\cite[Section~5]{MR3693966}).
To distinguish the Wick powers \eqref{e:Wick-convention-app} from the standard 
Wick powers in the homogeneous Wiener chaos, the latter will be denoted by $\wwick{\tilde Z^n}$.
Using that the covariance of $Z_t$ is given by \eqref{e:OU-cov},
the homogeneous Wick powers for the centred field $\tilde Z_t$ satisfy, for each $f\in\bbL^2(\R^2)$:
\begin{equation}  \label{e:wwick-var}
  \E\qa{(\wwick{(\tilde Z_t)^n},f)^2} = n! \int_{(\R^d)^2} f(x) f(y) \Big(\int_{0}^{2t} e^{-u} p_u(x-y) \, du\Big)^n \, dx\, dy.
\end{equation}
The definition of the Wick power $\wwick{(\tilde Z_t^L)^n}$ is the same 
with $p_t$ replaced by $p^L_t$, 
which is viewed as a function on $\R^d$ through the formula:
\begin{equation}
 p_t^L(x) = {\bf 1}_{x\in[-L,L)^2}\sum_{a\in\Z^2}p_t(x-2aL)
 .
 \label{eq_OU_kernel_periodic_app}
\end{equation}
It is well known that  
$\wwick{(\tilde Z_t^L)^n}$ can be obtained as in \eqref{e:Wick-convention-app} except
that $a_\epsilon$ must be replaced by
\begin{equation}
  a_\epsilon(t,L)= \var((\eta_\epsilon * \tilde Z_t^L)(0)) = \int_0^t e^{-s} \eta_\epsilon(x)\eta_\epsilon(y) p_s^L(x-y)\, dx\, dy \,ds 
\end{equation}
see, for example, \cite[Theorem V.3.]{MR0489552} for a variant of this statement.
Thus
the $t$- and $L$-independent Wick powers $\wick{\tilde Z^n}$ 
are obtained from the homogeneous ones by
\begin{align}
  \wick{\tilde Z^n_t}
  &=  \wwick{\tilde Z^n_t} + \lim_{\epsilon\to 0} \qB{ P_n(a_\epsilon(t,L)+(a_\epsilon - a_\epsilon(t,L)), \eta_\epsilon*Z_t)- P_n(a_\epsilon(t,L), \eta_\epsilon*Z_t)}
    \nnb
  &=  \wwick{\tilde Z^n_t} + \sum_{m=0}^{n-2} Q_{n,m}(f(t,L)) \wwick{\tilde Z^m_t}
  \label{eq_wick_as_fct_wwick}
\end{align}
where
\begin{equation}
  f(t,L)
  =\lim_{\epsilon\to 0} \qB{ a_\epsilon - a_\epsilon(t,L)}
\end{equation}
and  we used that, for some polynomials $Q_{n,m}$,
\begin{align}
  P_n(a+b,X)-P_n(a,X) = (e^{-\frac{b}{2} \partial_X^2}-1) e^{-\frac{a}{2}\partial_X^2} X^n
  &= (e^{-\frac{b}{2} \partial_X^2}-1) P_n(a,X)
    \nnb
    &= \sum_{m=0}^{n-2} Q_{n,m}(b) P_m(a,X)
\end{align}
In particular,
\begin{equation}
  \wick{\tilde Z_t^2} = \wwick{\tilde Z_t^2}+f(t,L), \qquad
  \wick{\tilde Z_t^3} = \wwick{\tilde Z_t^3}+3f(t,L)\tilde Z_t.
\end{equation}

\begin{lemma}\label{lemma_size_ftL}
  The limit $f(t,L)$ exists and
  \begin{equation}
\sup_{L\geq 1}f(t,L)
\lesssim
1
,\qquad
t\geq 1
,\qquad
f(t,L)\underset{t\to 0}\sim \frac{1}{4\pi}\log(t)
,
\label{eq_size_ftL}
\end{equation}
where the behaviour when $t\to0$ is valid uniformly in $L \geq 1$.   
\end{lemma}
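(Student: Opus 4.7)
Plan for Lemma~\ref{lemma_size_ftL}: the approach is to compute $f(t,L)$ via a Green's-function analysis of the Ornstein--Uhlenbeck covariance, using the exponential integral to isolate the logarithmic singularity of the kernel at the origin.

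First, I would reduce to the full-space case. Writing $p_u^L - p_u = \sum_{a \in \Z^2 \setminus \{0\}} p_u(\cdot - 2aL)$ and using the Gaussian bound $p_u(2aL) \leq (4\pi u)^{-1} e^{-|aL|^2/u}$, the sum and its mollifications by $\eta_\epsilon \ast \eta_\epsilon$ are uniformly bounded, in $\epsilon$, on any fixed time interval for $L \geq 1$. Dominated convergence then gives the existence of $\lim_{\epsilon\to 0}[a_\epsilon(t,L) - a_\epsilon(t,\infty)]$, and this limit is $O(1)$ uniformly in $L\geq 1$ and in $t$ on compact subsets of $(0,\infty)$. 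It therefore suffices to analyse
\[
  a_\epsilon(t,\infty) = (\eta_\epsilon \ast \eta_\epsilon \ast G_t)(0), \qquad G_t(x) := \int_0^{2t} e^{-u} p_u(x)\, du .
\]

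Second, I would extract the logarithmic singularity of $G_t$. Splitting $e^{-u} = 1 + (e^{-u}-1)$ and substituting $v = |x|^2/(4u)$ in the first piece gives
\[
  G_t(x) = \frac{1}{4\pi}\, E_1\!\bigl(|x|^2/(8t)\bigr) + \frac{1}{4\pi} \int_0^{2t} \frac{e^{-u}-1}{u}\, e^{-|x|^2/(4u)}\, du,
\]
where $E_1(z) := \int_z^\infty e^{-s}/s\, ds$ is the exponential integral. The second integrand is bounded near $u=0$, so the second term is continuous at $x=0$, and the classical expansion $E_1(z) = -\log z - \gamma_E + O(z)$ as $z \to 0^+$ (with $\gamma_E$ the Euler--Mascheroni constant) identifies the singular part of $G_t$ as $-\frac{1}{2\pi}\log|x|$. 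Writing $G_t = -\frac{1}{2\pi}\log|\cdot| + R_t$ with $R_t$ continuous on $\R^2$, a direct computation yields $R_t(0) = \frac{\log 4}{4\pi} - \frac{\gamma_E}{2\pi} - \frac{1}{4\pi} E_1(2t)$.

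Third, I would pass to the mollifier limit. The change of variables $y = \epsilon z$ gives $(\eta_\epsilon \ast \eta_\epsilon \ast \log|\cdot|)(0) = \log \epsilon + C_\eta$ with $C_\eta := \int (\eta \ast \eta)(z) \log|z|\, dz$, so the singular part of $G_t$ contributes exactly $\tfrac{1}{2\pi}\log(1/\epsilon) - C_\eta/(2\pi)$ to $a_\epsilon(t,\infty)$. By continuity of $R_t$ and the approximate-identity property, the regular part contributes $R_t(0)$ in the limit. Combining with the periodic correction of step one, the limit $f(t,L)$ exists and differs from $-R_t(0)$ by a quantity uniformly bounded in $L \geq 1$. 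Since $E_1(2t)$ is bounded for $t \geq 1$ and satisfies $E_1(2t) \sim -\log(2t)$ as $t \to 0$, both assertions of the lemma follow (with the constant in front of $\log t$ being $\pm \tfrac{1}{4\pi}$, i.e.\ the coefficient stated in the lemma, up to the sign dictated by the convention $f(t,L) = \lim_\epsilon [a_\epsilon - a_\epsilon(t,L)]$).

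The main obstacle is the clean separation of the logarithmic singularity of $G_t$ from its smooth remainder and the explicit identification of $R_t(0)$ as an elementary function of $t$; the exponential-integral representation makes this transparent but the commutation of limits (mollifier limit with the log singularity) requires the change-of-variables computation for $C_\eta$. The periodisation estimate and the mollifier-limit step for the continuous remainder are routine consequences of the Gaussian decay of the heat kernel and the approximate-identity property.
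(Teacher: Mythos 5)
Your proof is correct and gives essentially the same end result as the paper, but by a genuinely different and arguably cleaner route. The paper writes $-f(t,L)$ as $\lim_{\epsilon}\bigl[a_\epsilon(t,L)-a_\epsilon\bigr]$, uses the closed form $\int_{\epsilon^2}^\infty e^{-s}p_s(0)\,ds$ to cancel the log divergence at the level of the time integral, then splits the time integral at a small scale $\delta$ and argues by dominated convergence separately on $\{s>\delta\}$ (mollifiers converge trivially there) and on $\{s<\delta\}$ (after a scaling change of variables that exhibits the $\epsilon$-dependence). You instead compute the kernel $G_t$ in closed form via the exponential integral $E_1$, extract the exact singular part $-\tfrac{1}{2\pi}\log|\cdot|$, and identify the continuous remainder $R_t(0)$ as an explicit elementary function of $t$, so that the mollifier limit follows from the approximate-identity property plus the elementary identity $(\eta_\epsilon\ast\eta_\epsilon\ast\log|\cdot|)(0)=\log\epsilon + C_\eta$. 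Your route is more explicit: you get $f(t,L) = \tfrac{1}{4\pi}E_1(2t) + O(1)$ with the $O(1)$ term identified up to the mollifier-dependent constant and the periodisation correction, whereas the paper only isolates the one term $\int_t^\infty \tfrac{e^{-s}}{4\pi s}\,ds$ governing the $t\to0$ divergence. Both approaches deliver the lemma.

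Two small remarks. First, your periodisation estimate is phrased as uniform ``in $t$ on compact subsets of $(0,\infty)$'', but to conclude both parts of \eqref{eq_size_ftL} you need (and easily have) uniformity over all $t>0$: since $e^{-u}e^{-|aL|^2/u}/u$ is integrable on $(0,\infty)$ and summable over $a\neq0$ for $L\geq 1$, the correction is bounded uniformly in $t>0$ and $L\geq1$, not just on compacts. Second, your computation yields $f(t,L)\sim -\tfrac{1}{4\pi}\log t$ (i.e.\ $f\to+\infty$) as $t\to0$, which you correctly flag as a potential sign discrepancy against the stated $+\tfrac{1}{4\pi}\log t$. In fact the paper's own proof gives the same sign you obtain: the final display there reads $-f(t,L) = (\text{periodic sum}) - \int_t^\infty\tfrac{e^{-s}}{4\pi s}\,ds + C$, so $f(t,L) = \tfrac{1}{4\pi}E_1(t)+O(1)\to+\infty$, consistent with the subsequent use of the lemma (the divergence $\log(1/s)^{\lfloor\ell/2\rfloor}$ in the proof of Proposition~\ref{prop_gaussian_bounds_app}). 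The sign in the lemma statement is therefore a typo, and your proof aligns with the paper's proof rather than the paper's statement.
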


\begin{proof}
For some $C>0$ independent of $t,L$, 
\begin{align}
  -f(t,L)
  &= \lim_{\epsilon\to 0} \qa{ \int_0^t  e^{-s}\iint  \eta_\epsilon(x)\eta_\epsilon(y)p_{s}^L(x-y) \, dx\, dy\, ds - \frac{1}{2\pi} \log \frac{1}{\epsilon}}
  \nnb
  &= -\int_0^\infty e^{-s} \pa{ p_s(0) - p_s^L(0)1_{s\leq t}} \, ds +C 
  \nnb
  &=  \sum_{n\in \Z^2: n \neq 0}  \int_0^t \frac{e^{-|2Ln|^2/4s}e^{-s}}{4\pi s} \, ds - \int_t^\infty \frac{e^{-s}}{4\pi s} \, ds +C.
  \label{eq_def_diff_counterterms}
\end{align}
In the second line, we used that
\begin{equation}
  \int_{\epsilon^2}^\infty e^{-s}p_s(0) \, ds = \int_{\epsilon^2}^\infty e^{-s} \frac{ds}{4\pi s} =  \frac{1}{2\pi} \log \frac{1}{\epsilon} - \frac{\gamma}{\pi}
\end{equation}
where $\gamma$ is the Euler--Mascheroni constant, and hence the first line is
\begin{equation}
  \lim_{\epsilon \to 0}\int_0^\infty e^{-s} \iint \eta_\epsilon(x)\eta_\epsilon(y) \pa{p_s^L(x-y)1_{s\leq t}-p_s(0)1_{s\geq \epsilon^2}} \, ds  - \frac{\gamma}{\pi}
  .
\end{equation}
For any $\delta>0$ dominated convergence implies that
\begin{align}
  &\lim_{\epsilon \to 0}\int_\delta^\infty e^{-s} \iint \eta_\epsilon(x)\eta_\epsilon(y) \pa{p_s^L(x-y)1_{s\leq t}-p_s(0)1_{s\geq \epsilon^2}} \, ds
    \nnb
  &= -\int_\delta^\infty e^{-s} \pa{ p_s(0) - p_s^L(0)1_{s\leq t}} \, ds .
\end{align}
On other hand, for the $0<s<\delta$ contribution, the difference between $p_s$ and $p_s^L$ can be neglected as $\delta\to 0$, and
\begin{align}
  &\int_0^\delta e^{-s} \iint  \eta_\epsilon(x)\eta_\epsilon(y) (p_s(x-y)-p_s(0)1_{s\geq \epsilon^2})\,dx\,dy\, ds
    \nnb
  &=\int_0^\delta e^{-s}  \iint \eta(x)\eta(y) (p_s(\epsilon(x-y))-p_s(0)1_{s\geq \epsilon^2})\,dx\,dy\, ds
    \nnb
  &=\int_0^\delta  e^{-s} \iint  \eta(x)\eta(y) (e^{-\epsilon^2 |x-y|^2/4s}-1_{s\geq \epsilon^2})\,dx\,dy\, \frac{ds}{4\pi s} 
    \nnb
  &=\int_0^{\delta/\epsilon^2}e^{-s/\epsilon^2}\iint  \eta(x)\eta(y) (e^{-|x-y|^2/4s}-1_{s\geq 1})\,dx\,dy\, \frac{ds}{4\pi s}
.
\end{align}
Using that
\begin{equation}
  \iint \eta(x)\eta(y) |e^{-|x-y|^2/4s}-1_{s\geq 1}|\,dx\,dy
  =O(s)1_{s\leq 1} + O(\frac{1}{s})1_{s\geq 1}
\end{equation}
is integrable with respect to $ds/s$ it follows from dominated convergence that the following limit exists (and is finite):
\begin{align}
  &\lim_{\delta\to 0}\lim_{\epsilon\to 0}\int_0^\delta e^{-s} \eta_\epsilon(x)\eta_\epsilon(y) (p_s(x-y)-p_s(0)1_{s\geq \epsilon^2})\,dx\,dy\, ds
    \nnb
  &=\int_0^{\infty} \eta(x)\eta(y) (e^{-|x-y|^2/4s}-1_{s\geq 1})\,dx\,dy\, \frac{ds}{4\pi s}  
=O(1),
\end{align}
completing the estimate. 
\end{proof}

\subsection{Bounds on the field}
The following estimates are essentially contained in the proof of Theorem 5.1 in\cite{MR3693966}.
\begin{proposition}\label{prop_gaussian_bounds0IC}
Let $n\geq 1$ be an integer and let $\alpha,\sigma>0$. 
There is $c>0$ such that,
for each $p\geq 1$, 
\begin{align}
\sup_{L\in[3,\infty]} \E\Big[\sup_{s\in[0,t]} \| \wwick{(\tilde Z^L_s)^n}\|^p_{-\alpha,\rho}\Big]
&\leq
\Big(c p\, \log(1+ t)\Big)^{np/2}
.
\label{eq_gaussian_moment_bound_0IC}
\end{align}
As a result, 
there is $\kappa>0$ such that, 
for any $r\in[0,2]$ and some $\epsilon_r>0$:
\begin{equation}
\sup_{L\in[3,\infty]}\E\bigg[\exp\Big[\epsilon_r\sup_{s\in[0,t]}\|\wwick{(\tilde{Z}^L_s)^n}\|^{r/n}_{-\alpha,\rho}\Big]\bigg]
\lesssim
(1+t)^{\kappa}
.
\label{eq_gaussian_exp_moment_bound_0IC}
\end{equation}
\end{proposition}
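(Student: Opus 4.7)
The plan is to reduce the entire statement to verifying the two pointwise exponential-moment hypotheses \eqref{eq_assump_exp_moment_Kolmogorov_line1}--\eqref{eq_assump_exp_moment_Kolmogorov} of the continuity theorem (Proposition~\ref{prop:kolmogorov}) for the process $X_t=\wwick{(\tilde Z_t^L)^n}$, with constants uniform in $L\in[3,\infty]$. Proposition~\ref{prop:kolmogorov} then yields \eqref{eq_gaussian_exp_moment_bound_0IC} for $r=2$ directly; the case $r\in(0,2)$ follows from the elementary inequality $x^{r/n}\leq 1+x^{2/n}$ valid for $x\geq 0$ and $r\leq 2$; and the moment bound \eqref{eq_gaussian_moment_bound_0IC} is exactly the intermediate estimate \eqref{eq_to_prove_estimate_moment} established inside the proof of Proposition~\ref{prop:kolmogorov}. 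All pointwise assumptions will in turn be reduced by Nelson's hypercontractivity (applicable because $\wwick{(\tilde Z_t^L)^n}$ lies in the $n$-th homogeneous Wiener chaos) to explicit $L^2$ computations based on \eqref{e:wwick-var}.

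First I would bound the pointwise second moment. Setting $K_t^L(z):=\int_0^{2t}e^{-u}p_u^L(z)\,du$, the periodic version of \eqref{e:wwick-var} applied to $f=\Psi_R(x-\cdot)$, followed by the change of variables $y=R\tilde y$, $z=R\tilde z$, gives
\begin{equation}
\E\bigl[\bigl(\Psi_R\ast\wwick{(\tilde Z_t^L)^n}\bigr)(x)^2\bigr]
= n!\iint \Psi(\tilde y)\Psi(\tilde z)\, K_t^L\bigl(R(\tilde y-\tilde z)\bigr)^n\,d\tilde y\,d\tilde z.
\end{equation}
For $L\geq 3$, the estimate $p_u^L(z)\leq p_u(z)+Ce^{-cL^2/u}$ on $[-L,L)^2$ implies $K_t^L(z)\lesssim 1+|\log|z||$ uniformly in $t\geq 0$ and $L\geq 3$ (since the Gaussian exponential in $p_u$ cancels the $1/u$ singularity and the $e^{-u}$ factor controls the upper tail independently of $t$). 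Bounding $(|\log R|+|\log|\tilde y-\tilde z||)^n$ by a sum of its two terms and using that the two-dimensional logarithmic singularity is integrable against $\Psi(\tilde y)\Psi(\tilde z)$ yields $\E[(\Psi_R\ast\wwick{(\tilde Z_t^L)^n})(x)^2]\lesssim (1+|\log R|)^n$, uniformly in $x,t,L$.

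Next I would handle the time-increment second moment by decomposing $\tilde Z_t^L=e^{-(t-s)A}\tilde Z_s^L+\int_s^t e^{-(t-u)A}\sqrt{2}\,dW_u^L$ and expanding the Wick power in the $n$-th chaos accordingly. Combining the Hölder continuity of the semigroup after smoothing at scale $R$ (which produces a factor $(t-s)^\lambda R^{-2\lambda}$ for any $\lambda\in(0,1)$) with $n-1$ copies of the covariance kernel already controlled above gives
\begin{equation}
\E\bigl[\bigl(\Psi_R\ast(\wwick{(\tilde Z_t^L)^n}-\wwick{(\tilde Z_s^L)^n})\bigr)(x)^2\bigr]\lesssim (1+|\log R|)^{n-1} R^{-2\lambda}(t-s)^\lambda.
\end{equation}
Nelson's hypercontractivity $\|Y\|_{L^p}\leq (p-1)^{n/2}\|Y\|_{L^2}$ then lifts both $L^2$ bounds to $L^p$ bounds, and a direct power-series expansion $\E[\exp(\epsilon Z)]=\sum_k \epsilon^k\E[Z^k]/k!$ shows that the renormalisations $|\log R|^{-1}$ and $|\log R|^{-1}R^{2\lambda/n}(t-s)^{-\lambda/n}$ exactly cancel the $p^{n/2}$ growth coming from hypercontractivity combined with the above $L^2$ sizes, thereby verifying \eqref{eq_assump_exp_moment_Kolmogorov_line1}--\eqref{eq_assump_exp_moment_Kolmogorov}.

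The main obstacle will be the time-increment estimate: one must extract an arbitrarily small Hölder exponent $(t-s)^\lambda$ at the cost only of a compensating $R^{-2\lambda}$ spatial loss, while keeping constants uniform in $L\in[3,\infty]$. The periodisation is benign as long as $L^2$ dominates the relevant time scales, since $p_u^L-p_u$ is Gaussian-small; nevertheless the bookkeeping in the regime where $u$ approaches $t$ (when the kernel transitions to the diffusive regime on the torus) needs to be done carefully, and this is the reason for the uniform lower bound $L\geq 3$.
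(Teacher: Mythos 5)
Your proposal follows essentially the same strategy as the paper: reduce the statement to verifying the hypotheses \eqref{eq_assump_exp_moment_Kolmogorov_line1}--\eqref{eq_assump_exp_moment_Kolmogorov} of the continuity theorem (Proposition~\ref{prop:kolmogorov}), establish those via $L^2$ computations on the $n$-th Wiener chaos combined with Nelson hypercontractivity, and obtain uniformity in $L\geq 3$ by comparing $p_u^L$ with $p_u$ for small $u$ while letting the $e^{-u}$ factor handle the large-$u$ regime. Your pointwise second-moment bound --- via \eqref{e:wwick-var}, the kernel estimate $\mathcal K^L_t(z)\lesssim 1+\log_+(1/|z|)$ on the support of $\Psi_R\otimes\Psi_R$, and the resulting $(1+|\log R|)^n$ --- is the paper's computation verbatim.

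The one genuine point of divergence is the time-increment $L^2$ bound. The paper evaluates $\E\bigl[\bigl((\wwick{(\tilde Z_{t_1}^L)^n},\Psi_R^x)-(\wwick{(\tilde Z_{t_2}^L)^n},\Psi_R^x)\bigr)^2\bigr]$ directly, using the algebraic identity $\E[\wwick{X^n}\wwick{Y^n}]=n!\,(\E[XY])^n$ for jointly Gaussian $X,Y$, which reduces the estimate to the scalar kernel $\mathcal Q^n_L=(\mathcal K^L(t_1,t_1))^n+(\mathcal K^L(t_2,t_2))^n-2(\mathcal K^L(t_1,t_2))^n$; the Hölder gain in time then drops out of the elementary bound $\mathcal K^L(t_1,t_1,z)-\mathcal K^L(t_1,t_2,z)=\int_0^{|t_1-t_2|}e^{-u}p_u^L(z)\,du+\cdots\lesssim |t_1-t_2|^\lambda|z|^{-2\lambda}$. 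You propose instead the Markov decomposition $\tilde Z_t^L=e^{-(t-s)A}\tilde Z_s^L+\zeta_{s,t}$ and a binomial expansion of Wick powers of sums of independent Gaussians. That route is valid and leads to the same intermediate quantities, but is somewhat more laborious: one must track the Wick renormalisations across the decomposition (the variance of $e^{-(t-s)A}\tilde Z_s$ differs from that of $\tilde Z_s$), estimate the smoothing error $\E[(\Psi_R\ast(\wwick{(e^{-(t-s)A}\tilde Z_s)^n}-\wwick{\tilde Z_s^n}))^2]$ via the heat kernel, and separately bound each cross-term involving a factor $\zeta_{s,t}$; your sketch glosses over this bookkeeping. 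Your asserted conclusion $(1+|\log R|)^{n-1}R^{-2\lambda}(t-s)^\lambda$ does agree with the paper's. Overall the proposal is correct; the direct covariance computation in the paper is marginally shorter, but there is no obstruction to your alternative.

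Two small remarks on presentation. First, the paper's order of logic is the reverse of yours: it proves the moment bound \eqref{eq_gaussian_moment_bound_0IC} first (as the output of Proposition~\ref{prop:kolmogorov}), and then derives the exponential bound \eqref{eq_gaussian_exp_moment_bound_0IC} by a power-series expansion and the elementary observation $\E[X^{rp/n}]\leq 1+\E[X^p]^{r/n}$; your plan of extracting the moment bound as a by-product from inside Proposition~\ref{prop:kolmogorov} is fine but not quite the same as quoting its statement. Second, the role of $L\geq 3$ is more elementary than you suggest: it is what guarantees that on the support $|y-z|\leq 2R\leq 2$ of $\Psi_R\otimes\Psi_R$ the $2L$-periodic distance $|y-z|_L$ coincides with the Euclidean distance, so that the periodisation never interferes with the small-scale covariance estimate; the $e^{-u}$ damping handles all time regimes uniformly.
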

\begin{proof}
Let us first prove~\eqref{eq_gaussian_exp_moment_bound_0IC} assuming the moment bounds~\eqref{eq_gaussian_moment_bound_0IC}. 
Let $r\in[0,2]$. 
Recall the elementary identity $p^p/p!\geq e^p$ ($p\geq 1$) and the expansion:
\begin{equation}
\E[e^{\epsilon X^{r/n}}]
=
1+\sum_{p\geq 1}\frac{\epsilon^p}{p!}\E[X^{rp/n}]
,\qquad 
X\geq 0,\epsilon>0
.
\label{eq_expansion}
\end{equation}
Taking $X=\sup_{s\in[0,t]}\|\wwick{(\tilde{Z}^L_s)^n}\|_{-\alpha,\rho}$ and using $\E[X^{rp/n}]\leq 1+\E[X^p]^{r/n}$ then yields~\eqref{eq_gaussian_exp_moment_bound_0IC}. 

To prove~\eqref{eq_gaussian_moment_bound_0IC}, 
we check that the assumptions of Proposition~\ref{prop:kolmogorov} are satisfied. 
Computations closely follow those of~\cite[Theorem 5.1]{MR3693966}. 
Let $R\in(0,1)$, $t_1,t_2\geq 0$ with $|t_1-t_2|\leq 1$ and $x_1,x_2\in\R^2$ with $|x_1-x_2|\leq 1$. 
Write for short $\Psi^x_R:= \Psi_R(x-\cdot)$. 
From~\eqref{eq_expansion}, 
we see that it is enough to prove the following. 
For each $\lambda\in(0,1)$, 
there are constants $c,c_\lambda>0$ such that, 
uniformly on $t\geq0$, $x\in\R^2$, $L\geq 3$ and $p\geq 1$:
\begin{align}
\E\Big[\, \big| \big(\wwick{(\tilde Z_t^L)^n},\Psi^x_R\big)\big|^{2p/n}\, \Big]
&\leq
c^p (p-1)^{p}|\log R|^{p}
,
\label{eq_gaussian_bound_without_sup}
\\
\E\Big[ \, \Big|\big(\wwick{(\tilde Z_{t_1}^L)^n},\Psi^{x}_R\big)-\big(\wwick{(\tilde Z_{t_2}^L)^n},\Psi^{x}_R\big)\Big|^{2p/n}\, \Big]
&\leq 
c_\lambda^p (p-1)^{p}R^{-\lambda p}|t_1-t_2|^{\lambda p/2}|\log R|^{p}
.
\label{eq_gaussian_time_diff_bound_without_sup}
\end{align}
By hypercontractivity (see e.g.~\cite[(1.71)]{Nualart_book}) and since $\E[X]\leq \E[X^q]^{1/q}$ for any $q\geq 1$ and $X\geq 0$, 
it is enough to establish the above bounds when $p=n$. 
For $x,y\in\R^2$, define:
\begin{equation}
|x|_L
:=
\begin{cases}
\inf\{|x+y|:y\in 2L\Z^2\} &\text{if } L<\infty,\\
|x|&\text{if } L=\infty.\\
\end{cases}
\end{equation}
Consider first~\eqref{eq_gaussian_bound_without_sup}. 
It reads:
\begin{equation}
\E\big[ \big(\wwick{(\tilde Z_t^L)^n},\Psi^x_R\big)^2\big]
=
n!\int_{(\R^2)^2} \Psi_R^x(y)\Psi_R^x(z)\,\big(\mathcal K^L(t,t,y-z)\big)^n dy\, dz
,
\label{eq_gaussian_bound_without_sup_for_square}
\end{equation}
with (recall that $p^L$ is defined in~\eqref{eq_OU_kernel}--\eqref{eq_OU_kernel_periodic}):
\begin{equation}
\mathcal K^L(t_1,t_2,x)
=
\int_{|t_1-t_2|}^{t_1+t_2} e^{-u}p^L_u(x)\, du
.
\label{eq_def_kernelK}
\end{equation}
Standard estimates on the heat kernel give:
\begin{equation}
\mathcal K^L(t,t,x)
\lesssim 
1+\log_+(1/|x|_L)
,
\end{equation}
with $\log_+(x):= \max \{\log(x),0\}$. 
Indeed, 
\begin{align}
8\pi\mathcal K^L(t,t,x)
&\leq
\int_0^{|x_L|^2} e^{-\frac{|x|_L^2}{4u}}\, \frac{du}{u} 
+
\int_{|x_L|^2}^{|x_L|^2\vee 2t}e^{-u} e^{-\frac{|x|_L^2}{4u}} \, \frac{du}{u} 
\nnb
&\leq
\int_0^1 e^{-\frac{1}{4u}}\,\frac{du}{u} + e^{-1/4}\int_{|x_L|^2}^\infty e^{-u}\, \frac{du}{u}
\nnb
&\lesssim 
1+\log_+(1/|x_L|)
.
\end{align}
As a result,~\eqref{eq_gaussian_bound_without_sup_for_square} is up to a multiplicative constant bounded by:
\begin{align}
&\int_{(\R^2)^2}\Psi^x_R(y)\Psi^x_R(z)\Big[1+\log_+\Big(\frac{1}{|y-z|_L}\Big)^n\Big]\, dy\, dz
\nnb
&\qquad \lesssim
1+ \int_{(\R^2)^2}\Psi^x_R(y)\Psi^x_R(z) \log_+\Big(\frac{1}{|y-z|_L}\Big)^n\, dy\, dz
.
\end{align}
Recall that $\Psi_R=R^{2}\Psi(R^{-1}\cdot)$ has compact support in $B_R(0)$, 
so that the integral vanishes unless $|y-z|\leq 2R$. 
In particular on the domain of integration one must have $|y-z|_L=|y-z|$ as $L\geq 3$. 
Changing variables, we find:
\begin{align}
\E&\big[ \big(\wwick{(\tilde Z_t^L)^n},\Psi^x_R\big)^2\big]
\lesssim 
1+\int_{\R^2}\, dy\, \Psi\Big(\frac{x}{R}-y\Big) \int_{\R^2}\Psi\Big(\frac{x}{R}-z\Big)\log_+\Big(\frac{1}{R|y-z|}\Big)^n\, dz
\nnb
&\quad\leq
|\log R|^n+\|\Psi\|_\infty\int_{\R^2}\, dy\, \Psi\Big(\frac{x}{R}-y\Big) \int_{B(y,2\wedge 1/R)}\log\Big(\frac{1}{|y-z|}\Big)^n\, dz
\nnb
&\quad\lesssim
|\log R|^n+\int_{B(0,2\wedge 1/R)}\log\Big(\frac{1}{|z|}\Big)^n\, dz
\nnb
&\quad\lesssim
|\log R|^n
.
\label{eq_bound_ZPhi_T}
\end{align}
Consider next~\eqref{eq_gaussian_time_diff_bound_without_sup}. 
A direct computation gives (recall that $\cK^L$ is defined in~\eqref{eq_def_kernelK}):
\begin{align}
\E\Big[ \Big(\big(\wwick{(\tilde Z_{t_1}^L)^n},\Psi^{x}_R\big)-\big(\wwick{(\tilde Z_{t_2}^L)^n},\Psi^{x}_R\big)\Big)^2\Big]
=
n!
  \int_{(\R^2)^2}	\Psi^{x}_R(y)\Psi^x_R(z) \cQ^n_L(t_1,t_2,y-z)\, dy\, dz
, 
\end{align}
where:
\begin{equation}
\cQ^n_L(t_1,t_2,y-z)
:=
(\cK^L(t_1,t_1,y-z))^n +(\cK^L(t_2,t_2,y-z))^n - 2(\cK^L(t_1,t_2,y-z))^n
.
\end{equation}
Let $\lambda\in(0,1)$. 
Elementary computations give, 
for each $|y-z|\leq 2R\leq 2$ and $L\geq 3$: 
\begin{align}
\cK^L(t_1,t_1,y-z)
-\cK^L(t_1,t_2,y-z)
&=
\int_0^{|t_1-t_2|}e^{-u}p^L_u(y-z)\, du + \int_{t_1+t_2}^{2t_1}e^{-u}p^L_u(y-z)\, du
\nnb
&\lesssim 
\frac{|t_1-t_2|^\lambda}{|y-z|^{2\lambda}}
,
\end{align}
where the proportionality constant depends on $\lambda$. 
This implies, 
bounding $\log_+(\sqrt{t}/|y-z|_L)\lesssim \log_+(|y-z|^{-1})$ uniformly in $t\geq 1$ for $|y-z|\leq 2R$:
\begin{equation}
|\cQ^n_L(t_1,t_2,y-z)|
\lesssim
\frac{|t_1-t_2|^\lambda}{|y-z|^{2\lambda}}
\big[1+\log_+(|y-z|^{-1})^{n-1}\big]
.
\end{equation}
The claim~\eqref{eq_gaussian_time_diff_bound_without_sup} follows by computation similar to~\eqref{eq_bound_ZPhi_T}. 
\end{proof}
In view of the relationship~\eqref{eq_link_wick_powers_w_w/o_IC} between Wick powers of $Z$ and $\tilde Z$ and the multiplicative inequality of Proposition~\ref{prop:besov-mult}, 
Proposition~\eqref{prop_gaussian_bounds0IC} implies the following bounds on the fields $Z,Z^L$.
\begin{proposition}\label{prop_gaussian_bounds_app}
  Let $n\geq 1$ be an integer, 
  let $\alpha,\sigma>0$ and recall that $\rho(x)=(1+|x|^2)^{-\sigma/2}$ and $Z^\infty:=Z$. 
There is $\kappa>0$ such that, for each $r\in[0,2]$ and some $\epsilon_r,\epsilon'_r>0$:
\begin{align}
\sup_{L\in[3,\infty]} \E\bigg[\exp\Big[\, \epsilon_r\Big(\, \sup_{s\in[0,t]}  (s^{n\alpha}\wedge 1)\| \wick{(Z^L_s)^n}\|_{-n\alpha,\rho^n}\Big)^{r/n}\, \Big]\bigg]
&\lesssim
(1+ t)^{\kappa}\exp\Big[\epsilon'_r\big(1+\|\varphi_0\|_{-\alpha,\rho}^{r}\big)\Big]
.
\label{eq_gaussian_bound_non0IC_rho_norm_app}
\end{align}
\end{proposition}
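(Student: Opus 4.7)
The plan is to start from the identity \eqref{eq_link_wick_powers_w_w/o_IC} expressing $\wick{Z_t^n}$ as a finite sum of products $\binom{n}{\ell}\wick{\tilde Z_t^\ell}(e^{-tA}\varphi_0)^{n-\ell}$, and to estimate each of these products separately. The idea is to control the ``stochastic'' factor $\wick{\tilde Z_t^\ell}$ via Proposition~\ref{prop_gaussian_bounds0IC} (after converting to the homogeneous Wick powers $\wwick{\tilde Z^m_t}$ via \eqref{eq_wick_as_fct_wwick} and Lemma~\ref{lemma_size_ftL}), and the ``deterministic'' factor $(e^{-sA}\varphi_0)^{n-\ell}$ via the smoothing estimate of Proposition~\ref{prop:Besov-heat}, paying for extra regularity with negative powers of $s$. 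The two factors will be glued together with the multiplicative inequality \eqref{e:besov-mult-rho}.

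More concretely, first observe that $\|\cdot\|_{-n\alpha}\leq \|\cdot\|_{-\ell\alpha}$ for $\ell\leq n$. Picking $\beta > \ell\alpha$ with $\beta$ arbitrarily close to $\ell\alpha$, the multiplicative inequality then gives $\|\wick{\tilde Z^\ell_s}(e^{-sA}\varphi_0)^{n-\ell}\|_{-n\alpha,\rho^n}\lesssim \|\wick{\tilde Z^\ell_s}\|_{-\ell\alpha,\rho^\ell}\|e^{-sA}\varphi_0\|_{\beta,\rho}^{n-\ell}$. For $s\leq 1$, Proposition~\ref{prop:Besov-heat} yields $\|e^{-sA}\varphi_0\|_{\beta,\rho}\lesssim s^{-(\alpha+\beta)/2}\|\varphi_0\|_{-\alpha,\rho}$, so multiplying by the prefactor $(s^{n\alpha}\wedge 1)$ produces a weight $s^{\gamma_\ell}$ with $\gamma_\ell = n\alpha-(n-\ell)(1+\ell)\alpha/2 - O(\beta-\ell\alpha)$. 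A direct check (using $n\leq 4$, which is all we need for the cubic nonlinearity and the estimate of $\wick{\varphi^4}$) shows $\gamma_\ell>0$ uniformly in $\ell\in\{0,\ldots,n\}$, so the resulting factor $s^{\gamma_\ell}$ is bounded by $1$ on $(0,1]$ and even leaves room to absorb the logarithmic contributions in $f(s,L)$ coming from \eqref{eq_wick_as_fct_wwick}. The regime $s\geq 1$ is handled by the exponential factor $e^{-s}$ in $e^{-sA}=e^{-s}e^{s\Delta}$ together with $e^{-A}\varphi_0\in C^\beta(\rho)$. The net outcome is the pathwise bound
\begin{equation}
(s^{n\alpha}\wedge 1)\|\wick{(Z^L_s)^n}\|_{-n\alpha,\rho^n}
\lesssim \sum_{\ell=0}^n\sum_{m=0}^\ell \tilde C_{\ell,m}(s)\,\|\wwick{(\tilde Z^L_s)^m}\|_{-m\alpha,\rho^m}\,\|\varphi_0\|_{-\alpha,\rho}^{n-\ell},
\end{equation}
with $\sup_{s>0}\tilde C_{\ell,m}(s) \lesssim 1$.

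Raising this to the power $r/n$ and taking $\sup_{s\leq t}$, I then separate the Gaussian and initial-condition contributions by Young's inequality: for $m\geq 1$ and $\ell\geq m$, using conjugate exponents $(n/m,n/(n-m))$,
\begin{equation}
\|\wwick{(\tilde Z^L_s)^m}\|^{r/n}_{-m\alpha,\rho^m}\|\varphi_0\|_{-\alpha,\rho}^{r(n-\ell)/n}
\leq \tfrac{m}{n}\|\wwick{(\tilde Z^L_s)^m}\|^{r/m}_{-m\alpha,\rho^m} + \tfrac{n-m}{n}\|\varphi_0\|_{-\alpha,\rho}^{r(n-\ell)/(n-m)},
\end{equation}
and the second exponent $r(n-\ell)/(n-m)$ is at most $r$ since $\ell\geq m$. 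The $m=0$ terms are handled trivially (the noise factor is deterministic, and $\|\varphi_0\|^{r(n-\ell)/n}\leq 1+\|\varphi_0\|^r$). Combining all these bounds with Hölder's inequality in expectation, $\E[e^{\epsilon(X_1+\cdots+X_K)}]\leq\prod_i\E[e^{K\epsilon X_i}]^{1/K}$, and invoking Proposition~\ref{prop_gaussian_bounds0IC} with exponent $r\leq 2$ for each $\E[\exp(c\sup_s\|\wwick{(\tilde Z^L_s)^m}\|^{r/m}_{-m\alpha,\rho^m})]\lesssim (1+t)^\kappa$, yields the claimed estimate with $\epsilon_r'$ small.

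The bulk of the work is bookkeeping: confirming that the exponent $\gamma_\ell$ coming from the competition between $(s^{n\alpha}\wedge 1)$ on the left and the blow-up $s^{-(n-\ell)(\alpha+\beta)/2}$ from the heat semigroup is non-negative (with some slack for logarithms from $f(s,L)$) for every $\ell$, and simultaneously choosing the pair $(p,p')$ in Young's inequality so that the Gaussian exponent ends up exactly $r/m$ (matching Proposition~\ref{prop_gaussian_bounds0IC}) while the $\varphi_0$ exponent does not exceed $r$. The chief obstacle is simply ensuring that all these constraints can be simultaneously satisfied for each $(\ell,m)$ with $0\leq m\leq\ell\leq n$; the constraint $(n-\ell)(\ell+1)\leq 2n$ is the binding one and is harmless in the range of $n$ relevant to us.
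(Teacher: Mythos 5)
Your overall strategy matches the paper's: decompose $\wick{(Z^L_s)^n}$ via \eqref{eq_link_wick_powers_w_w/o_IC}, apply the multiplicative inequality to separate the Gaussian and deterministic factors, pay for positive regularity on $(e^{-sA}\varphi_0)^{n-\ell}$ via Proposition~\ref{prop:Besov-heat}, convert to homogeneous Wick powers via \eqref{eq_wick_as_fct_wwick} and Lemma~\ref{lemma_size_ftL}, decouple with Young's inequality, and invoke Proposition~\ref{prop_gaussian_bounds0IC}. The Young-inequality bookkeeping and the Hölder-in-expectation step at the end are also correct.

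There is, however, a genuine gap in your estimate of the deterministic factor that makes the argument fail for large $n$. You bound the $C^\beta(\rho^{n-\ell})$-norm of the product $(e^{-sA}\varphi_0)^{n-\ell}$ by $\|e^{-sA}\varphi_0\|_{\beta,\rho}^{n-\ell}$, thereby paying the smoothing blow-up $s^{-(\alpha+\beta)/2}$ a total of $n-\ell$ times. With $\beta$ close to $\ell\alpha$ this produces the exponent $\gamma_\ell = n\alpha - (n-\ell)(\ell+1)\alpha/2$, which is positive only when $(n-\ell)(\ell+1) < 2n$; maximizing over $\ell$ one finds this holds for $n\leq 5$, so the argument collapses at $n=6$. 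The proposition is claimed for every $n\geq 1$, and you acknowledge you're specializing to $n\leq 4$ — but that means you haven't proved the stated result. The paper avoids this by using the elementary inequality \eqref{eq_norms_of_powers}, which places the H\"older seminorm on only one factor of the product while the remaining $n-\ell-1$ factors are measured in $\bbL^\infty(\rho)$. The cost is then $s^{-(n-\ell)\alpha/2 - \ell\alpha'/2}$, and since $\alpha'$ can be taken in $(\alpha,2\alpha)$ this is dominated by $s^{-n\alpha}$ for every $n$. If you replace your estimate of the $(n-\ell)$-fold product by \eqref{eq_norms_of_powers} the argument goes through uniformly in $n$ and is then essentially identical to the paper's proof.
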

\begin{proof}
Let $L\in[3,\infty]$, $s\in(0,t]$ and $\alpha'>\alpha$.  
Equation~\eqref{eq_link_wick_powers_w_w/o_IC}, the multiplicative inequality of Proposition~\ref{prop:besov-mult} and the bound $\|\cdot\|_{\beta,\rho^n}\lesssim \|\cdot\|_{\beta',\rho^n}$ for any $\beta\leq \beta'$ imply:
\begin{align}
(s^{n\alpha}\wedge 1)&\, \|\wick{(Z^L_s)^n}\|_{-n\alpha,\rho^n}
\leq 
(s^{n\alpha}\wedge 1)\sum_{\ell=0}^n \binom{n}{\ell}\|\wick{(\tilde{Z}^L_s)^\ell} (e^{-sA}\varphi_0)^{n-\ell}\|_{-n\alpha,\rho^n}
\nnb
&\lesssim
(s^{n\alpha}\wedge 1)\max_{1\leq \ell\leq n}\|\wick{(\tilde{Z}^L_s)^\ell}\|_{-\ell\alpha,\rho^\ell}\ \|(e^{-sA}\varphi_0)^{n-\ell}\|_{\ell\alpha',\rho^{n-\ell}} + (s^{n\alpha}\wedge 1)\|(e^{-sA}\varphi_0)^n\|_{-n\alpha,\rho^n}
.
\end{align}
Note the following elementary bound valid for $\beta\geq 0$, any test function $f$ and any integer $p>1$:
\begin{equation}
\|f^p\|_{\beta,\rho^n} 
=
\|f^p\|_{\rho^n} + \sup_{x\in\R^2} \rho^n(x) \sup_{0<|z|\leq 1}\frac{|f^p(x)-f^p(x+z)|}{|z|^\beta}
\lesssim 
\|f\|_{\rho^{n/p}}^p +\|f\|_{\rho^{\frac{n-1}{p-1}}}^{p-1} \|f\|_{\beta,\rho}
.
\label{eq_norms_of_powers}
\end{equation}
Using the smoothing effect of $e^{-sA}$ (Proposition~\ref{prop:Besov-heat}), 
we can bound, for $\ell\in\{0,...,n\}$:
\begin{align}
\|(e^{-sA}\varphi_0)^{n-\ell}\|_{\ell\alpha',\rho^n}
&\lesssim 
\|(e^{-sA}\varphi_0)\|^{n-\ell}_{\rho} 
+ \|(e^{-sA}\varphi_0)\|^{n-\ell-1}_{\rho}\, \|(e^{-sA}\varphi_0)\|_{\ell\alpha',\rho} 
\nnb
&\lesssim 
\Big(s^{-\frac{(n-\ell)\alpha}{2}} + s^{-\frac{(n-\ell-1)\alpha}{2}}s^{-\frac{\ell\alpha'+\alpha}{2}}\Big)\, 
\|\varphi_0\|_{-\alpha,\rho}^{n-\ell}
\nnb
\|(e^{-sA}\varphi_0)^{n}\|_{\alpha,\rho^n}
&\lesssim
s^{-\frac{(n+1)\alpha}{2}}\|\varphi_0\|^n_{-\alpha,\rho}
.
\end{align}
Choose $\alpha'\in(\alpha,2\alpha)$ so that:
\begin{equation}
\min\Big\{n\alpha-\frac{(n-\ell)\alpha}{2} -\frac{\ell\alpha'}{2} : \ell\in\{0,...,n\}\Big\}
=:
\beta>0
.
\end{equation}
For such an $\alpha'$,
\begin{align}
(s^{n\alpha}\wedge 1)&\, \|\wick{(Z^L_s)^n}\|_{-n\alpha,\rho^n}
\lesssim 
\max_{1\leq \ell\leq n}\Big\{ (s^{\beta}\wedge 1)\, \|\wick{(\tilde{Z}^L_s)^\ell}\|_{-\ell\alpha,\rho^\ell}\, \|\varphi_0\|^{n-\ell}_{-\alpha,\rho}\Big\}
+\|\varphi_0\|^n_{-\alpha,\rho}
.
\end{align}
Recall the definition of $t,L$-independent counterterms from~\eqref{eq_def_diff_counterterms} and that $\wwick{(Z_s^L)^\ell}-\wick{(\tilde Z_s^L)^\ell}$ diverges like $\log(1/s)^{\lfloor \ell/2\rfloor}$ as $s\downarrow 0$ from~\eqref{eq_size_ftL}, 
uniformly in $L$. 
The divergence $\log(1/s)^{\lfloor \ell/2\rfloor}$ is absorbed in the $s^{\beta}\wedge 1$ prefactor for any $\ell\in\{1,...,n\}$. 
Thus, using the elementary inequality $ab\leq a^p/p+b^q/q$ ($a,b\geq 0$) in the second line with exponents $n/\ell$, $n/(n-\ell)$ for each $1\leq \ell<n$: 
\begin{align}
\sup_{s\in[0,t]}(s^{n\alpha}\wedge 1)\|\wick{(Z^L_s)^n}\|_{-n\alpha,\rho^n}
&\lesssim 
\|\varphi_0\|_{-\alpha,\rho}^{n} + \max_{1\leq \ell\leq n}\sup_{s\in[0,t]}\|\wwick{(Z^L_s)^\ell}\|_{-\ell\alpha,\rho^\ell}\, \|\varphi_0\|^{n-\ell}_{-\alpha,\rho}
\nnb
&\lesssim
\|\varphi_0\|_{-\alpha,\rho}^{n}\Big( 1+ \max_{1\leq \ell\leq n}\sup_{s\in[0,t]}\|\wwick{(Z^L_s)^\ell}\|_{-\ell\alpha,\rho^\ell}^{n/\ell} \Big)
.
\end{align}
Taking this expression to the power $r/n$ and invoking Proposition~\ref{prop_gaussian_bounds0IC} (with $\ell\alpha,\rho^\ell$ instead of $\alpha,\rho$ there) yields~\eqref{eq_gaussian_bound_non0IC_rho_norm_app}. 
\end{proof}

\subsection{Bounds on $Z-Z^L$}
We will also need bounds on $Z-Z^L$.
These are stated next and again follow from small modifications to the proof of~\cite[Theorem 5.1]{MR3693966}. 
\begin{proposition}\label{prop_gaussian_difference_app}
Let $\alpha,\sigma>0$, $\varphi\in C^{-\alpha}(\rho)$ and $n\in\N\setminus\{0\}$. 
For $a>0$, write $C_{a L}:= [- a L,a L]^2$.  
There is $c>0$ such that, for each $L\geq 12$, $t>0$ and each test function $f$ supported on $C_{\frac23 L}$:
\begin{align}
\E\Big[\, \big(\wick{Z_t^n}-\wick{(Z^L_t)^n},f\big)^2\, \Big]
&\lesssim 
(t^{-n\alpha}\vee 1)\,  \|f\|_\alpha^2 (1+\|\varphi_0\|^{2n}_{-\alpha,\rho})\, e^{-cL^2/t}
,
\label{eq_bound_Z-ZL_testfunction}
\\
\E\Big[\, \big\|\wick{Z_t^n}-\wick{(Z^L_t)^n}\big\|_{-n\alpha,\, [-\frac23 L,\frac23 L]^2 }^2\, \Big]
&\lesssim 
(t^{-n\alpha}\vee 1)\, (1+\|\varphi_0\|^{2n}_{-\alpha,\rho})\, e^{-cL^2/t}
.
\label{eq_bound_Z-ZL_app}
\end{align}
In addition, if $\sigma'>\sigma$ and $\rho'=(1+|\cdot|^2)^{-\sigma'/2}$,
\begin{equation}
\E\bigg[\, \big\|\wick{Z_t^n}-\wick{(Z^L_t)^n}\big\|_{-n\alpha,(\rho')^n}^2\, \bigg]
\lesssim
(t^{-n\alpha}\vee 1) \, (1+\|\varphi_0\|^{2n}_{-\alpha,\rho})\, \frac{1}{L^{2n(\sigma'-\sigma)}}
.
\label{eq_bound_Z-ZL_app_weighted}
\end{equation}
\end{proposition}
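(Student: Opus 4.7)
The plan is to reduce the statement to variance estimates for differences of homogeneous Wick powers, leveraging that the difference of heat kernels $p_u - p_u^L$ is exponentially small in $L^2/u$ on the interior box $C_{2L/3}$.

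First I would use the expansion \eqref{eq_link_wick_powers_w_w/o_IC} to write
\begin{equation}
\wick{Z_t^n}-\wick{(Z_t^L)^n}
=\sum_{\ell=0}^{n}\binom{n}{\ell}\Big[\big(\wick{\tilde Z_t^\ell}-\wick{(\tilde Z_t^L)^\ell}\big)(e^{-tA}\varphi_0)^{n-\ell}
+\wick{(\tilde Z_t^L)^\ell}\big((e^{-tA}\varphi_0)^{n-\ell}-(e^{-tA}\varphi_0^L)^{n-\ell}\big)\Big].
\end{equation}
The key pointwise input, for any $x\in C_{2L/3}$ and $u>0$, is $0\le p_u(x)-p_u^L(x)\lesssim \sum_{k\neq 0} p_u(x-2Lk)\lesssim u^{-1}e^{-cL^2/u}$, which also yields
$|(e^{-tA}\varphi_0)(x)-(e^{-tA}\varphi_0^L)(x)|\lesssim t^{-\alpha/2}e^{-cL^2/t}\|\varphi_0\|_{-\alpha,\rho}$ on $C_{2L/3}$ by a direct kernel estimate together with \eqref{eq_cv_varphi_L0}. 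For the Gaussian difference, decompose centered Wick powers into homogeneous ones via \eqref{eq_wick_as_fct_wwick}: using Lemma~\ref{lemma_size_ftL} one checks that $f(t,\infty)-f(t,L)=O(e^{-cL^2/t})$ uniformly, so the counterterm mismatch contributes only lower-order chaoses multiplied by a factor $e^{-cL^2/t}$ and can be absorbed.

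Next I would estimate the variance of the homogeneous part. Using the second moment formula \eqref{e:wwick-var} and its periodic analogue,
\begin{equation}
\E\big[(\wwick{\tilde Z_t^\ell}-\wwick{(\tilde Z_t^L)^\ell},f)^2\big]
=\ell!\iint f(x)f(y)\Big[K(t,x-y)^\ell+K^L(t,x-y)^\ell-2K_{\mathrm{mix}}(t,x-y)^\ell\Big]\,dx\,dy,
\end{equation}
where $K(t,z)=\int_0^{2t}e^{-u}p_u(z)\,du$ and similarly for $K^L$ and the mixed kernel. On the support of $f\otimes f\subset C_{2L/3}^2$, the bracket is bounded by $(\log(1/|x-y|)^{\ell-1}+1)e^{-cL^2/t}$ by comparing the kernels using the pointwise bound above, and then the argument of Proposition~\ref{prop_gaussian_bounds0IC} gives
$\E[(\wwick{\tilde Z_t^\ell}-\wwick{(\tilde Z_t^L)^\ell},f)^2]\lesssim \|f\|_\alpha^2\,e^{-cL^2/t}$. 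Combining with the decomposition above, the multiplicative inequality of Proposition~\ref{prop:besov-mult}, the heat-kernel smoothing Proposition~\ref{prop:Besov-heat} to bound $(e^{-tA}\varphi_0)^{n-\ell}$ in weighted H\"older norm (producing the $t^{-n\alpha}\vee 1$ prefactor together with $\|\varphi_0\|_{-\alpha,\rho}^{2(n-\ell)}$), and the unconditional bound of Proposition~\ref{prop_gaussian_bounds_app} on $\wwick{(\tilde Z_t^L)^\ell}$, I obtain \eqref{eq_bound_Z-ZL_testfunction}.

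Then, to upgrade \eqref{eq_bound_Z-ZL_testfunction} to the local negative H\"older estimate \eqref{eq_bound_Z-ZL_app}, I would invoke Proposition~\ref{prop:Besov-embedding} (Besov embedding) to represent $\|\cdot\|_{-n\alpha,\, C_{2L/3}}^2$ as an integral over scales $R\le 1$ of $R^{2n\alpha-d}\|\Psi_R*(\cdot)\|_{\bbL^p(C_{2L/3})}^p$ for $p$ large enough, apply \eqref{eq_bound_Z-ZL_testfunction} with $f=\Psi_R(x-\cdot)$ (which is supported in $C_{2L/3}$ provided $x\in C_{2L/3}$), and integrate; the extra $L^2$ volume factor from the integration over $x$ is absorbed by $e^{cL^2/2t}$. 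Finally, for the weighted bound \eqref{eq_bound_Z-ZL_app_weighted}, I split the norm at radius $2L/3$: on $C_{2L/3}$ I apply \eqref{eq_bound_Z-ZL_app}, which trivially dominates the required bound since $e^{-cL^2/t}$ beats any inverse polynomial in $L$; on the complement I use the triangle inequality with Proposition~\ref{prop_gaussian_bounds_app} applied separately to $\wick{Z_t^n}$ and $\wick{(Z_t^L)^n}$ in the weight $\rho^n$, and gain the factor $L^{-2n(\sigma'-\sigma)}$ from the pointwise ratio $(\rho'/\rho)^n(x)\lesssim L^{-n(\sigma'-\sigma)}$ on $|x|\geq 2L/3$.

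The main obstacle is the variance computation for the homogeneous Wick power difference: one must carefully exploit the telescoping identity
$a^\ell+b^\ell-2c^\ell=\sum_{j}(\cdots)(a-c)\cdots+(\cdots)(b-c)\cdots$
applied to the three kernels $K$, $K^L$, $K_{\mathrm{mix}}$, ensuring that each appearance of a kernel difference $K-K_{\mathrm{mix}}$ or $K^L-K_{\mathrm{mix}}$ produces a factor $e^{-cL^2/t}$ while the remaining factors still yield integrable logarithmic singularities when paired with a test function supported in $C_{2L/3}$. Everything else is routine bookkeeping.
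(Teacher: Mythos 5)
Your plan matches the paper's proof in all essential respects: the decomposition into a centred Wick-power part and an initial-condition part, reduction to homogeneous Wick powers via the counterterm difference $f(t,L)$ and Lemma~\ref{lemma_size_ftL}, the explicit variance computation comparing the three kernels $\mathcal K^\infty$, $\mathcal K^L$, $\mathcal K^{L,\infty}$, hypercontractivity plus the Besov embedding (Proposition~\ref{prop:Besov-embedding}) to lift the pointwise bound to the local $C^{-n\alpha}$ norm, and the inside/outside split for the weighted bound. The only slight deviation is the claimed rate $t^{-\alpha/2}$ for the initial-condition error (the paper obtains $t^{-(n+1)\alpha/2}$), but since the target has a $(t^{-n\alpha}\vee 1)$ prefactor this makes no difference.
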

\begin{proof}
Recall that Wick powers of $Z^L_t$ and of the centred field $\tilde Z^L_t$,
where $t>0$ and $L\in [1,\infty]$, are related through:
\begin{equation}
\wick{(Z^L_t)^n}
=
\sum_{\ell=0}^n\binom{n}{\ell} \wick{(\tilde Z^L_t)^\ell} (e^{-tA}\varphi_0^L)^{n-\ell}
.
\end{equation}
From now on, always $L\geq 12$ and omission means that $L=\infty$.
Using the multiplicative inequality (Proposition~\ref{prop:besov-mult}),
similarly as in the proof of Proposition~\ref{prop_gaussian_bounds_app}, 
the last equation implies that it is enough to prove~\eqref{eq_bound_Z-ZL_app}--\eqref{eq_bound_Z-ZL_app_weighted} for both $\wick{(\tilde{Z}_t)^n} - \wick{(\tilde{Z}_t^L)^n}$ and $(e^{-tA}(\varphi_0-\varphi_0^L))^n$. 
As we shall see below, 
the weight $\rho'\ll \rho$ in~\eqref{eq_bound_Z-ZL_app_weighted} is introduced only to get a decay on this last term that is uniform in $\varphi_0$.  
In addition, 
in view of the relationship~\eqref{eq_wick_as_fct_wwick} between Wick powers and their homogeneous counterparts and recalling also properties of the difference $f(t,L)$ between counterterms from Lemma~\ref{lemma_size_ftL}, 
it suffices to bound $\wwick{\tilde Z_t^n}-\wwick{(\tilde Z^L_t)^n}$ rather than $\wick{\tilde Z_t^n}-\wick{(\tilde Z^L_t)^n}$.

On the other hand,~\eqref{eq_bound_Z-ZL_testfunction} 
requires us to estimate:
\begin{equation}
\E\Big[\,\big[\, \big(\wick{\tilde Z_t^\ell}, (e^{-tA}\varphi_0)^{n-\ell} f\big) -\big(\wick{(\tilde Z^L_t)^\ell},(e^{-tA}\varphi^L_0)^{n-\ell} f\big) \big]^2\, \Big]
,\qquad
0\leq \ell\leq n
.
\end{equation}
Computations are similar to those needed to prove~\eqref{eq_bound_Z-ZL_app}--\eqref{eq_bound_Z-ZL_app_weighted}, 
so we focus on proving those for 
each of $\wick{(\tilde{Z}_t)^n} - \wick{(\tilde{Z}_t^L)^n}$ and $(e^{-tA}(\varphi_0-\varphi_0^L))^n$.

\paragraph{Estimate of $\wwick{\tilde Z_t^n}-\wwick{(\tilde Z^L_t)^n}$} 
Recall from Proposition~\ref{prop:Besov-embedding} that, 
for each $p\geq 1$, $R\in(0,1]$ and $x\in\R^2$ (with $\Psi_R^x:=\Psi_R(x-\cdot)$):
\begin{align}
\sup_{R\in(0,1]}&R^{n \alpha p} \big|\big(\wwick{\tilde Z_t^n}-\wwick{(\tilde Z^L_t)^n},\Psi_R^x\big)\big|^p
\nnb
&\hspace{2cm}\lesssim
\int_0^1 R^{n\alpha p-2} 
\big\|\Psi_R\ast \big(\wwick{\tilde Z_t^n}-\wwick{(\tilde Z^L_t)^n}\big)\big\|^p_{\bbL^p(B_{3R}(x))} \, \frac{dR}{R}
.
\label{eq_supR_as_intR_Z-ZL}
\end{align}
The bound~\eqref{eq_gaussian_bound_without_sup} on $\wwick{\tilde Z^n_t}$ and $\wwick{(\tilde Z^L)^n_t}$ gives, 
as soon as $n\alpha p-2>0$:
\begin{equation}
\int_0^1 R^{n\alpha p-2} (t^{np\alpha}\wedge 1)\, \sup_{x\in\R^2}\E\Big[\big|\big(\wwick{\tilde Z_t^n}-\wwick{(\tilde Z^L_t)^n},\Psi^x_R\big)\big|^p\Big]\, \frac{dR}{R}
\lesssim 
1
.
\end{equation}
As $\rho\leq L^{-\sigma}$ outside of $C_{\frac{2}{3}L+3}$, 
we obtain:
\begin{equation}
\E\bigg[\, \sup_{x\notin C_{\frac23 L}} \rho^{2n}(x)\big\|\wwick{\tilde Z_t^n}-\wwick{(\tilde Z^L_t)^n}\big\|_{-n\alpha}^2\, \bigg]
\lesssim
(t^{-n\alpha}\vee 1) \frac{1}{L^{2n\sigma}}
.
\end{equation}
The bounds~\eqref{eq_bound_Z-ZL_app}--\eqref{eq_bound_Z-ZL_app_weighted} for $\wwick{\tilde Z_t^n}-\wwick{(\tilde Z^L_t)^n}$ therefore follow if we can prove that, 
for some $c>0$ and a large enough $p$:
\begin{equation}
\int_0^1 R^{n\alpha p-2} (t^{np\alpha}\wedge 1)\, \sup_{x\in C_{\frac23 L+3}}\E\Big[\big|\big(\wwick{\tilde Z_t^n}-\wwick{(\tilde Z^L_t)^n},\Psi^x_R\big)\big|^p\Big]\, \frac{dR}{R}
\lesssim 
e^{-cL^2/t}
.
\end{equation}
By Gaussian hypercontractivity it is enough to bound the expectation for $p=2$. 
Let us prove:
\begin{align}
&\sup_{x\in C_{\frac23 L+3}}\E\Big[ \big(\wwick{\tilde Z_t^n}-\wwick{(\tilde Z^L_t)^n},\Psi_R^x\big)^2\Big]
\lesssim
(1+|\log R\, |^n)\, 
e^{-cL^2/t}
.
\label{eq_bound_Z-ZL_app_toprove}
\end{align}
We again closely follow the proof of~\cite[Theorem 5.1]{MR3693966} where more general estimates are proven that however do not capture the exponential decay in $L^2/t$. 
Recall that $p_t$ is the heat kernel given in~\eqref{eq_OU_kernel} and $p^L_t$ its periodised version extended to $\R^2$ as in~\eqref{eq_OU_kernel_periodic_app}. 
For $x,y\in\R^2$, define:
\begin{equation}
\mathcal K^{\infty}_t(x,y)
:=
\int_0^t \int_{\R^2}e^{-2(t-r)}p_{t-r}(x-z)p_{t-r}(y-z)\, dz \, dr
=
\int_0^t e^{-2r}p_{2r}(x-y)\, dr
\end{equation}
and:
\begin{align}
\mathcal K^{L}_t(x,y)
:&=
\int_0^t e^{-2(t-r)}\int_{\R^2}p^L_{t-r}(x-z)p^L_{t-r}(y-z)\, dz \, dr
\nnb
&=
\sum_{a\in\Z^2}\int_0^te^{-2r} \int_{\R^2}p_{r}(x-z) p_r(y-z-2aL)\, dz \, dr
\label{eq_def_calK_L}
.
\end{align}
Define also:
\begin{equation}
\mathcal K^{L,\infty}_t(x,y)
=
\sum_{a\in \Z^2}\int_0^t e^{-2r}\int_{[-L,L]^2}p_{r}(x-z)p_{r}(y-z-2aL)\, dz\, dr
.
\end{equation}
Let $x_0\in\R^2$.
Direct computations using the definition~\eqref{e:wwick-var} of Wick powers and a change of variable then give:
\begin{align}
&\E\Big[\big(\wwick{\tilde Z_t^\ell}-\wwick{(\tilde Z^L_t)^\ell},\Psi^{x_0}_R\big)^2\Big]\nnb
&=
\ell!\int h(x)h(y)\big[\, (\mathcal K^L_t)^\ell + (\mathcal K^\infty_t)^\ell -2(\mathcal K^{L,\infty}_t)^\ell\, \big](x,y) \, dx\,dy
\nnb
  &\lesssim
\int_{B_{x_0}(1)^2} \big[\, |(\mathcal K^\infty_t)^\ell-(\mathcal K^{L,\infty}_t)^\ell| +|(\mathcal K^L_t)^\ell-(\mathcal K^{L,\infty}_t)^\ell|\, \big](Rx,Ry) \, dx\,dy
.
\label{eq_moment_tildeZ}
\end{align}
Let us bound the integrand in~\eqref{eq_moment_tildeZ}. 
One has:
\begin{align}
|(\mathcal K^\infty_t)^\ell-(\mathcal K^{L,\infty}_t)^\ell|
\lesssim 
|\mathcal K_t^\infty-\mathcal K^{L,\infty}_t| \big(1+(\mathcal K_t^\infty)^\ell+(\mathcal K^{L,\infty}_t)^\ell\big)
.
\end{align}
A similar bound holds for the other half of the integrand in~\eqref{eq_moment_tildeZ} and we now bound each of the above terms. 
Standard heat kernel bounds give:
\begin{equation}
\mathcal K^\infty_t(x,y)
\lesssim 
1+\log_+(|x-y|^{-1})
.
\label{eq_bound_calK_infty}
\end{equation}
Notice that the support $B_R(x_0)$ of $\Psi^{x_0}_R$ is included in $C_{\frac23 L+4}$.  
As $L\geq 12$, 
any $x,y\in C_{\frac{2}{3}L+4}$ 
satisfy $|x-y-2La|^2\geq L^2|a|^2/4$ ($a\in\Z\setminus\{0\}$). 
Using the elementary bound:
\begin{align}
\sum_{a\in \Z^2\setminus\{0\}}
e^{-a^2L^2/(8u)}
&\leq
e^{-L^2/(8u)}\Big(4+\Big[2\int_0^\infty e^{-b^2/(16u)}\, db\Big]^2\Big)
\nnb
&=
4e^{-L^2/(8u)}(1+16\pi u)
,\qquad u\in(0,t]
,
\end{align}
we find, for some $c>0$:
\begin{align}
\mathcal K^{L}_t(x,y)
&=
\mathcal K^{\infty}_t(x,y)
+\sum_{a\in \Z^2\setminus\{0\}}\int_0^t e^{-2r}p_{2r}(x-y-2La)\, dr
\nnb
&\lesssim
1
+\log_+(|x-y|^{-1})
+\int_0^t\frac{(1+u)}{u} e^{-L^2/(8u)}e^{-2u}\, du
\nnb
&\lesssim 
1
+\log_+(|x-y|^{-1})\big[1+e^{-cL^2/t}\big]
.
\label{eq_bound_calK_L_t}
\end{align}
It remains to estimate $\mathcal K^{L,\infty}_t$, 
which we do by bounding its distance to $\mathcal K^\infty_t,\mathcal K^L_t$. 
Noting that for some $c\in(0,1)$ one has $|x-z|^2\geq |x-z|^2/2 + 4cL^2$ as soon as $z\notin[-L,L]^2$ and $x\in C_{\frac{2}{3}L+4}$,
\begin{align}
\mathcal K^L_t(x,y)
-\mathcal K^{L,\infty}_t(x,y)
&=
\sum_{a\in\Z^2}\int_0^te^{-2r}\int_{\R^2\setminus[-L,L]^2}p_r(x-z)p_r(y-z-2La)\, dz\, dr
\nnb
&\lesssim
\sum_{a\in\Z^2}\int_0^te^{-cL^2/r} e^{-2r}\int_{\R^2\setminus[-L,L]^2}p_{2r}(x-z)p_r(y-z-2La)\, dz\, dr
\nnb
&\lesssim
\sum_{a\in\Z^2}\int_0^te^{-cL^2/r} e^{-2r} \int_{\R^2}p_{2r}(z)p_r(y-x-z-2La)\, dz\, dr
\nnb
&\lesssim
e^{-cL^2/t}\int_0^te^{-2r}p^L_{3r}(y-x)\, dr
\nnb
&\lesssim
e^{-cL^2/t}
\big[1+\log_+(|x-y|^{-1})\big]
.
\end{align}
Similarly,
\begin{align}
\mathcal K^\infty_t(x,y)
-\mathcal K^{L,\infty}_t(x,y)
&=
\int_0^te^{-2r}\int_{\R^2\setminus[-L,L]^2}p_{r}(z-x)p_r(y-x-z)\, dz\, dr
\nnb
&\quad
-
\sum_{a\in\Z^2\setminus\{0\}}\int_0^te^{-2r}\int_{[-L,L]^2}p_{r}(z-x)p_r(y-z-2La)\, dz\, dr
.
\end{align}
The first term is just $\mathcal K^L_t-\mathcal K^{L,\infty}_t$, 
while the second one is bounded by:
\begin{equation}
\sum_{a\in\Z^2\setminus\{0\}}\int_0^te^{-2r} p_{2r}(y-x-2La)\, dz\, dr
\lesssim 
e^{-cL^2/t}\big[1+\log_+(|x-y|^{-1})\big]
. 
\end{equation}
Since any power of $\log(|\cdot|^{-1})$ is integrable around $0$, 
recalling~\eqref{eq_moment_tildeZ} and
putting all bounds together yields the desired bound~\eqref{eq_bound_Z-ZL_app_toprove}:  ,  
\begin{align}
\E\Big[\big(\wwick{\tilde Z_t^n}-\wwick{(\tilde Z^L_t)^n},
\Psi^{x_0}_R\big)^2\Big]
&\lesssim 
(1+|\log R\, |^n)\, e^{-cL^2/t}
. 
\end{align}

\paragraph{Initial condition}

We now prove~\eqref{eq_bound_Z-ZL_app}--\eqref{eq_bound_Z-ZL_app_weighted} for $(e^{-tA}(\varphi_0-\varphi_0^L))^n$. 
One has:
\begin{align}
\big|(e^{-tA}\varphi_0)^{n}-(e^{-tA}\varphi_0^L)^{n}\big|
&=
(e^{-tA}\varphi_0-e^{-tA}\varphi_0^L)\sum_{\ell=0}^{n-1}(e^{-tA}\varphi_0)^\ell(e^{-tA}\varphi^L_0)^{n-1-\ell}
. 
\end{align}
Let $x_0\in C_{\frac23 L}$. 
Using $\rho\gtrsim L^{-\sigma}$ on $C_{\frac23 L}$, 
the smoothing effect of the heat kernel (Proposition~\ref{prop:Besov-heat}) and $\|\varphi_0^L\|_{-\alpha,\rho}\lesssim\|\varphi_0\|_{-\alpha,\rho}$:
we get:
\begin{align}
\big|(e^{-tA}\varphi_0)^{n}-(e^{-tA}\varphi_0^L)^{n}\big|(x_0)
&\lesssim
L^{(n-1)\sigma}\big|e^{-tA}\varphi_0-e^{-tA}\varphi_0^L\big| (x_0) 
\max_{0\leq\ell\leq n-1}
\|e^{-tA}\varphi_0\|^\ell_{\rho}\|e^{-tA}\varphi_0\|^{n-1-\ell}_{\rho}
\nnb
&\lesssim
L^{(n-1)\sigma}\big|e^{-tA}\varphi_0-e^{-tA}\varphi_0^L\big| (x_0) \,
(t^{-(n-1)\alpha/2}\vee 1)\|\varphi_0\|^{n-1}_{-\alpha,\rho}
.
\end{align}
Let $\chi\geq 0$ be supported on $[-9L/10,9L/10]^2$ and equal to $1$ on $[-4L/5,4L/5]^2$.  
Then, by definition of $\varphi^L_0$ (recall~\eqref{eq_def_varphi_L0}):
\begin{equation}
e^{-tA}\varphi_0(x_0) - e^{-tA}\varphi_0^L(x_0)
=
e^{-t}(\varphi_0-\varphi^L_0, p_{t}(x_0-\cdot)(1-\chi))
,\qquad 
x\in\R^2
.
\end{equation}
Recall the following elementary bounds: 
for any $\alpha'\in(0,1)$, 
there is $c>0$ such that, for any $|y|\geq 9L/10$ and $|z|\leq 1$,
\begin{align}
|p_t(x_0-y)|
&\lesssim 
e^{-cL^2/t} \frac{1}{t}e^{-c|y|^2/t}
,
\\
|p_t(x_0-y-z)-p_t(x_0-y)|
&\lesssim 
|z|^{\alpha'} t^{-\alpha'/2} \frac{1}{t}e^{-c|y|^2/t}
.
\end{align}
In particular, for any $\beta\in(\alpha,(n+1)\alpha)$, 
using the above with $\alpha'=(n+1)\alpha>\beta$ and recalling definition~\eqref{e:dnorm-def} of the norm $\dnorm{\cdot}_{\beta,(\rho')^{-1}}$:
\begin{equation}
e^{-t}\dnorm{p_{t}(x_0-\cdot)(1-\chi)}_{\beta,(\rho')^{-1}}
\lesssim 
(1+t^{-(n+1)\alpha/2}) \, e^{-t}\, e^{-cL^2/t}
\lesssim
(t^{-(n+1)\alpha/2}\vee 1)\, e^{-cL^2/t}
.
\end{equation}
Remark~\ref{rk:completion} to get the first line below 
and $\|\varphi_0^L\|_{-\alpha,\rho'},\|\varphi_0\|_{-\alpha,\rho'}\lesssim\|\varphi_0\|_{-\alpha,\rho}$ then give:
\begin{align}
\big|e^{-tA}\varphi_0(x_0) - e^{-tA}\varphi_0^L(x_0)\big|
&\lesssim 
e^{-t}\|\varphi_0-\varphi^L_0\|_{-\alpha,\rho'}
\ \dnorm{p_{t}(x_0-\cdot)(1-\chi)}_{\beta,(\rho')^{-1}}
\nnb
&\lesssim 
e^{-t}\|\varphi_0\|_{-\alpha,\rho}\  \dnorm{p_{t}(x_0-\cdot)(1-\chi)}_{\beta,(\rho')^{-1}}
\nnb
&\lesssim
(t^{-(n+1)\alpha/2}\vee 1)\|\varphi_0\|_{-\alpha,\rho} \, e^{-cL^2/t}
.
\end{align}
Taking the supremum over $x_0\in C_{\frac23 L}$ yields~\eqref{eq_bound_Z-ZL_app} for the initial condition. 

To prove~\eqref{eq_bound_Z-ZL_app_weighted}, 
notice that any $\xi \in C^{-\alpha}(\rho)$ satisfies:
\begin{align}
\sup_{x\notin C_{\frac23 L}} \rho'(x)\sup_{R\in(0,1]}R^\alpha|\xi\ast \Psi_R (x)|
&\lesssim 
L^{\sigma-\sigma'}\sup_{x\notin C_{\frac23 L}} \rho(x)\sup_{R\in(0,1]}R^\alpha|\xi\ast \Psi_R (x)|
\nnb
&\leq 
L^{\sigma-\sigma'}\|\xi\|_{-\alpha,\rho}
.
\end{align}
Together with the previous bound valid inside $C_{\frac23 L}$ this gives the claim. 
\end{proof}

\section{Proof of a priori estimates for the SPDE}
\label{app:apriori}

In this section we provide some estimates for solutions of the  $\varphi^4$ equations. The method is an adaptation of \cite{MR4164267} to the simpler two-dimensional case rather than the three-dimensional case treated there, and  to the specific needs of the present work. Throughout the section we will work with the remainder equation
\begin{equation}\label{eq:remainder_equation}
(\partial_t - \Delta) v = - \lambda v^3 +   (-3v^2 Z_1 - 3vZ_2 - Z_3)
\end{equation}
for $\lambda >0$ and under the deterministic assumption of control on the space-time distributional norms $\| Z_1 \|_{-\kappa,B_2(0)}$,
$\| Z_2 \|_{-2\kappa, B_2(0)}$, and $\| Z_3 \|_{-3\kappa, B_2(0)}$,
see  Section~\ref{sec:parabolic-norms}  below
for the definition of these space-time norms.
The main results are a control of the space-time $\bbL^\infty$ norm 
\begin{equation}
\| v \|_{B_1(0)}:= \sup_{z \in (-1,0] \times \{x \colon | x | < 1 \}} | v(z) |
\end{equation}
and a local space-time $\alpha$-H\"older seminorm
\begin{equation}
[ v ]_{\alpha, B_1(0)}:= \sup_{z, \bar{z} \in (-1,0] \times \{x \colon | x | < 1 \}, z \neq \bar{z}}  \frac{| v(z) - v(\bar{z}) |}{d(z,\bar{z})^\alpha },
\end{equation}
where the parabolic metric $d$ is defined below in \eqref{eq:definition_parabolic_distance}.
The main result is the following theorem:

\begin{theorem}\label{thm:a_priori_estimates_v}
  Let $v$ be a continuous function on $\overline{B_2(0)} =  [-4, 0 ] \times \{ x \in \R^d_x \colon |x | \leq 2 \}  $ which solves  \eqref{eq:remainder_equation} in the distributional sense on $B_2(0) =  (-4, 0 ) \times \{ x \in \R^d_x \colon |x | < 2 \}  $.
  Let $\kappa>0$ be small enough and fix $\alpha \in (0,1)$.   Then
\begin{equation}\label{eq:main_remainder_estimate}
\| v \|_{B_1(0)} + [v]_{\alpha,B_1(0)} ^{\frac{1}{1+\alpha}}   \lesssim 1+ \Big(   \max \Big\{    \|Z_1 \|_{-\kappa,B_2(0)}, \, \|Z_2 \|^{\frac{1}{2}}_{-2\kappa,B_2(0)} , \, \|Z_3 \|^{\frac{1}{3}}_{-3\kappa,B_2(0)} \Big\}   \Big)^{\frac{1}{1-\kappa}}
\end{equation}
with an implicit constant that depends on $\lambda, d,\kappa, \alpha$.
\end{theorem}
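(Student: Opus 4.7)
\medskip\noindent\textbf{Proof proposal.}
The plan is to adapt the method of \cite{MR4164267} to the two-dimensional setting and track the sharp dependence on $\kappa$ in \eqref{eq:main_remainder_estimate}. The key object is a single scale-dependent quantity that simultaneously controls the uniform and H\"older norms of $v$ on every parabolic subball. Concretely, for $z \in B_1(0)$ and $R \leq 1$ with $B_R(z) \subset B_2(0)$, I would introduce
\begin{equation*}
  \cM(z,R) := R \|v\|_{B_R(z)} + R^{1+\alpha}[v]_{\alpha,B_R(z)}^{1/(1+\alpha)},
  \qquad \cM^* := \sup_{z,R} \cM(z,R),
\end{equation*}
and similarly $\cN := \max_n \|Z_n\|_{-n\kappa,B_2(0)}^{1/n}$. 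The goal is an inequality of the form $\cM^* \lesssim 1 + \cN^{1/(1-\kappa)}$, from which \eqref{eq:main_remainder_estimate} follows at scale $R=1$.

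First I would apply parabolic Schauder theory to the remainder equation \eqref{eq:remainder_equation} on each subball $B_R(z)$ of radius $R\leq 1$. With the right-hand side $g := -\lambda v^3 - 3 v^2 Z_1 - 3 v Z_2 - Z_3$, Schauder estimates give
\begin{equation*}
  R^{\alpha}[v]_{\alpha,B_{R/2}(z)} \lesssim R^{-1}\|v\|_{B_R(z)} + R^{2-3\kappa}\|g\|_{-3\kappa,B_R(z)},
\end{equation*}
and the parabolic analogue of the multiplicative inequality of Proposition~\ref{prop:besov-mult} controls each product $v^{3-n}Z_n$ via
$\|v^{3-n}Z_n\|_{-n\kappa,B_R} \lesssim (\|v\|_{B_R}+[v]_{\alpha,B_R})^{3-n}\,\|Z_n\|_{-n\kappa,B_R}$, provided $\alpha>3\kappa$. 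Substituting and multiplying by $R$ rewrites everything in terms of $\cM(z,R)$ and $R\cN$, producing a Schauder-type inequality in which the only dangerous term on the right is the cubic $R^{3-3\kappa}\|v\|_{B_R}^3$.

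The main obstacle, and the heart of the proof, is absorbing this cubic term to obtain a genuine a priori $L^\infty$ bound independent of far-field behaviour of $v$. This is the ``coming down from infinity'' step, and it is driven by the damping from $-\lambda v^3$. In the classical smooth setting, at a near-maximum point of $v$ one has $\lambda v^3 \leq -3v^2 Z_1 - 3vZ_2 - Z_3$, and Young's inequality yields $v \lesssim \cN$. In our distributional setting this has to be implemented in a weak sense: following \cite[Section 3]{MR4164267}, I would test the equation against $v^{2p-1}\chi^{2q}$ for a smooth parabolic cutoff $\chi$ adapted to $B_R(z) \subset B_{3R/2}(z)$ and $p,q$ large, integrate by parts in space-time, use the sign of $\int v^{2p+2}\chi^{2q}$ to absorb the paraproduct remainders (which after integration by parts pair $Z_n$ against smooth enough factors to be controlled in $C^{-n\kappa}$), and combine with the Schauder control of $v$ on a slightly larger ball to handle the terms produced by derivatives of $\chi$. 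This yields a local bound of the form $R\|v\|_{B_R} \lesssim R\cN + \text{(lower-order in } \cM^*\text{)}$.

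Finally I would close the argument by a buckling over scales: combining the damping bound on $\|v\|_{B_R}$ with the Schauder estimate for $[v]_{\alpha,B_{R/2}}$ gives a recursive inequality of the form $\cM^* \leq C + C\cN + \theta\,(\cM^*)^{1-\kappa'}$ for some $\theta<1$ and $\kappa'>0$ small, after choosing $\alpha$ small compared to $\kappa$ and iterating on a geometric sequence of scales so that the $R^{-1}\|v\|_{B_R}$ term in Schauder is beaten by the gain in radius. The exponent $1/(1-\kappa)$ in \eqref{eq:main_remainder_estimate} then emerges directly from solving this sublinear buckling inequality, with the factor $1/(1-\kappa)$ reflecting the scaling mismatch between the marginal cubic nonlinearity and the subcritical $Z_n$-terms.
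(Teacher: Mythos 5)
Your proposal follows the broad contours of the paper's argument (Schauder on subballs, then a damping step to beat the cubic nonlinearity, then closing across scales), but the key nonlinear step is different and, as written, the final buckling argument does not produce the claimed exponent.

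On the method: the paper's ``coming down from infinity'' step is a pointwise maximum principle (Lemma~\ref{lem:maximum_principle}) applied to the \emph{mollified} equation \eqref{eq:remainder_equation_regularised}, with the mollification scale chosen as $L \sim \varepsilon\|v\|_{B_R}^{-1}$ so that the commutator error $(v*\Psi_L)^3 - (v^3)*\Psi_L$ and the paraproduct terms $(v^{3-n}Z_n)*\Psi_L$ can all be absorbed using the interior Schauder estimate (Corollary~\ref{cor:simplified_Schauder}) and the multiplicative inequality \eqref{e:reconstruction2}. You instead propose an $L^p$-energy estimate, testing against $v^{2p-1}\chi^{2q}$, in the spirit of Mourrat--Weber rather than Moinat--Weber. (Your citation of \cite[Section~3]{MR4164267} for the testing argument is off; that paper uses the maximum principle, not an energy method.) The energy route is plausible in $d=2$ since the products $v^{3-n}Z_n$ are classical once one has a Schauder bound on $v$, but it is more work: pairing $Z_n$ against $v^{2p+2-n}\chi^{2q}$ requires H\"older control of the latter, so you must already have the interior regularity estimate in hand with constants that do not blow up as $p\to\infty$, and you must track how the test-function derivatives of $\chi$ interact with the scale $R$. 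The paper avoids these book-keeping issues entirely by working pointwise after mollification.

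The genuine gap is the last step. You write the recursive inequality $\cM^* \leq C + C\cN + \theta(\cM^*)^{1-\kappa'}$ with $\theta<1$ and assert that the exponent $1/(1-\kappa)$ ``emerges directly from solving this sublinear buckling inequality.'' It does not: solving $\cM^* \leq C + C\cN + \theta(\cM^*)^{1-\kappa'}$ gives $\cM^* \lesssim 1 + \cN$, i.e.\ exponent $1$, not $1/(1-\kappa)$. In the actual argument, the exponent has a different origin: it is the threshold at which the cubic damping dominates the linear terms once you mollify at scale $L \sim \|v\|_{B_R}^{-1}$. Concretely, the terms $(v^{3-n}Z_n)*\Psi_L$ are bounded by $\|v\|_{B_R}^{3-n}\|Z_n\|_{-n\kappa}\,L^{-n\kappa} \sim \|v\|_{B_R}^{3-n+n\kappa}\|Z_n\|_{-n\kappa}$, and requiring these to be dominated by $c\|v\|_{B_R}^3$ is exactly Assumption~\ref{Assumptionc}, i.e.\ $\|Z_n\|_{-n\kappa} \lesssim \|v\|_{B_R}^{n(1-\kappa)}$, whence the threshold $\|v\|_{B_R} \sim \|Z_n\|_{-n\kappa}^{1/(n(1-\kappa))}$. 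Once above this threshold, Lemma~\ref{lem:recursion_input} gives a \emph{linear} contraction $\|v\|_{B_{R-\overline R}} \leq \theta\|v\|_{B_R}$, not a sublinear one, and the recursion of \cite[Section~4.6]{MR4164267} propagates this inward. So the exponent is baked into the entry condition for the damping lemma, not into the structure of the recursion; your buckling inequality would need to record this threshold explicitly (e.g.\ as a dichotomy: either $\|v\|_{B_R} \lesssim \cN^{1/(1-\kappa)}$ already, or the damping bites) rather than carrying an additive $\cN$-term.
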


Theorem~\ref{thm:a_priori_estimates_v} is proven at the end of Section~\ref{sec_largescale_remainder_eq}. 
The theorem implies the estimates for the $\varphi^4_2$ SPDE
stated in Theorem~\ref{thm:apriori_bounds} and used in the bulk of the paper.
For convenience, we restate the proposition as the following corollary
and give its prove before giving the proof of Theorem~\ref{thm:a_priori_estimates_v}.

\begin{corollary}
\label{cor:apriori_bounds}
  Let $\alpha' \in [0,1)$ (with $0$ included), let  $\alpha >0$ be small enough, and set $\eta= \frac{1+\alpha'}{1-3\alpha}$.
    For $L\in \N\cup\{\infty\}$, let $v^L$ be the solution of the remainder equation  \eqref{e:YL-Duhamel}.
  
  \smallskip
  \noindent
  (i)  
  For initial condition $v_0=0$ (as in our standard convention \eqref{eq:initial-data-choice}),
  for each $t\geq 1$ and each ball $B=B_1(x)\subset\R^2$ in the spatial variable:
\begin{equation}
  \sup_{0 \leq s \leq t} \|v^L_s\|_{\alpha',B}
  + \sup_{\substack{0 \leq \bar s< s \leq t \\ |s-\bar s|\leq 1}} \frac{\|v^L_s-v^L_{\bar s}\|_B}{|\bar s-s|^{\alpha'/2}}
\lesssim 
1+ 
  \sup_{0<s\leq t}\max_{n=1,2,3} \Big\{
 	 \Big( 
	 	(s^{n\alpha}\wedge 1)\|\wick{(Z^L_s)^n} \|_{-n\alpha,2B} 
	\Big)^{\frac{1}{n}}
\Big\}^\eta .
\label{eq_desired_bounds-B}
\end{equation}
Furthermore, for any $\sigma>0$, 
\begin{equation}
  \sup_{0 \leq s \leq t } \|v^L_s\|_{\alpha',\rho}
  + \sup_{\substack{0 \leq \bar s< s \leq t \\ |s-\bar s|\leq 1}} \frac{\|v^L_s-v^L_{\bar s}\|_\rho}{|\bar s-s|^{\alpha'/2}}
\lesssim 
1+ 
  \sup_{0<s \leq t}\max_{n=1,2,3} \Big\{
  \Big(
  (s^{n\alpha}\wedge 1)\|\wick{(Z^L_s)^n} \|_{-n\alpha,\rho^{\frac{n}{\eta}}} 
\Big)^{\frac{1}{n}}
\Big\}^\eta 
.
\label{eq_desired_bounds-rho}
\end{equation}

  \smallskip
  \noindent
  (ii)
  For arbitrary initial condition $v_0 \in C^{-\alpha}(\rho)$ and $t\geq 1$, one also has
\begin{equation}
  \|v^L_t\|_{\alpha',\rho}
  \lesssim 
  1+ 
  \sup_{0<s \leq t}\max_{n=1,2,3} \Big\{
  \Big(
  (s^{n\alpha}\wedge 1)\|\wick{(Z^L_s)^n} \|_{-n\alpha,\rho^{\frac{n}{\eta}}} 
  \Big)^{\frac{1}{n}}
  \Big\}^\eta 
.
\label{eq_desired_bounds-rhorho-alternative-appendix}
\end{equation}
The implicit constants  are all independent of $t,L$ and $v_0, \varphi_0$.
\end{corollary}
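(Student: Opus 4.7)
I would deduce the corollary directly from the parabolic local a priori bound in Theorem~\ref{thm:a_priori_estimates_v}, applied at parabolic unit scale around each space-time point, with the parameter choice $\kappa = 3\alpha$ and H\"older exponent $\alpha'$. Since $(1+\alpha')/(1-\kappa)=(1+\alpha')/(1-3\alpha)=\eta$, raising the H\"older-seminorm piece in that theorem to the $(1+\alpha')$-th power produces the exponent $\eta$ on the right-hand side. The two remaining tasks are then translating the parabolic space-time Besov norms on the Wick powers $Z_n$ appearing in Theorem~\ref{thm:a_priori_estimates_v} into the spatial Besov norms on $\wick{(Z^L_s)^n}$ at fixed time that appear in the corollary, and handling the weighted bound \eqref{eq_desired_bounds-rho} together with the nonzero initial data case (ii).

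\textbf{Parabolic reduction.} For $t\geq 1$ and $B=B_1(x)\subset\R^2_x$, I would apply Theorem~\ref{thm:a_priori_estimates_v} at each $(s,x)$ with $s\leq t$, rescaling to a parabolic ball of radius $\min\{1,\sqrt{s}/2\}$ when needed so that it remains in positive time. By the scaling of the parabolic distance $d((s,y),(\bar s,y))=|s-\bar s|^{1/2}$, the quantity $\|v\|_{B_1^{\rm par}}+[v]_{\alpha',B_1^{\rm par}}$ on the parabolic unit ball dominates both $\|v_s\|_{\alpha',B}$ at any fixed time $s$ in the ball and the $\alpha'/2$-H\"older seminorm in time. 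This yields the left-hand side of \eqref{eq_desired_bounds-B} bounded by $1+\sup_{(s,x)}\bigl(\max_{n=1,2,3}\|Z_n\|_{-3n\alpha,B_2^{\rm par}(s,x)}^{1/n}\bigr)^{\eta}$.

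\textbf{Parabolic-to-spatial comparison and extensions.} At scale $R\leq 1$, a parabolic mollifier has time width $R^2$ and spatial width $R$, so bounding by the supremum over the relevant time interval one has $|\Psi_R^{\rm par}\ast Z_n(s,y)|\lesssim \sup_{|u-s|\leq R^2}|\Psi_R\ast\wick{(Z^L_u)^n}(y)|\lesssim R^{-n\alpha}\sup_{|u-s|\leq R^2}\|\wick{(Z^L_u)^n}\|_{-n\alpha,2B}$. Multiplying by the parabolic prefactor $R^{3n\alpha}$ leaves an extra $R^{2n\alpha}$, which for $u\geq R^2$ is dominated by $u^{n\alpha}\wedge 1$; taking the supremum over $R\leq 1$ gives $\|Z_n\|_{-3n\alpha,B_2^{\rm par}(s,x)}\lesssim \sup_{u\leq t}(u^{n\alpha}\wedge 1)\|\wick{(Z^L_u)^n}\|_{-n\alpha,2B}$, and combined with the previous step this establishes \eqref{eq_desired_bounds-B}. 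For the weighted bound \eqref{eq_desired_bounds-rho}, I would multiply the local estimate by $\rho(x)$, write $\rho(x)=(\rho(x)^{1/\eta})^\eta$, and use $\rho(x)\leq C\rho(y)$ for $y\in B_2(x)$ (from \eqref{e:weight-ineq}) to pass $\rho(x)^{1/\eta}$ inside the $\eta$-th power, yielding weight $\rho^{n/\eta}$ on each factor $\|\wick{(Z^L_u)^n}\|^{1/n}$ inside the max. For part (ii), the key observation is that Theorem~\ref{thm:a_priori_estimates_v} is a purely local parabolic estimate which places no requirement on the initial data of $v$: only the distributional equation on the parabolic ball is used. Thus applying it on a parabolic ball around $(t,x)$ of radius $\min\{1,\sqrt{t}/2\}$ gives \eqref{eq_desired_bounds-rhorho-alternative-appendix} directly.

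\textbf{Main obstacle.} The hardest step will be the parabolic--spatial comparison, specifically matching the small-time singularity $u^{-n\alpha}$ of the spatial norm with the extra regularization $R^{2n\alpha}$ gained from taking $\kappa=3\alpha$ rather than the sharp $\kappa=\alpha$. This balance is precisely what forces the exponent $\eta=(1+\alpha')/(1-3\alpha)$ rather than the possibly sharper $(1+\alpha')/(1-\alpha)$, but it is amply sufficient for the downstream applications in the paper. The remaining technicalities---rescaling the parabolic ball near $s=0$, time regularity of the space-time Wick powers needed in the parabolic--spatial comparison, summing local bounds in space, and controlling weights via the multiplicative inequality \eqref{e:besov-mult-rho}---are all routine.
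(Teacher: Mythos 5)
The core plan — invoke Theorem~\ref{thm:a_priori_estimates_v} with $\kappa = 3\alpha$, convert the parabolic space-time Besov norms of the $Z_n$'s to the spatial norms with the $(s^{n\alpha}\wedge 1)$ prefactor as in \eqref{e:space-time-distribution-bound}, track the exponent $\eta=(1+\alpha')/(1-3\alpha)$, and treat the weight by writing $\rho=(\rho^{1/\eta})^\eta$ — is the same as the paper's, and your part (ii) argument (applying the theorem on a time-window away from $s=0$ so the initial data is irrelevant) is also what the paper does. However, there is a genuine gap in your treatment of part (i) near $s=0$.

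You propose to handle small $s$ by \emph{rescaling} the theorem down to a parabolic ball of radius $\min\{1,\sqrt s/2\}$ so it stays in positive time. This cannot give a bound uniform over $s\in(0,4)$. Theorem~\ref{thm:a_priori_estimates_v} is not parabolically scale-invariant because of the cubic term: if you rescale $\hat v(u,y)=v(r^2u+s,ry+x)$, the damping becomes $\lambda r^2 \hat v^3$, and the $\bbL^\infty$ part of the bound — which comes from the maximum principle, $\|v\|\lesssim\max\{1/\tilde R,\|f\|^{1/3}\}$ with constant $\propto\lambda^{-1/2}$ — picks up a factor $\lambda^{-1/2} r^{-1}\sim s^{-1/2}$. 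Since the right-hand side of \eqref{eq_desired_bounds-B} is finite and has no $s^{-1/2}$ singularity, the rescaled bound cannot close, and you cannot deduce the claimed $\sup_{0\leq s\leq t}$ bound. What makes the estimate hold uniformly down to $s=0$ is precisely the zero initial condition, and the nonlinear damping argument alone cannot see this. The paper's solution is to \emph{extend $v$ and the space-time distributions $Z_s,\wick{Z_s^2},\wick{Z_s^3}$ by zero to negative times}: since $v_0=0$, the extended $v$ solves the remainder equation in the distributional sense on all of $\R_t\times\R^2_x$, so Theorem~\ref{thm:a_priori_estimates_v} can be applied at the fixed unit scale around every $(s,x)$ with $s\leq t$ (including $s$ near $0$), with no rescaling and hence no degeneration of constants. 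Your argument needs this zero-extension step substituted for the rescaling; with that change, the rest (the $\kappa=3\alpha$ choice, the computation \eqref{e:space-time-distribution-bound}, the weighted version) goes through as you describe.
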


\begin{proof}
The following argument does not change for $L<\infty$, and therefore for the rest of the proof we omit the subscripts $L$ from $v^L$ and $\wick{(Z^L)^n}$. 
Let $v$ solve the integral equation \eqref{e:Y-Duhamel} for  $t \geq 0$. 

For the proof of (i), due to the choice of zero initial data for $v$, see \eqref{eq:initial-data-choice},
we can extend both $v$ and the distributions $Z_s$, $\wick{Z_s^2}$ and $\wick{Z_s^3}$ to negative times $s<0$  by setting 
\begin{equation}
v_s= Z_s =  \wick{Z_s^2} =  \wick{Z_s^3} = 0, \qquad s <0 ,
\end{equation}
and  rewrite \eqref{e:Y-Duhamel} as 
\begin{equation} \label{e:Y-Duhamel-bis}
  v_t = 
  \int_{t_0}^t e^{-(t-s)A} \Big[-\mu v - \lambda [v_s^3+3v_s^2 Z_s + 3v_s\wick{Z_s^2}+ \wick{Z_s^3}]\Big] \, ds,
\end{equation}
valid for all $t_0 \leq 0$ and $t \geq t_0$. A standard argument (see e.g. \cite[Proposition 13]{MR3693966}) shows that the mild formulation implies that the remainder equation for $v$ also holds in the distributional sense for all times $s \in \mathbb{R}$. In  order to apply Theorem \ref{thm:a_priori_estimates_v} we first rewrite the equation as  
\begin{equation}
(\partial_t - \Delta) v = - \lambda v^3   + \Big[- 3  v^2  \lambda Z - 3v \Big( \lambda \wick{Z^2} +\frac{1+\mu}{3} \Big) -\lambda  \wick{Z^3} \Big].
\end{equation}
Thus, to get \eqref{eq_desired_bounds-B} we start with
\begin{align}
  &\sup_{s\in [0,t]} \|v_s\|_{\alpha',B}
    + \sup_{\substack{0 \leq \bar s< s \leq t \\ |s-\bar s|\leq 1}} \frac{\|v^L_s-v^L_{\bar s}\|_B}{|\bar s-s|^{\alpha'/2}} \nnb
& \leq   \sup_{s\in [0,t]}  \Big( \|v\|_{B_1(s,x)} + 2[v]_{\alpha',B_1(s,x)}  \Big) \nnb
& \lesssim   1+ \sup_{s\in [0,t]}  \Big( \|v\|_{B_1(s,x)} + [v]_{\alpha',B_1(s,x)}^{\frac{1}{1+\alpha'}}  \Big)^{1+\alpha'}
\end{align}
where in the (trivial) first step we bound the $\bbL^\infty$ bound over the space-ball at time $s$ by the $\bbL^\infty$ norm over a space-time  ball,
and similarly for the H\"older norms.
Applying \eqref{eq:main_remainder_estimate} this is
\begin{equation}
  \lesssim 1+ \sup_{s\in [0,t]} \Big(   \max \Big\{    \|Z_1 \|_{-\kappa,B_2(0)}, \, \|Z_2 \|^{\frac{1}{2}}_{-2\kappa,B_2(0)} , \, \|Z_3 \|^{\frac{1}{3}}_{-3\kappa,B_2(0)} \Big\}   \Big)^{\frac{1+\alpha'}{1-\kappa}}
\end{equation}
with 
\begin{equation}
  Z_1  = \lambda\,  Z_{s+\cdot}(x+\cdot), \qquad  Z_2 =  \lambda \, \wick{Z^2_{s+\cdot}(x+\cdot)} +\frac{1+\mu}{3}, \qquad Z_3 =  \lambda  \, \wick{Z^3_{s+\cdot}(x+\cdot)} ,
\end{equation}
defined for negative times as discussed above.
To get \eqref{eq_desired_bounds-B}, it  only remains to bound the space-time distributional norms of the $Z_i$, $i \in \{ 1,2,3 \}$. The constant term $\frac{1+\mu}{3}$ as well as the pre-factors $\lambda$ can be absorbed into the term $1$ and the implicit constant. To estimate the space-time distributional H\"older norms of  $\wick{Z^n}$ we use the factorisation $\Psi(t,x) = \Psi^{(1)}(t) \Psi^{(2)}(x)$ (see discussion around equation~\eqref{eq:self-similarity} below)  to write 
\begin{align}
\notag
\| \wick{Z^n} \|_{- n\kappa,B_2(s,x)} 
	& =  \sup_{\substack{\bar{s}, \bar{x}, R \\ B_R( \bar{s}, \bar{x}) \subset B_2(s,x) }} 
		R^{n \kappa} | \Psi_R \ast \wick{Z^n}(\bar{s}, \bar{x}) |  \\
\notag
	&=    \sup_{\substack{\bar{s}, \bar{x}, R \\ B_R( \bar{s}, \bar{x}) \subset B_2(s,x) }} 
		R^{n \kappa} \int_0^\infty R^{-2} \Psi^{(1)} \Big( \frac{\bar{s} - r}{ R^2} \Big)    R^{-n\alpha}  \| \wick{Z_r^n}  \|_{-n\alpha , B_2(x)}  dr \\
\notag
	&\leq \sup_{0 <s<t}  (s^{n\alpha}\wedge 1)\|\wick{(Z_s)^n} \|_{-n\alpha,2B}   \\
\notag
		& \qquad \times    \sup_{\substack{\bar{s}, \bar{x}, R \\ B_R( \bar{s}, \bar{x}) \subset B_2(s,x) }} R^{n \kappa}  R^{-n\alpha}  \int_0^\infty  R^{-2} \Psi^{(1)} \Big( \frac{\bar{s} - r}{ R^2} \Big) ( r^{-n\alpha } \vee 1 )      dr \\
\label{e:space-time-distribution-bound}
	&\leq \sup_{0 <s<t}  (s^{n\alpha}\wedge 1)\|\wick{(Z_s)^n} \|_{-n\alpha,2B}   
   \sup_{\substack{\bar{s}, \bar{x}, R \\ B_R( \bar{s}, \bar{x}) \subset B_2(s,x) }} R^{n \kappa}  R^{-n\alpha} R^{-2n\alpha}.
\end{align}
The latter supremum is finite for $\kappa = 3\alpha $.  Thus, \eqref{eq_desired_bounds-B}   follows for $\eta = \frac{1+\alpha'}{1 - 3 \alpha}$.  

To see \eqref{eq_desired_bounds-rho}, we observe that 
\begin{equation}
\| v_s \|_{\alpha,\rho} \lesssim \sup_{x \in \R^2} \rho(x) \| v_s \|_{\alpha,B_1(x)},
\end{equation}
so that  \eqref{eq_desired_bounds-B} yields
\begin{equation}
\sup_{s\in [0,t]} \|v_s\|_{\alpha,\rho}
\lesssim 1+   \sup_{s\leq t}  \max_{n=1,2,3}   \Big\{
  \Big(   (s^{n\alpha}\wedge 1) \sup_{x \in \R^2} \rho^{\frac{n}{\eta}}(x) \|\wick{(Z_s)^n} \|_{-n\alpha,2B}  \Big)^{\frac{1}{n}} \Big\}^\eta .
\end{equation}
The same applies to the time-H\"older norm,
which in turn implies   \eqref{eq_desired_bounds-rho}. 

The proof of (ii) is similar. 
Let $t\geq 1$.
We again use Theorem~\ref{thm:a_priori_estimates_v} to estimate a space-time H\"older norm 
of $v$  (say on the time-interval $[t/2,t]$) in terms of the space-time H\"older norms of 
\begin{equation}
\tilde{Z}_1  = \lambda \tilde{Z}, \qquad \tilde{Z}_2 =  \lambda \wick{\tilde{Z}^2} +\frac{1+\mu}{3}, \qquad \tilde{Z}_3 =  \lambda  \wick{\tilde{Z}^3} ,
\end{equation}
for times in $[t/4,t]\subset(0,t]$. These can be controlled  as in \eqref{e:space-time-distribution-bound}.
Actually, the bounds here are even slightly better because one does not have to deal with the blow-up near~$0$.
\end{proof}

\subsection{Parabolic H\"older norms}
\label{sec:parabolic-norms}

We will use parabolic space-time H\"older norms.
These are defined in the same way as the ``elliptic'' norms (for spatial variables) in Section~\ref{sec:norms-def},
except for inclusion of a time variable which is scaled parabolically,
and they have completely analogous properties to those given in Appendix~\ref{app:norms}.

\paragraph{Parabolic scaling and test function}

For space-time points $z =(t,x),\;\bar{z} = (\bar{t}, \bar{x})  \in \R_t \times \R_x^d$ we define the
parabolic metric  by
\begin{equation}\label{eq:definition_parabolic_distance}
d(z, \bar{z}) = \sqrt{|t - \bar{t}| } + |x - \bar{x}|,
\end{equation}
where $|x - \bar{x}|$ refers to the Euclidean norm on $\R_x^d$.
For $R>0$ and $z=(t,x) \in \R_t \times \R^d_x$  define parabolic cylinders \emph{in the past of $z$} by
\begin{equation}
B_R(z) = ( t-R^2, t] \times \{ \bar{x} \in \R^d_x \colon | \bar{x} - x| < R \},
\end{equation}
where again $ | \bar{x} - x| $ refers to the Euclidean norm on $\R^d_x$.
For a space-time function  $f \colon \R_t \times \R_x^d \to \R$ and for $R>0$ we define its rescaling $f_R$ by 
\begin{equation}
f_R(t,x) = R^{-d-2} f\Big(\frac{t}{R^2}, \frac{x}{R}\Big).
\end{equation}
The space-time version of the test function $\Psi$ used to define H\"older norms is
smooth and compactly supported with   $\int  \Psi \, dt\, dx = 1$. It is convenient (but probably not essential - see Proposition \ref{prop:norm-tilde}) to assume that  $\Psi$ factorises between the time and space-variables and has an approximate self-similarity property as in \cite[Section~2.1]{MR4164267}: 
More precisely, let  $\Phi(t,x) = \Phi^{(1)}(t) \Phi^{(2)}(x)$ be a smooth non-negative function compactly supported on the space-time ball
$-B_1(0)$ 
with $\int  \Phi \, dt\, dx = 1$.
  For $R>0$ and $n \in \N$ 
  set  $\Psi_{R,n} = \Phi_{2^{-1}R}  \ast \Phi_{2^{-2}R} \ast \ldots \ast \Phi_{R2^{-n}}$ and $\Psi_R =\lim_{n \to \infty }\Psi_{R,n}$.
  We observe that  $\Psi_R$ has a representation as the space-time convolution:
 \begin{equation}\label{eq:self-similarity}
 \Psi_R =\Psi_{R 2^{-n}} \ast  \Psi_{R,n} 
 \end{equation}
 of its rescaled version $\Psi_{R 2^{-n}}$ which is supported on the ball $-B_{R2^{-n}}(0)$ and $\Psi_{R,n}$ which is supported in $-B_{R(1-2^{-n})}(0)$.
This construction implies that $\Psi(t,x) = \Psi^{(1)}(t)\Psi^{(2)}(x)$  factorises between the space and time variable, where we assume that $\Psi^{(2)}$ coincides with the kernel used to define the spatial H\"older 
norms of negative regularity defined in Section ~\ref{sec:norms-def}. This decomposition is useful to relate distributional space-time H\"older norms with spatial H\"older norms (see equation \eqref{e:space-time-distribution-bound}).
 Furthermore, note that  $(\Psi \ast f)  (t,x)$, where $\ast$ denotes space-time convolution, only depends on values of $f$ ``in the past of $t$''.

\paragraph{Norms}

Using the parabolic scaling from the previous paragraph instead of the elliptic scaling from Section~\ref{sec:norms-def},
the H\"older norms can be defined analogously to Section~\ref{sec:norms-def}.
In that section, we defined the inhomogeneous versions of these norms in which distances and the scaling parameter $R$ are restricted to $(0,1]$. In this section, it is convenient to use the homogenous versions of the norms
in which there is no restriction on the distances and on $R$. The final statement
Theorem~\ref{thm:a_priori_estimates_v} applies  to unit balls where both definitions agree.

Thus the H\"older seminorms are defined with respect to the parabolic metric $d$,
i.e., for $\alpha \in (0,1)$ and for any $B \subseteq \R_t \times \R^d_x$, 
\begin{equation}\label{eq:definition-hoelder-seminorm}
[v]_{\alpha,B} = \sup_{z \neq \bar{z} \in B} \frac{|v(z)- v(\bar{z})|}{d(z,\bar{z})^\alpha} .
\end{equation}
We observe the following simple estimate for later use: 
\begin{equation}\label{eq:regularisation_error}
    | f(z) - f\ast \Psi_R(z) |
  \leq [f]_{\alpha, B_R(z)} R^\alpha.
\end{equation}
The localised Besov--H\"older norms of negative regularity are defined as in Section~\ref{sec:norms-def}.
Thus the homogenous version used in this section is defined, for $\alpha >0$ and $B \subseteq \R_t \times \R_x^d$, by 
\begin{equation}\label{eq:def_besov_norm}
\| Z \|_{-\alpha, B} := \sup_{\substack{z \in B, \,R>0  \\ B_R(z) \subseteq B}}  |\Psi_R * Z(z)| R^\alpha.
\end{equation}
From (the parabolic analogue) of Proposition~\ref{prop:besov-mult} we also recall the reconstruction or commutator estimate:
\begin{equation} \label{e:reconstruction}
| (v Z \ast \Psi_R) (z) - v(z) (Z\ast\Psi_R)(z) | \lesssim  R^{\beta -\alpha} [v]_{\beta, B_{2R}(z)}    \| Z \|_{-\alpha, B_{2R}(z)},
\end{equation}
valid if $0<\alpha<\beta$, and in particular
\begin{equation} \label{e:reconstruction2}
| (v Z \ast \Psi_R) (z)| \lesssim  \qB{ |v(z)|  + R^\beta[v]_{\beta, B_{2R}(z)}  } R^{-\alpha}  \| Z \|_{-\alpha, B_{2R}(z)}.
\end{equation}
 In the following estimate the self-similarity \eqref{eq:self-similarity} is convenient:
  Let $\eta$ be supported in $B_{2R}(0)$. By choosing $n$ such that $2^{-n}L \leq R < 2 \cdot 2^{-n}L$
  and $\Psi_L = \Psi_{L 2^{-n}} \ast  \Psi_{L,n}$, then 
\begin{align}
\notag
\| \eta f  \ast \Psi_L \| &=   \|  \eta f  \ast \Psi_{L2^{-n}} \ast  \Psi_{L,n}  \| \\
\notag
&\lesssim    \|  \eta f  \ast \Psi_{L2^{-n}}  \|_{B_{3R}(0)}  \| \Psi_{L,n} \|  | B_{3R}(0)|   \\
\notag
&\lesssim   R^{-\alpha} \frac{R^{2+d}}{L^{2+d}}\qa{ \sup_{r\leq R} r^{\alpha} \|  \eta f  \ast \Psi_{r}  \|_{B_{3R}(0)}}     \\
\label{e:reconstruction3}
 &  \lesssim R^{-\alpha}\frac{R^{2+d}}{L^{2+d}} \qB{\|\eta \|  + R^\beta [\eta]_\beta } \| f \|_{-\alpha, B_{5R}(0)}  ,
\end{align}
where we used that $ \eta f  \ast \Psi_{L2^{-n}}  $ is supported on $B_{3R}(0)$, $|B_{3R}(0)|\propto R^{d+2}$ denotes the volume of $B_{3R}(0)$
and $\|\Psi_{L,n}\|\propto L^{-d-2}$,
and in the last inequality applied \eqref{e:reconstruction}.

\subsection{Local Schauder estimate}\label{ss:Technical_tools}

We give a self-contained proof of the local Schauder estimate in the form required.
The main result is Corollary~\ref{Cor:local_Schauder_estimate}.
The first step is a low-regularity \emph{global} Schauder estimate, that we now proceed to discuss:
To capture the correct time-dependence of solutions of the heat equation we define H\"older norms on half-spaces:
\begin{equation*}
[v]_{\alpha,T} := [v]_{\alpha,(-\infty, T] \times \R_x^d } = \sup_{\substack{z,\bar{z} \in (-\infty,T] \times \R^d_x \\ z \neq \bar{z}} } \frac{ |v(z) - v(\bar{z} )|}{d(z,\bar{z})^\alpha} ,
\end{equation*}
and the corresponding negative regularity  distributional norm:
\begin{equation}\label{eq:def_global_Besov_norm}
\| f \|_{\alpha -2,T} := \| f \|_{\alpha -2,(\infty, T] \times \R_x^d } = \sup_{R>0} \sup_{z=(t,x) \colon t \leq T }  R^{2-\alpha} |\Psi_R\ast f|(z).
\end{equation}
With this notation in place, the global Schauder estimate takes the following form. Its proof is an adaptation of
\cite[Theorem 8.6.1]{MR1406091} to the present lower regularity context.

\begin{lemma}[Global Schauder estimate]\label{lem:Schauder_Lemma}
Let $ T \in \R$ and let $v$ be a continuous function on $(-\infty, T]  \times \R^d_x$  with compact support that satisfies 
\begin{equation}\label{eq:assumption_schauder}
(\partial_t - \Delta)v = f \qquad \text{ on } (-\infty, T)  \times \R^d_x
\end{equation} 
in the distributional sense. For $\alpha \in (0,1)$, if  $\| f \|_{\alpha -2,T}$ is finite, then so is $[v]_{\alpha,T}$ and 
\begin{equation}\label{eq:global_Hoelder_estimate}
[v]_{\alpha, T} \lesssim  \| f\|_{\alpha - 2,T} , 
\end{equation}
with an  implicit constant depending only on $d$ and $\alpha $.
\end{lemma}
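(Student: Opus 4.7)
The plan is to adapt the classical parabolic Schauder argument at the distributional level, essentially as in \cite[Theorem~8.6.1]{MR1406091}, to the test-function normalisation fixed in \eqref{eq:def_global_Besov_norm}. Since $v$ is continuous and compactly supported, the distribution $f = (\partial_t - \Delta) v$ is also compactly supported, and the identity $v = G \ast f$ holds with $G(s,y) = (4\pi s)^{-d/2} e^{-|y|^2/(4s)} \mathbf{1}_{s > 0}$ the forward heat kernel on $\R_t \times \R^d_x$. This makes $v$ available pointwise in terms of $f$, and from this Duhamel representation I would extract quantitative H\"older bounds by exploiting the scaling of $G$.

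Given $z_1, z_2 \in (-\infty, T] \times \R^d_x$ with $r := d(z_1, z_2)$, write $v^\rho := v \ast \Psi_\rho$ for the parabolic mollification at scale $\rho$ and split
\begin{equation*}
v(z_1) - v(z_2) = \bigl[v^r(z_1) - v^r(z_2)\bigr] + \bigl[v(z_1) - v^r(z_1)\bigr] - \bigl[v(z_2) - v^r(z_2)\bigr],
\end{equation*}
bounding each piece by $r^\alpha \|f\|_{\alpha-2,T}$. For the remainder terms, the plan is to telescope $v - v^r = \sum_{k \geq 0}(v^{r 2^{-k-1}} - v^{r 2^{-k}})$, with pointwise convergence of the tail by continuity of $v$. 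Each summand $w_k$ satisfies $(\partial_t - \Delta)w_k = g_k$ with $g_k = f \ast (\Psi_{r 2^{-k-1}} - \Psi_{r 2^{-k}})$; using the self-similarity \eqref{eq:self-similarity} to write $\Psi_{r 2^{-k}} = \Psi_{r 2^{-k-1}} \ast \Phi_{r 2^{-k-1}}$ (so that the bracketed kernel has a vanishing-mean factor), one represents
\begin{equation*}
w_k = u_k - u_k \ast \Phi_{r 2^{-k-1}}, \qquad u_k := G \ast f^{r 2^{-k-1}},
\end{equation*}
and the parabolic smoothness of $u_k$ (two derivatives of gain over the bound $\|f^{r 2^{-k-1}}\|_\infty \lesssim (r 2^{-k-1})^{\alpha-2}\|f\|_{\alpha-2,T}$) combined with the scale $r 2^{-k-1}$ of $\Phi_{r 2^{-k-1}}$ yields $\|w_k\|_\infty \lesssim (r 2^{-k})^\alpha \|f\|_{\alpha-2,T}$. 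The geometric series converges (using $\alpha > 0$) and gives $|v - v^r| \lesssim r^\alpha \|f\|_{\alpha-2,T}$ pointwise. For the smooth piece $v^r(z_1) - v^r(z_2)$, the analogous telescoping applied to $\partial_i v^r$ and $\partial_t v^r$, together with the derivative scaling $\partial_i \Psi_\rho = \rho^{-1} (\partial_i \Psi)_\rho$, yields $\|\nabla v^r\|_\infty \lesssim r^{\alpha-1}\|f\|_{\alpha-2,T}$ and $\|\partial_t v^r\|_\infty \lesssim r^{\alpha-2}\|f\|_{\alpha-2,T}$, and a parabolic mean-value estimate along a path from $z_1$ to $z_2$ produces the desired $r^\alpha$ bound.

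The hard part will be the quantitative justification of the two-derivative Duhamel gain in the estimate of each $w_k$. The naive bound $\|G \ast g_k\|_\infty \leq \|G\|_{L^1}\|g_k\|_\infty$ fails since $G$ is not integrable in the parabolic sense, and one must invoke the cancellation encoded in \eqref{eq:self-similarity} explicitly: pairing the factor $\delta - \Phi_{r 2^{-k-1}}$ against $G$ produces the ``second-order difference'' kernel $G - G \ast \Phi_{r 2^{-k-1}}$, which by an elementary heat-kernel computation is controlled in the appropriate $L^1$ sense by $(r 2^{-k})^2$ times a universal constant, so that after pairing with $f^{r 2^{-k-1}}$ the total gain is exactly $(r 2^{-k})^\alpha$. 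This is precisely the computation underlying classical parabolic Schauder theory at the distributional level, and the only adaptation needed for the present setting is to observe that any two admissible choices of bump function $\Psi$ yield equivalent norms, so that the estimates can be transferred from the standard setting to ours without change.
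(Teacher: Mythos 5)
Your route is a genuinely different global argument from the paper's local iteration, but it contains a gap at its central step. You claim that the kernel $G - G \ast \Phi_{r2^{-k-1}}$ is controlled in $L^1(\R_t \times \R^d_x)$ by $(r2^{-k})^2$ times a universal constant. This fails: writing $\rho = r2^{-k-1}$, the first-order parabolic Taylor contribution gives $\rho \int_{\rho^2}^{\infty}\int_{\R^d}|\nabla_y G(s,y)|\,dy\,ds \sim \rho\int_{\rho^2}^{\infty} s^{-1/2}\,ds = \infty$, and even restricting the time integral to $[0,T]$ leaves $\rho\sqrt{T}$, not $\rho^2$. The vanishing zeroth moment of $\delta - \Phi_\rho$ does not provide the required cancellation; one would also need a vanishing first moment, which $\Phi$ cannot have in its time variable since it is a nonnegative forward-in-time bump of unit mass. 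Consequently the best available bound is $\|w_k\|_\infty \lesssim (r2^{-k})^{\alpha-1}\sqrt{T}\,\|f\|_{\alpha-2,T}$, and since $\alpha - 1 < 0$ the telescoping series diverges. The Littlewood--Paley cancellation you invoke is genuine only when the dyadic blocks are \emph{frequency}-localized away from the origin; the physical-space mollifications $\Psi_\rho$ used to define the norms here do not provide that.

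The paper avoids the problem by a local rather than a global Duhamel argument: at a base scale $R$ it mollifies at $\underline{R} = \epsilon R$, cuts $f_{\underline{R}}$ off to the ball $B_{\overline{R}}(z_0)$ with $\overline{R} = \epsilon^{-1}R$, and solves the heat equation only with this truncated source, so that the propagator is effectively applied over a time window of length $\overline{R}^2$ and the $L^1$ norm is finite; this gives \eqref{eq:high-frequency-bound}. The remaining piece $v^{<} = v_{\underline{R}} - v^{>}$ is caloric on $B_{\overline{R}}(z_0)$ and is handled by the interior derivative estimates \eqref{eq:space_derivative_bound_small_frequencies}--\eqref{eq:time_derivative_bound_small_frequencies}, and the decomposition is closed by absorbing $[v]_\alpha$ terms for small $\epsilon$. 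If you wish to pursue your Duhamel/telescoping route, you would need to insert a comparable spatial or temporal truncation to make the kernel integrable, and verify that the resulting constant is independent of the diameter of $\mathrm{supp}\,v$, which the present version is not.
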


\begin{proof}[Proof of Lemma~\ref{lem:Schauder_Lemma} ]

By a standard approximation argument we can make the qualitative assumption that $[v]_{\alpha,T} < \infty$; indeed if we only assume that $v$ is continuous and compactly supported, 
apply the result to a function $v$ which is regularised, e.g. by convolution with a kernel at scale $\varepsilon$. The estimates on regularisations of $f$ are uniform in $\varepsilon$ and one can pass to the limit 
$\varepsilon \to 0$. 
Without loss of generality we assume $T=0$ and omit the subscript $T$, i.e., we will write  $[v]_{\alpha}$ instead of $[v]_{\alpha, T}$ and 
$  \| f\|_{\alpha - 2} $ instead of $ \| f\|_{\alpha - 2,T}$ for the half-space norms throughout the proof.
In particular, all functions respectively distributions used in the following argument are only evaluated for 
negative times.  

We consider the quantity
\begin{equation}
V(z_0,R) =  \frac{1}{R^{\alpha}}\| v - v(z_0) \|_{B_{R}(z_0)} ,
\end{equation}
noting that clearly 
\begin{equation}\label{eq:different_characterisations_Hoelder}
[v]_\alpha = \sup_{R>0} \sup_{z \in (\infty, 0]  \times \R^d_x}  V(z,R).
\end{equation}
To bound this quantity we  fix $z_0$ and a $R>0$ and introduce two auxiliary  scales $ \underline{R}<R$ and  $\overline{R} > R$ connected via
\begin{align}\label{eq:choice_R_and_T}
 \varepsilon^{-1} \underline{R} =  R =  \varepsilon \overline{R}
\end{align}
for a (small) $\varepsilon>0$ to be fixed below. 
To lighten notation, 
we write $v_{\underline{R}} := v \ast \Psi_{\underline{R} }$ and $f_{\underline{R}} := f\ast \Psi_{\underline{R}}$ for the regularised functions which satisfy the regularised problem 
\begin{equation}\label{eq:assumption_schauder_R}
(\partial_t - \Delta)v_{\underline{R}} = f_{\underline{R}}.
\end{equation} 
Then, on the parabolic cylinder $B_{\overline{R}}(z_0)$ of radius $\overline{R}$ around  $z_0$ consider the decomposition $v_{\underline{R}} = v^{>} + v^{<}$
as follows: $ v^{>} $ solves
\begin{align}
\label{eq:def_high_frequency_equation}
(\partial_t - \Delta ) v^{>}  &= f_{\underline{R}} \mathbf{1}_{B_{\overline{R}}(z_0)}   & & \text{on } \mathbb{R}_t \times \mathbb{R}_x^d,
\end{align}
or in other words, $v^{>}$ is given by space-time convolution of  $f_{\underline{R}} \mathbf{1}_{B_{\overline{R}}(z_0)} $ with a Gaussian heat kernel. 
Observe that $v^{<} = v_{\underline{R}} - v^{>}$ satisfies $(\partial_t - \Delta ) v^{<} = 0$ on $B_{\overline{R}}(z_0)$.
The rationale for the notation is that $ v^{>} $ should capture the high and $ v^{<}$ the low frequencies of $v_{\underline{R}}$ near $z_0$.

On the one hand, we have
\commentrb{Should the $B_R$ be a $B_{\overline{R}}$?}
\begin{equation}\label{eq:high-frequency-bound}
\| v^{>} \|_{B_{\overline{R}}(z_0)} \lesssim {\overline{R}}^2  \| f_{\underline{R}} \|_{B_R(z_0)} \lesssim {\underline{R}}^\alpha  \Big( \frac{{\overline{R}}^2}{ {\underline{R}}^{2}} \Big) \| f \|_{\alpha -2  }, 
\end{equation}
where the first inequality follows (for example) by calculating the $L^1$ norm of the heat kernel, restricted to a parabolic space-time ball of radius $\overline{R}$.
The second inequality follows from the definition of $f_{\underline{R}}$ and that of the negative regularity Besov norm \eqref{eq:def_global_Besov_norm}.

On the other hand,  by standard regularity properties of caloric functions (see e.g.~\cite[Theorem~8.4.4]{MR1406091}),
\begin{equation}\label{eq:space_derivative_bound_small_frequencies}
\| \partial_{x_i} v^{<} \|_{B_{R}(z_0)} \lesssim {\overline{R}}^{-1} \| v^{>} -v(z_0) \|_{B_{\overline{R}}(z_0)}
\end{equation}
and 
\begin{equation}\label{eq:time_derivative_bound_small_frequencies}
\| \partial_{t} v^{<} \|_{B_{R}(z_0)} \lesssim {\overline{R}}^{-2} \| v^{>} -v(z_0)\|_{B_{\overline{R}}(z_0)}.
\end{equation}

For $z \in B_{R}(z_0)$ we get  from the triangle inequality 
\begin{align}\label{eq:decomposition_v_increments}
|v(z) - v(z_0)|  \leq |v(z) - v_{\underline{R}}(z)| + |v^{>}(z) - v^{>}(z_0)  | + |v^{<}(z) - v^{<}(z_0)  | + |v_{\underline{R}}(z_0) - v(z_0)|  .
\end{align}
For the first and last term on the right-hand side of \ref{eq:decomposition_v_increments} we invoke \eqref{eq:regularisation_error} and bound
\begin{equation}\label{eq:Regularisation_error}
 |v(z) - v_{\underline{R}}(z)| +  |v(z_0) - v_{\underline{R}}(z_0)| \leq 2 {\underline{R}}^{\alpha} [v]_{\alpha},
\end{equation}
while for the second term we use \eqref{eq:high-frequency-bound}
\begin{equation}\label{eq:high_frequency_error}
 |v^{>}(z) - v^{>}(z_0)  | \leq 2 \| v^{>} \|_{B_{R(z_0)}} \lesssim \underline{R}^\alpha  \Big( \frac{{\overline{R}}^2}{ \underline{R}^{2}} \Big) \| f \|_{\alpha -2  } ,
 \end{equation}
and for the third we write 
\begin{align}
\notag
 |v^{<}(z) - v^{<}(z_0)  | &\leq |x-x_0|  \| \partial_{x_i} v^{<} \|_{B_{R}(z_0)}  + |t-t_0|  \| \partial_{t} v^{<} \|_{B_{R}(z_0)} \\
 \notag
 & \lesssim  \frac{R}{\overline{R}}  \| v^{<} -v(z_0)  \|_{B_{\overline{R}}(z_0)} +  \frac{R^2}{{\overline{R}}^2}  \| v^{<} -v(z_0)  \|_{B_{\overline{R}}(z_0)}\\
 \notag
 & \lesssim \frac{R}{\overline{R}} \Big( \| v_{\underline{R}}  -v(z_0)\|_{B_{\overline{R}}(z_0)} +\|v^{>}\|_{B_{\overline{R}}(z_0)}\Big) \\
  & \lesssim \frac{R}{\overline{R}} \Big( \| v -v(z_0) \|_{B_{\overline{R}}(z_0)}  + {\underline{R}}^\alpha [v]_{\alpha} +  \underline{R}^\alpha  \Big( \frac{{\overline{R}}^2}{ T^{2}} \Big) \| f \|_{\alpha -2 }  \Big) \;,
  \label{eq:low_frequency_error}
\end{align}
where in the first line  $x,x_0$ and $t, t_0$ refer to the space and time components of the space-time points $z,z_0$ respectively, where in the second inequality  we invoke  \eqref{eq:space_derivative_bound_small_frequencies} 
and  \eqref{eq:time_derivative_bound_small_frequencies} as well as the fact that $z \in B_{R}(z_0)$ by assumption, where we  used $\frac{R}{\overline{R}} = \varepsilon \leq 1$ and the triangle inequality for $v^{<} = v_T - v^{<}$ in the third inequality, 
and finally \eqref{eq:high-frequency-bound} in the last inequality.

Summarising \eqref{eq:decomposition_v_increments}, \eqref{eq:Regularisation_error}, \eqref{eq:high_frequency_error} and \eqref{eq:low_frequency_error} and using once more that $\frac{R}{\overline{R}}\leq 1$ to absorb the term involving $f$ on the right-hand side of \eqref{eq:low_frequency_error} we get
\begin{align}
|v(z) - v(z_0)|
&\lesssim  {\underline{R}}^\alpha [v]_{\alpha} + {\underline{R}}^\alpha  \Big( \frac{{\overline{R}}^2}{ {\underline{R}}^{2}} \Big) \| f \|_{\alpha -2  } +  \frac{R}{\overline{R}}  \| v_{\underline{R}} -v(z_0) \|_{B_{\overline{R}}(z_0)}  \nnb
&=  {\underline{R}}^\alpha [v]_{\alpha} +   \varepsilon^{-4} {\underline{R}}^\alpha \| f \|_{\alpha -2  } +  \varepsilon  \| v_{\underline{R}} -v(z_0) \|_{B_{\overline{R}}(z_0)}  .
\end{align}
Multiplying by $R^{-\alpha}$, taking the supremum over $z \in B_R(z_0)$ and recalling the choices \eqref{eq:choice_R_and_T} for ${\underline{R}}$ and $\overline{R}$ depending on $R$, one obtains
\begin{equation}
V(z_0,R)  \lesssim  \varepsilon^\alpha [v]_{\alpha} + \varepsilon^{-4+\alpha}\| f \|_{\alpha -2  }   +  \varepsilon^{1-\alpha} \frac{1}{{\overline{R}}^\alpha}  \| v -v(z_0) \|_{B_{\overline{R}}(z_0)}  . 
\end{equation}
Finally, taking the supremum over $z_0 \in \R_t \times \R^d_x$, invoking \eqref{eq:different_characterisations_Hoelder} we obtain 
\begin{equation}
[v]_{\alpha} \lesssim \varepsilon^\alpha [v]_{\alpha}   + \varepsilon^{-4+\alpha}\| f \|_{\alpha -2  }   +  \varepsilon^{1-\alpha} [v]_{\alpha} ,
\end{equation}
so that by choosing $\varepsilon>0$ small enough, recalling that $\alpha \in (0,1)$ and using the a priori assumption that $[v]_{\alpha }< \infty$, the desired estimate \eqref{eq:global_Hoelder_estimate} follows. 
\end{proof}

The following corollary is obtained by post-processing the estimate \eqref{eq:global_Hoelder_estimate} to the form we will actually use. 

\begin{corollary}[Local Schauder estimate]\label{Cor:local_Schauder_estimate}
For $z_0 \in \R_t \times \R_x^d$ and $R>0$ assume that $v$ is a continuous function that satisfies \eqref{eq:assumption_schauder} on $B_{5R}(z_0)$ in the distributional sense.  Fix $\alpha \in (0,1)$ and $\kappa \in (0,1)$ small enough. Then  
\begin{equation}
[v]_{\alpha,B_R(z_0)} \lesssim R^{2-\alpha - 3\kappa}  \| f\|_{-3\kappa, B_{5R}(z_0)} + R^{-\alpha}\| v \|_{B_{5R}(z_0)}.
\end{equation}
The implicit constant depends only on $\alpha$ and $d$.
\end{corollary}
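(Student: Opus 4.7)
The strategy is to reduce to the unit-scale case $R = 1$, $z_0 = 0$ via parabolic scaling and then apply the global Schauder estimate of Lemma~\ref{lem:Schauder_Lemma} to a suitable cutoff of $v$. Setting $\bar v(t,x) := v(R^2 t + t_0,\, Rx + x_0)$ (with $z_0 = (t_0, x_0)$) turns the equation into $(\partial_t - \Delta)\bar v = R^2 \bar f$ on $B_5(0)$, and the homogeneities $[\bar v]_{\alpha, B_1(0)} = R^\alpha [v]_{\alpha, B_R(z_0)}$, $\|\bar v\|_{B_5(0)} = \|v\|_{B_{5R}(z_0)}$, $\|\bar f\|_{-3\kappa, B_5(0)} = R^{-3\kappa}\|f\|_{-3\kappa, B_{5R}(z_0)}$ produce exactly the two powers of $R$ in the claimed bound. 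Henceforth the statement is proved at scale $R = 1$.

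Next, introduce a smooth parabolic cutoff $\eta \colon \R_t \times \R^d_x \to [0,1]$ that is equal to $1$ on $B_1(0)$, supported in $B_2(0)$, and has bounded derivatives of all orders. Extending $\tilde v := \eta v$ by zero outside $B_2(0)$ gives a continuous, compactly supported function, and using the identity $-2\nabla\eta\cdot\nabla v = -2\nabla\cdot(v\nabla\eta) + 2v\Delta\eta$ one checks that
\[
(\partial_t - \Delta)\tilde v = \tilde f, \qquad \tilde f := \eta f + v(\partial_t\eta + \Delta\eta) - 2\nabla\cdot(v\nabla\eta),
\]
distributionally on all of $\R_t \times \R^d_x$ (using that $v$ solves \eqref{eq:assumption_schauder} on a neighbourhood of $\supp\eta$). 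The global Schauder estimate of Lemma~\ref{lem:Schauder_Lemma} applied on the half-space $(-\infty, 0]$ then yields $[v]_{\alpha, B_1(0)} \leq [\tilde v]_{\alpha, 0} \lesssim \|\tilde f\|_{\alpha - 2, 0}$, so it remains to bound the right-hand side by $\|f\|_{-3\kappa, B_5(0)} + \|v\|_{B_5(0)}$.

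The two terms of $\tilde f$ containing $v$ are easy. The smooth term $v(\partial_t\eta + \Delta\eta)$ is uniformly bounded by $\lesssim \|v\|_{B_5(0)}$ and supported in $B_2(0)$; splitting test-function scales into $R' \leq 1$ (use $\|\Psi_{R'}\|_{\bbL^1} = 1$) and $R' \geq 1$ (use $\|\Psi_{R'}\|_{\bbL^\infty} \lesssim R'^{-d-2}$ combined with the compact support of the integrand) yields $\|v(\partial_t\eta + \Delta\eta)\|_{\alpha - 2, 0} \lesssim \|v\|_{B_5(0)}$. The divergence term is handled identically after moving the spatial derivative onto $\Psi_{R'}$; the resulting factor $R'^{-1}$ is absorbed in both regimes since $1 - \alpha > 0$ for $R' \leq 1$ and the compact-support gain $(R')^{-d-2}$ easily beats $R'^{2-\alpha-1}$ for $R' \geq 1$.

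The main obstacle is the estimate of $\eta f$. For test-function scales $R' \leq 1$ the parabolic analogue of the reconstruction estimate \eqref{e:reconstruction2} provides
\[
|\Psi_{R'} \ast (\eta f)(z)| \lesssim R'^{-3\kappa}\bigl(|\eta(z)| + R'^\beta [\eta]_{\beta, B_{2R'}(z)}\bigr)\|f\|_{-3\kappa, B_{2R'}(z)}
\]
for any $\beta \in (3\kappa, 1)$; the choice of $\eta$ with support inside $B_2(0)$ guarantees $B_{2R'}(z) \subset B_5(0)$ whenever the left-hand side is nonzero, so $\|f\|_{-3\kappa, B_5(0)}$ controls the right-hand side, and multiplying by $R'^{2-\alpha}$ gives $R'^{2-\alpha-3\kappa}\|f\|_{-3\kappa, B_5(0)} \lesssim \|f\|_{-3\kappa, B_5(0)}$ since $2 - \alpha - 3\kappa > 0$. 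For scales $R' > 1$ we use the self-similarity \eqref{eq:self-similarity}, $\Psi_{R'} = \Psi_{R' 2^{-n}} \ast \Psi_{R', n}$, with $n$ chosen so that $R' 2^{-n} \leq 1 < 2 R' 2^{-n}$: the inner convolution $(\eta f) \ast \Psi_{R' 2^{-n}}$ falls under the small-scale estimate, is bounded in $\bbL^\infty$ by $\lesssim \|f\|_{-3\kappa, B_5(0)}$, and keeps its support inside an $O(1)$ ball, after which convolution with $\Psi_{R', n}$ gains a factor $\|\Psi_{R', n}\|_{\bbL^\infty} \lesssim R'^{-d-2}$ that easily beats $R'^{2-\alpha}$. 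Summing the two regimes yields $\|\eta f\|_{\alpha - 2, 0} \lesssim \|f\|_{-3\kappa, B_5(0)}$, closing the estimate. The smallness condition on $\kappa$ in the statement ensures exactly the two inequalities $\alpha + 3\kappa < 2$ and the existence of an admissible $\beta \in (3\kappa, 1)$ needed above.
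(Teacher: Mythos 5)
Your proof is correct and takes essentially the same route as the paper: localise with a cutoff so the global Schauder estimate of Lemma~\ref{lem:Schauder_Lemma} applies, then estimate the three resulting source terms by splitting into small and large convolution scales, using the reconstruction estimate for $\eta f$ at small scales and the self-similarity \eqref{eq:self-similarity} at large scales. The only stylistic difference is that you rescale to $R=1$ at the outset, whereas the paper works directly at scale $R$ with a cutoff whose derivatives scale like $R^{-1}$, $R^{-2}$; the two are equivalent. (Minor: your ``$(\partial_t - \Delta)\tilde v = \tilde f$ on all of $\R_t\times\R^d_x$'' should read on $(-\infty,0)\times\R^d_x$, which is all the lemma needs.)
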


\begin{proof}
Without loss of generality we assume $z_0 =0$. Throughout the proof we omit the argument in balls around $0$, i.e. for $R >0$ we write $B_R$ instead of $B_R(0)$. 
Let $\eta \in C^{\infty}(\R_t \times \R_x^d)$ be a cut-off function with the following properties:
\begin{itemize}
\item $\eta \equiv 1$ on $B_R$ and $\eta(z) =0$ for $z = (t,x)$ with $t \leq  -4 R^2$ or with $|x| \geq 2R$.
\item We have the estimates  $\| \partial_x \eta \| \lesssim \frac{1}{R}$, $ \|\Delta\eta\|\lesssim \frac{1}{R^2}$ and $\| \partial_t \eta \| \lesssim \frac{1}{R^2}$.
\end{itemize} 
Then the function $v \eta $ (naturally defined as $\equiv 0$ outside of $B_{2R}$) satisfies
\begin{align}
\notag
( \partial_t - \Delta ) (v \eta) &= \eta ( \partial_t - \Delta ) v   + v ( \partial_t - \Delta )  \eta - 2 \nabla v \cdot \nabla \eta \\
\label{Cor:local_Schauder_decomposition_RHS_localised_v}
& = \eta f  + v  (\partial_t - \Delta )  \eta - 2 \nabla v \cdot \nabla \eta 
\end{align}
on all of $ (-\infty, 0)  \times \R_x^d$. Here  $\nabla$ refers to the spatial gradient and $\cdot$ is the canonical scalar product on $\R^d_x$. 
In order to apply the global Schauder bound from Lemma~\ref{lem:Schauder_Lemma} we estimate the $\bbL^\infty$ norm of the individual terms on the right-hand side convolved with $\Psi_L$ for $L>0$.
The argument splits into the cases $L\leq R$ and $L > R$. For $L \leq R$ the function  $\eta f \ast \Psi_L$ is supported in $B_{3R}$ and for $\bar{z} \in B_{3R}$ we have by~\eqref{e:reconstruction2}
\begin{equation}\label{Cor:local_Schauder_estimate_First_Term_RHS1}
| \eta f  \ast \Psi_L |(\bar{z})   \leq  (L^\alpha [\eta]_\alpha + \|\eta \| ) \| f \|_{-3\kappa, B_{2L}(\bar{z}) } L^{-3\kappa} 
\lesssim  \| f \|_{-3\kappa, B_{5R} } L^{-3\kappa}   ,
\end{equation}
where we use that by assumption on $\eta$ we have $(R^\alpha [\eta]_\alpha + \|\eta \| ) \lesssim 1$.
 For $L \geq R$ we apply \eqref{e:reconstruction3}:
\begin{equation}
  \| \eta f  \ast \Psi_L \| 
  \lesssim  (R^\alpha [\eta]_\alpha + \|\eta \| ) \| f \|_{-3\kappa, B_{5R}}  R^{-3\kappa}\frac{R^{2+d}}{L^{2+d}}.
  \label{Cor:local_Schauder_estimate_First_Term_RHS2}
\end{equation}
We can combine the estimates \eqref{Cor:local_Schauder_estimate_First_Term_RHS1} and \eqref{Cor:local_Schauder_estimate_First_Term_RHS2} into 
\begin{equation}\label{Cor:local_Schauder_estimate_First_Term_RHS3}
\| \eta f  \ast \Psi_L \| \lesssim R^{2 - \alpha - 3\kappa} \| f \|_{-3\kappa, B_{5R}} L^{\alpha - 2} .
\end{equation}
For the remaining terms a similar splitting into $L \leq R $ and $L > R$ is done. 
For the second term on the right-hand side of \eqref{Cor:local_Schauder_decomposition_RHS_localised_v} this yields
\begin{align}
\notag
\|  v  (\partial_t - \Delta )  \eta  \ast \Psi_L \| & \lesssim \|  v  (\partial_t - \Delta )  \eta  \|_{B_{2R}}  \min \Big\{1 , \frac{R^{2+d}}{L^{2+d}}   \Big\} \\
\notag
&\lesssim R^{-2} \| v \|_{B_{2R}}   \min \Big\{1 , \frac{R^{2+d}}{L^{2+d}}  \Big\} \\
\label{Cor:local_Schauder_estimate_Second_Term_RHS}
& \lesssim R^{-\alpha} \| v \|_{B_{2R}
}  L^{\alpha-2}.
\end{align}
Finally, for the third term on the right-hand side of \eqref{Cor:local_Schauder_decomposition_RHS_localised_v}  we write
\begin{align}
\notag
\| (\nabla v \cdot \nabla \eta ) \ast \Psi_L \| &\leq   \sum_{j=1}^d \|  v \cdot \partial_j \eta  \ast  \partial_j \Psi_L \| +   \|  v \Delta \eta  \ast  \Psi_L \| \\
\notag
&\lesssim \| v \|_{B_{2R}}  \Big(R^{-1}L^{-1} + R^{-2}  \Big)  \min \Big\{1 , \frac{R^{2+d}}{L^{2+d}}  \Big\}  \\
\label{Cor:local_Schauder_estimate_Third_Term_RHS}
& \lesssim R^{-\alpha} \| v \|_{B_{2R}} L^{\alpha-2}.
\end{align}
Combining \eqref{Cor:local_Schauder_estimate_First_Term_RHS3}, \eqref{Cor:local_Schauder_estimate_Second_Term_RHS}, \eqref{Cor:local_Schauder_estimate_Third_Term_RHS} and  recalling Lemma~\ref{lem:Schauder_Lemma} (and being a bit more generous in the $\bbL^\infty$ norm of $v$) the desired conclusion follows.
\end{proof}

\subsection{Small scale estimates for remainder equation}

We assume we are given a continuous function $v$ that satisfies the remainder equation~\eqref{eq:remainder_equation}  in the distributional sense  on some space-time cylinder $B$. 
The main aim of this subsection is to establish the following interior regularity estimate (Corollary \ref{cor:interior_regularity_estimate}) that permits to control a local high regularity ($\alpha$-H\"older) norm 
in terms of a low regularity ($\bbL^\infty$) norm of $v$ as well as distributional norms of $Z_1$, $Z_2$ and $Z_3$.

\begin{corollary}[Interior regularity estimate]\label{cor:interior_regularity_estimate}
Let $v $ be a continuous function on an open set $B \subseteq \R_t \times \R^d_x$ that solves \eqref{eq:remainder_equation} on $B$ for given space-time distributions $Z_1$, $Z_2$, $Z_3$. 
 Let $\alpha>2\kappa>0$ be sufficiently small, 
and let $R^*>0$ be small enough to ensure that 
\begin{equation}\label{eq:smallness-condition}
(R^*)^{2-\kappa} \| v \|_B  \|Z_1\|_{-\kappa,B}  + (R^*)^{2-2\kappa} \| Z_2 \|_{-2\kappa,B} \ll 1.
\end{equation}
Then:
\begin{align}
\notag
  \sup_{R \leq R^*} \sup_{z \colon B_{2R}(z) \subseteq B} R^\alpha [v]_{\alpha,B_R(z)}
  &\lesssim  (R^*)^{2} \| v \|_{B}^3 + (R^*)^{2-\kappa} \| v \|_{B}^2  \| Z_1 \|_{-\kappa, B}  \\
& + (R^*)^{2 - 2\kappa} \| v \|_B   \| Z_2 \|_{-2\kappa,B}    + (R^*)^{2 - 3\kappa }\| Z_3 \|_{-3\kappa,B} 
   + \| v \|_{B}. \label{eq:main_interior_regularity_estimate}
\end{align}
\end{corollary}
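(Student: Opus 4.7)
My plan is to apply the local Schauder estimate (Corollary~\ref{Cor:local_Schauder_estimate}) at a scale $\tilde R$ slightly smaller than $R$, and to control the right-hand side $f = -\lambda v^3 - 3v^2 Z_1 - 3vZ_2 - Z_3$ of the remainder equation~\eqref{eq:remainder_equation} via the reconstruction estimate~\eqref{e:reconstruction2}. The main conceptual difficulty is that the reconstruction of the products $v^2 Z_1$ and $vZ_2$ introduces a H\"older seminorm of $v$ on a ball larger than the one where Schauder was applied, yielding a self-consistent inequality that must be closed by absorption using the smallness condition~\eqref{eq:smallness-condition}.

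Concretely, I would fix a small constant $\eta \in (0,1/20)$, set $\tilde R = \eta R$, and for $z_0$ with $B_{2R}(z_0) \subset B$ and $\bar z$ near $z_0$ apply Schauder at scale $\tilde R$ to obtain
\begin{equation*}
\tilde R^\alpha [v]_{\alpha, B_{\tilde R}(\bar z)} \lesssim \tilde R^{2-3\kappa}\, \|f\|_{-3\kappa, B_{5\tilde R}(\bar z)} + \|v\|_B.
\end{equation*}
The cubic term contributes $\tilde R^{3\kappa}\|v\|_B^3$ directly; for the products $v^2 Z_1$ and $vZ_2$ the reconstruction estimate~\eqref{e:reconstruction2} provides a decomposition in terms of $\|v\|_B$ and $[v]_\alpha$ on a ball $B_{15\tilde R}(\bar z)$ that, for $\eta$ small, remains inside $B_R(z_0)\subset B$. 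Converting from $\|\cdot\|_{-\kappa}$ and $\|\cdot\|_{-2\kappa}$ to $\|\cdot\|_{-3\kappa}$ at small radius costs only small positive powers of $\tilde R$, producing an inequality of the form
\begin{equation*}
\tilde R^\alpha [v]_{\alpha, B_{\tilde R}(\bar z)} \lesssim A(R^*) + \big(\tilde R^{2-\kappa}\|v\|_B\|Z_1\|_{-\kappa,B} + \tilde R^{2-2\kappa}\|Z_2\|_{-2\kappa,B}\big)\, R^\alpha [v]_{\alpha, B_R(z_0)},
\end{equation*}
where $A(R^*)$ collects exactly the terms appearing on the right-hand side of~\eqref{eq:main_interior_regularity_estimate}.

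To close the argument I will combine this with the elementary covering inequality
\begin{equation*}
R^\alpha [v]_{\alpha, B_R(z_0)} \lesssim \eta^{-\alpha} \sup_{\bar z}\, \tilde R^\alpha [v]_{\alpha, B_{\tilde R}(\bar z)} + \eta^{-\alpha}\|v\|_B,
\end{equation*}
obtained by distinguishing pairs in $B_R(z_0)$ at parabolic distance above or below $\tilde R$ (in the former case using the trivial $\bbL^\infty$ bound, in the latter by covering with small cylinders centred in a slightly smaller concentric ball). Substituting the Schauder bound and observing that under~\eqref{eq:smallness-condition} the coefficient multiplying $R^\alpha[v]_{\alpha,B_R(z_0)}$ on the right is $\ll 1$ (the additional powers of $\eta$ are innocuous since $2-\kappa, 2-2\kappa>0$), I can absorb this term on the left and deduce~\eqref{eq:main_interior_regularity_estimate} after taking suprema over $R \leq R^*$ and admissible $z_0$.

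I expect the main obstacle to be the parabolic-scale bookkeeping needed to ensure that the larger balls produced by the reconstruction (of radius $\sim 10\tilde R$) stay inside $B_R(z_0)$ and that the covering by small cylinders is geometrically compatible with this constraint; this is what forces the choice $\tilde R = \eta R$ with small $\eta$ rather than $\tilde R = R$. A secondary technical point is the justification of the absorption, which formally requires $[v]_{\alpha, B_R(z_0)}$ to be a priori finite; this can be handled by a mollification argument yielding quantitative bounds uniform in the regularisation parameter, exactly as in the proof of Lemma~\ref{lem:Schauder_Lemma}.
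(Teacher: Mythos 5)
Your overall strategy (local Schauder plus reconstruction plus absorption) is the paper's, but the absorption step as you have set it up does not close. The issue is that the reconstruction estimate~\eqref{e:reconstruction2}, applied to control $\|f\|_{-3\kappa, B_{5\tilde R}(\bar z)}$ for a point $\bar z$ arising from your covering of $B_R(z_0)$, produces the H\"older seminorm $[v]_{\alpha, B_{\sim 15\tilde R}(\bar z)}$ --- and since $\bar z$ has to range over \emph{all} of $B_R(z_0)$ (the pairs $z,z'$ you need to cover can be arbitrarily close to the boundary of $B_R(z_0)$), the ball $B_{15\tilde R}(\bar z)$ necessarily pokes outside $B_R(z_0)$, no matter how small you take $\eta$. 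So after substituting Schauder into your covering inequality, the right-hand side contains $R^\alpha [v]_{\alpha, B_{(1+C\eta)R}(z_0)}$, not $R^\alpha[v]_{\alpha,B_R(z_0)}$, and the absorption at fixed $(R,z_0)$ cannot be performed. Taking $\tilde R = \eta R$ small only shrinks the overshoot; it does not eliminate it.

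The paper resolves exactly this by absorbing at the level of a \emph{global} supremum rather than ball-by-ball: it defines $Q(B) = \sup\{R^\alpha[v]_{\alpha, B_R(z)} : R \leq R^*,\, B_{20R}(z)\subseteq B\}$, applies the Schauder estimate (at scale $R$ itself, without any $\eta$) to obtain $R^\alpha[v]_{\alpha, B_R(z_0)} \lesssim \varepsilon\, R^\alpha[v]_{\alpha, B_{10R}(z_0)} + F(B)$, and then uses a chaining/covering argument to show $R^\alpha[v]_{\alpha,B_{10R}(z_0)} \lesssim Q(B)$ --- crucially, the buffer in the constraint $B_{20R}(z)\subseteq B$ is chosen large enough that all the small balls produced by the cover still satisfy the constraint defining $Q(B)$. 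The self-referential inequality $Q(B)\lesssim\varepsilon Q(B)+F(B)$ then closes at once, and a final (omitted) covering passes from the constraint $B_{20R}\subseteq B$ to the stated $B_{2R}\subseteq B$. You could repair your argument by globalising in the same way, or alternatively by iterating the inequality over a geometric sequence of nested balls $B_{(1+\eta_n)R}(z_0)$ with $\sum\eta_n<\infty$ and the product of the small factors summing geometrically; but some such global or iterative device is genuinely required, and your current formulation glosses over it. (Your secondary remark about a priori finiteness via mollification is correct and matches the paper.)
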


The proof relies on the multiplicative inequality \eqref{e:reconstruction}
and the local Schauder estimate of Corollary~\ref{Cor:local_Schauder_estimate} and is closely related to the ``local-in-time well-posedness theory'' of \cite{MR2016604},
albeit with taking some care of spatial dependency.

\begin{proof}
We fix a space-time point $z_0$ and a scale $R$. We aim to apply the local Schauder estimate, Corollary \ref{Cor:local_Schauder_estimate} to $v$ satisfying \eqref{eq:remainder_equation}. 
For  any space-time point $z$ and $L >0$  we calculate
\begin{align}
| (\partial_t - \Delta ) v \ast \Psi_L | (z)   &=  | (- \lambda v^3 -3v^2 Z_1 - 3vZ_2 - Z_3) \ast \Psi_L | (z)  \nnb
&\lesssim \| v \|^3_{B_L(z)}  + \| v \|_{B_{2L}(z)} \big(   [v]_{\alpha, B_{2L}(z) } L^\alpha + \| v \|_{B_{2L}(z)} \big)   \| Z_1 \|_{-\kappa, B_{2L}(z)} L^{-\kappa}  \nnb
& \qquad + \big(   [v]_{\alpha, B_{2L}(z) } L^\alpha + \| v \|_{B_{2L}(z)} \big) \| Z_2 \|_{-2\kappa, B_{2L}(z)} L^{-2\kappa}  + \| Z_3 \|_{-3\kappa, B_{2L}(z)} L^{-3 \kappa},
\end{align}
where we have made use of the elementary inequality $[v^2]_{\alpha,  B_{2L}(z)} \leq 2  [v]_{\alpha,  B_{2L}(z)} \| v\|_{ B_{2L}(z)}$.
Therefore,  Corollary \ref{Cor:local_Schauder_estimate} yields for $z_0$, $R$ with $B_{10R}(z_0) \subseteq B$
\begin{align}
[v]_{\alpha, B_{R}(z_0)}& \lesssim R^{2-\alpha - 3\kappa} \sup_{\substack{z,L\\ B_L(z)  \subseteq B_{5R}(z_0) } } L^{3\kappa}  \big|  (\partial_t - \Delta ) v \ast \Psi_L\big|(z) + R^{-\alpha}\| v \|_{B_{5R}(z)} \nnb
&\lesssim R^{2-\alpha}   \| v \|^3_{B_{5R}(z_0)}   
+  R^{2-\alpha-\kappa}       \| v \|_{B_{10R}(z_0)} \big(   [v]_{\alpha, B_{10R}(z_0) } R^\alpha + \| v \|_{B_{10R}(z_0)} \big)   \| Z_1 \|_{-\kappa, B_{10L}(z_0)}   \nnb
& \qquad +  R^{2-\alpha- 2\kappa}    \big(   [v]_{\alpha, B_{10R}(z_0) } R^\alpha + \| v \|_{B_{10R}(z_0)} \big) \| Z_2 \|_{-2\kappa, B_{10R}(z_0)}  
\nnb &\qquad +  R^{2-\alpha-3\kappa}    \| Z_3 \|_{-3\kappa, B_{10R}(z_0)} 
 + R^{-\alpha} \| v \|_{B_{10R}(z_0)}\nnb
 &=  [v]_{\alpha, B_{10R}(z_0) } \Big(   R^{2-\kappa}  \| v \|_{B_{10R}(z_0)} \| Z_1 \|_{-\kappa, B_{10R}(z_0)}    + R^{2- 2\kappa}  \| Z_2 \|_{-2\kappa, B_{10R}(z_0)}       \Big) \nnb
&\qquad + R^{2-\alpha}   \| v \|^3_{B_{5R}(z_0)}  
+   R^{2-\alpha-\kappa}       \| v \|^2_{B_{10R}(z_0)}    \| Z_1 \|_{-\kappa, B_{10R}(z_0)}   \nnb
& \qquad +  R^{2-\alpha- 2\kappa}  \| v \|_{B_{10R}(z_0)}  \| Z_2 \|_{-2\kappa, B_{10R}(z_0)}  
+  R^{2-\alpha-3\kappa}    \| Z_3 \|_{-3\kappa, B_{10R}(z_0)} 
\nnb
&\qquad  + R^{-\alpha} \| v \|_{B_{10R}(z_0)},
\end{align}
where the last step only consists of rearranging terms. 
Now for $\varepsilon>0$ to be fixed below, let $R^*$ be small enough to ensure that 
\begin{equation}\label{eq:smallness-conditionA}
(R^*)^{2-\kappa} \| v \|_B  \|Z_1\|_{-\kappa,B}  + (R^*)^{2-2\kappa} \| Z_2 \|_{-2\kappa,B} \leq  \varepsilon,
\end{equation}
where we recall that $B$ is the open set on which $v$ solves~\eqref{eq:remainder_equation}, 
and set 
\begin{equation}
Q(B) = \sup_{\substack{z,R \leq R^* \\B_{20R}(z)  \subseteq B}} R^{\alpha} [v]_{\alpha,B_R(z)}
\end{equation}
as well as 
\begin{align} 
F(B) &=  ({R^*})^{2}   \| v \|^3_{B}  
+  ( {R^*})^{2-\kappa}       \| v \|^2_{B}    \| Z_1 \|_{-\kappa, B}   
  +  ({R^*})^{2- 2\kappa}  \| v \|_{B}  \| Z_2 \|_{-2\kappa,  B} \nnb
 & \qquad +  (R^*)^{2-3\kappa}    \| Z_3 \|_{-3\kappa, B} 
 +  \| v \|_{B}.
\end{align}
Then we get 
\begin{align}\label{eq:absorption_bound}
Q(B) \lesssim \varepsilon  \sup_{\substack{z,R \leq R^* \\B_{20R}(z)  \subseteq B}} R^{\alpha} [v]_{\alpha,  B_{10R}(z)} + F(B).
\end{align}
It remains to observe that there exists a number $N$ (depending only on $d$) such that each of the balls $B_{ 10R}$ 
can be covered by $N$ parabolic balls  $B_{\frac{R}{3}} (z_i)$ for $i = 1, \ldots, N$ with $B_{10R}(z_i) \subseteq B$.
Therefore, by subadditivity of the $\alpha$-H\"older norm we get, for $z$ with $B_{20R}(z) \subseteq B$,
\begin{equation}
R^{\alpha} [v]_{\alpha,  B_{10R}(z)}  \leq R^{\alpha} \sum_{i=1}^N [v]_{\alpha, B_{\frac{R}{4} }(z_i)} \leq 3^{\alpha}  N Q(B),
\end{equation}
which turns \eqref{eq:absorption_bound} into 
\begin{align}\label{eq:absorption_bound2}
Q(B) \lesssim  \varepsilon Q(B) + F(B), 
\end{align}
and thus for $\varepsilon$ small enough $Q(B) \leq F(B)$. To turn this estimate into the desired form \eqref{eq:main_interior_regularity_estimate}, it only remains to replace the supremum over balls $B_R(z)$ such that $B_{20R}(z)$ is contained in $B$ into the supremum over balls  $B_R(z)$ such that $B_{2R}(z)$  is contained in $B$. This can be achieved easily by another covering argument that we omit.
\end{proof}

The statement of Corollary~\ref{cor:interior_regularity_estimate} takes a simpler form with  a specific choice of parameters and under the following assumption. 
\begin{assumption}\label{Assumptionc}
  Let $B$ be a parabolic cylinder, and let $c <1$, $\kappa<1$. We assume that 
\begin{equation}
  \| Z_1 \|_{-\kappa,B} \leq c^3 \| v \|_B^{1-\kappa}, \qquad
   \| Z_2 \|_{-2\kappa, B} \leq c^3 \| v \|_B^{2(1-\kappa)}, \qquad
  \| Z_3 \|_{-3\kappa, B} \leq c^3 \| v \|_B^{3(1-\kappa)}
                             .
\end{equation}
\end{assumption}
\begin{corollary}\label{cor:simplified_Schauder}
Let $B $ be a parabolic cylinder, assume Assumption~\ref{Assumptionc} for given $c \ll 1$, $\kappa \ll 1$, and let 
\begin{equation}
R^* =  \| v \|_B^{-1}.
\end{equation}
Then, for $\alpha>0$ sufficiently small, for all $R\leq R^*$ and all $z \in B$ for which $B_{2R}(z) \subseteq B$,
\begin{equation}
R^{\alpha }[v]_{\alpha, B_R(z)} \lesssim \| v \|_{B},
\end{equation}
where the implicit constant only depends on $\alpha$ and $d$.
\end{corollary}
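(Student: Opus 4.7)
The strategy is simply to apply Corollary~\ref{cor:interior_regularity_estimate} with the specific choice $R^*=\|v\|_B^{-1}$ and use Assumption~\ref{Assumptionc} to cancel the powers of $\|v\|_B$ in each of the five terms on the right-hand side of \eqref{eq:main_interior_regularity_estimate}. The only non-immediate check is the smallness hypothesis \eqref{eq:smallness-condition} needed to invoke that corollary.

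First I would verify \eqref{eq:smallness-condition}. Substituting $R^*=\|v\|_B^{-1}$ and using Assumption~\ref{Assumptionc}, the left-hand side of \eqref{eq:smallness-condition} becomes
\begin{equation}
\|v\|_B^{-(2-\kappa)}\,\|v\|_B\,\|Z_1\|_{-\kappa,B}+\|v\|_B^{-(2-2\kappa)}\,\|Z_2\|_{-2\kappa,B}
\leq c^3\|v\|_B^{\kappa-1+1-\kappa}+c^3\|v\|_B^{2\kappa-2+2-2\kappa}=2c^3,
\end{equation}
which is $\ll 1$ provided $c$ is small enough. Hence Corollary~\ref{cor:interior_regularity_estimate} applies.

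Next I would plug into the right-hand side of \eqref{eq:main_interior_regularity_estimate} and check term by term that each reduces to a multiple of $\|v\|_B$:
\begin{align}
(R^*)^2\|v\|_B^3 &= \|v\|_B,\qquad
(R^*)^{2-\kappa}\|v\|_B^2\|Z_1\|_{-\kappa,B}\leq c^3\|v\|_B^{\kappa}\cdot\|v\|_B^{1-\kappa}=c^3\|v\|_B,\nonumber\\
(R^*)^{2-2\kappa}\|v\|_B\|Z_2\|_{-2\kappa,B}&\leq c^3\|v\|_B,\qquad
(R^*)^{2-3\kappa}\|Z_3\|_{-3\kappa,B}\leq c^3\|v\|_B,
\end{align}
and the final $\|v\|_B$ term is already of the desired form. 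Summing the five contributions yields the claim with an implicit constant depending only on $\alpha$ and $d$ (the constants arising in Corollary~\ref{cor:interior_regularity_estimate}, since $c,\kappa$ have been absorbed into $O(1)$ factors).

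I do not anticipate any real obstacle: the corollary is essentially a bookkeeping reformulation of Corollary~\ref{cor:interior_regularity_estimate} tailored to the scaling $R^*=\|v\|_B^{-1}$, which is precisely the scale at which the cubic nonlinearity $v^3$ and the linear term $v$ balance out in the Schauder bound. The point of Assumption~\ref{Assumptionc} is exactly that the $Z_i$ contributions, measured at this natural scale, are dominated by the deterministic $\|v\|_B$ term. The only item requiring attention is that $\alpha$ and $\kappa$ be chosen small enough (with $\alpha>2\kappa$) so that the hypotheses of Corollary~\ref{cor:interior_regularity_estimate} are met; this is guaranteed by the assumption that $\alpha$ is sufficiently small.
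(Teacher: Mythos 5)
Your proof is correct and follows essentially the same route as the paper: verify the smallness condition \eqref{eq:smallness-condition} by direct substitution (obtaining $2c^3\ll 1$), then substitute $R^*=\|v\|_B^{-1}$ and Assumption~\ref{Assumptionc} into each of the five terms of \eqref{eq:main_interior_regularity_estimate} to see they all reduce to $O(1)\cdot\|v\|_B$. The term-by-term cancellation of powers matches the paper's computation exactly.
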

\begin{proof}
We first verify that condition \eqref{eq:smallness-conditionA} holds. Indeed, we have
\begin{align}
  &(R^*)^{2-\kappa} \| v \|_B \|Z_1\|_{-\kappa,B}  + (R^*)^{2-2\kappa} \| Z_2 \|_{-2\kappa,B}
    \nnb
 &\leq  \| v  \|_B^{\kappa -2} \| v \|_B c^3 \| v \|_B^{1-\kappa} +   \| v \|_B^{2\kappa-2}  c^3  \| v \|_B^{2- 2\kappa }  
= 2  c^3 , 
\end{align}
so that \eqref{eq:smallness-conditionA} is satisfied for $c \ll 1$. We can thus invoke \eqref{eq:main_interior_regularity_estimate} which takes the form
\begin{equation}
   R^\alpha [v]_{\alpha,B_R(z)}
  \lesssim   \| v \|_{B} +  c^3 \| v \|_{B} +  c^3   \| v \|_B  
    + c^3 \| v \|_{B} 
    + \| v \|_{B},
    \label{eq:main_interior_regularity_estimate-bis}
 \end{equation}
 uniformly for $R \leq R^*$ and $z$ such that $B_{2R}(z) \subseteq B$.
\end{proof}

\subsection{Large scale estimates for remainder equation}\label{sec_largescale_remainder_eq}

The following lemma is essentially identical to \cite[Lemma  2.7]{MR4164267} (where the constant $\lambda$ is set to $1$) or \cite[Theorem 4.4]{MoinatWeberEJP} (specialised to the case $f(u) = \lambda u^3$ in the notation of that paper). It
is a consequence of the maximum principle.

\begin{lemma}[Maximum principle]\label{lem:maximum_principle}
Let $R_0, \tilde{R} >0$, $\lambda>0$ and let $z_0 \in \R_t \times \R_x^2$. Assume that $v$ satisfies 
\begin{equation}
(\partial_t - \Delta)v  + \lambda v^3 = f
\end{equation} 
 on $B_{R_0+\tilde{R}}(z_0)$, where $f$ is a bounded function.
Then 
\begin{equation}\label{eq:maximum-principle-estimate}
\| v \|_{B_{R_0}(z_0)} \lesssim \max \Big\{ \frac{1}{\tilde {R}}, \| f \|^{\frac13}_{B_{R_0+\tilde R}(z_0)}    \Big\} ,
\end{equation}
where the implicit constant depends on $\lambda$.
\end{lemma}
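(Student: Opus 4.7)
The statement is a parabolic Osserman-type bound: the subcritical dissipation coming from the cubic nonlinearity is strong enough to override boundary behaviour on scale $\tilde R$ and forcing of size $\|f\|$. The proof goes by the parabolic maximum principle applied to $v$ minus a suitably constructed barrier $\phi$ on $B_{R_0+\tilde R}(z_0)$, whose key property is that it blows up on the parabolic boundary of $B_{R_0 + \tilde R}(z_0)$ so that, regardless of the boundary values of $v$, any positive maximum of $v - \phi$ must be attained in the interior.

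The first step is the algebraic observation that the radial ODE
\begin{equation}
u''(r) = \lambda \, u(r)^3
\end{equation}
admits the explicit solution $u(r) = c_\lambda / r$ with $c_\lambda = \sqrt{2/\lambda}$. This suggests the pure space barrier $\phi_{\rm sp}(x) = c_\lambda/(R_0 + \tilde R - |x-x_0|)$, for which a direct computation (keeping in mind the lower-order $\phi'/r$ term in the two-dimensional Laplacian and that $\partial_t \phi_{\rm sp}=0$) gives $(\partial_t - \Delta) \phi_{\rm sp} + \lambda \phi_{\rm sp}^3 \geq 0$ inside $B_{R_0 + \tilde R}(z_0)$ up to a bounded correction. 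To absorb the bottom of the parabolic cylinder I would add a term $\phi_{\rm t}(t) = c_\lambda / \sqrt{t - (t_0 - (R_0+\tilde R)^2)}$, which by a similar computation is itself a supersolution of the linear heat equation and dominates its cube. Finally, to balance the forcing $f$, I add a constant $A := (\|f\|_{B_{R_0+\tilde R}(z_0)}/\lambda)^{1/3}$, for which $\lambda A^3 = \|f\|$. Writing $\phi = \phi_{\rm sp} + \phi_{\rm t} + A$ and using $(a+b+c)^3 \geq a^3+b^3+c^3$ for nonnegative terms, one checks that
\begin{equation}
(\partial_t - \Delta)\phi + \lambda \phi^3 \geq \|f\|_{B_{R_0+\tilde R}(z_0)} \geq f
\end{equation}
throughout $B_{R_0+\tilde R}(z_0)$.

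Next I apply the parabolic maximum principle to $w = v - \phi$. By construction $\phi \to +\infty$ on the lateral and bottom parts of $\partial_p B_{R_0+\tilde R}(z_0)$, while $v$ is continuous and bounded there; consequently, if $\sup w > 0$ it is achieved at an interior point $z_*$. At $z_*$ one has $\partial_t w(z_*) \geq 0$, $\Delta w(z_*) \leq 0$, and $v(z_*) > \phi(z_*) > 0$, so the difference of the two equations yields
\begin{equation}
0 \leq (\partial_t - \Delta) w(z_*) \leq f(z_*) - \|f\| - \lambda\,(v(z_*)^3 - \phi(z_*)^3) < 0,
\end{equation}
a contradiction. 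Hence $v \leq \phi$ on $B_{R_0+\tilde R}(z_0)$, and applying the same argument to $-v$ (whose equation is $(\partial_t - \Delta)(-v) + \lambda(-v)^3 = -f$) gives $|v| \leq \phi$. Evaluating $\phi$ on $B_{R_0}(z_0)$, the spatial distance to the lateral boundary is at least $\tilde R$ and the time distance to the bottom is at least $\tilde R^2 + 2 R_0 \tilde R \geq \tilde R^2$, so $\phi \lesssim c_\lambda/\tilde R + A \lesssim \max(1/\tilde R, \|f\|^{1/3})$, which is the bound \eqref{eq:maximum-principle-estimate}.

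The main obstacle is essentially bookkeeping: ensuring that the supersolution property of $\phi$ really holds up to the corners of the parabolic cylinder and checking that the lower-order Laplacian term $\phi_{\rm sp}'/r$ (which carries the wrong sign when $r = |x-x_0|$ is small) does not spoil the inequality. This is handled by noting that such a term is bounded (it is regular away from the lateral boundary) and can be absorbed by increasing the constant in front of $\phi_{\rm sp}$ or by localising the analysis to a neighbourhood of the parabolic boundary and using a classical maximum principle in the interior where $\phi$ is smooth and bounded. A secondary technical point is that one should smooth out the bottom-time singularity slightly or work with a limiting procedure, as the parabolic maximum principle is cleanest when barriers are continuous up to $\partial_p B_{R_0+\tilde R}(z_0)$ (with the value $+\infty$ allowed on the bad part of the boundary); this follows the same strategy as \cite{MR4164267, MoinatWeberEJP}.
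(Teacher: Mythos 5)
Your barrier-and-comparison strategy matches the approach of the references the paper cites (the paper gives no proof of its own, deferring to \cite{MR4164267,MoinatWeberEJP}), and the time barrier, the constant $A = (\|f\|/\lambda)^{1/3}$, the superadditivity of $u \mapsto u^3$ on nonnegative arguments, and the comparison at an interior maximum are all sound. However, the spatial barrier $\phi_{\rm sp}(x) = c_\lambda/(R - |x - x_0|)$ with $R = R_0 + \tilde R$ and $c_\lambda = \sqrt{2/\lambda}$ is \emph{not} a supersolution on the Euclidean ball in $d=2$, and, contrary to your parenthetical, the obstruction is not a bounded correction. Writing $r = |x - x_0|$, one has $\Delta\phi_{\rm sp} = \phi_{\rm sp}'' + \tfrac1r\phi_{\rm sp}' = \frac{2 c_\lambda}{(R - r)^3} + \frac{c_\lambda}{r(R - r)^2}$; with $\lambda c_\lambda^2 = 2$ the first term exactly exhausts $\lambda \phi_{\rm sp}^3$, so
$$
\lambda\phi_{\rm sp}^3 - \Delta\phi_{\rm sp} = -\frac{c_\lambda}{r(R - r)^2} < 0
$$
everywhere, and this blows up as $r \to 0$ (where $\phi_{\rm sp}$ also has a conical singularity, so it is not $C^2$). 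Enlarging $c_\lambda$ does not help: the surplus becomes $(\lambda c_\lambda^2 - 2)\frac{c_\lambda}{(R-r)^3} - \frac{c_\lambda}{r(R-r)^2}$, which is negative whenever $r < R/(\lambda c_\lambda^2 - 1)$, i.e.\ on a fixed fraction of the ball no matter how large $c_\lambda$ is. Nor can one ``localise near the boundary'': the comparison argument needs the supersolution property at the interior point where $v - \phi$ attains its maximum, and nothing prevents that point from sitting near the centre.

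The repair is small but essential: use the smooth barrier $\phi_{\rm sp}(x) = \frac{2 R c_\lambda}{R^2 - |x - x_0|^2}$. In $d = 2$ one computes $\Delta\phi_{\rm sp} = \frac{8 R c_\lambda (R^2 + r^2)}{(R^2 - r^2)^3} \leq \frac{16 R^3 c_\lambda}{(R^2 - r^2)^3}$ while $\lambda\phi_{\rm sp}^3 = \frac{8\lambda R^3 c_\lambda^3}{(R^2 - r^2)^3}$, so $\lambda\phi_{\rm sp}^3 \geq \Delta\phi_{\rm sp}$ holds for the same $c_\lambda = \sqrt{2/\lambda}$. This barrier is smooth on the open ball, has the identical $c_\lambda/(R-r)$ asymptotics at the lateral boundary, and evaluates on $B_{R_0}(z_0)$ to $\frac{2Rc_\lambda}{R^2 - R_0^2} = \frac{2(R_0 + \tilde R)c_\lambda}{\tilde R(2R_0 + \tilde R)} \leq \frac{2 c_\lambda}{\tilde R}$, so every other step of your argument goes through verbatim. (An alternative used in some references is to replace spherical cylinders by boxes, where the coordinate-separated barrier $\sum_i c_\lambda/(R - |x_i - x_{0,i}|)$ has no $\tfrac1r$ term at all; one then needs a short covering argument to fit boxes inside the annular region between $B_{R_0}$ and $B_{R_0 + \tilde R}$.)
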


In order to apply Lemma~\ref{lem:maximum_principle} to $v$ satisfying  equation~\eqref{eq:remainder_equation} we regularise the equation by convolving with $\Psi_L$ at a suitable scale $L$ leading to
\begin{equation}\label{eq:remainder_equation_regularised}
(\partial_t - \Delta) v*\Psi_L + \lambda (v*\Psi_L)^3 = \lambda \big( (v*\Psi_L)^3 - (v^3)*\Psi_L\big)  + (-3v^2 Z - 3vZ_2 - Z_3)*\Psi_L ,
\end{equation}
where the extra term $\lambda ((v*\Psi_L)^3 - (v^3)*\Psi_L) $ accounts for the fact that regularisation and cubing do not commute.
The following lemma follows by applying the maximum principle estimate (Lemma~\ref{lem:maximum_principle}) to this equation, using the multiplicative inequality \eqref{e:reconstruction} 
to bound $v^2 Z_1*\Psi_L$ and $3vZ_2*\Psi_L$
and the interior regularity estimate (Corollary~\ref{cor:simplified_Schauder}) to bound local $C^{\alpha}$ norms of $v$ by local $\bbL^\infty$ norms.

The lemma is the key step in the proof of (the $\bbL^\infty$ part of) Theorem~\ref{thm:a_priori_estimates_v}.
Indeed, it states that either the $\bbL^\infty$ norm of $v$ on a set $B_R$ can be controlled by the distributional norms of $Z_1$, $Z_2$, $Z_3$ in which case the desired estimate holds automatically,
or its $\bbL^\infty$ norm shrinks by a factor $<1$ at distance $\overline{R}$ away from the boundary.
This is a non-linear damping estimate because $\overline{R}$ is itself proportional to the inverse of the  $\bbL^\infty$ norm of $v$,
i.e., it becomes smaller as $v$ becomes large.

\begin{lemma}\label{lem:recursion_input}
  Assume that $v$ satisfies \eqref{eq:remainder_equation} on some parabolic cylinder $B = B_{R}(z_0)$.
Let $\alpha>2\kappa>0$ be small enough and 
  let Assumption~\ref{Assumptionc} be satisfied with $c \ll 1$, $\kappa \ll 1$.
  For $\varepsilon \ll 1$,
  let $\overline{R} = \varepsilon^{-1} \| v \|_{B_R(z_0)}^{-1}$ and assume $\overline{R} < R$. Then
\begin{equation}\label{eq:non-linear-recursion-estimiate}
\| v \|_{B_{R - \overline{R}}(z_0) }  \lesssim (\varepsilon^{\frac{\alpha}{3}} + c)   \| v \|_{B_R(z_0)}.
\end{equation}
\end{lemma}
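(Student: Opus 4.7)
The strategy is to regularise the remainder equation~\eqref{eq:remainder_equation} by convolution with $\Psi_L$ at a well-chosen scale $L$, apply the maximum principle (Lemma~\ref{lem:maximum_principle}) to the resulting equation~\eqref{eq:remainder_equation_regularised} to bound $v\ast\Psi_L$ on the shrunken cylinder $B_{R-\overline R}(z_{0})$, and then transfer the bound to $v$ itself using the interior regularity estimate (Corollary~\ref{cor:simplified_Schauder}). The natural scale is $L=\varepsilon R^{\ast}$ with $R^{\ast}=\|v\|_{B}^{-1}$; note $L\ll\overline R=\varepsilon^{-1}R^{\ast}$, so $v\ast\Psi_L$ is well defined on $B_{R-L}(z_{0})$.

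Applying Lemma~\ref{lem:maximum_principle} with $R_{0}=R-\overline R$ and $\tilde R=\overline R-L\sim \overline R$ yields
\begin{equation*}
\|v\ast\Psi_L\|_{B_{R-\overline R}(z_{0})}\lesssim \frac{1}{\tilde R}+\|F_L\|_{B_{R-L}(z_{0})}^{1/3},
\end{equation*}
where $F_L$ denotes the right-hand side of~\eqref{eq:remainder_equation_regularised}. The first term is $\sim\varepsilon\|v\|_{B}\leq \varepsilon^{\alpha/3}\|v\|_{B}$ and poses no difficulty.

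The heart of the proof is the estimate of $\|F_L\|$. The cubic commutator $(v\ast\Psi_L)^{3}-v^{3}\ast\Psi_L$ is bounded pointwise by $\|v\|_{B}^{2}L^{\alpha}[v]_{\alpha,B_{2L}(z)}$ via elementary telescoping; inserting the H\"older bound $[v]_{\alpha,B_{R^{\ast}}(z)}\lesssim (R^{\ast})^{-\alpha}\|v\|_{B}$ from Corollary~\ref{cor:simplified_Schauder} (valid because $B_{2R^{\ast}}(z)\subseteq B_{R}(z_{0})$ for $\varepsilon\leq 1/2$) yields a contribution $\lesssim (L/R^{\ast})^{\alpha}\|v\|_{B}^{3}=\varepsilon^{\alpha}\|v\|_{B}^{3}$. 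For each noise term $v^{3-n}Z_{n}\ast\Psi_L$ with $n\in\{1,2,3\}$, the multiplicative inequality~\eqref{e:reconstruction2} combined with Corollary~\ref{cor:simplified_Schauder} and Assumption~\ref{Assumptionc} produces a bound of order $c^{3}(L\|v\|_{B})^{-n\kappa}\|v\|_{B}^{3}=c^{3}\varepsilon^{-n\kappa}\|v\|_{B}^{3}$. Summing gives $\|F_L\|\lesssim (\varepsilon^{\alpha}+c^{3}\varepsilon^{-3\kappa})\|v\|_{B}^{3}$ and hence $\|F_L\|^{1/3}\lesssim (\varepsilon^{\alpha/3}+c\varepsilon^{-\kappa})\|v\|_{B}$.

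The final step transfers the bound from $v\ast\Psi_L$ to $v$: writing $v=v\ast\Psi_L+(v-v\ast\Psi_L)$ and bounding the difference pointwise by $L^{\alpha}[v]_{\alpha,B_{L}(z)}\leq L^{\alpha}[v]_{\alpha,B_{R^{\ast}}(z)}\lesssim \varepsilon^{\alpha}\|v\|_{B}$ via Corollary~\ref{cor:simplified_Schauder}, all three contributions combine to give $\|v\|_{B_{R-\overline R}(z_{0})}\lesssim (\varepsilon^{\alpha/3}+c\varepsilon^{-\kappa})\|v\|_{B}$. Since $\kappa$ is taken much smaller than $\alpha$, the residual $\varepsilon^{-\kappa}$ factor is absorbed into the implicit constant, yielding~\eqref{eq:non-linear-recursion-estimiate}.

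The main obstacle is the trade-off in the choice of $L$: small $L$ is needed for the H\"older regularisation error $L^{\alpha}[v]_{\alpha}$ to be small, while large $L$ (of order $R^{\ast}$) is needed to control the negative-regularity factors $L^{-n\kappa}\|Z_{n}\|_{-n\kappa}$ arising from the commutator estimates. The intermediate choice $L=\varepsilon R^{\ast}$ balances the two, at the price of a mild $\varepsilon^{-\kappa}$ penalty that is harmless given $\kappa\ll\alpha$.
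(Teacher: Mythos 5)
Your proposal follows essentially the same route as the paper: regularise the equation by convolution with $\Psi_L$ at scale $L\sim\varepsilon\|v\|_B^{-1}$, apply the maximum principle (Lemma~\ref{lem:maximum_principle}) to the regularised equation~\eqref{eq:remainder_equation_regularised}, estimate the regularised right-hand side via the cubic commutator bound, the reconstruction inequality~\eqref{e:reconstruction2} and Assumption~\ref{Assumptionc}, and finally transfer from $v\ast\Psi_L$ back to $v$ using the regularisation error bound. This is the paper's proof.

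There is, however, one genuine slip in the ball bookkeeping. You apply the maximum principle with $\tilde R=\overline R-L$, so you must control the regularised right-hand side on $B_{R-L}(z_0)$, and you invoke Corollary~\ref{cor:simplified_Schauder} there via the parenthetical claim that ``$B_{2R^{\ast}}(z)\subseteq B_{R}(z_{0})$ for $\varepsilon\leq 1/2$'' when $z\in B_{R-L}(z_0)$. This is false: a point $z\in B_{R-L}(z_0)$ is only at distance $L=\varepsilon R^*$ from $\partial B_R(z_0)$, and $\varepsilon R^*<2R^*$, so $B_{2R^*}(z)$ sticks out of $B_R(z_0)$. The fix is the one the paper uses: leave a margin of order $R^*$ rather than $L$ by taking $\tilde R=\overline R-\underline R$ with $\underline R=\|v\|_B^{-1}=R^*$, so that the interior regularity estimate is applied to $z\in B_{R-\underline R}(z_0)$ on balls of radius $\underline R/2$, for which the hypothesis $B_{\underline R}(z)\subseteq B_R(z_0)$ is met. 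Since $\overline R=\varepsilon^{-1}R^*\gg R^*$, one still has $\tilde R\sim\overline R$ and the term $1/\tilde R\lesssim\varepsilon\|v\|_{B_R}$ is unaffected. With this margin corrected, your argument goes through as written.

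One further remark: your treatment of the $\varepsilon^{-\kappa}$ penalty coming from $L^{-n\kappa}$ is more explicit than the paper's (which omits this factor in the final line), and your resolution — absorbing it by taking $\kappa$ small relative to $\alpha$, or equivalently taking $c$ small relative to $\varepsilon^{\kappa}$ — is the correct reading of how the constants are meant to be ordered.
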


\begin{proof}
  To shorten notation we omit the argument $z_0$ in balls, i.e., we write $B_R$ for $B_R(z_0)$, write $B_{R - \overline{R}}$ for $B_{R - \overline{R}}(z_0)$, and so on.
  Also write $u_L=(u)_L=u*\Psi_L$ throughout this proof, where
  \begin{equation}
    L = \frac{\varepsilon}{2}  \| v \|_{B_R}^{-1},
  \end{equation}
  and consider the regularised equation \eqref{eq:remainder_equation_regularised}. 
  In order to apply  \eqref{eq:maximum-principle-estimate} to this equation, it is useful to 
 leave a bit of space to the boundary of $B_R$ and apply the estimate on the ball $B_{R-\underline{R}}$ where $\underline{R} = \| v \|_{B_R}^{-1}$,
 noting that for any $\bar{z} \in B_{R-\underline{R}}$ we have, 
by Corollary~\ref{cor:simplified_Schauder},
 \begin{equation}\label{eq:use_of_regularity_estimate}
   \underline{R}^\alpha [v]_{\alpha,B_{\underline{R}/2}(\bar{z})} \lesssim \| v \|_{B_R}
   ,\qquad
   L^\alpha [v]_{\alpha,B_{L}(\bar{z})} \lesssim \| v \|_{B_R}.
 \end{equation}
 Applying \eqref{eq:maximum-principle-estimate} with $R_0=R-\overline{R}$ and $\tilde R = \overline{R}-\underline{R}$, we arrive at
\begin{align}
  \| v_L \|_{B_{R - \overline{R}} }  \lesssim
  \max \bigg\{ \frac{1}{\overline{R} - \underline{R} },  \,  \|  (v_L)^3 - (v^3)_L \|_{B_{R-\underline{R} }}^{\frac13}, \, & \| (v^2 Z_1)_L  \|_{B_{R-\underline{R}}}^{\frac13} , \,  \nnb
&  \| (vZ_2)_L \|_{B_{R-\underline{R}}}^{\frac13}, \, \| ( Z_3)_L \|_{B_{R-\underline{R}}}^{\frac13} \bigg\} .
\end{align}
By assumption we have $(\overline{R} - \underline{R})^{-1}  \leq  2\overline{R}^{-1}  = 2 \varepsilon \| v \|_{B_R}$,
thus bringing the first term on the right-hand side into the desired form. 
For the second term we use the simple commutator estimate 
\begin{align}
\| (v_L)^3 - (v^3)_L \|_{B_{R-\underline{R}}} & \lesssim  L^\alpha \sup_{\bar{z} \in B_{R-\underline{R}}}[v]_{\alpha, B_{L}(\bar{z})} \| v \|_{B_{L}(\bar{z})}^2   \nnb
&\lesssim\Big( \frac{ L}{\underline{R}} \Big)^\alpha  \sup_{\bar{z} \in B_{R-\underline{R}}} \underline{R}^\alpha [v]_{\alpha, B_{\underline{R}/2}(\bar{z})} \| v \|_{B_{R}}^2   \lesssim  \varepsilon^\alpha \| v \|^3_{B_R  },
\end{align}
where in the last inequality we have used \eqref{eq:use_of_regularity_estimate} as well as
that $L/\underline{R} =\epsilon/2$ by the definitions of $L$ and $\underline{R}$.
Using the multiplicative inequality \eqref{e:reconstruction} to bound $(v^2 Z_1)_L$ and $(vZ_2)_L$ together with $R-\underline{R}+2L\leq R$, we arrive at 
\begin{align}
\| v_L \|_{B_{R - \overline{R}} }   \lesssim& \max \bigg\{ (\varepsilon + \varepsilon^{\frac{\alpha}{3}} )\| v \|_{B_R} , \nnb 
& \Big(   \|v \|_{B_{R}} \Big(  \sup_{\bar{z} \in B_{R-\underline{R}}}[v]_{\alpha, B_{ 2L}(\bar{z})} L^\alpha  + \| v \|_{{B_{R}}}  \Big) \| Z_1 \|_{-\kappa,B_R} L^{-\kappa} \Big)^{\frac13}, \nnb
&\Big( \big(  \sup_{\bar{z} \in B_{R-\underline{R}}}[v]_{\alpha, B_{2L}(\bar{z})} L^\alpha  + \| v \|_{ B_R
}  \big) \| Z_2 \|_{-2\kappa,B_R}L^{-2\kappa}\Big)^{\frac13}, \Big( \| Z _3\|_{-3\kappa,B_R} L^{-3 \kappa} \Big)^{\frac13} \bigg\},
\end{align}
which by using Assumption~\ref{Assumptionc} to bound the norms of $Z_1$, $Z_2$, $Z_3$ in terms of powers of $\| v \|_{B_R}$ and Corollary~\ref{cor:simplified_Schauder} to replace $ \sup_{\bar{z} \in B_{R-\underline{R}}}[v]_{\alpha, B_{ 2L}(\bar{z})} L^\alpha $ by $\| v \|_{B_R}$
 turns into 
\begin{equation}
  \| v_L \|_{B_{R - \overline{R}} }  \lesssim (\varepsilon^{\frac{\alpha}{3}} + c) \| v \|_{B_R}.
\end{equation}
 To obtain the desired conclusion it only remains to invoke \eqref{eq:use_of_regularity_estimate} one final time to see 
\begin{equation}
  \Big| \|v \|_{B_{R - \overline{R}}  } -  \|v_L \|_{B_{R - \overline{R}} } \Big|
  \leq \|v -v_L \|_{B_{R - \overline{R}} }
  \leq \sup_{\bar{z} \in B_{R-\underline{R}}}[v]_{\alpha, B_{L}(\bar{z})} L^\alpha  \lesssim \varepsilon^\alpha   \| v \|_{B_R(z_0)},
\end{equation}
which concludes the argument.
\end{proof}

\def\reference{\ref{thm:a_priori_estimates_v} }
\begin{proof}[Proof of Theorem~\reference]
Let 
\begin{equation}
\mathcal{Z} :=  \max \Big\{   \|Z_1 \|^{\frac{1}{1-\kappa}}_{-\kappa,B_2(0)}, \, \|Z_2 \|^{\frac{1}{2(1-\kappa)}}_{-2\kappa,B_2(0)} , \, \|Z_3 \|^{\frac{1}{3(1-\kappa)}}_{-3\kappa,B_2(0)} \Big\} .
\end{equation}
The non-linear recursion argument that permits to turn Assumption~\ref{Assumptionc} and  Lemma~\ref{lem:recursion_input}  into the $\bbL^\infty$ estimate 
\begin{equation}\label{eq:L-infty-estimate}
\| v \|_{B_{\frac32}(0)}   \lesssim 
\mathcal{Z}  +1,
\end{equation}
is identical to \cite[Section 4.6]{MR4164267} (see also \cite[Proof of Thm 1.4, Step 3]{bonnefoi2022priori})  and we do not reproduce it here. 
To obtain the bound on the $C^\alpha$ seminorm we invoke once more the interior regularity estimate Corollary~\ref{cor:interior_regularity_estimate} for $B = B_{\frac32}(0)$.
For $R^* =\min \{ \frac14, \varepsilon  \mathcal{Z}^{-1}  \}$ 
 and $\varepsilon>0$ small enough,  condition \eqref{eq:smallness-condition} is automatically satisfied. Indeed, for such $R^*$ we have using \eqref{eq:L-infty-estimate}
\begin{align}
&(R^*)^{2-\kappa} \| v \|_{B_{\frac32}(0)} \|Z\|_{-\kappa, B_{\frac32}(0)}  + (R^*)^{2-2\kappa} \| Z \|_{-2\kappa,B_{\frac32}(0)}  \nnb
&\lesssim \min\{  \varepsilon^{2-\kappa}  \mathcal{Z}^{-(2-\kappa)} , 1 \} (\mathcal{Z} +1) \|Z\|_{-\kappa, B_{\frac32}(0)}  + \varepsilon^{2-2\kappa}   \mathcal{Z}^{-(2-2\kappa)} \| Z \|_{-2\kappa, B_{\frac32}(0)}   \nnb
& \notag\lesssim \varepsilon^{2-\kappa} + \varepsilon^{1-\kappa}+  \varepsilon^{2-2\kappa}  . 
\end{align}
Thus for $\bar{z} \in B_{1}(0)$, using \eqref{eq:main_interior_regularity_estimate} and \eqref{eq:L-infty-estimate} to replace the powers of $\| v \|_{B_{\frac32}(0)}$ by $\mathcal{Z}+1$, we get
\begin{equation} 
  (R^*)^{\alpha }  [v]_{\alpha, B_{R^*}(\bar{z})} \lesssim \mathcal{Z}+1, 
\end{equation}
which yields the small-scale  bound 
\begin{equation}
\Big( \sup_{\substack{z \neq \bar{z} \in B_1(0) \\ d(z,\bar{z}) \leq R^{*}  }}   \frac{ |v(z) - v (\bar{z}) |}{d(z,\bar{z}) ^\alpha} \Big)^{\frac{1}{1-\alpha}} \lesssim \mathcal{Z}+1.
\end{equation}
For $z \neq \bar{z} \in B_1(0)$ with $d(z, \bar{z}) \geq R^*$ we simply write
\begin{equation}
\frac{|v(z) - v(\bar{z}) |}{d(z,\bar{z})^\alpha} \leq 2 (R^{*})^{-\alpha} \| v \|_{B_1(0)} \lesssim \mathcal{Z}^{1+\alpha}+1,
\end{equation}
which completes the argument. 
\end{proof}

\section{Proof of gradient bound on auxiliary diffusion}\label{app_gradientX}
In this section we prove Lemma~\ref{lemm_gradientX}. 
Let $\alpha>0$ and consider the equation:
\begin{equation}
\mathrm{d}\hat X^x_r
=
b(r,\hat X_r)\, \mathrm{d}r+\sqrt{2}\,\mathrm{d}W_r
,\qquad
\hat X^x_0=x\in\R^2
,
\label{eq_diff_appendix}
\end{equation}
where $b \in \mathbb L^\infty(\R_+,C^\alpha(\R^2,\R^2))$, 
defined as the space of $\R^2$-valued functions of space-time that have components with $C^\alpha$ norm bounded uniformly in time. 
Define more generally $C^{n+\alpha}(\R^2,\R^2)$ as the set of functions with bounded derivatives up to order $n$, 
with order $n$ derivatives having components in $C^\alpha(\R^2,\R^2)$.
\medskip

\noindent {\bf Notation.} In this appendix, H\"older norms will always be on the full space with no weight. 
To use similar notations as~\cite{MR2593276} we will write $\|\cdot\|_{C^\alpha}$ for the norm in $C^\alpha(\R^2)$. 
Abusing notations, 
we also write $\psi \in C^\alpha$ for a tensor $(\psi_i)_{i\in I}$ on some finite index set $I$ to mean that each component $\psi_i$ ($i\in I$) is in $C^\alpha(\R^2,\R)$, 
and $\|\psi\|_{C^\alpha}$ for the norm:
\begin{equation}
\|\psi\|_{C^\alpha}
:=
\big| (\|\psi_i\|_{C^\alpha})_{i\in I}\big|_2
=
\Big[\sum_{i\in I} \|\psi_i\|_{C^\alpha}^2\Big]^{1/2}
.
\end{equation}
For $\psi:\R_+\times\R^2\to\R^I$, we also write $\|\psi\|_0$ for the uniform space-time norm $\sup_{r\geq 0}\sup_{z\in\R^2}|\psi(r,z)|_2$. 
 
Expectation on a probability space on which all $\hat X^x_\cdot$ are defined ($x\in\R^2$) is denoted by $\Eg$, with $\Pg$ for the associated probability measure.
%
\begin{proposition}\label{prop_gradientX_appendix}
Let $\alpha\in(0,1)$ and $b \in \mathbb L^\infty(\R_+,C^\alpha(\R^2,\R^2))$. 
There is a constant $C(\alpha)>0$ independent of $b$ such that:
\begin{equation}
\forall t\geq 0,\qquad
\sup_{\substack{x,y\in\R^2 \\ x\neq y}}\frac{1}{|x-y|}
\sup_{u\leq t}\Eg\Big[|\hat X^x_{u}-\hat X^y_u|^2\Big]^{1/2}
\lesssim
\exp\Big[C(\alpha)\, t\, \big[1\vee  \sup_{u\leq t}\|b(u,\cdot)\|_{C^{\alpha}}^{\frac{2}{1-\alpha}}\big]\Big]
. 
\end{equation}
\end{proposition}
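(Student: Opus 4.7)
The plan is to implement the classical Zvonkin (or It\^o--Tanaka) transformation following Flandoli--Gubinelli--Priola~\cite{MR2593276}. The drift $b$ is only $C^\alpha$ in space, so one cannot directly use Lipschitz estimates. Instead, for a parameter $\lambda>0$ to be fixed below (depending on $\sup_{u\leq t}\|b(u,\cdot)\|_{C^\alpha}$), I would solve coordinate-wise on $[0,t]\times\R^2$ the backward parabolic PDE
\begin{equation*}
(\partial_r+\Delta+b\cdot\nabla-\lambda)\, \mathfrak u=-b,\qquad \mathfrak u(t,\cdot)=0,
\end{equation*}
and introduce the change of variables $\Phi(r,x)=x+\mathfrak u(r,x)$.

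Next I would invoke the standard parabolic Schauder estimates with sharp $\lambda$-dependence (as developed in~\cite[Section~3]{MR2593276}): provided
$\lambda\geq C_0(\alpha)\,\big(1\vee \sup_{u\leq t}\|b(u,\cdot)\|_{C^\alpha}^{2/(1-\alpha)}\big),$
one has $\mathfrak u\in L^\infty([0,t],C^{2+\alpha}(\R^2,\R^2))$ with
\begin{equation*}
\|\mathfrak u\|_0+\|\nabla\mathfrak u\|_0+[\nabla\mathfrak u]_{C^\alpha}\leq \tfrac12,\qquad \|\nabla^2\mathfrak u\|_0\leq C\lambda^{(1-\alpha)/2}.
\end{equation*}
The first bound ensures that $\Phi(r,\cdot)$ is globally bi-Lipschitz with constants in $[1/2,3/2]$, uniformly in $r\in[0,t]$. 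This step is the main technical input; establishing these estimates with the correct $\lambda$-scaling is the core of~\cite{MR2593276} and essentially amounts to a fixed-point argument on the heat semigroup exploiting the gain $\int_0^\infty e^{-\lambda u}u^{(\alpha-2)/2}\,du\lesssim \lambda^{-\alpha/2}$.

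I would then apply It\^o's formula to $Y^x_r:=\Phi(r,\hat X^x_r)$; by construction of $\mathfrak u$, the generator absorbs the irregular drift and yields
\begin{equation*}
dY^x_r=\lambda\,\mathfrak u(r,\hat X^x_r)\,dr+\sqrt{2}\,(I+\nabla\mathfrak u)(r,\hat X^x_r)\,dW_r.
\end{equation*}
The drift and diffusion coefficients of $Y^x$ are now both Lipschitz in the spatial variable, with constants $\lambda\|\nabla\mathfrak u\|_0\leq\lambda/2$ and $\|\nabla^2\mathfrak u\|_0\leq C\lambda^{(1-\alpha)/2}$ respectively. Writing $Z_r=Y^x_r-Y^y_r$ and taking expectations after applying It\^o to $|Z_r|^2$, the martingale term vanishes and
\begin{equation*}
\tfrac{d}{dr}\Eg|Z_r|^2\leq 2\lambda\|\nabla\mathfrak u\|_0\,\Eg\big[|Z_r||\hat X^x_r-\hat X^y_r|\big]+2\|\nabla^2\mathfrak u\|_0^{2}\,\Eg|\hat X^x_r-\hat X^y_r|^2\leq A\,\Eg|Z_r|^2,
\end{equation*}
where $|\hat X^x_r-\hat X^y_r|\leq 2|Z_r|$ by the bi-Lipschitz property of $\Phi$, and $A\leq C(\alpha)\,(1\vee\sup_{u\leq t}\|b(u,\cdot)\|_{C^\alpha}^{2/(1-\alpha)})$ by combining $\lambda\sim\sup\|b\|_{C^\alpha}^{2/(1-\alpha)}$ with $\lambda^{1-\alpha}\leq\lambda$. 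Gr\"onwall gives $\Eg|Z_r|^2\leq \Eg|Z_0|^2\,e^{Ar}$; since $|Z_0|=|\Phi(0,x)-\Phi(0,y)|\leq (3/2)|x-y|$ and $|\hat X^x_r-\hat X^y_r|\leq 2|Z_r|$, the claimed bound follows. The main obstacle is the PDE input of the second step: everything downstream is a clean It\^o--Gr\"onwall computation, but the sharp $\lambda$-dependence of the Schauder estimates must be cited carefully from~\cite{MR2593276} (or derived from the heat kernel bounds and a contraction argument, which is however standard).
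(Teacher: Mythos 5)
Your proof is correct and takes essentially the same route as the paper: a Zvonkin transform with a backward parabolic PDE, Schauder estimates with the correct $\lambda$-scaling, and an It\^o--Gr\"onwall argument for the transformed Lipschitz SDE. The paper packages the last step as a standalone lemma for Lipschitz SDEs and formulates the PDE on the infinite time horizon $\R_+$ (with representation $\psi_\lambda(s,\cdot) = -\int_0^\infty e^{-\lambda u}P_u[c_\lambda(s+u,\cdot)]\,du$ and $L^b = \Delta + b\cdot D$), but these are cosmetic differences from your finite-horizon terminal-value formulation and your direct It\^o computation for $|Z_r|^2$.

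One remark: the paper does not simply cite the $\lambda$-dependent Schauder estimates but includes a brief self-contained derivation (Lemmas~\ref{lemm_Schauder} and~\ref{lemm_bound_Dpsi_lambda}, following \cite{MR2748616}), because the explicit tracking of the $\lambda$-dependence is not stated verbatim in \cite{MR2593276}. Your identification of this as the nontrivial technical input is accurate, and your claimed bound $\|\nabla^2 \mathfrak u\|_0 \lesssim \lambda^{(1-\alpha)/2}$ is equivalent to the paper's $\|\psi_\lambda\|_{C^{2+\alpha}} \lesssim \|b\|_{C^\alpha}$ under the choice $\lambda \sim \|b\|_{C^\alpha}^{2/(1-\alpha)}$.
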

Proposition~\ref{prop_gradientX_appendix} is proven in~\cite[Theorem 5]{MR2593276}, except that the constants on the right-hand side are not made explicit there.
Therefore we sketch its proof below, 
following~\cite{MR2593276}, but keeping track of constants. 
Before we do so, we first prove Lemma~\ref{lemm_gradientX} assuming Proposition~\ref{prop_gradientX_appendix}.
\begin{proof}[Proof of Lemma~\ref{lemm_gradientX}]
Recall that $X^x_\cdot$ is the solution of the following SDE:
\begin{equation}
  \mathrm{d}X^x_u = -\nabla \psi_{u+s}(X^x_u)\, \mathrm{d}u + \sqrt{2}\,\mathrm{d} W_u, \qquad X^x_0=x
  ,
\end{equation}
with $W_\cdot$ a standard Brownian motion and $\psi$ defined in~\eqref{eq_def_psi}. 
Let $\ell,\ell'\in\N$ and $\ell^*=\max\{\ell,\ell'\}$. 
Let $\chi:\R^2\to[0,1]$ be a smooth function with compact support in $[-L^{\ell^*+1}/2-1,L^{\ell^*+1}/2+1]^2$, 
equal to $1$ on $[-L^{\ell^*+1}/2,L^{\ell^*+1}/2]^2$. 
With this definition $\chi(X^x_\cdot)$ is constant equal to $1$ on the following set 
(recall~\eqref{eq_def_I_ell} for the definition of $I^x_{\ell}(s,t)$):
\begin{equation}
\Big\{\sup_{u\leq t-s}|X^x_u|\leq L^{\ell+1}/2-4\Big\}
\supset 
I^x_\ell(s,t)
.
\end{equation}
%
Consider then $\hat X^x_\cdot$ with the same noise $\sqrt{2}W_\cdot$ as $X^x_\cdot$ but drift replaced by the truncated version:
\begin{equation}
b(u,y) 
= 
-\chi(y)\nabla\psi_{s+u\wedge t}(y)
,\qquad 
u\geq 0, y\in\R^2
.
\end{equation}
The definition~\eqref{eq_def_CtL} of $C_{t,L}(\ell^*)$ then implies:
\begin{equation}
t\sup_{u\leq t-s}\|b(u,\cdot)\|_{C^\alpha}^{\frac{2}{1-\alpha}}
\leq 
C_{t,L}(\ell^*)^{\frac{1}{1-\alpha}}
.
\end{equation}
The diffusion $\hat X$ is now of the form~\eqref{eq_diff_appendix}, and:
\begin{equation}
(X^x_{u})_{u\leq t-s}
=
(\hat X^x_u)_{u\leq t-s} \quad \text{ on }\quad \Big\{\sup_{u\leq t-s}|X^x_u|\leq L^{\ell^*+1}/2-1\Big\}
.
\end{equation}
This implies, recalling from~\eqref{eq_def_J_ell} that $J^{x,y}_{\ell,\ell'}(s,t) = I^x_{\ell}(s,t)\cap I^y_{\ell'}(s,t)$ and that the law $\Qg_{t-s}$ of the $X^z_\cdot$ ($z\in\R^2$) is defined in~\eqref{eq_def_law_aux_diffusion}:
\begin{align}
\sup_{u\leq t-s}\Eg_{\Qg_{t-s}}\Big[{\bf 1}_{J^{x,y}_{s,t}(\ell,\ell')}|X^x_{u}-X^y_u|^2\Big]^{1/2}
&\leq 
\sup_{u\leq t-s}\Eg_{\Qg_{t-s}}\Big[{\bf 1}_{\sup_{u\leq t-s}\max\{|X^x_u|,|X^y_u|\}\leq L^{\ell^*+1}/2-1} |X^x_{u}-X^y_u|^2\Big]^{1/2}
\nnb
&\leq 
\sup_{u\leq t-s}\Eg_{\Qg_{t-s}}\Big[|\hat X^x_{u}-\hat X^y_u|^2\Big]^{1/2}
.
\end{align}
Lemma~\ref{lemm_gradientX} thus indeed follows from Proposition~\ref{prop_gradientX_appendix} as claimed. 
\end{proof}
We now prove Proposition~\ref{prop_gradientX_appendix}. 
The conditions on $b$ ensure a solution to~\eqref{eq_diff_appendix} exists and can be written as $(\phi_{0,s}(x))_{s\geq 0}$, 
where $(\phi_{s,t}(x))_{0\leq s\leq t,x\in\R^2}$ is a stochastic flow of diffeomorphisms, see~\cite[Theorem 5]{MR2593276}. 
If $b$ were Lipschitz continuous, estimates on the space derivative of this flow would be classical. 
In the present H\"older continuous case, the proof in~\cite{MR2593276} involves a change of coordinate that maps the SDE~\eqref{eq_diff_appendix} to another, 
more regular SDE with Lipschitz drift and diffusion coefficients. 
Theorem 5 in~\cite{MR2593276} then states that, for any $p\geq 1$:
\begin{equation}
\sup_{x\in\R^2}\sup_{s\leq t}\Eg\big[|D\phi_{0,s}(x)|_{2}^p\big]
<\infty
.
\end{equation}
Since:
\begin{equation}
\sup_{\substack{x,y\in\R^2 \\ x\neq y}}\frac{1}{|x-y|}
\sup_{u\leq t}\Eg\Big[|\hat X^x_{u}-\hat X^y_u|^2\Big]^{1/2}
\leq 
\sup_{x\in\R^2}\sup_{u\leq t}\Eg\big[|D\phi_{0,u}(x)|^2_{2}\big]^{1/2}
,
\label{eq_bound_gradient_Dphi_appendix}
\end{equation}
this directly implies a non-quantitative version of Proposition~\ref{prop_gradientX_appendix}. 
We follow the argument of~\cite{MR2593276} step by step to get quantitative bounds. 
The first step is a bound in the case of an SDE with Lipschitz coefficients. 
This is classical (see e.g.~\cite[Theorem 3.1]{KunitaStFlour}), 
but we need explicit bounds so we provide a proof below.
\begin{lemma}[Lipschitz case]\label{lemm_SDE_Lipschitz_coeff}
Consider the following SDE on $\R^2$:
\begin{equation}
\mathrm{d}\tilde X^x_s 
=
\tilde b(s,\tilde X^x_s)\, \mathrm{d}u + \sqrt{2}\, \tilde\sigma(s,\tilde X^x_s)\, \mathrm{d}W_s,
\qquad 
\tilde X^x_0=x\in\R^2
.
\end{equation}
Assume that $\tilde b,\tilde\sigma\in\bbL^\infty(\R_+,C^{1+\alpha})$. 
There is then a family $(\varphi_{r,s}(x))_{0\leq r\leq s,x\in\R^2}$ of diffeomorphisms such that $\tilde X^x_s=\varphi_{0,s}(x)$ and $\varphi_{r,\cdot}\in\bbL^\infty(\R_+,C^{1+\beta})$ for each $\beta\in(0,\alpha)$ and each $r\geq 0$. 
In addition,
\begin{equation}
\sup_{s\leq t}\Eg\big[|\varphi_{0,s}(y)-\varphi_{0,s}(y')|^2\big]^{1/2}
\leq 
|y-y'|e^{3t(\|D\tilde b\|_0+2\||D\tilde \sigma|^2_2\|_0)}
,
\label{eq_bound_diffeo_Lipschitz_case}
\end{equation}
where we recall that $\|\cdot\|_0$ stands for the supremum in space and time.
\end{lemma}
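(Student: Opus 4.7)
The plan is to split the argument into two parts: existence and $C^{1+\beta}$-regularity of the flow, and then the quantitative bound~\eqref{eq_bound_diffeo_Lipschitz_case}.

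For the first part, the assumption $\tilde b,\tilde\sigma\in\bbL^\infty(\R_+,C^{1+\alpha})$ implies in particular that both coefficients are globally Lipschitz in space, uniformly in time. A standard Picard iteration therefore yields a unique strong solution $\tilde X^x$ depending measurably on $(x,\omega)$. The existence of a stochastic flow of diffeomorphisms $(\varphi_{r,s})_{0\leq r\leq s}$ with $\varphi_{r,\cdot}\in\bbL^\infty(\R_+,C^{1+\beta})$ for every $\beta\in(0,\alpha)$ is then exactly Kunita's theorem applied to an SDE with $C^{1+\alpha}$ coefficients, see~\cite[Theorem 3.1]{KunitaStFlour}, which I would simply invoke.

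For the quantitative bound the idea is to apply It\^o's formula to $|Z_s|^2$, where $Z_s:=\varphi_{0,s}(y)-\varphi_{0,s}(y')$. The SDE satisfied by $Z$ is
\begin{equation}
\mathrm{d}Z_s
=
\big(\tilde b(s,\varphi_{0,s}(y))-\tilde b(s,\varphi_{0,s}(y'))\big)\mathrm{d}s
+ \sqrt{2}\big(\tilde\sigma(s,\varphi_{0,s}(y))-\tilde\sigma(s,\varphi_{0,s}(y'))\big)\mathrm{d}W_s,
\end{equation}
and It\^o gives
\begin{equation}
\mathrm{d}|Z_s|^2
=
2 Z_s\cdot\big(\tilde b(s,\varphi_{0,s}(y))-\tilde b(s,\varphi_{0,s}(y'))\big)\mathrm{d}s
+2\big|\tilde\sigma(s,\varphi_{0,s}(y))-\tilde\sigma(s,\varphi_{0,s}(y'))\big|_2^2\,\mathrm{d}s
+\mathrm{d}M_s,
\end{equation}
with $M$ a local martingale. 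The Lipschitz bounds $|\tilde b(s,x)-\tilde b(s,x')|\leq \|D\tilde b\|_0|x-x'|$ and $|\tilde\sigma(s,x)-\tilde\sigma(s,x')|_2^2\leq \||D\tilde\sigma|_2^2\|_0|x-x'|^2$ then yield
\begin{equation}
\mathrm{d}|Z_s|^2 \leq 2\big(\|D\tilde b\|_0+\||D\tilde\sigma|_2^2\|_0\big)|Z_s|^2\,\mathrm{d}s+\mathrm{d}M_s.
\end{equation}
A standard localisation by stopping times $\tau_n=\inf\{s:|Z_s|\geq n\}$ lets me take expectations and pass to the limit $n\to\infty$ by monotone convergence, after which Gronwall's lemma gives
\begin{equation}
\Eg\big[|Z_s|^2\big]\leq |y-y'|^2\exp\big[2s\big(\|D\tilde b\|_0+\||D\tilde\sigma|_2^2\|_0\big)\big],
\end{equation}
which after taking square roots is stronger than~\eqref{eq_bound_diffeo_Lipschitz_case} (the factor $3$ there just leaves some slack in the exponent).

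The proof is essentially all bookkeeping and there is no genuine obstacle. The only points that require minor care are distinguishing $\||D\tilde\sigma|_2^2\|_0$ from $\||D\tilde\sigma|_2\|_0^2$ when estimating the quadratic variation contribution, and justifying the localisation step used to remove the martingale in expectation; both are standard.
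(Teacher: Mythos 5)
Your proof is correct and follows essentially the same route as the paper: both cite Kunita's theorem for the flow of $C^{1+\beta}$ diffeomorphisms, and both derive the quantitative bound via a Gronwall argument on $\E\big[|\varphi_{0,s}(y)-\varphi_{0,s}(y')|^2\big]$. The only difference is cosmetic—you apply It\^o directly to $|Z_s|^2$ with localisation, while the paper squares the integral equation before taking expectations—and your variant in fact yields a marginally sharper exponent.
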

\begin{proof}
We only prove the bound~\eqref{eq_bound_diffeo_Lipschitz_case}, 
referring to~\cite[Theorem 3.1]{KunitaStFlour} for the rest. 
By Ito's formula, 
\begin{align}
\varphi_{0,s}(y)-\varphi_{0,s}(y')
&=
y-y'+ \int_0^s [\tilde b(u,\varphi_{0,u}(y))-\tilde b(u,\varphi_{0,u}(y'))]\, du
\nnb
&\quad 
+\sqrt{2}\int_0^s [\tilde \sigma(u,\varphi_{0,u}(y))-\tilde \sigma(u,\varphi_{0,u}(y'))]\, dW_u
.
\end{align}
Taking squares and expectations and using  
the fact that $\tilde b,\tilde \sigma$ are Lipschitz gives:
\begin{align}
\Eg\big[\, |\varphi_{0,s}(y)-\varphi_{0,s}(y')\,|^2\big]
\leq 
3|y-y'|^2 + 3\int_0^s \big[\|D\tilde b\|_0| + 2\||D\tilde \sigma|^2_2\|_0\big]\, \Eg\big[ \, |\varphi_{0,u}(y)-\varphi_{0,u}(y')|^2\, \big]\, du
.
\end{align}
Gronwall inequality concludes the proof. 
\end{proof}
We now explain how to transform the SDE~\eqref{eq_diff_appendix} with irregular drift into a regular one of the form given in Lemma~\ref{lemm_SDE_Lipschitz_coeff}. 
Consider the vector-valued solution $\psi_\lambda$ of:
\begin{equation}
(\partial_t+ L^b-\lambda)\psi_\lambda 
= 
-b,
\qquad 
L^b
: u\in C^2(\R^2,\R)\mapsto 
\Delta u +b\cdot D u
.
\label{eq_pde_appendix}
\end{equation}
According to~\cite[Theorem 2.4]{MR2748616}, 
for any $\lambda\geq 1$, 
there is a unique solution $\psi_\lambda\in \mathbb L^\infty(\R_+,C^\alpha(\R^2,\R^2))$ of~\eqref{eq_pde_appendix}. 
It satisfies a Schauder estimate, 
stated as Theorem 2.4 in~\cite{MR2748616}. 
\begin{lemma}\label{lemm_Schauder}
There is $C(\alpha)>0$ such that the solution $\psi_\lambda$ of~\eqref{eq_pde_appendix} satisfies the following Schauder estimate:
\begin{equation}
\sup_{s\geq 0}\|\psi_\lambda(s,\cdot)\|_{C^{2+\alpha}}
\leq
C(\alpha)
\sup_{s\geq 0}\|b(s,\cdot)\|_{C^{\alpha}}
\label{eq_Schauder}
.
\end{equation}
\end{lemma}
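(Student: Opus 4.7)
The plan is to obtain the Schauder estimate via a Duhamel representation of $\psi_\lambda$ against the heat semigroup on $\R^2$, applying a parabolic Schauder estimate to the source and treating the first-order drift contribution $b\cdot D\psi_\lambda$ perturbatively. This mirrors the proof of \cite[Theorem 2.4]{MR2748616}, which we essentially follow.

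First I would rewrite \eqref{eq_pde_appendix} in the form
$$(\partial_s+\Delta-\lambda)\psi_\lambda = -b - b\cdot D\psi_\lambda,$$
and use the fact that this is a backward parabolic equation on $\R_+$ with exponential decay in time to obtain the Duhamel representation
$$\psi_\lambda(s,x) = \int_s^\infty e^{-\lambda(u-s)}\, e^{(u-s)\Delta}\bigl[b(u,\cdot) + b(u,\cdot)\cdot D\psi_\lambda(u,\cdot)\bigr](x)\, du.$$
Existence and uniqueness of $\psi_\lambda\in\mathbb L^\infty(\R_+,C^\alpha)$ are already granted by \cite[Theorem 2.4]{MR2748616}, so I may work directly with this representation.

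Second, I would establish a parabolic Schauder estimate for the linear operator $f\mapsto\int_s^\infty e^{-\lambda(u-s)}e^{(u-s)\Delta}f(u,\cdot)\, du$, namely a bound of the form $\mathbb L^\infty_s C^\alpha_x \to \mathbb L^\infty_s C^{2+\alpha}_x$ uniformly in $\lambda\geq 1$. This rests on the heat-kernel bounds $\|D^ke^{t\Delta}f\|_{\mathbb L^\infty}\lesssim t^{(\alpha-k)/2}\|f\|_{C^\alpha}$ for $k\in\{0,1,2\}$ (integrable against $e^{-\lambda u}$ thanks to the positive exponent $\alpha/2$), supplemented by the standard Calderón--Zygmund / Littlewood--Paley trick used to produce the $C^\alpha$ seminorm of $D^2\psi_\lambda$ (where a naïve integration would only give a logarithmic divergence). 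Applied to the Duhamel formula this yields, for a constant depending only on $\alpha$,
$$\sup_s\|\psi_\lambda(s,\cdot)\|_{C^{2+\alpha}} \lesssim \sup_s\|b(s,\cdot)\|_{C^\alpha}\bigl(1+\sup_s\|D\psi_\lambda(s,\cdot)\|_{C^\alpha}\bigr),$$
using the product rule in H\"older spaces for the drift term.

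Third, since $\|D\psi_\lambda\|_{C^\alpha}\leq\|\psi_\lambda\|_{C^{1+\alpha}}$, I would invoke the interpolation inequality
$$\|\psi_\lambda\|_{C^{1+\alpha}}\lesssim \|\psi_\lambda\|_{C^{2+\alpha}}^{\theta}\,\|\psi_\lambda\|_{C^0}^{1-\theta},\qquad \theta=\frac{1+\alpha}{2+\alpha}\in(0,1),$$
and combine it with the elementary bound $\|\psi_\lambda\|_{\mathbb L^\infty_sC^0_x}\lesssim\lambda^{-1}\sup_s\|b(s,\cdot)\|_{C^\alpha}$ that follows from the Duhamel formula (using $\int_0^\infty e^{-\lambda u}du=\lambda^{-1}$ and $\mathbb L^\infty$-contractivity of $e^{u\Delta}$). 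Young's inequality, exploiting $\theta<1$, then absorbs the resulting $\|\psi_\lambda\|_{C^{2+\alpha}}^\theta$ factor into the left-hand side and produces the desired estimate \eqref{eq_Schauder}.

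The hard part is organising this absorption uniformly in $\lambda\geq 1$ and in the potentially large quantity $\sup_s\|b(s,\cdot)\|_{C^\alpha}$: the straightforward implementation yields a constant of the form $C(\alpha,\|b\|_{C^\alpha})$, and extracting a clean bound with $C(\alpha)$ depending on $\alpha$ alone requires a careful scale-by-scale treatment of the heat kernel and a judicious choice of how the drift-term estimate is iterated. This is precisely the technical content of \cite[Theorem 2.4]{MR2748616}, whose proof I would follow step by step; any sharper explicit tracking of constants would ultimately reduce to quantifying the Schauder and interpolation constants used above.
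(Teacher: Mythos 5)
Your overall skeleton is right — Duhamel representation, Schauder estimate for the heat integral, and the product rule leading to
\[
\sup_s\|\psi_\lambda(s,\cdot)\|_{C^{2+\alpha}} \lesssim \sup_s\|b(s,\cdot)\|_{C^\alpha}\bigl(1+\sup_s\|D\psi_\lambda(s,\cdot)\|_{C^\alpha}\bigr),
\]
together with the interpolation step $\|\psi\|_{C^{2}} \leq \epsilon\|\psi\|_{C^{2+\alpha}} + C(\epsilon,\alpha)\|\psi\|_{\infty}$ and the $\lambda^{-1}$ bound on $\|\psi_\lambda\|_\infty$, all match the paper. But the step you flag as the ``hard part'' and defer to \cite{MR2748616} is exactly the step the paper cannot defer (see the Remark immediately following the Lemma), and your proposed resolution — interpolate $\|D\psi_\lambda\|_{C^\alpha}$ between $C^0$ and $C^{2+\alpha}$ and absorb by Young's inequality — produces a constant that is not manifestly $C(\alpha)$: after Young you are left with a term of the form $\|b\|_{C^\alpha}^{2+\alpha}\|\psi_\lambda\|_{C^0}$, and controlling its $\|b\|$-dependence requires both the explicit formula $\lambda \sim \|b\|_{C^\alpha}^{2/(1-\alpha)}$ and an unchecked numerical comparison of exponents; for small $\alpha$ this happens to close, but your proposal never carries that out.

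The ingredient you are missing is Lemma~\ref{lemm_bound_Dpsi_lambda}: for the specific choice of $\lambda$ in~\eqref{eq_def_lambda_appD} one has the \emph{direct} bound $\sup_s\|D\psi_\lambda(s,\cdot)\|_{C^\alpha}\leq 1/3$, proved independently of the Schauder estimate from the heat-kernel smoothing $\|P_u g\|_{C^{1+\alpha}}\lesssim u^{-(1+\alpha)/2}\|g\|_\infty$ integrated against $e^{-\lambda u}$. Once you have this, no absorption, no interpolation between $C^0$ and $C^{2+\alpha}$, and no Young's inequality are needed: you simply plug $\|D\psi_\lambda\|_{C^\alpha}\leq 1/3$ into the right-hand side above and read off the clean bound $\sup_s\|D^2\psi_\lambda(s,\cdot)\|_{C^\alpha}\leq \frac{4}{3}C(\alpha)\sup_s\|b(s,\cdot)\|_{C^\alpha}$, then combine with $\|\psi_\lambda\|_\infty \leq \|b\|_0/\lambda \leq \|b\|_0$. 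The $\lambda$-dependence (and $\|b\|$-dependence of the constant) has been shifted entirely into the choice of $\lambda$ rather than appearing in the Schauder constant — this is precisely what makes the resulting $C(\alpha)$ universal, and it is the point the paper's Remark identifies as not immediately readable from \cite{MR2748616}.
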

\begin{remark}
The fact that the constant in the right-hand side of~\eqref{eq_Schauder} does not depend on $\lambda$ is not stated explicitly in~\cite{MR2748616} and not immediately evident in the argument, 
so we provide some details at the end of the section.
\end{remark}
Define also $\Psi_\lambda(s,x) = x+\psi_\lambda(s,x)$. 
We will use $\psi_\lambda,\Psi_\lambda$, 
more regular than $b$, 
to define a so-called conjugate SDE with regular coefficients associated with~\eqref{eq_diff_appendix}. 
To do so, we start with the following claim, bounding $D\psi_\lambda$ for large enough $\lambda$. 
\begin{lemma}\label{lemm_bound_Dpsi_lambda}
There is $c(\alpha)>0$ such that, if:
\begin{equation}
\lambda
=
\max\Big\{\big[4 c(\alpha)\sup_{s\geq 0}\|b(s,\cdot)\|_{C^\alpha}\big]^{\frac{2}{1-\alpha}},1\Big\}
,
\label{eq_def_lambda_appD}
\end{equation}
then:
\begin{equation}
\sup_{s\geq 0}\|D\psi_\lambda(s,\cdot)\|_{C^\alpha}\leq 1/3
.
\label{eq_Dpsi}
\end{equation}
\end{lemma}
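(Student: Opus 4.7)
The plan is to combine the $\lambda$-independent Schauder estimate of Lemma~\ref{lemm_Schauder}, which controls the high-regularity norm $\|\psi_\lambda\|_{C^{2+\alpha}}$, with a $\lambda$-decay bound on a weaker norm of $\psi_\lambda$, and then interpolate the two in order to control the intermediate quantity $\|D\psi_\lambda\|_{C^\alpha}$. Once a bound of the form $\|D\psi_\lambda\|_{C^\alpha}\lesssim \lambda^{-(1-\alpha)/2}\|b\|_{C^\alpha}$ is established, the choice of $\lambda$ in~\eqref{eq_def_lambda_appD} immediately yields $\|D\psi_\lambda\|_{C^\alpha}\leq 1/3$ for $c(\alpha)$ large enough.

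\medskip

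\noindent\textbf{Step 1 (uniform $\bbL^\infty$ decay).} I would first prove
\begin{equation*}
\sup_{s\geq 0}\|\psi_\lambda(s,\cdot)\|_0 \leq \lambda^{-1}\sup_{s\geq 0}\|b(s,\cdot)\|_{0}\leq \lambda^{-1}\sup_{s\geq 0}\|b(s,\cdot)\|_{C^\alpha}
\end{equation*}
via a Feynman--Kac representation of $\psi_\lambda$. Since $b$ is bounded and continuous, Girsanov's theorem furnishes weak solutions $Y^{s,x}$ to the SDE $dY_u = b(u,Y_u)\, du + \sqrt{2}\, dW_u$ started at $x$ at time $s$. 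Applying It\^o's formula to $e^{-\lambda(u-s)}\psi_\lambda(u,Y_u^{s,x})$---which is legitimate since $\psi_\lambda\in C^2$ by Lemma~\ref{lemm_Schauder}---together with the equation~\eqref{eq_pde_appendix} satisfied by $\psi_\lambda$ yields
\begin{equation*}
\psi_\lambda(s,x) = \Eg\bigg[\int_s^\infty e^{-\lambda(u-s)}\, b(u,Y_u^{s,x})\, du\bigg],
\end{equation*}
where the boundary term at $u=\infty$ vanishes thanks to the uniform boundedness of $\psi_\lambda$ (from Lemma~\ref{lemm_Schauder}) and the exponential prefactor. Taking suprema inside the integral gives the $\bbL^\infty$ bound.

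\medskip

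\noindent\textbf{Step 2 (interpolation).} Combining Step 1 with Lemma~\ref{lemm_Schauder} and the standard H\"older interpolation inequality
\begin{equation*}
\|f\|_{C^{1+\alpha}(\R^2)} \leq K(\alpha)\|f\|_0^{1/(2+\alpha)}\|f\|_{C^{2+\alpha}}^{(1+\alpha)/(2+\alpha)}
\end{equation*}
(proved by a covering/scaling argument exactly as in Appendix~\ref{app:norms}) yields
\begin{equation*}
\sup_{s\geq 0}\|D\psi_\lambda(s,\cdot)\|_{C^\alpha} \leq K'(\alpha)\, \lambda^{-1/(2+\alpha)}\sup_{s\geq 0}\|b(s,\cdot)\|_{C^\alpha}.
\end{equation*}
Since $1/(2+\alpha)\geq (1-\alpha)/2$ for $\alpha\in[0,1)$ (equivalent to $\alpha+\alpha^2\geq 0$), and since $\lambda\geq 1$, this implies the target bound $\|D\psi_\lambda\|_{C^\alpha}\leq K'(\alpha)\lambda^{-(1-\alpha)/2}\|b\|_{C^\alpha}$. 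Setting $c(\alpha) = \tfrac{3}{4}K'(\alpha)$ and using~\eqref{eq_def_lambda_appD} gives $\|D\psi_\lambda\|_{C^\alpha}\leq 1/3$.

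\medskip

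\noindent\textbf{Main obstacle.} The interpolation inequality is routine, and Lemma~\ref{lemm_Schauder} is assumed. The delicate point is the rigorous justification of the Feynman--Kac representation in Step 1, since $b$ is only H\"older continuous and pathwise uniqueness for $Y^{s,x}$ is not immediate. However, only \emph{weak} existence for $Y^{s,x}$ is used (which is provided by Girsanov's theorem with bounded drift), together with $\psi_\lambda\in C^2$ to apply It\^o's formula; uniqueness in law is irrelevant because the right-hand side of the representation formula is intrinsically determined by $\psi_\lambda$ via~\eqref{eq_pde_appendix}. If one prefers to avoid this altogether, one can instead mollify $b$ at scale $\varepsilon$, run the argument for the classical SDE associated with $b^\varepsilon$, and pass to the limit $\varepsilon\to 0$ using the $\varepsilon$-uniformity of Lemma~\ref{lemm_Schauder}.
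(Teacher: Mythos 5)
Your proposal takes a genuinely different route from the paper — interpolation between the $\lambda^{-1}$ decay of $\|\psi_\lambda\|_0$ and the Schauder bound on $\|\psi_\lambda\|_{C^{2+\alpha}}$ — whereas the paper argues directly from the mild formulation $\psi_\lambda(s,x)=-\int_0^\infty e^{-\lambda u}P_u[c_\lambda(s+u,\cdot)](x)\,du$ with $c_\lambda=b(1+D\psi_\lambda)$, applies the heat smoothing bound $\|P_u g\|_{C^{1+\alpha}}\leq c(\alpha)u^{-(1+\alpha)/2}\|g\|_\infty$, and absorbs the resulting $\|D\psi_\lambda\|_0$-factor on the right-hand side for $\lambda$ as in~\eqref{eq_def_lambda_appD}. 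Your interpolation exponent comparison ($1/(2+\alpha)\geq(1-\alpha)/2$ for $\alpha\in[0,1)$, so $\lambda^{-1/(2+\alpha)}\leq\lambda^{-(1-\alpha)/2}$ when $\lambda\geq 1$) and the final arithmetic with $c(\alpha)=\tfrac34 K'(\alpha)$ are correct.

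However, there is a logical circularity in the context of this paper. You treat Lemma~\ref{lemm_Schauder} (the $\lambda$-uniform Schauder estimate) as a black box available before Lemma~\ref{lemm_bound_Dpsi_lambda}. But the paper's proof of Lemma~\ref{lemm_Schauder}, given at the end of the appendix, uses precisely~\eqref{eq_Dpsi} — the conclusion of Lemma~\ref{lemm_bound_Dpsi_lambda} — to absorb the $\|D\psi\|_{C^\alpha}$-term in the intermediate bound $\|D^2\psi\|_{C^\alpha}\leq C(\alpha)\|b\|_{C^\alpha}(1+\|D\psi\|_{C^\alpha})$ and thereby obtain a $\lambda$-independent constant. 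The statement order in the paper (Lemma~\ref{lemm_Schauder} before Lemma~\ref{lemm_bound_Dpsi_lambda}) obscures this, but the remark following Lemma~\ref{lemm_Schauder} and its proof make clear that the intended logical order is the reverse: Lemma~\ref{lemm_bound_Dpsi_lambda} is established first, self-containedly, and is then an input to Lemma~\ref{lemm_Schauder}. The reference [MR2748616] only gives a Schauder estimate whose constant a priori depends on $\lambda$, which is exactly the point of the paper's remark. Your Step~1 Feynman--Kac argument also relies on the $C^{2+\alpha}$ spatial regularity from Lemma~\ref{lemm_Schauder} to justify It\^o's formula, so the circularity enters there too. (The $\bbL^\infty$ bound $\|\psi_\lambda\|_0\leq\|b\|_0/\lambda$ is in any case cited directly from [MR2748616, Theorem 4.1] and does not need the probabilistic detour.) To repair the proposal you would either need an independent proof of the $\lambda$-uniform Schauder estimate — which in turn would have to control $\|D\psi_\lambda\|_{C^\alpha}$ and thus essentially reprove Lemma~\ref{lemm_bound_Dpsi_lambda} — or carry out a joint fixed-point argument in the coupled quantities $(\|D\psi_\lambda\|_{C^\alpha},\|\psi_\lambda\|_{C^{2+\alpha}})$, tracking how the $\lambda$-dependence of a preliminary Schauder constant interacts with the interpolation. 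The paper's mild-formulation argument avoids all of this: it uses nothing beyond the $\bbL^\infty$ bound on $c_\lambda$ and elementary heat kernel smoothing, which is why it is placed first.
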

Note that Lemma~\ref{lemm_bound_Dpsi_lambda} implies in particular that $\Psi_\lambda(t,\cdot)$ is a $C^2$ diffeomorphism at each time, 
with first and second space derivatives bounded uniformly in time by~\eqref{eq_Schauder}. 
Similarly, $\Psi_\lambda^{-1}$ has bounded first and second space derivatives uniformly in time (by $(1-\|D\psi_\lambda\|_0)^{-1}\leq 2$ for the first derivative). %
\begin{proof}[Proof of Lemma~\ref{lemm_bound_Dpsi_lambda}]
We follow the proof of~\cite[Lemma 4]{MR2593276}. 
If $P_u=e^{u\Delta}$ ($u\geq 0$) denotes the heat semigroup, 
the solution $\psi_\lambda$ has the following representation:
\begin{equation}
\psi_\lambda(s,x)
=
-\int_0^\infty e^{-\lambda u}P_u \big[c_\lambda(s+u,\cdot)\big](x)\, du
,\qquad 
c_\lambda(s,\cdot) 
:=
b(s,\cdot) + (b(s,\cdot),D\psi_\lambda(s,\cdot))
.
\end{equation}
Note that $c_\lambda(s,\cdot)$ is continuous and bounded.  Differentiating under the integral and using the existence of $c(\alpha)>0$ such that:
\begin{equation}
\|P_u g\|_{C^{1+\alpha}}
\leq
\frac{c(\alpha)}{u^{(1+\alpha)/2}} \|g\|_\infty
,\qquad
g\text{ continuous and bounded}
,
\label{eq_def_calpha_appD}
\end{equation}
we find:
\begin{align}
\sup_{s\geq 0}\|D\psi_\lambda(s,\cdot)\|_{C^\alpha}
&\leq 
c(\alpha)\sup_{s\geq 0} \int_0^\infty e^{-\lambda u}\|P_u c_\lambda(s+u,\cdot)\|_{C^{1+\alpha}}\, du
\nnb
&\leq 
\frac{c(\alpha) \sup_{s\geq 0}\|b(s,\cdot)\|_{C^\alpha}}{\lambda^{(1-\alpha)/2}}\big(1 + \sup_{s\geq 0}\|D\psi_{\lambda}(s,\cdot)\|_{0}\big)
.
\end{align}
Thus, when $\lambda$ satisfies~\eqref{eq_def_lambda_appD} with $c(\alpha)$ given by~\eqref{eq_def_calpha_appD},
\begin{align}
\sup_{s\geq 0}\|D\psi_{\lambda}(s,\cdot)\|_{C^\alpha}
&\leq 
\frac{c(\alpha)\sup_{s\geq 0}\|b(s,\cdot)\|_{C^\alpha}}{\lambda^{(1-\alpha)/2}}\frac{1}{1-\frac{c(\alpha)\sup_{s\geq 0}\|b(s,\cdot)\|_{C^\alpha}}{\lambda^{(1-\alpha)/2}}}
\nnb
&\leq 
\frac{1}{3}
\quad \text{ when }\quad 
\lambda=\max\Big\{\big[4 c(\alpha)\sup_{s\geq 0}\|b(s,\cdot)\|_{C^\alpha}\big]^{\frac{2}{1-\alpha}},1\Big\}
.
\label{eq_Dpsi_bis}
\end{align}
\end{proof}
In the following we just write $\psi,\Psi$ for $\psi_\lambda,\Psi_\lambda$, 
with $\psi_s,\Psi_s = \psi(s,\cdot),\Psi(s,\cdot)$.   
Define now a diffusion $\tilde X^y$ as follows:
\begin{equation}
\tilde X^y_t 
=
y + \int_s^t\tilde b(u,\tilde X^y_u)\, du +  \sqrt{2}\int_{s}^t\tilde \sigma(u,\tilde X^y_u)\, dW_u
,
\label{eq_SDE_tildeX_qt_bd}
\end{equation}
where the drift $\tilde b$ and diffusion matrix $\tilde \sigma$ are given by:
\begin{equation}
\tilde b(t,y)
=
-\lambda\psi(t,\Psi^{-1}(t,y)),\qquad 
\tilde\sigma(t,y)
=
D\Psi(t,\Psi^{-1}(t,y))
.
\end{equation}
Note that $\tilde b,\tilde \sigma$ are continuous in time and in $\mathbb L^\infty(\R_+,C^{1+\alpha})$. 
In particular they are Lipschitz in space uniformly in time and, 
as $D\Psi=\id + D\psi$ by definition and $\|D\psi\|_0\leq 1/3$ by Lemma~\ref{lemm_bound_Dpsi_lambda}:
\begin{align}
\|D\tilde b\|_{0}
&\leq 
\lambda  \big\|D\psi(t,\Psi^{-1}(t,y)) \big[D\Psi(t,\Psi^{-1}(t,y))\big]^{-1}\big\|_{0}
\leq 
\lambda
,\nnb
\|D\tilde\sigma\|_0
&\leq 
\frac{3}{2}\| D^2\Psi\|_0
\leq
\frac{3C(\alpha)}{2}\sup_{s\geq 0}\|b(s,\cdot)\|_{C^\alpha} 
,
\label{eq_bound_bsigma_appendix}
\end{align}
where the second line is the Schauder estimate~\eqref{eq_Schauder}.

Since $\tilde X$ now has Lipschitz drift and diffusion coefficient, 
its solution can be written in terms of diffeomorphisms $(\varphi_{0,s}(x))_{s\geq 0,x\in\R^2}$, 
for which Lemma~\ref{lemm_SDE_Lipschitz_coeff} and the definition of $\lambda$ in Lemma~\ref{lemm_bound_Dpsi_lambda} give the following bound: 
for some constant $C'(\alpha)>0$, 
\begin{equation}
\sup_{s\leq t}\Eg\big[|\varphi_{0,s}(y)-\varphi_{0,s}(y')|^2\big]^{1/2}
\leq 
3|y-y'| \, e^{t\, C'(\alpha) [1\vee \sup_{s\geq 0}\|b(s,\cdot)\|^{2/(1-\alpha)}_{C^\alpha}]}
.
\end{equation}
From this we deduce a similar estimate for $\tilde X^y_t-\tilde X^{y'}_t = \phi_{0,t}(y)-\phi_{0,t}(y')$, 
where we recall that $\phi_{0,t}(y)$ are the diffeomorphisms appearing in~\eqref{eq_bound_gradient_Dphi_appendix}.  
Indeed, 
$\tilde X^y_t$ solves~\eqref{eq_SDE_tildeX_qt_bd} with initial condition $y=\Psi(s,x)$ whenever $X$ solves~\eqref{eq_diff_appendix}, or in other words:
\begin{equation}
\phi_{0,s}
=
\Psi_{s}^{-1}\circ\varphi_{0,s}\circ\Psi_0,\qquad 
\Psi_0(x) =x 
.
\end{equation}
Thus:
\begin{align}
\Eg\big[|\phi_{0,s}(y)-\phi_{0,s}(y')|^2\big]^{1/2}
&\leq
\|D(\Psi^{-1})\|_0\, \Eg\big[|\varphi_{0,s}(y)-\varphi_{0,s}(y')|^2\big]^{1/2}
\nnb
&\leq 
\frac{3}{2}\Eg\big[|\varphi_{0,s}(y)-\varphi_{0,s}(y')|\big]
\nnb
&\leq 
\frac{9}{2}\, |y-y'| \, \exp\Big[sC'(\alpha)\big[1\vee  \sup_{u\geq 0}\|b(u,\cdot)\|^{2/(1-\alpha)}_{C^\alpha}\big]\Big]
.
\end{align}
This concludes the proof of Proposition~\ref{prop_gradientX_appendix} assuming the Schauder estimate of Lemma~\ref{lemm_Schauder}, 
proven next. 
\begin{proof}[Proof of Lemma~\ref{lemm_Schauder}]
We follow the proof of Theorem 2.4 in~\cite{MR2748616}, 
keeping track of constants. 
Let $\psi$ denote a solution of~\eqref{eq_pde_appendix}. 
Recall the following interpolation inequality. 
for any $\epsilon>0$, 
there is $C(\epsilon,\alpha)>0$ such that:
\begin{equation}
\|\psi(s,\cdot)\|_{C^{2}}
\leq 
\epsilon\|\psi(s,\cdot)\|_{C^{2+\alpha}}  + C(\epsilon,\alpha)\|\psi(s,\cdot)\|_{\infty}
.
\end{equation}
Then $\epsilon=1/2$ and $\|\psi(s,\cdot)\|_{C^{2+\alpha}}\leq \|\psi(s,\cdot)\|_{C^{2}} + \|D^2\psi(s,\cdot)\|_{C^\alpha}$ give ($C(\alpha):=C(1/2,\alpha)$):
\begin{align}
\sup_{s\geq 0}\|\psi(s,\cdot)\|_{C^{2+\alpha}}
&\leq
C(\alpha)\|\psi\|_0 + 2\sup_{s\geq 0}\|D^2\psi(s,\cdot)\|_{C^\alpha}
.
\end{align}
The uniform norm is bounded independently of $\lambda$ by Theorem 4.1 in~\cite{MR2748616} (recall~\eqref{eq_Dpsi} for the definition of $\lambda$):
\begin{equation}
\|\psi\|_0
\leq 
\frac{\|b\|_0}{\lambda}
\leq 
\|b\|_0
.
\label{eq_bound_unif_norm_psi}
\end{equation}
It therefore remains to estimate the H\"older norm of $D^2\psi$. 
This is done as in the proof of Theorem 2.4(i) in~\cite{MR2748616} (see middle of p18), 
with simpler computations due to $L^b$ (recall~\eqref{eq_pde_appendix}) having diffusion matrix equal to the identity. 
One obtains:
\begin{align}
\sup_{s\geq 0}\|D^2\psi(s,\cdot)\|_{C^\alpha}
&\leq 
C(\alpha)\sup_{s\geq 0}\|b(s,\cdot)\|_{C^\alpha}\big(1+\sup_{s\geq 0}\|D\psi(s,\cdot)\|_{C^\alpha}\big)
\nnb
&\leq
\frac{4C(\alpha)}{3}\sup_{s\geq 0}\|b(s,\cdot)\|_{C^\alpha}
,
\end{align}
where the second line comes from the estimate~\eqref{eq_Dpsi} of $\|D\psi\|_{C^\alpha}$. 
Together with~\eqref{eq_bound_unif_norm_psi} this concludes the proof.
\end{proof}

\section*{Acknowledgements}

We thank Martin Hairer and Jonathan Mattingly for very helpful and interesting discussions on the time-shifted Girsanov method.

R.B. acknowledges funding from NSF grant DMS-2348045.

B.D. acknowledges funding from the European Union's Horizon 2020 research and innovation programme under the Marie Sk\l odowska-Curie grant agreement No 101034255.

H.W. is funded by the European Union (ERC, GE4SPDE, 101045082) and by the Deutsche Forschungsgemeinschaft (DFG, German Research Foundation) under Germany's Excellence Strategy EXC 2044 -390685587, Mathematics Münster: Dynamics-Geometry-Structure.

\bibliography{all}
\bibliographystyle{plain}

\end{document}